\newcommand{\XA}{{\mbox{\large{\wedn{a}}}}}
\newcommand{\XB}{{\mbox{\large{\wedn{b}}}}}
\newcommand{\XC}{{\mbox{\large{\wedn{c}}}}\,}
\newcommand{\XX}{\underline{\mbox{\large{\wedn{x}}}}\,}
\newcommand{\XY}{\underline{\mbox{\large{\wedn{y}}}}\,}
\newcommand{\XT}{\underline{\mbox{\large{\wedn{t}}}}\,}
\title{Local density of solutions \\ of time and space fractional equations\thanks{Supported by
the Australian Research Council Discovery Project 170104880 NEW ``Nonlocal
Equations at Work''. The authors are members of INdAM/GNAMPA.}}
\author{Alessandro Carbotti\thanks{Dipartimento di Matematica
e Fisica, Universit\`a del Salento,
Via Per Arnesano, 73100 Lecce, Italy. {\tt alessandro.carbotti@unisalento.it}}, Serena Dipierro\thanks{Department
of Mathematics and Statistics,
University of Western Australia,
35 Stirling Highway,
Crawley WA 6009, Australia. {\tt serena.dipierro@uwa.edu.au} },
and
Enrico Valdinoci\thanks{Department of Mathematics and Statistics,
University of Western Australia,
35 Stirling Highway,
Crawley WA 6009, Australia, 
and Istituto di Matematica Applicata e Tecnologie Informatiche,
Consiglio Nazionale delle Ricerche,
Via Ferrata 1, 27100 Pavia, Italy,
and Dipartimento di Matematica, Universit\`a degli studi di Milano,
Via Saldini 50, 20133 Milan, Italy. {\tt enrico@mat.uniroma3.it} }}
\newtheorem{theorem}{Theorem}[section]
\newtheorem{remark}[theorem]{Remark}
\newtheorem{lemma}[theorem]{Lemma}
\newtheorem{proposition}[theorem]{Proposition}
\newtheorem{corollary}[theorem]{Corollary}
\numberwithin{equation}{section}
\begin{document}
\maketitle
\begin{abstract}
We prove that 
any given function can be smoothly approximated by
functions lying in the kernel of a linear operator
involving at least one fractional component.
The setting in which we work is very general, since it takes into account
anomalous diffusion, with possible
fractional components
in both space and time. The operators studied comprise
the case of
the sum of classical and fractional Laplacians, possibly of different
orders, in the space variables, and classical or fractional derivatives
in the time variables.

This type of approximation results
shows that space-fractional and time-fractional
equations exhibit a variety of solutions which is much richer and more
abundant than in the case of classical diffusion.
\end{abstract}

\tableofcontents

\section{Introduction and main results}
\label{s:first}
In this paper we prove the local density of functions
which annihilate a linear operator built by classical and 
fractional derivatives, both in space and time.\medskip

Nonlocal operators of fractional type
present a variety of challenging problems in pure mathematics,
also in connections with long-range phase transitions and nonlocal
minimal surfaces,
and are nowadays commonly exploited in a large number of models
describing complex phenomena related
to anomalous diffusion and boundary reactions
in physics, biology and material sciences (see e.g.~\cite{claudia}
for several examples, for instance in atom dislocations in crystals and
water waves models).
Furthermore, anomalous diffusion in the space variables can be seen
as the natural counterpart of discontinuous
Markov processes, thus providing important connections
with problems in probability and statistics, and several applications to
economy and finance (see e.g.~\cite{MR0242239,MR3235230} for pioneer works relating
anomalous diffusion and financial models).\medskip

On the other hand, the development of time-fractional derivatives
began at the end of the seventeenth century, also in view of
contributions by mathematicians
such as Leibniz, Euler, Laplace, Liouville and many others,
see e.g.~\cite{ferrari} and the references therein for several
interesting scientific and historical discussions.
{F}rom the point of view of the applications, time-fractional derivatives 
naturally provide a model to comprise memory effects in the description of the
phenomena under consideration. \medskip

In this paper, the time-fractional derivative will be mostly described
in terms of the so-called
Caputo fractional derivative (see~\cite{MR2379269}),  which
induces a natural ``direction'' in
the time variable, distinguishing between ``past'' and ``future''. In particular, the time direction encoded in this setting
allows the analysis of ``non anticipative systems'', namely phenomena in which
the state at a given time depends on past events, but not on future ones.
The Caputo derivative is also related to other types of time-fractional derivatives,
such as the Marchaud fractional derivative, which has
applications in modeling anomalous time
diffusion, see e.g.~\cite{MR3488533, AV, ferrari}.
See also~\cite{MR1219954, MR1347689} for more details on fractional
operators and several applications.\medskip

In this article, we will take advantadge of the nonlocal structure
of a very general linear operator containing fractional derivatives
in some variables (say, either time, or space, or both),
in order to approximate, in the smooth sense and with arbitrary precision,
any prescribed function. Remarkably, {\em no structural assumption}
needs to be taken on the prescribed function: therefore
this approximation property reveals a {\em truly nonlocal behaviour},
since it is
in contrast with the rigidity of the functions that lie in the kernel
of classical linear
operators (for instance, harmonic functions cannot approximate 
a function with interior maxima or minima, functions with null first derivatives 
are necessarily constant, and so on).

The approximation
results with solutions of nonlocal operators have been first introduced
in~\cite{MR3626547}
for the case of the fractional Laplacian,
and since then widely studied under different perspectives,
including harmonic analysis,
see~\cite{MR3774704, 2016arXiv160909248G, 2017arXiv170804285R, 2017arXiv170806294R, 2017arXiv170806300R}.
The approximation result for the
one dimensional case of a fractional derivative of Caputo type
has been considered in~\cite{MR3716924, CDV18}, and
operators involving classical time derivatives and additional classical derivatives
in space have been studied in~\cite{DSV1}.
\medskip

The great flexibility of solutions of fractional problems established
by this type of approximation results
has also consequences that go beyond
the purely mathematical curiosity. 
For example, these results 
can be applied to study the evolution of
biological populations, showing how a nonlocal hunting or dispersive
strategy can be 
more convenient than one based on classical diffusion,
in order to avoid waste of resources and optimize the search for food
in sparse environment, see~\cite{MR3590678, MR3579567}.
Interestingly, the theoretical descriptions
provided in this setting can be compared with a series of concrete
biological data and real world experiments, confirming
anomalous diffusion behaviours in many biological species, see~\cite{ALBA}.

\medskip

Another interesting application of time-fractional derivatives
arises in neuroscience, for instance in view of the anomalous diffusion
which has been experimentally measured in neurons, see e.g.~\cite{SANTA}
and the references therein.
In this case, the anomalous diffusion could be seen as the effect
of the highly ramified structure of the biological cells
taken into account, see~\cite{DV1}. 
\medskip

In many applications, it is also natural to consider the case in
which different types of diffusion take 
place in different variables: for instance, classical diffusion
in space variables could be
naturally combined to anomalous diffusion with respect to variables
which take into account genetical information, see~\cite{GEN1, GEN2}.
\medskip

Now, to state the main results of this paper, we introduce some notation.
In what follows, we will denote the ``local variables''
with the symbol $x$, the ``nonlocal variables''
with $y$, the ``time-fractional variables''
with $t$.
Namely, we consider the variables
\begin{equation}\label{1.0}\begin{split}
&x=\left(x_1,\ldots,x_n\right)\in\mathbb{R}^{p_1}\times\ldots\times\mathbb{R}^{p_n},
\\&
y=\left(y_1,\ldots,y_M\right)\in\mathbb{R}^{m_1}
\times\ldots\times\mathbb{R}^{m_M}\\
{\mbox{and }}\;&
t=\left(t_1,\ldots,t_l\right)\in\mathbb{R}^l,\end{split}\end{equation}
for some $p_1,\dots,p_n$, $M$, $m_1,\dots,m_M$, $l \in\mathbb{N}$, and we let
$$\left(x,y,t\right)\in\mathbb{R}^N,\qquad{\mbox{
where }}\;N:=p_1+\ldots+p_n+m_1+\ldots+m_M+l.$$ 
When necessary, we will use the notation $B_R^k$ to denote the $k$-dimensional
ball of radius $R$, centered at the origin in $\mathbb{R}^k$; otherwise, when there are no ambiguities, we will use the usual notation $B_R$.

Fixed $r=\left(r_1,\ldots,r_n\right)\in\mathbb{N}^{p_1}\times\ldots\times\mathbb{N}^{p_n}$,
with~$|r_i|\ge1$ for each~$i\in\{1,\dots, n \}$,
and~$\XA=\left(\XA_1,\ldots,\XA_n\right)\in\mathbb{R}^n$, we consider the local operator
acting on the variables~$x=(x_1,\dots,x_n)$ given by
\begin{equation}\label{ILPOAU-1}
\mathfrak{l}:=\sum_{i=1}^n {\XA_i\partial^{r_i}_{x_i}}.
\end{equation}
where the multi-index notation has been used.

Furthermore, given $\XB=\left(\XB_1,\ldots,\XB_M\right)\in\mathbb{R}^M$ and
$s=\left(s_1,\ldots,s_M\right)\in\left(0,+\infty\right)^M$, we consider the operator
\begin{equation}\label{ILPOAU-2}
\mathcal{L}:=\sum_{j=1}^M {\XB_j(-\Delta)^{s_j}_{y_j}},
\end{equation}
where each operator~$(-\Delta)^{s_j}_{y_j}$
denotes the fractional Laplacian of order $2s_j$ acting
on the set of space variables~$y_j\in\mathbb{R}^{m_j}$. More precisely,
for any~$j\in\{1,\dots,M\}$,
given $h_j\in\mathbb{N}$ and $s_j\in\left(0,h_j\right)$,
in the spirit of~\cite{AJS2}, we consider
the operator
\begin{equation}
\label{nonlocop}
(-\Delta)^{s_j}_{y_j}u\left(x,y,t\right):=\int_{\mathbb{R}^{m_j}} 
{\frac{\left(\delta_{h_j} u\right)
\left(x,y,t,Y_j\right)}{|Y_j|^{m_j+2s_j}} \,dY_j},
\end{equation}
where
\begin{equation}\label{898989ksdc}
\left(\delta_{h_j} u\right)\left(x,y,t,Y_j\right):=
\sum_{k=-h_j}^{h_j} {\left(-1\right)^k \binom{2h_j}{h_j-k}u
\left(x,y_1,\ldots,y_{j-1},y_j+kY_j,y_{j+1},\ldots,y_M,t\right)}.\end{equation}

In particular, when $h_j:=1$, this setting comprises the case
of the fractional Laplacian~$\left(-\Delta\right)^{s_j}_{y_j}$ of order~$2s_j\in(0,2)$, given by
\begin{equation*}\begin{split}
\left(-\Delta\right)^{s_j}_{y_j}u\left(x,y,t\right) 
&\, := c_{m_j,s_j}\; 
\int_{\mathbb{R}^{m_j}} 
\Big(2u(x,y,t)-
u(x,y_1,\ldots,y_{j-1},y_j+Y_j,y_{j+1},\ldots,y_M,t)\\&\qquad\qquad-
u(x,y_1,\ldots,y_{j-1},y_j-Y_j,y_{j+1},\ldots,y_M,t)
\Big)\;
\frac{dY_j}{|Y_j|^{m_j+2s_j}},\end{split}
\end{equation*}
where $s_j\in(0,1)$ and $c_{m_j,s_j}$ denotes a multiplicative normalizing constant
(see e.g. formula~(3.1.10) in~\cite{claudia}).

It is interesting to recall that
if $h_j=2$ and $s_j=1$ the setting in~\eqref{nonlocop}
provides a nonlocal representation for the classical Laplacian,
see \cite{AV}.
\medskip

In our general framework,
we take into account also nonlocal operators of time-fractional type.
To this end,
for any~$\alpha>0$, letting $k:=[\alpha]+1$ and $a\in\mathbb{R}\cup\left\{-\infty\right\}$,
one can introduce the left\footnote{In the literature, one often finds also
the notion of right Caputo fractional derivative, defined for $t<a$ by 
$$ \frac{(-1)^k}{\Gamma\left(k-\alpha\right)}\int^{a}_{t}
\frac{\partial_{t}^{k} u(\tau)}{\left(\tau-t\right)^{\alpha-k+1}}
\, d\tau.$$
Since the right time-fractional derivative boils down to the left one
(by replacing~$t$ with~$2a-t$), in this paper we focus only
on the case of left derivatives.

Also, though there are several time-fractional derivatives that are studied
in the literature under different perspectives,
we focus here on the Caputo derivative, since it possesses well-posedness properties
with respect to classical initial
value problems, differently than other time-fractional derivatives, such as the
Riemann-Liouville derivative,
in which the initial value setting involves data containing
derivatives of
fractional order.}
Caputo fractional derivative of order $\alpha$
and initial point $a$, defined, for~$t>a$,
as
\begin{equation}
\label{defcap}
D_{t,a}^{\alpha}u(t):=\frac{1}{\Gamma(k-\alpha)}\int_a^t \frac{\partial_{t}^{k} u\left(\tau\right)}{(t-\tau)^{\alpha-k+1}} d\tau,
\end{equation}
where\footnote{For notational simplicity, we will often denote~$\partial_t^k u
=u^{(k)}$.} $\Gamma$ denotes the Euler's Gamma function. 

%%%%			For sufficiently smooth functions, a simple integration by parts allows to write the Caputo fractional derivative, when $\alpha\in(0,1)$, in the following equivalent form
%%%%			\begin{equation}
%%%%			\Gamma(1-\alpha)D_{t,a}^\alpha u(t):=\frac{u(t)-u(a)}{(t-a)^\alpha}+\alpha\int_a^t \frac{u(t)-u(\tau)}{(t-\tau)^{\alpha+1}} d\tau;
%%%%			\end{equation}
%%%%			notice that this definition takes into account only the values $t>a$, but if we assume that $u(t)=u(a)$ for any $t<a$, then we have
%%%%			\begin{equation}
%%%%			\begin{split}
%%%%			\label{march}
%%%%			&\frac{u(t)-u(a)}{(t-a)^\alpha}+\alpha\int_a^t \frac{u(t)-u(\tau)}{(t-\tau)^{\alpha+1}} d\tau=\alpha\int_{-\infty}^a \frac{u(t)-u(a)}{(t-\tau)^{\alpha+1}} d\tau+\alpha\int_a^t \frac{u(t)-u(\tau)}{(t-\tau)^{\alpha+1}} d\tau= \\
%%%%			&\alpha\int_{-\infty}^a \frac{u(t)-u(\tau)}{(t-\tau)^{\alpha+1}} d\tau+\alpha\int_a^t \frac{u(t)-u(\tau)}{(t-\tau)^{\alpha+1}} d\tau=\alpha\int_{-\infty}^t \frac{u(t)-u(\tau)}{(t-\tau)^{\alpha+1}} d\tau.
%%%%			\end{split}
%%%%			\end{equation}
%%%%			This formulation of \eqref{defcap} is better known as Marchaud fractional derivative with initial point $-\infty$; notice that \eqref{march} recalls the structure of the one-variable fractional Laplacian, but in this case, for the well-posedness of the boundary value problems, we need only a left-sided prescription of the solution. \\
In this framework,
fixed $\XC=\left(\XC_1,\ldots,\XC_l\right)\in\mathbb{R}^l$,
$\alpha=(\alpha_1,\dots,\alpha_l)\in(0,+\infty)^l$
and~$a=(a_1,\dots,a_l)\in(\mathbb{R}\cup\left\{-\infty\right\})^l$, 
we set
\begin{equation}\label{ILPOAU-3}
\mathcal{D}_a:=\sum_{h=1}^l \XC_h \,D_{t_h, a_h}^{\alpha_h}\,.
\end{equation} 
Then, in the notation introduced in~\eqref{ILPOAU-1}, \eqref{ILPOAU-2}
and~\eqref{ILPOAU-3}, we consider here the superposition of the
local, the nonlocal, and the time-fractional operators, that is, we set
\begin{equation}\label{1.6BIS}
\Lambda_a:=\mathfrak{l}+\mathcal{L}+\mathcal{D}_a.
\end{equation}
With this, the statement of our main result goes as follows:

\begin{theorem}\label{theone}
Suppose that 
\begin{equation}\label{NOTVAN}\begin{split}&
{\mbox{either there exists~$i\in\{1,\dots,M\}$ such that~$\XB_i\ne0$
and~$s_i\not\in{\mathbb{N}}$,}}\\
&{\mbox{or there exists~$i\in\{1,\dots,l\}$ such that~$\XC_i\ne0$ and $\alpha_i\not\in{\mathbb{N}}$.}}\end{split}
\end{equation}
Let $\ell\in\mathbb{N}$, $f:\mathbb{R}^N\rightarrow\mathbb{R}$,
with $f\in C^{\ell}\big(\overline{B_1^N}\big)$. Fixed $\epsilon>0$,
there exist
\begin{equation}\label{EX:eps}\begin{split}&
u=u_\epsilon\in C^\infty\left(B_1^N\right)\cap C\left(\mathbb{R}^N\right),\\
&a=(a_1,\dots,a_l)=(a_{1,\epsilon},\dots,a_{l,\epsilon})
\in(-\infty,0)^l,\\ {\mbox{and }}\quad&
R=R_\epsilon>1\end{split}\end{equation} such that
\begin{equation}\label{MAIN EQ}\left\{\begin{matrix}
\Lambda_a u=0 &\mbox{ in }\;B_1^N, \\
u=0&\mbox{ in }\;\mathbb{R}^N\setminus B_R^N,
\end{matrix}\right.\end{equation}
and
\begin{equation}\label{IAzofm}
\left\|u-f\right\|_{C^{\ell}(B_1^N)}<\epsilon.
\end{equation}
\end{theorem}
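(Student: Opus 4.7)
My plan follows the Runge-type / Hahn--Banach rigidity strategy, introduced for the fractional Laplacian in~\cite{MR3626547} and adapted to the one-dimensional Caputo setting in~\cite{MR3716924, CDV18}. By Taylor's formula, after a small translation and rescaling if needed, it suffices to show that every polynomial in the $N$ variables $(x,y,t)$ can be $C^\ell(\overline{B_1^N})$-approximated by restrictions $u|_{\overline{B_1^N}}$ of functions satisfying \eqref{EX:eps}--\eqref{MAIN EQ}; by linearity, this further reduces to the case of monomials. Writing $\mathcal{U}\subset C^\ell(\overline{B_1^N})$ for the set of admissible restrictions obtained by varying $u$, $R$, and $a$ in \eqref{EX:eps}--\eqref{MAIN EQ}, the goal becomes $\overline{\mathcal{U}}=C^\ell(\overline{B_1^N})$. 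I argue by contradiction: otherwise, Hahn--Banach together with the Riesz representation theorem produce a nonzero compactly supported distribution $\mu$ of order at most $\ell$ in $\overline{B_1^N}$ that annihilates $\mathcal{U}$, and the task reduces to showing $\mu\equiv 0$.

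\textbf{Probing by exterior data and rigidity.} To test $\mu$, I would construct a sufficiently rich family of admissible $u$'s driven by exterior data $\phi\in C_c^\infty(B_{R'}^N\setminus\overline{B_1^N})$. Namely, I would set $u_\phi=\phi+v_\phi$, where $v_\phi$ solves the exterior Dirichlet problem
\[\Lambda_a v=-\Lambda_a\phi\quad\text{in}\ B_1^N,\qquad v=0\quad\text{outside}\ B_1^N.\]
For $a$ sufficiently negative, the causal structure of $\mathcal{D}_a$ (each $t_h$-integration in~\eqref{defcap} being truncated at $a_h$) combined with the coercive variational structure coming from the fractional part of $\mathcal{L}$ yields existence of $v_\phi$ and compact support of $u_\phi$. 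The orthogonality $\langle\mu,u_\phi\rangle=0$ for every admissible $\phi$ then translates, via an exterior Poisson/Green representation of $v_\phi$ in terms of $\phi$, into an equation of the form $\mathcal{K}\mu\equiv 0$ on $B_{R'}^N\setminus\overline{B_1^N}$, where $\mathcal{K}$ is a nonlocal transform formally adjoint to the Poisson extension of $\Lambda_a$. By~\eqref{NOTVAN}, at least one genuinely non-integer-order fractional component is present, so $\mathcal{K}\mu$ is real-analytic outside $\mathrm{supp}\,\mu$; hence it vanishes on all of $\mathbb{R}^N\setminus\overline{B_1^N}$, and, combined with the injectivity of $\mathcal{K}$ on compactly supported distributions, this forces $\mu\equiv 0$, contradicting the choice of $\mu\neq 0$.

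\textbf{Main obstacle.} The delicate ingredients are, on the one hand, setting up a clean exterior theory for the mixed operator $\Lambda_a=\mathfrak{l}+\mathcal{L}+\mathcal{D}_a$ acting simultaneously on variables of three distinct natures (classical local of possibly high order from~\eqref{ILPOAU-1}, space-fractional of arbitrary order $2s_j$ via~\eqref{nonlocop}, and Caputo time-fractional with varying initial point $a_h$ as in~\eqref{defcap}) and, on the other hand, carrying out the analyticity and unique-continuation step for $\mathcal{K}\mu$. Assumption~\eqref{NOTVAN} is used essentially in both: only when a truly non-integer-order component appears can the exterior datum $\phi$ effectively propagate into $B_1^N$, and only then is $\mathcal{K}\mu$ real-analytic outside $\mathrm{supp}\,\mu$, which is precisely what allows the rigidity to be closed.
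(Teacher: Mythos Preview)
Your outline is a genuinely different route from the paper, but in its present form it is a programme rather than a proof, and the gaps you flag as ``main obstacles'' are in fact the entire content of the theorem.

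The paper does \emph{not} argue by Hahn--Banach and unique continuation. Instead it reduces (Lemma~\ref{GRAT}) to an auxiliary statement with $a=-\infty$, and then proves a \emph{constructive} jet lemma (Lemma~\ref{lemcin}): for every $K$, the derivatives $\partial^\iota w(0)$, $|\iota|\le K$, of functions $w$ in $\ker\Lambda_{-\infty}$ span the whole of~$\mathbb{R}^{K'}$. The functions $w$ are explicit tensor products of (i) eigenfunctions of $(-\Delta)^{s_j}$ in balls, (ii) Mittag--Leffler eigenfunctions of the Caputo derivative, (iii) local ODE solutions in the $x$-variables, and (iv) $s$-harmonic functions with bump exterior data; the span claim is proved by contradiction using the precise boundary blow-up asymptotics of (i)--(iv) (Sections~\ref{s:second}--\ref{s:hwb}) together with a polynomial identity in free parameters. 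An anisotropic rescaling then turns the jet lemma into the approximation of monomials, hence polynomials, hence all of $C^\ell$.

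Compared with this, your scheme has three concrete problems. First, the set $\mathcal{U}$ is not a linear space: the initial point $a$ is part of the unknown, so $u_1+u_2$ solves no equation of the form $\Lambda_a u=0$ unless $a$ is common; the paper handles exactly this via the passage to $\Lambda_{-\infty}$ and polynomial extension of~\cite{CDV18}, which your proposal lacks. Second, there is at present no exterior Dirichlet theory or Poisson kernel for the mixed operator $\mathfrak{l}+\mathcal{L}+\mathcal{D}_a$: the $x$-part is a high-order local operator with no sign, the $y$-part involves $(-\Delta)^{s_j}$ with $s_j$ possibly larger than~$1$, and the Caputo part is causal and not self-adjoint, so neither coercivity nor a Green/Poisson representation is available off the shelf; writing ``$\mathcal{K}\mu=0$'' names the object but does not construct it. Third, even if $\mathcal{K}$ existed, the analytic continuation and injectivity you invoke would themselves have to be proved from scratch for this operator, and the only known route to such statements passes through exactly the boundary asymptotics that the paper develops in Sections~\ref{s:second}--\ref{s:hwb}. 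In short, the Hahn--Banach shell does not bypass the hard analysis; it only repackages it, and your proposal does not supply that analysis.
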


We observe that the initial points of the Caputo type operators
in Theorem~\ref{theone} also depend on~$\epsilon$, as detailed in~\eqref{EX:eps}
(but the other parameters, such as the orders of the operators
involved, are fixed arbitrarily).\medskip

We also stress that condition~\eqref{NOTVAN} requires that
the operator~$\Lambda_a$ contains at least one nonlocal operator
among its building blocks in~\eqref{ILPOAU-1}, \eqref{ILPOAU-2}
and~\eqref{ILPOAU-3}. This condition cannot be avoided, since
approximation results in the same spirit of Theorem~\ref{theone}
cannot hold for classical differential operators.\medskip

Theorem~\ref{theone} comprises, as particular cases,
the nonlocal approximation results established in the recent literature of this
topic. Indeed, when
\begin{eqnarray*}
&&\XA_1=\dots=\XA_n=\XB_1=\dots=\XB_{M-1}=\XC_1=\dots=\XC_l=0,\\
&& \XB_M=1,\\
{\mbox{and }}&& s\in(0,1)
\end{eqnarray*}
we see that Theorem~\ref{theone} recovers the main result
in~\cite{MR3626547}, giving the local density of $s$-harmonic functions
vanishing outside a compact set.\medskip

Similarly, when
\begin{eqnarray*}&&
\XA_1=\dots=\XA_n=\XB_1=\dots=\XB_{M}=\XC_1=\dots=\XC_{l-1}=0,\\
&&\XC_l=1,\\{\mbox{and }}&&\mathcal{D}_a=D_{t,a}^{\alpha},
\quad{\mbox{ for some~$\alpha>0$, $a<0$}}\end{eqnarray*}
we have that
Theorem~\ref{theone} reduces to
the main results in~\cite{MR3716924} for~$\alpha\in(0,1)$
and~\cite{CDV18} for~$\alpha>1$, in which such approximation result
was established for Caputo-stationary functions, i.e, functions that annihilate
the Caputo fractional derivative.\medskip

Also, when
\begin{eqnarray*}
&&p_1=\dots=p_{n}=1,\\
&&\XC_1=\dots=\XC_{l}=0,\\ {\mbox{and }}
&&s_j\in(0,1),\quad{\mbox{for every~$j\in\{1,\dots,M\},$}}\end{eqnarray*}
we have that
Theorem~\ref{theone} recovers the cases taken into account in~\cite{DSV1},
in which approximation results have been established
for the superposition of a local operator
with a superposition of fractional Laplacians of order~$2s_j<2$.\medskip

In this sense, not only Theorem~\ref{theone} comprises the existing literature,
but it goes beyond it, since it combines 
classical derivatives, fractional Laplacians and Caputo fractional derivatives
altogether.
In addition, it comprises the cases in which the space-fractional Laplacians 
taken into account are of order
greater than~$2$. 

As a matter of fact, this point is also a novelty
introduced by Theorem~\ref{theone} here
with respect to the previous literature.

Theorem~\ref{theone} was announced in~\cite{CDV18}, and
we have just received the very interesting preprint~\cite{2018arXiv181007648K}
which also considered the case of
different, not necessarily fractional, powers of the Laplacian,
using a different and innovative methodology.
\medskip

The rest of the paper is organized as follows. 
Sections~\ref{s:second}
and~\ref{s:grf0} focus on time-fractional operators.
More precisely, in Sections~\ref{s:second}
and~\ref{s:grf0}
we study the boundary behaviour of the eigenfunctions of the
Caputo derivative and of functions with vanishing Caputo derivative, respectively,
detecting their singular
boundary behaviour in terms of
explicit representation formulas. These type of results are also
interesting in themselves and can find further applications. 

Sections \ref{s:grf}--\ref{s:hwb} are devoted to some properties
of the higher order fractional Laplacian. 
More precisely, Section~\ref{s:grf} provides
some representation formula of the solution
of~$(-\Delta)^s u=f$ in a ball, with~$u=0$ outside this ball, for all~$s>0$,
and extends
the Green formula methods introduced
in~\cite{MR3673669} and \cite{AJS3}.

Then, in Section~\ref{SEC:eigef} we 
study the boundary behaviour of the first Dirichlet
eigenfunction of higher order fractional equations, and in Section~\ref{sec5}
we give some precise asymptotics
at the boundary for the first Dirichlet eigenfunction
of~$(-\Delta)^s$ for any~$s>0$.

Section~\ref{s:hwb} is devoted to
the analysis of the asymptotic behaviour of $s$-harmonic
functions, with a ``spherical bump function'' as exterior Dirichlet datum.

Section~\ref{s:fourthE} contains an auxiliary statement, namely
Theorem~\ref{theone2}, which will imply
Theorem~\ref{theone}. This is technically convenient,
since the operator~$\Lambda_a$ depends in principle on the initial
point~$a$: this has the disadvantage that if~$\Lambda_a u_a=0$
and~$\Lambda_b u_b=0$ in some domain, the function~$u_a+u_b$
is not in principle a solution of any operator, unless~$a=b$.
To overcome such a difficulty, in Theorem~\ref{theone2}
we will reduce to the case in which~$a=-\infty$, exploiting
a polynomial extension that we have introduced and used in~\cite{CDV18}.

In Section~\ref{s:fourth0}
we make the main step towards the proof of Theorem~\ref{theone2}.
In this section, we prove that functions in the kernel
of nonlocal operators such as the one in~\eqref{1.6BIS}
span with their derivatives a maximal Euclidean space.
This fact is special for the nonlocal case
and its proof is based on the boundary analysis of the fractional operators in both
time and space.
Due to the general form of the operator in~\eqref{1.6BIS},
we have to distinguish here several cases,
taking advantage of either the time-fractional
or the space-fractional components of the operators.

We conclude the paper with Section~\ref{s:fourth} in which we
complete the proof of
Theorem~\ref{theone2}, using the previous approximation
results and suitable rescaling arguments.

\section{Sharp boundary behaviour for the time-fractional eigenfunctions}\label{s:second}

In this section we show that the eigenfunctions of the Caputo fractional derivative 
in~\eqref{defcap}
have
an explicit representation via the Mittag-Leffler function.
For this,
fixed $\alpha$, $\beta\in\mathbb{C}$ with $\Re\left(\alpha\right)>0$,
for any $z$ with $\Re\left(z\right)>0$, we recall that
the Mittag-Leffler function is defined as
\begin{equation}\label{Mittag}
E_{\alpha,\beta}\left(z\right):=\sum_{j=0}^{+\infty} {\frac{z^j}{\Gamma
\left(\alpha j+\beta\right)}}.
\end{equation} 
The Mittag-Leffler function plays an important role in equations
driven by the Caputo derivatives, replacing the exponential function
for classical differential equations, as given by the following well-established result
(see \cite{MR3244285} and the references therein):

\begin{lemma}\label{lemma1}
Let~$\alpha\in(0,1]$,
$\lambda\in{\mathbb{R}}$, and
$a\in\mathbb{R}\cup\left\{-\infty\right\}$.
Then, the unique solution of the boundary value problem
\begin{equation*}\left\{
\begin{matrix}
D_{t,a}^{\alpha}u(t)=\lambda\, u(t) &
\mbox{ for any }t\in (a,+\infty),\\
u(a)=1 &
\end{matrix}\right.
\end{equation*}
is given by $E_{\alpha,1}\left(\lambda \left(t-a\right)^\alpha\right)$.
\end{lemma}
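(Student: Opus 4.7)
The plan is to verify by direct computation that $E_{\alpha,1}\!\left(\lambda(t-a)^\alpha\right)$ solves the equation, and then to establish uniqueness via the equivalent Volterra integral equation. For the verification, I would work from the power series in~\eqref{Mittag}, writing
\[
u(t)=\sum_{j=0}^{+\infty}\frac{\lambda^j(t-a)^{\alpha j}}{\Gamma(\alpha j+1)},
\]
which converges on $(a,+\infty)$ for every $\lambda\in\mathbb{R}$ and is amenable to termwise differentiation. The initial condition is immediate: $u(a)=1/\Gamma(1)=1$.

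The computational core is the ``fractional power rule''
\[
D_{t,a}^{\alpha}(t-a)^{\beta}=\frac{\Gamma(\beta+1)}{\Gamma(\beta-\alpha+1)}(t-a)^{\beta-\alpha}\qquad\text{for }\beta>0,
\]
together with $D_{t,a}^{\alpha}(\mathrm{const})=0$. For $\alpha\in(0,1)$ this is obtained by substituting $(t-a)^\beta$ into~\eqref{defcap} with $k=1$, differentiating once to get $\beta(\tau-a)^{\beta-1}$ under the integral sign, and recognising the result after the substitution $\tau=a+(t-a)\sigma$ as the Beta integral
\[
\int_0^1\sigma^{\beta-1}(1-\sigma)^{-\alpha}\,d\sigma=\frac{\Gamma(\beta)\,\Gamma(1-\alpha)}{\Gamma(\beta-\alpha+1)};
\]
the case $\alpha=1$ reduces to the classical power rule. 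Applying this termwise with $\beta=\alpha j$ kills the $j=0$ contribution, while for $j\ge1$ the gamma factors collapse to
\[
D_{t,a}^{\alpha}u(t)=\sum_{j=1}^{+\infty}\frac{\lambda^{j}(t-a)^{\alpha(j-1)}}{\Gamma(\alpha(j-1)+1)}=\lambda\,u(t),
\]
after re-indexing $j\mapsto j+1$. The interchange of $D_{t,a}^{\alpha}$ with the infinite sum, which is the main technical point, is justified by the uniform convergence of the series (and of its formal termwise derivative) on compact subsets of $(a,+\infty)$ and by Fubini's theorem applied to the singular but integrable kernel $(t-\tau)^{-\alpha}$.

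For uniqueness, I would apply the Riemann--Liouville fractional integral $I_{t,a}^{\alpha}v(t):=\frac{1}{\Gamma(\alpha)}\int_a^t(t-\tau)^{\alpha-1}v(\tau)\,d\tau$ to both sides of the equation and use the identity $I_{t,a}^{\alpha}D_{t,a}^{\alpha}u=u-u(a)$, valid for $\alpha\in(0,1]$ and sufficiently regular $u$. This converts the problem into the Volterra integral equation
\[
u(t)=1+\frac{\lambda}{\Gamma(\alpha)}\int_a^t(t-\tau)^{\alpha-1}u(\tau)\,d\tau.
\]
If $u_1,u_2$ are two solutions, their difference $w$ satisfies the same equation with $w(a)=0$; iterating the integral operator on any compact subinterval $[a,T]$ (or using a fractional Gr\"onwall inequality) forces $w\equiv0$. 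The case $a=-\infty$, although formally contemplated in the statement, is not really operative here since $(t-a)^\alpha$ is then ill-defined; in practice the lemma will be used for finite $a$, and the $a=-\infty$ case of Theorem~\ref{theone} is reached later through the polynomial extension argument referenced in the introduction.
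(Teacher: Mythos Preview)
Your argument is correct. Note, however, that the paper does not actually prove Lemma~\ref{lemma1}: it quotes it as a well-established fact from~\cite{MR3244285}. What the paper does prove is the generalisation in Lemma~\ref{MittagLEMMA} (any $\alpha>0$), and there the verification step is carried out exactly as you do it---termwise Caputo differentiation of the Mittag-Leffler series, the substitution $\tau=t\sigma$, and the Beta identity~\eqref{H:2} to collapse the gamma factors. So your existence computation is essentially the paper's own, specialised to $k=1$.

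The one genuine difference is uniqueness. You reduce to the Volterra equation $u(t)=1+\lambda I^\alpha_{t,a}u(t)$ and then iterate (equivalently, apply a fractional Gr\"onwall inequality), which is self-contained and elementary. The paper instead invokes an external uniqueness theorem (Theorem~4.1 in~\cite{MR3563609}) for the difference of two solutions. Your route is arguably more transparent in this range of~$\alpha$, since the identity $I^\alpha_{t,a}D^\alpha_{t,a}u=u-u(a)$ is straightforward when $k=1$; the paper's citation has the advantage of covering all $k\ge1$ uniformly, which is what is needed for Lemma~\ref{MittagLEMMA}. Your remark that the case $a=-\infty$ is not genuinely covered by the formula $E_{\alpha,1}(\lambda(t-a)^\alpha)$ is well taken; the paper never actually uses that case of Lemma~\ref{lemma1} directly.
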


Lemma~\ref{lemma1} can be actually generalized\footnote{It is easily seen that
for~$k:=1$ Lemma~\ref{MittagLEMMA} boils down
to Lemma~\ref{lemma1}.} to any
fractional order of differentiation~$\alpha$:

\begin{lemma}\label{MittagLEMMA}
Let $\alpha\in(0,+\infty)$, with~$\alpha\in(k-1,k]$ and~$k\in\mathbb{N}$,
$a\in\mathbb{R}\cup\left\{-\infty\right\}$, and~$\lambda\in{\mathbb{R}}$. Then,
the unique 
continuous solution of the boundary value problem
\begin{equation}\label{CHE:0}\left\{
\begin{matrix}
D_{t,a}^{\alpha}u(t)=\lambda \,u(t) &
\mbox{ for any }t\in (a,+\infty),\\
u(a)=1 ,\\
\partial^m_t u(a)=0&\mbox{ for any }
m\in\{1,\dots,k-1\}
\end{matrix}\right.
\end{equation}
is given by $u\left(t\right)=E_{\alpha,1}\left(\lambda \left(t-a\right)^\alpha\right)$.

\begin{proof}
For the sake of simplicity we take $a=0$.
Also, the case in which~$\alpha\in{\mathbb{N}}$ can be checked with a direct computation,
so we focus on the case~$\alpha\in(k-1,k)$, with~$k\in{\mathbb{N}}$.

We
let $u\left(t\right):=E_{\alpha,1}\left(\lambda t^\alpha\right)$.
It is straightforward to see that~$ u(t)=1+{\mathcal{O}}(t^k)$ and therefore
\begin{equation}\label{CHE:1}
u(0)=1 \qquad{\mbox{and}}\qquad
\partial^m_t u(0)=0\;\mbox{ for any }\;
m\in\{1,\dots,k-1\}.
\end{equation}
We also claim that
\begin{equation}\label{CHE:2}
D_{t,a}^{\alpha}u(t)=\lambda \,u(t) \;
\mbox{ for any }\;t\in (0,+\infty).
\end{equation}
To check this,
we recall~\eqref{defcap} and~\eqref{Mittag} (with~$\beta:=1$), 
and we have that
\begin{eqnarray*}&&
D_{t,a}^{\alpha} u\left(t\right) \\&= &
\frac{1}{\Gamma\left(k-\alpha\right)}\int_0^t {\frac{u^{(k)}\left(\tau\right)}{
\left(t-\tau\right)^{\alpha-k+1}}\, d\tau} \\
&=& \frac{1}{\Gamma\left(k-\alpha\right)}
\int_0^t {\left(\sum_{j=1}^{+\infty} {\lambda^j\frac{\alpha j\left(\alpha j
-1\right)\ldots\left(\alpha j-k+1\right)}{\Gamma\left(\alpha j
+1\right)} \tau^{\alpha j-k}}\right)\frac{d\tau}{\left(t-\tau\right)^{\alpha-k+1}}} \\
&=& \sum_{j=1}^{+\infty} {\lambda^j\,\frac{\alpha j\left(\alpha j-1\right)
\ldots\left(\alpha j-k+1\right)}{\Gamma\left(k-\alpha\right)
\Gamma\left(\alpha j+1\right)}}
\int_0^t {\tau^{\alpha j-k}\left(t-\tau\right)^{k-\alpha-1} \,d\tau}.
\end{eqnarray*}
Hence, using the change of variable $\tau=t\sigma$, we obtain that
\begin{equation}\label{H:1}
D_{t,a}^{\alpha} u\left(t\right) =
\sum_{j=1}^{+\infty} {\lambda^j\,
\frac{\alpha j\left(\alpha j-1\right)\ldots\left(\alpha j-k+1\right)}{
\Gamma\left(k-\alpha\right)\Gamma\left(\alpha j+1\right)}}
t^{\alpha j-\alpha}\int_0^1 {\sigma^{\alpha j-k}\left(1-\sigma\right)^{k-\alpha-1}
\, d\tau}.\end{equation}
On the other hand, from the basic properties of the Beta function, it is known
that if~$\Re(z)$, $\Re(w)>0$, then
\begin{equation}\label{H:2} \int_0^1 {\sigma^{z-1}\left(1-\sigma\right)^{w-1}\, dt}
=\frac{\Gamma\left(z\right)\Gamma\left(w\right)}{\Gamma\left(z+w\right)}.
\end{equation}
In particular, taking~$z:=\alpha j-k+1\in(\alpha-k+1,+\infty)\subseteq(0,+\infty)$
and~$w:=k-\alpha\in(0,+\infty)$, and substituting~\eqref{H:2} into~\eqref{H:1},
we conclude that
\begin{equation}\label{H:3}\begin{split}
D_{t,a}^{\alpha} u\left(t\right)=
& \sum_{j=1}^{+\infty} {\lambda^j\frac{\alpha j\left(\alpha j-1\right)\ldots\left(\alpha j-k+1\right)}{\Gamma\left(k-\alpha\right)\Gamma\left(\alpha j+1\right)}}\frac{\Gamma\left(\alpha j-k+1\right)\Gamma\left(k-\alpha\right)}{\Gamma\left(\alpha j-\alpha+1\right)}\,t^{\alpha j-\alpha} \\
=& \sum_{j=1}^{+\infty} {\lambda^j\frac{\alpha j\left(\alpha j-1\right)\ldots\left(\alpha j-k+1\right)}{\Gamma\left(\alpha j+1\right)}}\frac{\Gamma\left(\alpha j-k+1\right)}{\Gamma\left(\alpha j-\alpha+1\right)}\,t^{\alpha j-\alpha}.
\end{split}\end{equation}
Now we use the fact that $z\Gamma\left(z\right)=\Gamma\left(z+1\right)$
for any $z\in\mathbb{C}$ with $\Re\left(z\right)>-1$, so, we have
\begin{equation*}
\alpha j\left(\alpha j-1\right)\ldots\left(\alpha j-k+1\right)\Gamma\left(\alpha j-k+1\right)=\Gamma\left(\alpha j+1\right).
\end{equation*}
Plugging this information into~\eqref{H:3}, we thereby find that
\begin{equation*}
D_{t,a}^{\alpha} u\left(t\right)=
\sum_{j=1}^{+\infty} {\frac{\lambda^j}{\Gamma\left(\alpha j-\alpha+1\right)}t^{\alpha j-\alpha}}=\sum_{j=0}^{+\infty} {\frac{\lambda^{j+1}}{\Gamma\left(\alpha j+1\right)}t^{\alpha j}}=\lambda u(t).
\end{equation*}
This proves~\eqref{CHE:2}.

Then, in view of~\eqref{CHE:1} and~\eqref{CHE:2} we obtain that~$u$
is a solution of~\eqref{CHE:0}.
Hence, to complete the proof of the desired result, we have to show
that such a solution is unique. To this end, supposing that we have two
solutions of~\eqref{CHE:0}, we consider their difference~$w$, and
we observe that~$w$ is a solution of
\begin{equation*}\left\{
\begin{matrix}
D_{t,0}^{\alpha}w(t)=\lambda \,w(t) &
\mbox{ for any }t\in (0,+\infty),\\
\partial^m_t w(0)=0&\mbox{ for any }
m\in\{0,\dots,k-1\}.
\end{matrix}\right.
\end{equation*}
By Theorem~4.1 in~\cite{MR3563609}, it follows that~$w$ vanishes
identically, and this proves the desired uniqueness result.
\end{proof}
\end{lemma}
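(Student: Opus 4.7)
The plan is to verify directly that $u(t):=E_{\alpha,1}\bigl(\lambda(t-a)^\alpha\bigr)$ solves the stated initial value problem and then invoke a known uniqueness result for the Caputo equation. After translating so that $a=0$ and dispatching the case $\alpha\in\mathbb{N}$ (which reduces to the classical Cauchy problem for a linear ODE since $D^\alpha_{t,0}=\partial_t^\alpha$ on sufficiently smooth functions), I focus on $\alpha\in(k-1,k)$ with $k\in\mathbb{N}$.

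First I would check the boundary conditions. Since
$$u(t)=\sum_{j=0}^{+\infty}\frac{\lambda^j\,t^{\alpha j}}{\Gamma(\alpha j+1)}=1+\frac{\lambda\,t^\alpha}{\Gamma(\alpha+1)}+\mathcal{O}(t^{2\alpha}),$$
and $\alpha>k-1$, the lowest non-constant exponent $\alpha$ exceeds $k-1$, so $u\in C^{k-1}$ at $0$ with $u(0)=1$ and $\partial_t^m u(0)=0$ for $m=1,\dots,k-1$. Next I would compute $D^\alpha_{t,0} u$ by differentiating the Mittag-Leffler series termwise $k$ times (justified because $E_{\alpha,1}$ is entire, giving uniform convergence on compact subsets of $(0,+\infty)$), substituting the resulting series into~\eqref{defcap}, and exchanging sum and integral. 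The rescaling $\tau=t\sigma$ yields
$$\int_0^t \tau^{\alpha j-k}(t-\tau)^{k-\alpha-1}\,d\tau=t^{\alpha j-\alpha}\int_0^1\sigma^{\alpha j-k}(1-\sigma)^{k-\alpha-1}\,d\sigma,$$
and the Beta function identity~\eqref{H:2}, applicable precisely because $\alpha j-k+1>\alpha-k+1>0$ for $j\ge 1$ and $k-\alpha>0$, collapses the $\sigma$-integral to $\Gamma(\alpha j-k+1)\,\Gamma(k-\alpha)/\Gamma(\alpha j-\alpha+1)$. Combining this with the falling-factorial identity $\alpha j(\alpha j-1)\cdots(\alpha j-k+1)\Gamma(\alpha j-k+1)=\Gamma(\alpha j+1)$, obtained by $k$ applications of $z\Gamma(z)=\Gamma(z+1)$, gives massive cancellation, leaving
$$D^\alpha_{t,0}u(t)=\sum_{j=1}^{+\infty}\frac{\lambda^j\,t^{\alpha j-\alpha}}{\Gamma(\alpha j-\alpha+1)}=\lambda\,u(t)$$
after reindexing $j\mapsto j+1$; the fact that the $j=0$ term drops out is consistent with the Caputo derivative annihilating constants, as expected.

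For uniqueness, I would subtract two continuous solutions to obtain a function $w$ with vanishing Caputo derivative data of all orders $0,\dots,k-1$ at $0$ and satisfying $D^\alpha_{t,0}w=\lambda w$; the standard uniqueness theorem for such linear Caputo initial value problems (Theorem~4.1 in~\cite{MR3563609}) then forces $w\equiv 0$. The main technical obstacle will be the bookkeeping in the computation of $D^\alpha_{t,0}u$: one has to track three different gamma-function factors simultaneously, verify at each step that the Beta integral converges (which hinges on the assumption $\alpha>k-1$, precisely the sharp threshold for membership of $u^{(k)}$ in the domain of the fractional integral of order $k-\alpha$), and justify the interchange of summation and integration uniformly for $t$ in compact subsets of $(0,+\infty)$.
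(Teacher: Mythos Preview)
Your proposal is correct and follows essentially the same approach as the paper's proof: the same reduction to $a=0$ and non-integer $\alpha$, the same termwise differentiation of the Mittag-Leffler series, the same Beta-function evaluation after the substitution $\tau=t\sigma$, the same telescoping via $z\Gamma(z)=\Gamma(z+1)$, and the same appeal to Theorem~4.1 of~\cite{MR3563609} for uniqueness. Your verification of the initial conditions (noting that the lowest non-constant exponent $\alpha$ exceeds $k-1$) is in fact slightly more careful than the paper's, which writes $u(t)=1+\mathcal{O}(t^k)$ where $\mathcal{O}(t^\alpha)$ is what is actually true and sufficient.
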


%%%%			Another useful property we take into account is the scaling property of the Caputo derivative.
%%%%			\begin{proposition}
%%%%			\label{capsca}
%%%%			Let $r>0$ and $u_r(t):=u(rt)$. Then
%%%%			\begin{equation}
%%%%			D_{t,a}^{\alpha}u_r(t)=r^\alpha \,D_{t,ra}^{\alpha}u (rt).
%%%%			\end{equation}
%%%%			\begin{proof}
%%%%			With a simple computation, we have
%%%%			\begin{align*}
%%%%			D_{t,a}^{\alpha}u_r(t)&=\frac{1}{\Gamma(k-\alpha)}\int_a^t \frac{u_r^{(k)}(\tau)}{(t-\tau)^{\alpha-k+1}}\, d\tau=\frac{r^k}{\Gamma(k-\alpha)}\int_a^t \frac{u^{(k)}(r\tau)}{(t-\tau)^{\alpha-k+1}} \,d\tau \\
%%%%			&=\frac{r^{k-1}}{\Gamma(k-\alpha)}\int_{ra}^{rt} \frac{u^{(k)}(w)}{\left(t-\frac{w}{r}\right)^{\alpha-k+1}}\, dw=\frac{r^\alpha}{\Gamma(k-\alpha)}\int_{ra}^{rt} \frac{u^{(k)}(w)}{\left(rt-w\right)^{\alpha-k+1}} \,dw \\
%%%%			&=r^\alpha \,D_{t,ra}^{\alpha}u(rt),
%%%%			\end{align*}
%%%%			where we have also used the change of variable $w:=r\tau$.
%%%%			\end{proof}
%%%%			\end{proposition}

The boundary behaviour of the Mittag-Leffler function for different values
of the fractional parameter~$\alpha$ is depicted in Figure~\ref{FIG1}.
In light of~\eqref{Mittag},
we notice in particular that, near~$z=0$,
$$ E_{\alpha,\beta}\left(z\right)=
\frac{1}{\Gamma\left(\beta\right)}+\frac{z}{\Gamma\left(\alpha +\beta\right)}+O(z^2)$$
and therefore, near~$t=a$,
\begin{equation*}
E_{\alpha,1}\left(\lambda \left(t-a\right)^\alpha\right)
=1+\frac{\lambda \left(t-a\right)^\alpha}{\Gamma\left(\alpha +1\right)}+O\big(\lambda^2\left(t-a\right)^{2\alpha}\big).
\end{equation*}

\begin{figure}[h]
\centering
\includegraphics[width=8 cm]{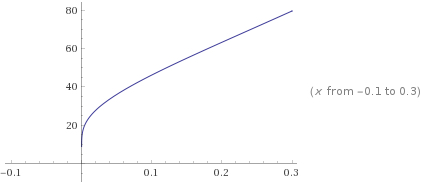}
\includegraphics[width=8 cm]{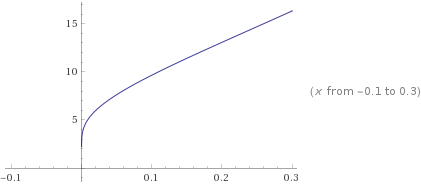}\\
\bigskip
\includegraphics[width=8 cm]{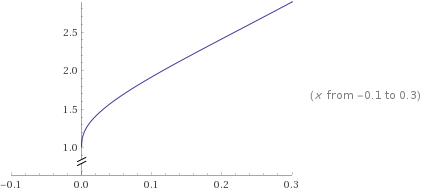}
\includegraphics[width=8 cm]{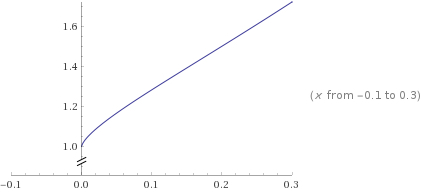}\\
\bigskip
\includegraphics[width=8 cm]{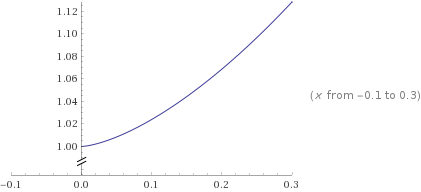}
\includegraphics[width=8 cm]{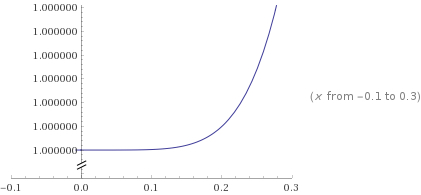}
\caption{\footnotesize\it Behaviour of the
Mittag-Leffler
function $E_{\alpha,1}\left(t^\alpha\right)$ near the origin
for $\alpha=\frac{1}{100}$, $\alpha=\frac{1}{20}$, $\alpha=\frac{1}{3}$, 
$\alpha=\frac{2}{3}$
$\alpha=\frac{3}{2}$ and~$\alpha=\frac{11}{2}$.}
\label{FIG1}
\end{figure}

%\section{Scale invariance for fractional Laplacian}\label{s:third}
%
%In this section we show that the operators taken into account in $\mathcal{L}$
%in~\eqref{ILPOAU-2}
%are scale-invariant. Namely, we point out the details
%of the following elementary observation:
%
%\begin{lemma}
%Let~$s_j\in(0,+\infty)$ and~$u\in C^\infty({\mathbb{R}}^{m_j})$.
%Fixed~$r>0$,
%let~$u_r(y_j):=u(ry_j)$. Then
%$$ (-\Delta)^{s_j}_{y_j} u_r(y_j)= r^{2s_j} (-\Delta)^{s_j}_{y_j}u(ry_j).$$
%\end{lemma}
%
%\begin{proof}
%{F}rom~\eqref{nonlocop} and~\eqref{898989ksdc}, we have that
%\begin{eqnarray*}
%(-\Delta)^{s_j}_{y_j} u_r(y_j) &=&
%\int_{\mathbb{R}^{m_j}} 
%\frac{\left(\delta_{h_j} u_r\right)(y_j,Y_j)}{|Y_j|^{m_j+2s_j}}\; \,dY_j
%\\ &=&
%\sum_{k=-h_j}^{h_j}
%(-1)^k \binom{2h_j}{h_j-k}
%\int_{\mathbb{R}^{m_j}} 
%\frac{u_r(y_j+kY_j)}{|Y_j|^{m_j+2s_j}}\, \,dY_j
%\\ &=& \sum_{k=-h_j}^{h_j}
%(-1)^k \binom{2h_j}{h_j-k}
%\int_{\mathbb{R}^{m_j}} 
%\frac{u_r(y_j+kY_j)}{|Y_j|^{m_j+2s_j}}\, \,dY_j.
%\end{eqnarray*}
%Then, using the change of variable~$rY_j=:Z_j$, we obtain that
%\begin{eqnarray*}
%(-\Delta)^{s_j}_{y_j} u_r(y_j) &=&
%r^{m_j+2s_j}\sum_{k=-h_j}^{h_j}
%(-1)^k \binom{2h_j}{h_j-k}
%\int_{\mathbb{R}^{m_j}} 
%\frac{u(ry_j+kZ_j)}{|Z_j|^{m_j+2s_j}}\, \,\frac{dZ_j}{r^{m_j}}
%\\ &=& r^{2s_j}\,
%\sum_{k=-h_j}^{h_j}
%(-1)^k \binom{2h_j}{h_j-k}
%\int_{\mathbb{R}^{m_j}} 
%\;\frac{u(ry_j+kZ_j)}{\left|Z_j\right|^{m_j+2s_j}}
%\,dZ_j\\
%&=& r^{2s_j}\,
%\int_{\mathbb{R}^{m_j}} 
%\frac{\left(\delta_{h_j} u\right)(ry_j,Z_j)}{\left|Z_j\right|^{m_j+2s_j}}\;\,dZ_j\\
%&=& r^{2s_j} (-\Delta)^{s_j}_{y_j} u(ry_j),
%\end{eqnarray*}
%as desired.
%\end{proof}

\section{Sharp boundary behaviour for the time-fractional harmonic functions}\label{s:grf0}

In this section, we detect the optimal boundary behaviour of time-fractional
harmonic functions and of their derivatives.
The result that we need for our purposes is the following:

\begin{lemma}\label{LF}
Let~$\alpha\in(0,+\infty)\setminus\mathbb{N}$.
There exists a function~$\psi:\mathbb{R}\to\mathbb{R}$ such
that~$\psi\in C^\infty((1,+\infty))$ and
\begin{eqnarray}
\label{LAp1}&&D^\alpha_0\psi(t)=0 \qquad{\mbox{for all }}t\in(1,+\infty),\\
\label{LAp2}{\mbox{and }}&&\lim_{\epsilon\searrow0}
\epsilon^{\ell-\alpha} \partial^\ell\psi(1+\epsilon t)=\kappa_{\alpha,\ell}\, t^{\alpha-\ell},
\qquad{\mbox{for all }}\ell\in\mathbb{N},
\end{eqnarray}
for some~$\kappa_{\alpha,\ell}\in\mathbb{R}\setminus\{0\}$, where~\eqref{LAp2}
is taken in the sense of distribution for~$t\in(0,+\infty)$.
\end{lemma}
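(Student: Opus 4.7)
The natural candidate for $\psi$ is a modification of the model function $(t-1)_+^\alpha$, which already exhibits the prescribed singularity at $t=1$; one only has to correct it by a term that annihilates its nontrivial Caputo derivative without disturbing the boundary asymptotics. The driving computation, entirely parallel to \eqref{H:1}--\eqref{H:3} (change of variable $\tau=1+(t-1)\sigma$ or $\tau=t\sigma$ followed by the Beta function identity \eqref{H:2}), is
\[
D^{\alpha}_{0}\bigl[(t-1)_+^\alpha\bigr](t)\;=\;\Gamma(\alpha+1)\;=\;D^{\alpha}_{0}[t^\alpha](t)\qquad\text{for every }t>1,
\]
so the two singular contributions cancel each other out modulo a polynomial remainder.

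\emph{Construction.} Set $k:=[\alpha]+1$ and let $P$ be the degree-$(k-1)$ Taylor polynomial of $s^\alpha$ at $s=1$, namely $P(t):=\sum_{j=0}^{k-1}\binom{\alpha}{j}(t-1)^j$. Define
\[
\psi(t):=(t-1)_+^\alpha-t_+^\alpha+P(t)\quad\text{for }t\ge 0,\qquad\psi(t):=0\quad\text{for }t<0
\]
(the values of $\psi$ on $(-\infty,0)$ are irrelevant for $D^{\alpha}_{0}$). Then $\psi\in C^\infty((1,+\infty))$ is immediate from the explicit formula. Since $\deg P\le k-1$, the $k$-th classical derivative of $P$ vanishes, so $D^{\alpha}_{0}P\equiv 0$; combined with the identity above and linearity, this yields $D^{\alpha}_{0}\psi\equiv 0$ on $(1,+\infty)$, establishing~\eqref{LAp1}.

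\emph{Boundary asymptotics.} Differentiating on $(1,+\infty)$ gives
\[
\partial^{\ell}\psi(s)\;=\;\frac{\Gamma(\alpha+1)}{\Gamma(\alpha-\ell+1)}(s-1)^{\alpha-\ell}+\partial^{\ell}\bigl[P(s)-s^{\alpha}\bigr].
\]
The convergent binomial expansion $s^{\alpha}=\sum_{j\ge 0}\binom{\alpha}{j}(s-1)^j$ for $|s-1|<1$ yields $P(s)-s^{\alpha}=O\bigl((s-1)^{k}\bigr)$ near $s=1$, so $\partial^{\ell}[P(s)-s^{\alpha}]=O\bigl((s-1)^{k-\ell}\bigr)$ when $\ell\le k-1$, while for $\ell\ge k$ one has $\partial^{\ell}P=0$ and $\partial^{\ell}s^{\alpha}$ is bounded near $s=1$. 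Substituting $s=1+\epsilon t$ and multiplying by $\epsilon^{\ell-\alpha}$, the first summand becomes the $\epsilon$-independent quantity $\kappa_{\alpha,\ell}\,t^{\alpha-\ell}$ with
\[
\kappa_{\alpha,\ell}\,:=\,\frac{\Gamma(\alpha+1)}{\Gamma(\alpha-\ell+1)},
\]
which is nonzero precisely because $\alpha\notin\mathbb{N}$ (no factor in the product $\alpha(\alpha-1)\cdots(\alpha-\ell+1)$ vanishes); the correction is $O(\epsilon^{k-\alpha})\to 0$ as $\epsilon\searrow 0$ since $k>\alpha$. The convergence is uniform on compact subsets of $(0,+\infty)$, hence a fortiori distributional, delivering~\eqref{LAp2}.

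\emph{Main obstacle.} The one genuinely delicate point is the identity $D^{\alpha}_{0}[(t-1)_+^\alpha]=\Gamma(\alpha+1)$ on $(1,+\infty)$ for $\alpha>1$: one must differentiate a function of regularity only $C^{k-1}$ at $t=1$ exactly $k$ times, thereby creating the integrable but singular factor $(\tau-1)^{\alpha-k}$ on $(1,t)$, and then recognize the resulting Abel-type integral as a Beta function after the rescaling $\tau=1+(t-1)\sigma$. This proceeds along the same lines as~\eqref{H:1}--\eqref{H:3}.
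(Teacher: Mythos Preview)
Your construction is correct and takes a genuinely different route from the paper. The paper does not write down an explicit~$\psi$; it imports one from Lemmas~2.5--2.6 of~\cite{CDV18}, obtaining the integral representation~\eqref{VBVBHJSnb} and the base case~$\ell=0$ of the asymptotic. It then reaches general~$\ell$ indirectly: pair~$\epsilon^{\ell-\alpha}\partial^\ell\psi(1+\epsilon t)$ against a test function, integrate by parts~$\ell$ times to reduce to~$\ell=0$, and justify the limit by dominated convergence using the bound~$\epsilon^{-\alpha}|\psi(1+\epsilon t)|\le Ct^\alpha$ coming from~\eqref{VBVBHJSnb}.

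Your argument, by contrast, writes~$\psi$ in closed form, checks~\eqref{LAp1} by two Beta-function computations (exactly the mechanism of~\eqref{H:1}--\eqref{H:3}), and then reads off~\eqref{LAp2} \emph{pointwise}: the leading term~$\Gamma(\alpha+1)/\Gamma(\alpha-\ell+1)\,(s-1)^{\alpha-\ell}$ already has the right scaling, and the smooth remainder~$P(s)-s^\alpha$ vanishes to order~$k$ at~$s=1$, so its contribution is~$O(\epsilon^{k-\alpha})$ (or~$O(\epsilon^{\ell-\alpha})$ when~$\ell\ge k$), uniformly on compact subsets of~$(0,+\infty)$. This is stronger than the distributional convergence actually claimed, and it yields the explicit constant~$\kappa_{\alpha,\ell}=\Gamma(\alpha+1)/\Gamma(\alpha-\ell+1)$, which the paper's argument does not. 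The paper's approach is more modular (it recycles prior work and would adapt to settings where no closed form is available), while yours is self-contained and sharper for the case at hand. The only point to handle with a little care---that~$(t-1)_+^\alpha$ is merely~$C^{k-1}$ at~$t=1$ so its~$k$-th derivative is the integrable singularity~$(\tau-1)^{\alpha-k}$---you have correctly flagged and dispatched via the Beta identity.
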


\begin{proof} We use Lemma~2.5
in~\cite{CDV18}, according to which
(see in particular formula~(2.16)
in~\cite{CDV18}) the claim in~\eqref{LAp1}
holds true. Furthermore (see formulas~(2.19)
and~(2.20) in~\cite{CDV18}), we can write that, for all~$t>1$,
\begin{equation}\label{VBVBHJSnb}
\psi(t)=-\frac1{\Gamma(\alpha)\Gamma([\alpha]+1-\alpha)}
\iint_{[1,t]\times[0,3/4]}\partial^{[\alpha]+1}\psi_0(\sigma)\,(\tau-\sigma)^{[\alpha]-\alpha}\,
(t-\tau)^{\alpha-1}\,d\tau\,d\sigma,
\end{equation}
for a suitable~$\psi_0\in C^{[\alpha]+1}([0,1])$.

In addition, by Lemma~2.6 in~\cite{CDV18}, we can write that
\begin{equation}\label{0oLAMJA}
\lim_{\epsilon\searrow0}
\epsilon^{ -\alpha} \psi(1+\epsilon)=\kappa,
\end{equation}
for some~$\kappa\ne0$.
Now we set
$$ (0,+\infty)\ni t\mapsto f_\epsilon(t):=
\epsilon^{\ell-\alpha} \partial^\ell\psi(1+\epsilon t).$$
We observe that, for any~$\varphi\in C^\infty_0((0,+\infty))$,
\begin{equation}\label{UHAikAJ678OKA}
\begin{split}& \int_0^{+\infty} f_\epsilon(t)\,\varphi(t)\,dt=
\epsilon^{\ell-\alpha} \int_0^{+\infty}\partial^\ell\psi(1+\epsilon t)\varphi(t)\,dt\\
&=
\epsilon^{-\alpha} \int_0^{+\infty}\frac{d^\ell}{dt^\ell}\big(\psi(1+\epsilon t)\big)\varphi(t)\,dt
=(-1)^\ell\,
\epsilon^{-\alpha} \int_0^{+\infty}\psi(1+\epsilon t)\,\partial^\ell\varphi(t)\,dt.
\end{split}\end{equation}
Also, in view of~\eqref{VBVBHJSnb},
\begin{eqnarray*}&&
\epsilon^{-\alpha}|\psi(1+\epsilon t)|\\
&=&\left|\frac{\epsilon^{-\alpha}}{\Gamma(\alpha)\Gamma([\alpha]+1-\alpha)}
\iint_{[1,1+\epsilon t]\times[0,3/4]}\partial^{[\alpha]+1}\psi_0(\sigma)\,(\tau-\sigma)^{[\alpha]-\alpha}\,
(1+\epsilon t-\tau)^{\alpha-1}\,d\tau\,d\sigma\right|
\\&\le&C\,\epsilon^{-\alpha}\,
\int_{[1,1+\epsilon t]}(1+\epsilon t-\tau)^{\alpha-1}\,d\tau
\\ &=& Ct^\alpha,
\end{eqnarray*}
which is locally bounded in~$t$, where~$C>0$ here above may vary from line to line.

As a consequence, we can pass to the limit in~\eqref{UHAikAJ678OKA}
and obtain that
$$\lim_{\epsilon\searrow0} \int_0^{+\infty} f_\epsilon(t)\,\varphi(t)\,dt
=
(-1)^\ell\, \int_0^{+\infty} \lim_{\epsilon\searrow0}
\epsilon^{-\alpha}
\psi(1+\epsilon t)\,\partial^\ell\varphi(t)\,dt.
$$
This and~\eqref{0oLAMJA} give that
$$\lim_{\epsilon\searrow0} \int_0^{+\infty} f_\epsilon(t)\,\varphi(t)\,dt
=
(-1)^\ell\, \kappa\,\int_0^{+\infty} t^\alpha\,
\partial^\ell\varphi(t)\,dt=\kappa\,\alpha\dots(\alpha-\ell+1)\int_0^{+\infty} t^{\alpha-\ell}\,\varphi(t)\,dt,$$
which establishes~\eqref{LAp2}.
\end{proof}

\section{Green representation formulas and solution of $(-\Delta)^s u=f$ in $B_1$ with homogeneous Dirichlet datum}
\label{s:grf}
Our goal is to provide some representation results on the solution
of~$(-\Delta)^s u=f$ in a ball, with~$u=0$ outside this ball, for all~$s>0$.
Our approach is an extension of the Green formula methods introduced
in~\cite{MR3673669} and \cite{AJS3}:
differently from the previous literature, we are not assuming here that~$f$
is regular in the whole of the ball, but merely that it is H\"older continuous
near the boundary and sufficiently integrable inside. Given the type of singularity
of the Green function, these assumptions are sufficient to obtain meaningful
representations, which in turn will be useful to deal with
the eigenfunction problem in the subsequent Section~\ref{SEC:eigef}.

\subsection{Solving $(-\Delta)^s u=f$ in $B_1$ for discontinuous~$f$ vanishing near $\partial B_1$}

In this subsection, we want to extend the representation
results of \cite{MR3673669} and \cite{AJS3} to
the case in which the right hand side is not H\"older continuous,
but merely in a Lebesgue space, but it has the additional
property of vanishing near the boundary of the domain.
To this end, fixed~$s>0$,
we consider the polyharmonic Green
function in $B_1\subset{\mathbb{R}}^n$, given,
for every~$x\ne y\in\mathbb{R}^n$, by 
\begin{equation}\label{GREEN}
\begin{split}& 
\mathcal{G}_s\left(x,y\right):=\frac{k(n,s)}{\left|x-y\right|^{n-2s}}\,
\int_0^{r_0\left(x,y\right)} 
\frac{\eta^{s-1}}{\left(\eta+1\right)^{\frac{n}{2}}} \,d\eta,
\\ {\mbox{where }}\quad&r_0\left(x,y\right):=
\frac{\left(1-\left|x\right|^2\right)_+\left(1-\left|y\right|^2\right)_+}{\left|x-y\right|^2}, \\
{\mbox{with }}\quad&
k(n,s):=\frac{\Gamma\left(\frac{n}{2}\right)}{
\pi^{\frac{n}{2}}\,4^s\Gamma^2\left(s\right)}. 
\end{split}
\end{equation}
Given~$x\in B_1$, we also set
\begin{equation}\label{GREEN2} d(x):=1-|x|.\end{equation}
In this setting, the main result of this subsection is the following:

\begin{proposition}\label{LONTANO}
Let~$r\in(0,1)$ and~$f\in L^2(B_1)$, with~$f=0$ in~${\mathbb{R}}^n\setminus B_r$. Let
\begin{equation}\label{DEF uG} u(x):=
\begin{cases}
\displaystyle\int_{B_1} \mathcal{G}_s\left(x,y\right)\,f(y)\,dy & {\mbox{ if }}x\in B_1,\\
0&{\mbox{ if }}x\in{\mathbb{R}}^n\setminus B_1.
\end{cases}
\end{equation}
Then:
\begin{equation}
\label{LON1}
u\in L^1(B_1), {\mbox{ and }} \|u\|_{L^1(B_1)}\le C\,\|f\|_{L^1(B_1)},
\end{equation}
\begin{equation}\label{LON2}
{\mbox{for every $R\in(r,1)$, }} \sup_{x\in B_1\setminus B_R}
d^{-s}(x)\,|u(x)|\le C_R\,\|f\|_{L^1(B_1)},\end{equation}
\begin{equation}\label{LON3}
{\mbox{$u$ satisfies }}(-\Delta)^s u=f{\mbox{ in }}B_1 {\mbox{ in the sense of
distributions,}} 
\end{equation}
and
\begin{equation}\label{LON4}
u\in W^{2s,2}_{loc}(B_1).
\end{equation}
Here above, 
$C>0$ is a constant depending on~$n$, $s$ and~$r$,
$C_R>0$ is a constant depending on~$n$, $s$, $r$ and~$R$
and $C_\rho>0$ is a constant depending on~$n$, $s$, $r$ and~$\rho$.
\end{proposition}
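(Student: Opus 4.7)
The plan is to establish the four conclusions in turn, in each case exploiting the explicit formula for the Green function in \eqref{GREEN} together with the crucial assumption that $f$ is supported away from $\partial B_1$. For \eqref{LON1}, I would apply Fubini to the nonnegative integrand and reduce the claim to the classical estimate $\sup_{y \in B_1} \int_{B_1} \mathcal{G}_s(x,y)\,dx \le C(n,s)$; this in turn follows from the bound $\mathcal{G}_s(x,y) \le C|x-y|^{2s-n}\min\{1, r_0(x,y)^s\}$ (obtained by splitting the $\eta$-integral in \eqref{GREEN} according to whether $r_0$ is small or large) together with Green function computations already performed in \cite{MR3673669} and \cite{AJS3}.

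For \eqref{LON2}, when $x \in B_1 \setminus B_R$ and $y \in \operatorname{supp} f \subseteq \overline{B_r}$, the diagonal singularity disappears since $|x-y| \ge R - r$, and moreover
\begin{equation*}
r_0(x,y) \le \frac{(1-|x|^2)(1-|y|^2)}{(R-r)^2} \le C_R\, d(x).
\end{equation*}
Since $r_0$ is uniformly small in this regime, $\int_0^{r_0}\eta^{s-1}(\eta+1)^{-n/2}\,d\eta \le r_0^s/s$, whence $\mathcal{G}_s(x,y) \le C_R\, d(x)^s$; integrating against $|f|$ yields \eqref{LON2}. For \eqref{LON4} I would split $u = v + w$, where $v$ is the Riesz potential of $f$ on $\mathbb{R}^n$ (so $v \in W^{2s,2}_{loc}(\mathbb{R}^n)$ by Fourier analysis, since $f \in L^2$ is compactly supported), while $w = u - v$ satisfies $(-\Delta)^s w = 0$ distributionally in $B_1$ and is therefore $C^\infty$ inside $B_1$ by standard interior regularity for the higher-order fractional Laplacian.

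For \eqref{LON3}, the strategy is a density argument. I would mollify $f$ to obtain a family $f_\epsilon \in C^\infty_c(B_r)$ (using a mollification radius smaller than $\operatorname{dist}(\operatorname{supp} f,\partial B_r)$, after an inessential shrinkage of $r$), define $u_\epsilon$ by \eqref{DEF uG} with $f_\epsilon$ in place of $f$, and invoke the representation theorems of \cite{MR3673669} and \cite{AJS3} to conclude that $(-\Delta)^s u_\epsilon = f_\epsilon$ in $B_1$ in the pointwise/distributional sense. Fixing $\varphi \in C^\infty_c(B_1)$ and passing to the limit in
\begin{equation*}
\int_{\mathbb{R}^n} u_\epsilon(x)\,(-\Delta)^s\varphi(x)\,dx = \int_{B_1} f_\epsilon(x)\,\varphi(x)\,dx,
\end{equation*}
the left-hand side converges by \eqref{LON1} applied to $f - f_\epsilon$ combined with the integrability of $(-\Delta)^s\varphi$ weighted against the controlled growth of $u_\epsilon$, while the right-hand side converges by $L^1$ convergence $f_\epsilon \to f$.

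The main obstacle I foresee is precisely this Step 3: the results of \cite{MR3673669} and \cite{AJS3} are phrased for H\"older continuous right-hand sides, and an honest passage to a merely $L^2$ datum is required. For $s > 1$, the pairing $\int u\,(-\Delta)^s\varphi\,dx$ must accommodate the higher-order difference formulation \eqref{nonlocop}, and one needs tail integrability of $(-\Delta)^s\varphi$ weighted against the boundary behaviour of $u_\epsilon$; the hypothesis that $f$ vanishes near $\partial B_1$, quantified in \eqref{LON2}, is precisely what makes the tail contributions manageable and uniform in $\epsilon$, and is what forces one to prove \eqref{LON2} before \eqref{LON3}.
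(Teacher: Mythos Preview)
Your treatment of \eqref{LON1}, \eqref{LON2}, and \eqref{LON3} matches the paper's argument closely. Two small corrections are in order. The pointwise bound $\mathcal{G}_s(x,y)\le C|x-y|^{2s-n}\min\{1,r_0^s\}$ you quote is only adequate for $n>2s$; to cover all $n,s$ uniformly the paper invokes Lemma~3.3 of \cite{AJS3}, which gives $\mathcal{G}_s(x,y)\le C\,d(x)^{s-\epsilon/2}d(y)^{s-\epsilon/2}|x-y|^{-(n-\epsilon)}$ for any $\epsilon\in(0,\min\{n,s\})$, and then integrates. And the obstacle you flag in \eqref{LON3} is not really there: since $u$ and the mollified $u_\epsilon$ vanish outside $\overline{B_1}$, the distributional pairing reduces to $\int_{B_1}u\,(-\Delta)^s\varphi$, and $(-\Delta)^s\varphi$ is smooth and bounded on $\overline{B_1}$ for $\varphi\in C^\infty_c(B_1)$, so $L^1(B_1)$-convergence of $u_\epsilon\to u$ (coming from \eqref{LON1}) suffices with no tail analysis whatsoever.

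The genuine divergence from the paper is in \eqref{LON4}. Your Riesz-potential-plus-harmonic-remainder decomposition is attractive but incomplete as stated. First, for $2s\ge n$ the kernel $|x-y|^{2s-n}$ is no longer a fundamental solution of $(-\Delta)^s$ in the naive sense, so ``$v\in W^{2s,2}_{loc}$ by Fourier analysis'' needs a substitute (Bessel potentials, or a frequency cutoff near $\xi=0$ to separate a smooth low-frequency piece). Second, you invoke interior $C^\infty$ regularity for $(-\Delta)^s w=0$ with arbitrary $s>0$, which requires both a reference and a verification that $w=u-v$ lies in the weighted exterior space such results demand. The paper sidesteps both issues with a different mechanism: it first establishes \eqref{LON4} for $s\in(0,1)$ via a cutoff/commutator argument combined with the $L^2\to H^{2s}$ estimate of Dong--Kim \cite{MR2863859} and the energy bounds of \cite{MR3641649}, obtaining the quantitative local estimate $\|u\|_{W^{2s,2}(B_\rho)}\le C_\rho\|f\|_{L^2(B_1)}$; it then inducts on the integer part of $s$ using the recursive identity $-\Delta_x\mathcal{G}_s=\mathcal{G}_{s-1}-CP_{s-1}$ from \cite{AJS3}. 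The induction step works precisely because $f$ is supported in $B_r$, which forces $P_{s-1}*f\in C^\infty(\mathbb{R}^n)$ (the kernel $P_{s-1}$ is smooth off the boundary diagonal) and reduces $W^{2s,2}_{loc}$ regularity of $\mathcal{G}_s*f$ to $W^{2s-2,2}_{loc}$ regularity of $\mathcal{G}_{s-1}*f$ via classical elliptic theory. Your approach, once the two gaps are filled, would likely be shorter; the paper's is more self-contained within the Green-function framework already in place.
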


When~$f\in C^\alpha(B_1)$ for some~$\alpha\in(0,1)$,
Proposition~\ref{LONTANO} boils down to the main results of \cite{MR3673669}
and~\cite{AJS3}.

\begin{proof}[Proof of Proposition~\ref{LONTANO}] 
We recall the following useful estimate, see Lemma~3.3 in 
\cite{AJS3}:
for any~$\epsilon\in\left(0,\,\min\{n,s\}\right)$, and any~$\bar R$, $\bar r>0$,
$$ \frac1{\bar R^{n-2s}}\,\int_0^{\bar r/\bar R^2}\frac{\eta^{s-1}}{
\left(\eta+1\right)^{\frac{n}{2}}} \,d\eta\le
\frac{2}{s}\;\frac{\bar r^{s-(\epsilon/2)}}{\bar R^{n-\epsilon}},$$
and so, by~\eqref{GREEN} and~\eqref{GREEN2},
for every~$x$, $y\in B_1$,
$$ \mathcal{G}_s\left(x,y\right)\le
\frac{C\,d^{s-(\epsilon/2)}(x)\,d^{s-(\epsilon/2)}(y)}{|x-y|^{n-\epsilon}}
$$
for some~$C>0$. Hence, recalling~\eqref{DEF uG},
\begin{eqnarray*}
\int_{B_1} |u(x)|\,dx&\le& \int_{B_1}
\left(\int_{B_1} \mathcal{G}_s\left(x,y\right)\,|f(y)|\,dy\right)\,dx
\\ &\le& C \int_{B_1}
\left(\int_{B_1} \frac{|f(y)|}{|x-y|^{n-\epsilon}}\,dy\right)\,dx\\&=&
C \int_{B_1}
\left(\int_{B_1} \frac{|f(y)|}{|x-y|^{n-\epsilon}}\,dx\right)\,dy\\&=&
C \int_{B_1}|f(y)|\,dy,
\end{eqnarray*}
up to renaming~$C>0$ line after line, and this proves~\eqref{LON1}.

Now, if~$x\in B_1\setminus B_R$ and~$y\in B_r$, with~$0<r<R<1$, we have that
$$|x-y|\ge |x|-|y|\ge R-r$$
and accordingly
$$ r_0\left(x,y\right)\le
\frac{2d(x)}{(R-r)^2},$$
which in turn implies that
$$ \mathcal{G}_s\left(x,y\right)\le\frac{k(n,s)}{\left|x-y\right|^{n-2s}}\,
\int_0^{{2d(x)}/{(R-r)^2}} 
\frac{\eta^{s-1}}{\left(\eta+1\right)^{\frac{n}{2}}} \,d\eta,
\le {C\,d^s(x)},$$
for some~$C>0$.
As a consequence,
since~$f$ vanishes outside~$B_r$, we see that, for any~$x\in B_1\setminus B_R$,
\begin{eqnarray*}
|u(x)|\le
\int_{B_r} \mathcal{G}_s\left(x,y\right)\,|f(y)|\,dy\le C\,d^s(x)
\int_{B_r} |f(y)|\,dy,
\end{eqnarray*}
which proves~\eqref{LON2}.

Now, we fix~$\hat r\in(r,1)$ and consider a mollification of~$f$,
that we denote by~$f_j\in C^\infty_0(B_{\hat r})$, with~$f_j\to f$
in~$L^2(B_1)$ as~$j\to+\infty$. 
We also write~$\mathcal{G}_s * f$ as a short notation for the right hand side
of~\eqref{DEF uG}. Then, by \cite{MR3673669} and \cite{AJS3},
we know that~$u_j:=\mathcal{G}_s * f_j$ is a (locally smooth, hence distributional) solution of~$(-\Delta)^s u_j=f_j$.
Furthermore, if we set~$\tilde u_j:=u_j-u$ and~$\tilde f_j:=f_j-f$
we have that
$$ \tilde u_j=\mathcal{G}_s * (f_j-f)=\mathcal{G}_s * \tilde f_j,$$
and therefore, by~\eqref{LON1},
$$ \|\tilde u_j\|_{L^1(B_1)}\le C\,\|\tilde f_j\|_{L^1(B_1)},$$
which is infinitesimal as~$j\to+\infty$. This says that~$u_j\to u$
in~$L^1(B_1)$ as~$j\to+\infty$, and consequently, for any~$\varphi\in C^\infty_0(B_1)$,
\begin{eqnarray*}
&&\int_{B_1} u(x)\,(-\Delta)^s\varphi(x)\,dx=\lim_{j\to+\infty}
\int_{B_1} u_j(x)\,(-\Delta)^s\varphi(x)\,dx
\\&&\qquad=\lim_{j\to+\infty}
\int_{B_1} f_j(x)\,\varphi(x)\,dx=\int_{B_1} f(x)\,\varphi(x)\,dx,
\end{eqnarray*}
thus completing the proof of~\eqref{LON3}.

Now, to prove~\eqref{LON4}, we can suppose that~$s\in(0,+\infty)\setminus{\mathbb{N}}$,
since the case of integer~$s$ is classical, see e.g. \cite{MR1814364}.
First of all, we claim that
\begin{equation}\label{08}
{\mbox{\eqref{LON4} holds true for every~$s\in(0,1)$.}}
\end{equation}
For this, we first claim that
if~$g\in C^\infty(B_1)$ and~$v$ is a (locally smooth) solution of~$(-\Delta)^s v=g$
in~$B_1$, with~$v=0$ outside~$B_1$, then~$v\in W^{2s,2}_{loc}(B_1)$,
and, for any~$\rho\in(0,1)$, 
\begin{equation}\label{BIX}
\|v\|_{W^{2s,2}(B_\rho)}\le C_\rho\,\|g\|_{L^2(B_1)}.
\end{equation}
This claim can be seen as a localization of
Lemma~3.1 of~\cite{MR2863859},
or a quantification of the last claim in Theorem~1.3 of~\cite{MR3641649}.
To prove~\eqref{BIX}, we let~$R_-<R_+\in(\rho,1)$,
and consider~$\eta\in C^\infty_0(B_{R_+})$ with~$\eta=1$ in~$B_{R_-}$.
We let~$v^*:=v\eta$, and we recall formulas~(3.2), (3.3)
and~(A.5) in~\cite{MR3641649}, according to which
\begin{equation*}\begin{split}&
(-\Delta)^s v^*-\eta(-\Delta)^s v=g^* \quad{\mbox{ in }}{\mathbb{R}}^n,\\
{\mbox{with }}\quad&\| g^*\|_{L^2({\mathbb{R}}^n)}\le C\,\|v\|_{W^{s,2}({\mathbb{R}}^n)}
,\end{split}\end{equation*}
for some~$C>0$. 

Moreover, 
using a notation taken from \cite{MR3641649}
we denote by~$W^{s,2}_0(\overline{B_1})$
the space of functions in~$W^{s,2}({\mathbb{R}}^n)$ vanishing outside~$B_1$
and we consider the dual space~$
W^{-s,2}_0(\overline{B_1})$. We remark that if~$h\in L^2(B_1)$
we can naturally identify~$h$ as an element of~$
W^{-s,2}_0(\overline{B_1})$ by considering the action of~$h$
on any~$\varphi\in W^{s,2}_0(\overline{B_1})$ as defined by
$$ \int_{B_1} h(x)\,\varphi(x)\,dx.$$
With respect to this, we have that
\begin{equation}\label{DUE} \|h\|_{W^{-s,2}_0(\overline{B_1})}=\sup_{{\varphi\in
W^{s,2}_0(\overline{B_1})}\atop{\|\varphi\|_{W^{s,2}_0(\overline{B_1})}=1}}
\int_{B_1}h(x)\,\varphi(x)\,dx\le\|h\|_{L^2(B_1)}.\end{equation}
We notice also that
$$ \|v\|_{W^{s,2}({\mathbb{R}}^n)}\le C\,\| g\|_{W^{-s,2}(\overline{B_1})},$$
in light of Proposition~2.1 of~\cite{MR3641649}. This and~\eqref{DUE}
give that
$$ \|v\|_{W^{s,2}({\mathbb{R}}^n)}\le C\,\| g\|_{L^2(B_1)}.$$
Then, by Lemma~3.1 of~\cite{MR2863859}
(see in particular formula~(3.2) there, applied here with~$\lambda:=0$),
we obtain that
\begin{equation}\label{BIX0}
\begin{split}
\|v^*\|_{W^{2s,2}({\mathbb{R}}^n)}\,&\le C\,\|
\eta(-\Delta)^s v+
g^*\|_{L^2({\mathbb{R}}^n)}\\
&\le C\,\big( \| (-\Delta)^s v\|_{L^2(B_{R_+})}+\|g^*\|_{L^2({\mathbb{R}}^n)}\big)
\\&=C\,\big( \| g\|_{L^2(B_{R_+})}+\|g^*\|_{L^2({\mathbb{R}}^n)}\big)\\
&\le C\,\big(\| g\|_{L^2(B_{1})}+
\|v\|_{W^{s,2}({\mathbb{R}}^n)}\big)\\
&\le C\, \| g\|_{L^2(B_{1})}
,\end{split}\end{equation}
up to renaming~$C>0$ step by step.
On the other hand, since~$v^*=v$ in~$B_\rho$,
$$ \|v\|_{W^{2s,2}(B_\rho)}=
\|v^*\|_{W^{2s,2}(B_\rho)}\le \|v^*\|_{W^{2s,2}({\mathbb{R}}^n)}.$$
{F}rom this and~\eqref{BIX0}, we obtain \eqref{BIX}, as desired.

Now, we let~$f_j$, $\tilde f_j$, $u_j$ and~$\tilde u_j$
as above and make use
of~\eqref{BIX} to write
\begin{equation}\label{qwe89:BBA}
\begin{split}
&\|u_j\|_{W^{2s,2}(B_\rho)}\le C_\rho\,\|f_j\|_{L^2(B_1)}
\\ {\mbox{and }}\quad&
\|\tilde u_j\|_{W^{2s,2}(B_\rho)}\le C_\rho\,\|\tilde f_j\|_{L^2(B_1)}.
\end{split}
\end{equation}
As a consequence,
taking the limit as~$j\to+\infty$
we obtain that
$$ \|u\|_{W^{2s,2}(B_\rho)}\le C_\rho\,\|f\|_{L^2(B_1)},$$
that is~\eqref{LON4} in this case, namely the claim in~\eqref{08}.

Now, to prove~\eqref{LON4}, we argue by induction on
the integer part of~$s$. When the integer part of~$s$
is zero, the basis of the induction is warranted by~\eqref{08}.
Then, to perform the inductive step, given~$s\in(0,+\infty)\setminus{\mathbb{N}}$,
we suppose that~\eqref{LON4} holds true for~$s-1$,
namely
\begin{equation}\label{0lL:AN1}
\mathcal{G}_{s-1} * f
\in W^{2s-2,2}_{loc}(B_1).
\end{equation}
Then, following~\cite{AJS3},
it is convenient to introduce the notation
$$ [x,y]:=\sqrt{|x|^2|y|^2-2x\cdot y+1}$$
and consider
the auxiliary kernel given, for every~$x\ne y\in B_1$, by
\begin{equation}
\label{aux}
P_{s-1}(x,y):=\frac{(1-|x|^2)^{s-2}_+(1-|y|^2)^{s-1}_+(1-|x|^2|y|^2)}{[x,y]^n}.
\end{equation} 
We point out that if~$x\in B_r$ with~$r\in(0,1)$,
then
\begin{equation}
\label{klop}
[x,y]^2\ge|x|^2|y|^2-2|x|\,| y|+1=(1-|x|\,|y|)^2\ge(1-r)^2>0.
\end{equation}
Consequently, since~$f$ is supported in~$B_r$,
\begin{equation}\label{0lL:AN2}
P_{s-1}*f\in C^\infty({\mathbb{R}}^n).\end{equation}
Then,
we recall that
\begin{equation}\label{0lL:AN3}
-\Delta_x\mathcal{G}_s(x,y)=\mathcal{G}_{s-1}(x,y)-CP_{s-1}(x,y),\end{equation}
for some~$C\in{\mathbb{R}}$, see Lemma~3.1 in \cite{AJS3}.

As a consequence, in view of~\eqref{0lL:AN1}, \eqref{0lL:AN2}, \eqref{0lL:AN3},
we conclude that
$$ -\Delta (\mathcal{G}_s*f)= (-\Delta_x\mathcal{G}_s)*f
\in W^{2s-2,2}_{loc}(B_1).$$
This and the classical elliptic regularity theory (see e.g. \cite{MR1814364})
give that~$\mathcal{G}_s*f\in W^{2s,2}_{loc}(B_1)$, which
completes the inductive proof and establishes~\eqref{LON4}.
\end{proof}

\subsection{Solving $(-\Delta)^s u=f$ in $B_1$ for~$f$ H\"older continuous near $\partial B_1$}

The goal of this subsection is
to extend the representation
results of \cite{MR3673669} and \cite{AJS3} to
the case in which the right hand side is not H\"older continuous
in the whole of the ball,
but merely in a neighborhood of the boundary.
This result is obtained here by superposing the
results in \cite{MR3673669} and \cite{AJS3}
with Proposition~\ref{LONTANO} here, taking advantage of the linear
structure of the problem.

\begin{proposition} \label{LEJOS}
Let~$f\in L^2(B_1)$.
Let~$\alpha$, $r\in(0,1)$ and assume that
\begin{equation}\label{CHlaIA}
f\in C^\alpha(B_1\setminus B_r).\end{equation}
In the notation of~\eqref{GREEN}, let
\begin{equation} \label{0olwsKA}
u(x):=
\begin{cases}
\displaystyle\int_{B_1} \mathcal{G}_s\left(x,y\right)\,f(y)\,dy & {\mbox{ if }}x\in B_1,\\
0&{\mbox{ if }}x\in{\mathbb{R}}^n\setminus B_1.
\end{cases}
\end{equation}
Then, in the notation of~\eqref{GREEN2}, we have that:
\begin{equation}\label{VIC2}
{\mbox{for every $R\in(r,1)$, }} \sup_{x\in B_1\setminus B_R}
d^{-s}(x)\,|u(x)|\le C_R\,\big(\|f\|_{L^1(B_1)}+
\|f\|_{L^\infty(B_1\setminus B_r)}\big)
,\end{equation}
\begin{equation}\label{VIC3}
{\mbox{$u$ satisfies }}(-\Delta)^s u=f{\mbox{ in }}B_1 {\mbox{ in the sense of
distributions,}} \end{equation}
and
\begin{equation}\label{VIC4} 
u\in W^{2s,2}_{loc}(B_1).
\end{equation}
Here above, 
$C>0$ is a constant depending on~$n$, $s$ and~$r$,
$C_R>0$ is a constant depending on~$n$, $s$, $r$ and~$R$
and $C_\rho>0$ is a constant depending on~$n$, $s$, $r$ and~$\rho$.
\end{proposition}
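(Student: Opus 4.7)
The plan is to decompose $f$ as the sum of a piece supported strictly inside $B_1$, to which Proposition~\ref{LONTANO} can be applied directly, and a piece which is H\"older continuous on the whole of $\overline{B_1}$, which falls within the scope of the representation theory developed in~\cite{MR3673669} and~\cite{AJS3}. Since the integral in~\eqref{0olwsKA} is linear in $f$, the two resulting contributions can be summed and each of the three conclusions~\eqref{VIC2}, \eqref{VIC3}, \eqref{VIC4} will follow from the corresponding assertion for the two summands.

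More concretely, given $R\in(r,1)$, I would fix auxiliary radii $r<r_0<r_1<R$ and a cutoff $\chi\in C^\infty_0(B_{r_1})$ with $\chi\equiv 1$ on an open neighborhood of $\overline{B_{r_0}}$, and set $f_1:=\chi f$ and $f_2:=(1-\chi)f$. Then $f_1\in L^2(B_1)$ vanishes in $\mathbb{R}^n\setminus B_{r_1}$; by~\eqref{CHlaIA} and the smoothness of $\chi$, the function $f_2$ vanishes on a full open neighborhood of $\overline{B_{r_0}}$ and coincides with a $C^\alpha$ function on $B_1\setminus B_{r_0}\subset B_1\setminus B_r$, hence $f_2\in C^\alpha(\overline{B_1})$ with
\begin{equation*}
\|f_2\|_{L^\infty(B_1)}\le\|f\|_{L^\infty(B_1\setminus B_r)}.
\end{equation*}
Writing $u=u_1+u_2$ with $u_i$ given by~\eqref{0olwsKA} for $f_i$, Proposition~\ref{LONTANO} applied with parameter $r_1$ gives that $u_1\in W^{2s,2}_{loc}(B_1)$, that $(-\Delta)^s u_1=f_1$ distributionally, and, since $R>r_1$,
\begin{equation*}
\sup_{x\in B_1\setminus B_R}d^{-s}(x)\,|u_1(x)|\le C_R\,\|f\|_{L^1(B_1)}.
\end{equation*}
For $u_2$, the H\"older theory of~\cite{MR3673669,AJS3} applies and produces a locally smooth pointwise (hence distributional) solution of $(-\Delta)^s u_2=f_2$ lying in $W^{2s,2}_{loc}(B_1)$, while the crude bound
\begin{equation*}
|u_2(x)|\le \|f_2\|_{L^\infty(B_1)}\int_{B_1}\mathcal{G}_s(x,y)\,dy\le C\,d^s(x)\,\|f\|_{L^\infty(B_1\setminus B_r)}
\end{equation*}
follows from the torsion-function estimate $\int_{B_1}\mathcal{G}_s(x,y)\,dy\le C\,d^s(x)$ (itself an immediate consequence of~\eqref{GREEN} and~\eqref{GREEN2}, or of the explicit Getoor-type solution of $(-\Delta)^s v=1$ in $B_1$). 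Adding the two pieces yields~\eqref{VIC2}, \eqref{VIC3} and~\eqref{VIC4} simultaneously.

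The one point that genuinely needs care, rather than being purely mechanical, is the verification that $f_2$ is of class $C^\alpha$ across the artificial interface $\partial B_{r_0}$: this is precisely why the cutoff $\chi$ is demanded to be identically $1$ on an \emph{open} neighborhood of $\overline{B_{r_0}}$, so that $f_2$ vanishes on a full open set and the extension by zero from $B_1\setminus B_r$ to $B_1$ does not introduce any jump in the H\"older norm. Once this is arranged, the proof reduces to a bookkeeping superposition of Proposition~\ref{LONTANO} with the H\"older representation theory already available in the literature, and no new estimate on the nonlocal operator is required.
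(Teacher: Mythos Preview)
Your proposal is correct and follows essentially the same approach as the paper: split $f$ via a smooth cutoff into a compactly supported $L^2$ piece (handled by Proposition~\ref{LONTANO}) and a globally H\"older piece (handled by the representation theory of~\cite{MR3673669,AJS3}), then superpose. The only cosmetic differences are that the paper takes the cutoff equal to $1$ on $B_r$ itself rather than on a slightly larger ball, and that for the boundary estimate on $u_2$ the paper cites Theorem~1.1 of~\cite{AJS3} directly, whereas you obtain it from the torsion-function bound $\int_{B_1}\mathcal{G}_s(x,y)\,dy\le C\,d^s(x)$; both routes give the same conclusion.
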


\begin{proof} We take~$r_1\in(r,1)$ and~$\eta\in C^\infty_0(B_{r_1})$
with~$\eta=1$ in~$B_r$.
Let also
$$f_1:=f\eta\qquad{\mbox{and}}\qquad f_2:=f-f_1.$$
We observe that~$f_1\in L^2(B_1)$, and that~$f_1=0$ outside~$B_{r_1}$.
Therefore, we are in the position of applying Proposition~\ref{LONTANO}
and find a function~$u_1$ (obtained by convolving~$\mathcal{G}_s$
against~$f_1$) such that
\begin{eqnarray}
&& \label{XLON2}
{\mbox{for every $R\in(r_1,1)$, }} \sup_{x\in B_1\setminus B_R}
d^{-s}(x)\,|u_1(x)|\le C_R\,\|f_1\|_{L^1(B_1)},\\ 
\label{XLON3}
&& {\mbox{$u_1$ satisfies }}(-\Delta)^s u_1=f_1{\mbox{ in }}B_1 {\mbox{ in the sense of
distributions,}} 
\\
\label{XLON4}{\mbox{and }}
&& u_1\in W^{2s,2}_{loc}(B_1).
\end{eqnarray}
On the other hand, we have that~$f_2=f(1-\eta)$
vanishes outside~$B_1\setminus B_r$
and it is H\"older continuous. Accordingly,
we can apply Theorem~1.1 of~\cite{AJS3}
and find a function~$u_2$ (obtained by convolving~$\mathcal{G}_s$
against~$f_2$) such that
\begin{eqnarray}
&& \label{YLON2}
{\mbox{for every $R\in(r_1,1)$, }} \sup_{x\in B_1\setminus B_R}
d^{-s}(x)\,|u_2(x)|\le C_R\,\|f_2\|_{L^\infty(B_1)},\\ 
\label{YLON3}
&& {\mbox{$u_2$ satisfies }}(-\Delta)^s u_2=f_2{\mbox{ in }}B_1 {\mbox{ in the sense of
distributions,}} 
\\
\label{YLON4}
{\mbox{and }}&& u_2\in C^{2s+\alpha}_{loc}(B_1).
\end{eqnarray}
Then, $f=f_1+f_2$, and thus,
in view of~\eqref{0olwsKA}, we have that~$
u=u_1+u_2$. Also, $u$ satisfies~\eqref{VIC2},
thanks to~\eqref{XLON2} and~\eqref{YLON2},
\eqref{VIC3},
thanks to~\eqref{XLON3} and~\eqref{YLON3}, and~\eqref{VIC4},
thanks to~\eqref{XLON4} and~\eqref{YLON4}.
\end{proof}

\section{Existence and regularity for the first eigenfunction of the higher order fractional Laplacian}\label{SEC:eigef}

The goal of these pages is
to study the boundary behaviour of the first Dirichlet
eigenfunction of higher order fractional equations.

For this, writing~$s=m+\sigma$, with~$m\in{\mathbb{N}}$ and~$\sigma\in(0,1)$,
we define the energy space 
\begin{equation}
\label{energy}
H_0^s\left(B_1\right):=\left\{u\in H^s\left(\mathbb{R}^n\right);\; u=0 \;\text{in}\; \mathbb{R}^n\setminus B_1\right\},
\end{equation}
endowed with the Hilbert norm
\begin{equation}
\label{energynorm} \left\|u\right\|_{H_0^s\left(B_1\right)}:=
\left(\sum_{\left|\alpha\right|\leq m} {\left\|\partial^\alpha u
\right\|^2_{L^2\left(B_1\right)}}+\mathcal{E}_s
\left(u,u\right)\right)^{\frac{1}{2}},\end{equation}
where
%%		$$\mathcal{E}_s\left(u,v\right):=\begin{cases}
%%		\mathcal{E}_{\sigma}\left(\Delta^{\frac{m}{2}}u,\Delta^{\frac{m}{2}}v\right),\hspace{0,4 cm}\text{if}\hspace{0.1 cm}m\hspace{0.1 cm}\text{is even} \\
%%		\sum_{k=1}^N {\mathcal{E}_{\sigma}\left(\partial_k\Delta^{\frac{m-1}{2}}u,\partial_k\Delta^{\frac{m-1}{2}}v\right)},\hspace{0,4 cm}\text{if}\hspace{0.1 cm}m\hspace{0.1 cm}\text{is odd}.
%%		\end{cases}$$
%%		In both cases, it reduces again to
\begin{equation}\label{ENstut}
\mathcal{E}_s\left(u,v\right)=\int_{\mathbb{R}^n} {
\left|\xi\right|^{2s}\mathcal{F}u\left(\xi\right)\overline{\mathcal{F}v\left(\xi\right)} \,
d\xi},\end{equation}
being~$\mathcal{F}$ the Fourier transform and using the notation~$\overline{z}$ to denote the complex conjugated
of a complex number~$z$.

In this setting, we consider~$u\in H^s_0(B_1)$ to be 
such that
\begin{equation}\label{dirfun}\begin{cases}
\left(-\Delta\right)^s u=\lambda_1 u &\quad\text{ in }B_1, \\
u=0 &\quad\text{ in } \mathbb{R}^n\setminus\overline{B}_1,
\end{cases}\end{equation}
for every~$s>0$, with~$\lambda_1$ as small as possible.

The existence of solutions of \eqref{dirfun} is ensured
via variational techniques, as stated in the following result:

\begin{lemma}\label{VARIA}
The functional~$\mathcal{E}_s\left(u,u\right)$
attains its minimum~$\lambda_1$ on the functions in~$H^s_0(B_1)$
with unit norm in~$L^2(B_1)$.

The minimizer satisfies~\eqref{dirfun}.

In addition, $\lambda_1>0$.

\begin{proof} The proof is based on the direct method
in the calculus of variations. We provide some details for completeness.
Let~$s=m+\sigma$,
with~$m\in\mathbb{N}$ and~$\sigma\in(0,1)$.
Let us consider a minimizing sequence~$u_j\in H^s_0(B_1)\subseteq H^m({\mathbb{R}}^n)$
such that~$\|u_j\|_{L^2(B_1)}=1$ and
$$ \lim_{j\to+\infty}\mathcal{E}_s\left(u_j,u_j\right)=\inf_{{u\in H^s_0(B_1)}\atop{
\|u\|_{L^2(B_1)}=1}}\mathcal{E}_s\left(u,u\right).$$
In particular, we have that~$u_j$ is bounded in~$H^s_0(B_1)$ uniformly in~$j$,
so, up to a subsequence, it converges to some~$u_\star$
weakly in~$H^s_0(B_1)$ and strongly in~$L^2(B_1)$
as~$j\to+\infty$.

The weak lower semicontinuity of the seminorm $\mathcal{E}_s\left(\cdot,\cdot\right)$
then implies that~$u_\star$ is the desired minimizer.

Then, given~$\phi\in C^\infty_0(B_1)$, we have that
$$ \mathcal{E}_s\left(u_\star+\epsilon\phi,u_\star+\epsilon\phi\right)
\ge\mathcal{E}_s\left(u_\star,u_\star\right),$$
for every~$\epsilon\in{\mathbb{R}}$, and this gives that~\eqref{dirfun} is
satisfied in the sense of distributions,
and also in the classical sense by the elliptic regularity theory.

Finally, we have that~$\mathcal{E}_s\left(u_\star,u_\star\right)>0$,
since~$u_\star$ (and thus~${\mathcal{F}}u_\star$) does not vanish identically.
Consequently,
$$ \lambda_1=\frac{ \mathcal{E}_s
\left(u_\star,u_\star\right)}{\|u_\star\|_{L^2(B_1)}^2}=
\mathcal{E}_s\left(u_\star,u_\star\right)>0,$$
as desired.
\end{proof}
\end{lemma}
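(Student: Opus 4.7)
The plan is to apply the direct method in the calculus of variations to the functional $\mathcal{E}_s(\cdot,\cdot)$ constrained to the unit sphere of $L^2(B_1)$ inside $H^s_0(B_1)$. First I would pick a minimizing sequence $u_j \in H^s_0(B_1)$ with $\|u_j\|_{L^2(B_1)}=1$ and $\mathcal{E}_s(u_j,u_j) \to \lambda_1 := \inf \mathcal{E}_s$. Using the definition of the energy norm in \eqref{energynorm} together with the normalization, the sequence is bounded in $H^s_0(B_1)$. Since $H^s_0(B_1)$ embeds compactly into $L^2(B_1)$ (by the Rellich--Kondrachov theorem applied to the compactly supported extensions), I can extract a subsequence that converges weakly to some $u_\star \in H^s_0(B_1)$ and strongly in $L^2(B_1)$. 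The strong $L^2$ convergence preserves the constraint $\|u_\star\|_{L^2(B_1)}=1$, so the candidate minimizer is admissible.

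Next, I would invoke weak lower semicontinuity of the quadratic form $\mathcal{E}_s$: since $v\mapsto |\xi|^{s}\mathcal{F}v(\xi)$ embeds $H^s$ isometrically (up to lower order terms) into an $L^2$ space via the Plancherel identity hidden in \eqref{ENstut}, the map $v \mapsto \mathcal{E}_s(v,v)^{1/2}$ is a seminorm arising from an inner product on a Hilbert space. Such seminorms are weakly lower semicontinuous, giving $\mathcal{E}_s(u_\star,u_\star) \le \liminf_j \mathcal{E}_s(u_j,u_j) = \lambda_1$. Combined with the constraint, this forces equality and makes $u_\star$ a minimizer.

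To derive the Euler--Lagrange equation, I would compute the first variation along $u_\star + \epsilon \phi$ for $\phi \in C^\infty_0(B_1)$, using the Lagrange multiplier associated with the $L^2$ constraint. Since $\mathcal{E}_s(u,v) = \int |\xi|^{2s}\mathcal{F}u\,\overline{\mathcal{F}v}\,d\xi$ is precisely the bilinear form of $(-\Delta)^s$ via Plancherel, setting $\frac{d}{d\epsilon}\big|_{\epsilon=0}\mathcal{E}_s(u_\star+\epsilon\phi,u_\star+\epsilon\phi) = \lambda_1 \frac{d}{d\epsilon}\big|_{\epsilon=0}\|u_\star+\epsilon\phi\|^2_{L^2(B_1)}$ yields $(-\Delta)^s u_\star = \lambda_1 u_\star$ in the distributional sense in $B_1$. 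Classical interior elliptic regularity for the fractional Laplacian then upgrades this to a genuine solution of \eqref{dirfun}.

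Finally, for the strict positivity $\lambda_1>0$, I would observe that $\mathcal{E}_s(u_\star,u_\star)=\int_{\mathbb{R}^n} |\xi|^{2s}\,|\mathcal{F}u_\star(\xi)|^2\,d\xi \geq 0$, with equality only if $\mathcal{F}u_\star$ is supported at the origin, forcing $u_\star$ to be a polynomial; since $u_\star$ vanishes outside $B_1$, this polynomial must be identically zero, contradicting $\|u_\star\|_{L^2(B_1)}=1$. The main conceptual point to get right is the compact embedding used for $L^2$ strong convergence and the identification of $\mathcal{E}_s$ as the quadratic form of $(-\Delta)^s$; both are standard but must be applied carefully in the higher-order regime $s>1$, where the space $H^s_0(B_1)$ encodes vanishing of the function outside $B_1$ and is the right energy space for \eqref{dirfun}.
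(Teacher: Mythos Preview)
Your proposal is correct and follows essentially the same approach as the paper: direct method with a minimizing sequence, compactness to get strong $L^2$ convergence, weak lower semicontinuity of $\mathcal{E}_s$, first variation for the Euler--Lagrange equation, and nonvanishing of $\mathcal{F}u_\star$ for positivity. If anything, your treatment is slightly more careful than the paper's in two places: you correctly handle the $L^2$ constraint via a Lagrange multiplier (the paper writes $\mathcal{E}_s(u_\star+\epsilon\phi,u_\star+\epsilon\phi)\ge\mathcal{E}_s(u_\star,u_\star)$ without renormalizing, which is a small slip), and your positivity argument spells out why $\mathcal{F}u_\star$ supported at the origin forces $u_\star\equiv0$.
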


Our goal is now to apply Proposition~\ref{LEJOS}
to solutions of~\eqref{dirfun}, taking~$f:=\lambda u$. To this end,
we have to check that condition~\eqref{CHlaIA}
is satisfied, namely that solutions of~\eqref{dirfun} are
H\"older continuous in $B_1\setminus B_r$, for any $0<r<1$.

To this aim, we prove that polyharmonic operators of any order~$s>0$ 
always admit
a first eigenfunction in the ball which does not change sign
and which is radially symmetric. For this, we start discussing
the sign property:

\begin{lemma}\label{ikAHHPKAK}
There exists a nontrivial solution of~\eqref{dirfun} that does not change sign.

\begin{proof} We exploit a method explained in details in
Section~3.1 of~\cite{MR2667016}. As a matter of fact,
when~$s\in{\mathbb{N}}$, the desired result is exactly Theorem~3.7
in~\cite{MR2667016}.

Let~$u$ be as in
Lemma~\ref{VARIA}.
If either~$u\ge0$ or~$u\le0$, then the desired result is proved.
Hence, we argue by contradiction,
assuming that~$u$ attains strictly positive and strictly negative values.
We define
$$ {\mathcal{K}}:=\{ w:{\mathbb{R}}^n\to{\mathbb{R}} {\mbox{ s.t. 
$\mathcal{E}_s\left(w,w\right)<+\infty$, and
$w\ge0$ in $B_1$}} \}.$$
Also, we set
$$ {\mathcal{K}}^\star
:=\{ w\in H^s_0(B_1) {\mbox{ s.t. }}
\mathcal{E}_s\left(w,v\right)\le0
{\mbox{ for all }}v\in {\mathcal{K}}\}.$$
We claim that
\begin{equation}\label{kstar po}
{\mbox{if~$w\in{\mathcal{K}}^\star$, then $w\le0$}}.
\end{equation}
To prove this, we recall the notation in~\eqref{GREEN},
take~$\phi\in C^\infty_0(B_1)\cap{\mathcal{K}}$,
and let
$$ v_\phi(x):=
\begin{cases}
\displaystyle\int_{B_1} \mathcal{G}_s\left(x,y\right)\,\phi(y)\,dy & {\mbox{ if }}x\in B_1,\\
0&{\mbox{ if }}x\in{\mathbb{R}}^n\setminus B_1.
\end{cases}
$$
Then~$v_\phi\in{\mathcal{K}}$ and it satisfies~$
\left(-\Delta\right)^s v_\phi=\phi$ in~$B_1$, thanks
to~\cite{MR3673669} or~\cite{AJS3}.

Consequently, we can write, for every~$x\in B_1$,
$$ \phi(x)={\mathcal{F}}^{-1}(|\xi|^{2s}{\mathcal{F}}v_\phi)(x).$$
Hence, for every~$w\in{\mathcal{K}}^\star$,
\begin{eqnarray*}
0&\ge&\mathcal{E}_s\left(w,v_\phi\right)\\
&=&
\int_{\mathbb{R}^n} 
\left|\xi\right|^{2s}\mathcal{F}v_\phi\left(\xi\right)
\overline{\mathcal{F}w\left(\xi\right)} \,d\xi
\\&=&
\int_{\mathbb{R}^n} {\mathcal{F}}^{-1}(
\left|\xi\right|^{2s}\mathcal{F}v_\phi)(x)\,
w\left(x\right) \,dx\\&=&
\int_{B_1} {\mathcal{F}}^{-1}(
\left|\xi\right|^{2s}\mathcal{F}v_\phi)(x)\,
w\left(x\right) \,dx\\&=&
\int_{B_1}\phi(x)\,w\left(x\right) \,dx.
\end{eqnarray*}
Since~$\phi$ is arbitrary and nonnegative, this gives that~$w\le0$,
and this establishes~\eqref{kstar po}.

Furthermore, by Theorem~3.4 in~\cite{MR2667016}, we can write
$$ u=u_1+u_2,$$
with~$u_1\in {\mathcal{K}}\setminus\{0\}$,
$u_2\in{\mathcal{K}}^\star\setminus\{0\}$, and~$\mathcal{E}_s\left(u_1,u_2\right)=0$.

We observe that
$$ \mathcal{E}_s\left(u_1-u_2,u_1-u_2\right)=
\mathcal{E}_s\left(u_1,u_1\right)+\mathcal{E}_s\left(u_2,u_2\right)
+2\mathcal{E}_s\left(u_1,u_2\right)=\mathcal{E}_s\left(u_1,u_1\right)+\mathcal{E}_s\left(u_2,u_2\right).$$
In the same way,
$$ \mathcal{E}_s\left(u,u\right)=
\mathcal{E}_s\left(u_1+u_2,u_1+u_2\right)=
\mathcal{E}_s\left(u_1,u_1\right)+\mathcal{E}_s\left(u_2,u_2\right),$$
and therefore
\begin{equation}\label{7yhbAxcvTFV}
\mathcal{E}_s\left(u_1-u_2,u_1-u_2\right)=\mathcal{E}_s\left(u,u\right).
\end{equation}
On the other hand,
\begin{eqnarray*}
\| u_1-u_2\|_{L^2(B_1)}^2-\| u\|_{L^2(B_1)}^2&=&\| u_1-u_2\|_{L^2(B_1)}^2-\| u_1+u_2\|_{L^2(B_1)}^2\\
&=& -4\int_{B_1} u_1(x)\,u_2(x)\,dx.
\end{eqnarray*}
As a consequence, since~$u_2\le0$ in view of~\eqref{kstar po},
we conclude that
$$ \| u_1-u_2\|_{L^2(B_1)}^2-\| u\|_{L^2(B_1)}^2\ge0.$$
This and~\eqref{7yhbAxcvTFV} say that the function~$u_1-u_2$
is also a minimizer for the variational problem in Lemma~\ref{VARIA}.
Since now~$u_1-u_2\ge0$, the desired result follows.
\end{proof}
\end{lemma}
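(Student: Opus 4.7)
The plan is to follow the dual cone decomposition strategy familiar from the classical polyharmonic eigenvalue problem (the case $s\in\mathbb{N}$ being Theorem~3.7 in~\cite{MR2667016}), and adapt it to the full fractional setting by exploiting Proposition~\ref{LEJOS} as the replacement for the classical Green representation. The point is that for $s>1$ one cannot simply pass to $|u|$, since $|u|$ need not lie in $H^s_0(B_1)$; instead one projects a minimizer $u$ (whose existence is given by Lemma~\ref{VARIA}) onto the convex cone of nonnegative functions in the energy inner product and shows that the projection still minimizes.

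Concretely, I would introduce the closed convex cone
\[
\mathcal{K}:=\{w\in H^s_0(B_1)\,:\, w\ge 0\text{ in }B_1\}
\]
together with its polar (dual) cone $\mathcal{K}^\star:=\{w\in H^s_0(B_1)\,:\,\mathcal{E}_s(w,v)\le 0\text{ for all }v\in\mathcal{K}\}$. By the Moreau decomposition for closed convex cones in the Hilbert space $(H^s_0(B_1),\mathcal{E}_s)$, every $u\in H^s_0(B_1)$ splits uniquely as $u=u_1+u_2$ with $u_1\in\mathcal{K}$, $u_2\in\mathcal{K}^\star$ and $\mathcal{E}_s(u_1,u_2)=0$. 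The crux of the whole argument is to verify the sign statement
\[
w\in\mathcal{K}^\star\ \Longrightarrow\ w\le 0\ \text{in }B_1.
\]
This is exactly where the positivity of the higher-order fractional Green kernel enters: given any nonnegative $\phi\in C^\infty_0(B_1)$, Proposition~\ref{LEJOS} produces $v_\phi(x)=\int_{B_1}\mathcal{G}_s(x,y)\phi(y)\,dy$, and $v_\phi\ge 0$ because the integral defining $\mathcal{G}_s$ in~\eqref{GREEN} is strictly positive whenever $r_0(x,y)>0$; hence $v_\phi\in\mathcal{K}$. Then, using the Fourier/inverse-Fourier identification of $\mathcal{E}_s$ in~\eqref{ENstut} with Parseval, together with the fact that $(-\Delta)^s v_\phi=\phi$ in $B_1$ from Proposition~\ref{LEJOS}, the pairing $\mathcal{E}_s(w,v_\phi)$ reduces to $\int_{B_1}\phi(x)w(x)\,dx$, which is therefore $\le 0$ for every nonnegative $\phi$, yielding $w\le 0$ a.e. in $B_1$.

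Once this sign property is established, the conclusion is quick. Taking $u$ as in Lemma~\ref{VARIA}, its decomposition gives $u_1\ge 0$ and $u_2\le 0$, so the candidate $\widetilde u:=u_1-u_2$ is nonnegative in $B_1$. By $\mathcal{E}_s$-orthogonality,
\[
\mathcal{E}_s(\widetilde u,\widetilde u)=\mathcal{E}_s(u_1,u_1)+\mathcal{E}_s(u_2,u_2)=\mathcal{E}_s(u,u),
\]
while
\[
\|\widetilde u\|_{L^2(B_1)}^2-\|u\|_{L^2(B_1)}^2=-4\int_{B_1}u_1u_2\,dx\ge 0,
\]
since $u_1\ge 0\ge u_2$. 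Therefore the Rayleigh quotient of $\widetilde u$ is no larger than that of $u$, so $\widetilde u$ is itself a minimizer (hence solves~\eqref{dirfun} by the Euler--Lagrange argument in Lemma~\ref{VARIA}) and is nontrivial since $\mathcal{E}_s(\widetilde u,\widetilde u)=\lambda_1\|\widetilde u\|_{L^2(B_1)}^2>0$ by Lemma~\ref{VARIA}.

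The main obstacle I anticipate is the sign identity for $\mathcal{K}^\star$: it rests on two nonobvious ingredients, namely the pointwise positivity of the polyharmonic fractional Green function $\mathcal{G}_s$ (valid for \emph{all} $s>0$ on the ball, a feature special to this geometry) and a representation for solutions of $(-\Delta)^s v=\phi$ in $B_1$ with zero exterior datum that is rigorous enough to legitimize the distributional identity $\mathcal{E}_s(w,v_\phi)=\int\phi w$ for merely $H^s_0$-regular $w$ --- this last point is the content of Proposition~\ref{LEJOS} (applied to the smooth, compactly supported $\phi$), and justifies the otherwise formal Fourier manipulation.
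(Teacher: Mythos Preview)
Your proposal is correct and follows essentially the same argument as the paper: Moreau/dual-cone decomposition in the $\mathcal{E}_s$-inner product, the key sign claim $\mathcal{K}^\star\subset\{w\le 0\}$ proved via the positivity of the Green kernel $\mathcal{G}_s$ and the representation $(-\Delta)^s v_\phi=\phi$, and then the comparison $\mathcal{E}_s(u_1-u_2,u_1-u_2)=\mathcal{E}_s(u,u)$ together with $\|u_1-u_2\|_{L^2}\ge\|u\|_{L^2}$. The only cosmetic differences are that the paper defines $\mathcal{K}$ as the cone of nonnegative finite-energy functions (rather than restricting to $H^s_0(B_1)$), invokes the decomposition as Theorem~3.4 of~\cite{MR2667016} (which is precisely the Moreau result you name), and cites~\cite{MR3673669,AJS3} directly for the Green representation of $v_\phi$ rather than Proposition~\ref{LEJOS}; since your $\phi$ is smooth and compactly supported, either reference suffices.
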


Now, we define the spherical mean of a function~$v$ by
$$ v_\sharp(x):=
\frac{1}{\left|\mathbb{S}^{n-1}\right|}
\int_{\mathbb{S}^{n-1}} v({\mathcal{R}}_\omega\,x)
\,d{\mathcal{H}}^{n-1}(\omega)
$$
where~${\mathcal{R}}_\omega$ is the rotation corresponding to the solid angle~$\omega
\in{\mathbb{S}^{n-1}}$, ${\mathcal{H}}^{n-1}$ is the standard
Hausdorff measure, and~$\left|\mathbb{S}^{n-1}\right|=
{\mathcal{H}}^{n-1}(\mathbb{S}^{n-1})$.
Notice that~$v_\sharp(x)=v_\sharp({\mathcal{R}}_\varpi x)$
for any~$\varpi \in\mathbb{S}^{n-1}$, that is~$v_\sharp$
is rotationally invariant.

Then, we have:

\begin{lemma}
\label{lapsfercom}
Any positive power of the Laplacian commutes
with the spherical mean, that is
$$ \big( (-\Delta)^s v\big)_\sharp(x)=(-\Delta)^s v_\sharp(x).$$
\begin{proof} By density,
we prove the claim for a function~$v$ in the
Schwartz space of smooth and rapidly decreasing functions.
In this setting, writing~${\mathcal{R}}_\omega^T$
to denote the transpose of the rotation~${\mathcal{R}}_\omega$,
and changing variable~$\eta:={\mathcal{R}}_\omega^T\,\xi$,
we have that
\begin{equation}\label{RFA2}
\begin{split} (-\Delta)^s v({\mathcal{R}}_\omega\,x)\,&=
\int_{{\mathbb{R}}^n} |\xi|^{2s} {\mathcal{F}}v(\xi)\,
e^{2\pi i {\mathcal{R}}_\omega\,x\cdot\xi}\,d\xi\\ &=
\int_{{\mathbb{R}}^n} |\xi|^{2s} {\mathcal{F}}v(\xi)\,
e^{2\pi i x\cdot{\mathcal{R}}_\omega^T\,\xi}\,d\xi\\
&=
\int_{{\mathbb{R}}^n} |\eta|^{2s}
{\mathcal{F}}v({\mathcal{R}}_\omega\,\eta)\,
e^{2\pi i x\cdot\eta}\,d\eta.
\end{split}\end{equation}
On the other hand, using the substitution~$y:={\mathcal{R}}_\omega^T\,x$,
\begin{eqnarray*}
{\mathcal{F}}v({\mathcal{R}}_\omega\,\eta)
&=&\int_{{\mathbb{R}}^n} v(x)\,
e^{-2\pi i x\cdot{\mathcal{R}}_\omega\,\eta}\,dx\\
&=& \int_{{\mathbb{R}}^n} v(x)\,
e^{-2\pi i {\mathcal{R}}_\omega^T\,x\cdot\eta}\,dx\\
&=& \int_{{\mathbb{R}}^n} v({\mathcal{R}}_\omega\,y)\,
e^{-2\pi i y\cdot\eta}\,dy,
\end{eqnarray*}
and therefore, recalling~\eqref{RFA2},
$$ (-\Delta)^s v({\mathcal{R}}_\omega\,x)=
\iint_{{\mathbb{R}}^n\times{\mathbb{R}}^n} |\eta|^{2s}
v({\mathcal{R}}_\omega\,y)\,
e^{2\pi i (x-y)\cdot\eta}\,dy\,d\eta.$$
As a consequence,
\begin{eqnarray*}
\big( (-\Delta)^s v\big)_\sharp(x)&=&
\frac{1}{\left|\mathbb{S}^{n-1}\right|}
\int_{\mathbb{S}^{n-1}} (-\Delta)^s v({\mathcal{R}}_\omega\,x)
\,d{\mathcal{H}}^{n-1}(\omega)
\\ &=&
\frac{1}{\left|\mathbb{S}^{n-1}\right|}
\iiint_{\mathbb{S}^{n-1}\times{\mathbb{R}}^n\times{\mathbb{R}}^n} 
|\eta|^{2s}
v({\mathcal{R}}_\omega\,y)\,
e^{2\pi i (x-y)\cdot\eta}
\,d{\mathcal{H}}^{n-1}(\omega)\,dy\,d\eta\\
&=& \iint_{{\mathbb{R}}^n\times{\mathbb{R}}^n} 
|\eta|^{2s}
v_\sharp(y)\,
e^{2\pi i (x-y)\cdot\eta}\,dy\,d\eta\\&=&
\int_{{\mathbb{R}}^n} 
|\eta|^{2s} {\mathcal{F}} (v_\sharp)(\eta)\,
e^{2\pi i x\cdot\eta}\,d\eta\\&=&(-\Delta)^s v_\sharp(x),
\end{eqnarray*}
as desired.
\end{proof}
\end{lemma}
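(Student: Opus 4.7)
The plan is to exploit the fact that the fractional Laplacian, being the Fourier multiplier by the radial symbol $|\xi|^{2s}$, commutes with every orthogonal transformation, and then to push this equivariance through the averaging over $\mathbb{S}^{n-1}$ that defines $v_\sharp$.

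First I would reduce to $v$ in the Schwartz class by the density argument already invoked in Lemma~\ref{lapsfercom}; this ensures that every Fourier integral below converges absolutely and that differentiation and integration may be freely interchanged. The preliminary step is the rotation equivariance
\begin{equation*}
(-\Delta)^s (v\circ\mathcal{R}_\omega)(x)=\big((-\Delta)^s v\big)(\mathcal{R}_\omega x),
\end{equation*}
valid for every $\omega\in\mathbb{S}^{n-1}$. This is exactly the computation already carried out in \eqref{RFA2}: starting from the Fourier representation of $(-\Delta)^s v$ at the point $\mathcal{R}_\omega x$ and performing the change of variable $\eta:=\mathcal{R}_\omega^{T}\xi$, the Jacobian is $1$, $|\eta|=|\xi|$, and the exponent becomes $2\pi i\, x\cdot\eta$, while $\mathcal{F}v(\mathcal{R}_\omega\eta)=\mathcal{F}(v\circ\mathcal{R}_\omega)(\eta)$ by another change of variable in the Fourier transform itself.

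Next I would simply average the identity above over $\omega\in\mathbb{S}^{n-1}$. By definition of the spherical mean,
\begin{equation*}
v_\sharp(x)=\frac{1}{|\mathbb{S}^{n-1}|}\int_{\mathbb{S}^{n-1}} v(\mathcal{R}_\omega x)\,d\mathcal{H}^{n-1}(\omega),
\end{equation*}
so applying $(-\Delta)^s$ in the variable $x$ and interchanging it with the $\omega$-integral (allowed by the uniform Schwartz bounds, in view of $(-\Delta)^s(v\circ\mathcal{R}_\omega)$ being a Schwartz function whose seminorms are controlled independently of $\omega$) gives
\begin{equation*}
(-\Delta)^s v_\sharp(x)=\frac{1}{|\mathbb{S}^{n-1}|}\int_{\mathbb{S}^{n-1}} (-\Delta)^s\big(v\circ\mathcal{R}_\omega\big)(x)\,d\mathcal{H}^{n-1}(\omega).
\end{equation*}
Using the rotation equivariance established in the first step, the right-hand side equals the spherical mean of $(-\Delta)^s v$, that is $\big((-\Delta)^s v\big)_\sharp(x)$, which is the claim.

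The only delicate point is the justification of moving $(-\Delta)^s$ under the spherical integral. For Schwartz $v$ this is essentially routine, since both expressions can be written as absolutely convergent double integrals against $|\eta|^{2s}\,e^{2\pi i(x-y)\cdot\eta}$ (suitably regularized for large $|\eta|$ by the rapid decay of $\mathcal{F}v$) and Fubini applies; after the density reduction this suffices for all $v$ for which the statement is meaningful. I do not expect any genuine obstacle beyond carefully tracking these exchanges, which parallels the computation the authors have already displayed in \eqref{RFA2}.
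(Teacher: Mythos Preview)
Your proof is correct and follows essentially the same approach as the paper: reduce to Schwartz functions, use the rotation equivariance of $(-\Delta)^s$ coming from the radial Fourier symbol $|\xi|^{2s}$ (this is exactly~\eqref{RFA2}), and then average over $\mathbb{S}^{n-1}$, justifying the interchange of $(-\Delta)^s$ with the spherical integral via Fubini. The only cosmetic difference is that the paper carries out the averaging and the Fourier computation in a single chain of equalities, whereas you isolate the equivariance identity first and then average; the content is the same.
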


It is also useful to observe that the spherical mean is compatible with the energy bounds.
In particular we have the following observation:

\begin{lemma}
We have that
\begin{equation}\label{ENblaoe1}
\mathcal{E}_s\left(v_\sharp,v_\sharp\right)\le
\mathcal{E}_s\left(v,v\right).\end{equation}
Moreover,
\begin{equation}\label{ENblaoe2}
{\mbox{if~$v\in H^s_0(B_1)$, then so does~$v_\sharp$.}}\end{equation}

\begin{proof} We see that
\begin{eqnarray*}
{\mathcal{F}}(v_\sharp)(\xi)&=&
\int_{\mathbb{R}^n} v_\sharp(x)\,e^{-2\pi ix\cdot\xi}\,dx\\
&=&
\frac{1}{\left|\mathbb{S}^{n-1}\right|}
\iint_{\mathbb{S}^{n-1}\times{\mathbb{R}^n}} 
v({\mathcal{R}}_\omega\,x)\,e^{-2\pi ix\cdot\xi}
\,d{\mathcal{H}}^{n-1}(\omega)\,dx
\end{eqnarray*}
and therefore, taking the complex conjugated,
$$ \overline{ {\mathcal{F}}(v_\sharp)(\xi) }=
\frac{1}{\left|\mathbb{S}^{n-1}\right|}
\iint_{\mathbb{S}^{n-1}\times{\mathbb{R}^n}} 
v({\mathcal{R}}_\omega\,x)\,e^{2\pi ix\cdot\xi}
\,d{\mathcal{H}}^{n-1}(\omega)\,dx.$$
Hence, by~\eqref{ENstut}, and exploiting the changes of variables~$y:=
{\mathcal{R}}_{\omega}\,x$ and~$\tilde y:=
{\mathcal{R}}_{\tilde\omega}\,\tilde x$,
\begin{eqnarray*}&&
\mathcal{E}_s\left(v_\sharp,v_\sharp\right)\\&=&\int_{\mathbb{R}^n} 
\left|\xi\right|^{2s}\mathcal{F}(v_\sharp)\left(\xi\right)
\overline{\mathcal{F}(v_\sharp)\left(\xi\right)} \,d\xi\\
&=&\frac{1}{\left|\mathbb{S}^{n-1}\right|^2}
\iiiint\!\!\!\int_{\mathbb{S}^{n-1}\times\mathbb{S}^{n-1}\times{\mathbb{R}^n}\times{\mathbb{R}^n}\times{\mathbb{R}^n}} 
|\xi|^{2s}v({\mathcal{R}}_\omega\,x)\,v({\mathcal{R}}_{\tilde\omega}\,\tilde x)\,e^{2\pi i (\tilde x-x)\cdot\xi}
\,d{\mathcal{H}}^{n-1}(\omega)
\,d{\mathcal{H}}^{n-1}(\tilde\omega)\,dx\,d\tilde x\,d\xi\\
&=& \frac{1}{\left|\mathbb{S}^{n-1}\right|^2}
\iiiint\!\!\!\int_{\mathbb{S}^{n-1}\times\mathbb{S}^{n-1}\times{\mathbb{R}^n}\times{\mathbb{R}^n}\times{\mathbb{R}^n}} 
|\xi|^{2s}v(y)\,v(\tilde y)\,e^{2\pi i \tilde y\cdot{\mathcal{R}}_{\tilde\omega}\,\xi}
e^{-2\pi i y\cdot{\mathcal{R}}_\omega\,\xi}
\,d{\mathcal{H}}^{n-1}(\omega)
\,d{\mathcal{H}}^{n-1}(\tilde\omega)\,dy\,d\tilde y\,d\xi\\
&=& \frac{1}{\left|\mathbb{S}^{n-1}\right|^2}
\iiint_{\mathbb{S}^{n-1}\times\mathbb{S}^{n-1}\times{\mathbb{R}^n}} 
|\xi|^{2s}\,{\mathcal{F}}v({\mathcal{R}}_\omega\,\xi)\,\overline{
{\mathcal{F}}v({\mathcal{R}}_{\tilde\omega}\,\xi)}\,
\,d{\mathcal{H}}^{n-1}(\omega)
\,d{\mathcal{H}}^{n-1}(\tilde\omega)\,d\xi.
\end{eqnarray*}
Consequently, using the Cauchy-Schwarz Inequality,
and the substitutions~$\eta:=
{\mathcal{R}}_{\omega}\,\xi$ and~$\tilde\eta:=
{\mathcal{R}}_{\tilde\omega}\,\xi$,
\begin{eqnarray*}
\mathcal{E}_s\left(v_\sharp,v_\sharp\right)&\le&
\frac{1}{\left|\mathbb{S}^{n-1}\right|^2}
\iiint_{\mathbb{S}^{n-1}\times\mathbb{S}^{n-1}\times{\mathbb{R}^n}} 
|\xi|^{2s}\,\big|{\mathcal{F}}v({\mathcal{R}}_\omega\,\xi)\big|
\,\big|
{\mathcal{F}}v({\mathcal{R}}_{\tilde\omega}\,\xi)\big|\,
\,d{\mathcal{H}}^{n-1}(\omega)
\,d{\mathcal{H}}^{n-1}(\tilde\omega)\,d\xi
\\ &\le&
\frac{1}{\left|\mathbb{S}^{n-1}\right|^2}
\left(
\iiint_{\mathbb{S}^{n-1}\times\mathbb{S}^{n-1}\times{\mathbb{R}^n}} 
|\xi|^{2s}\,\big|{\mathcal{F}}v({\mathcal{R}}_\omega\,\xi)\big|^2
\,d{\mathcal{H}}^{n-1}(\omega)
\,d{\mathcal{H}}^{n-1}(\tilde\omega)\,d\xi\right)^{\frac12}\\&&\qquad\cdot
\left(
\iiint_{\mathbb{S}^{n-1}\times\mathbb{S}^{n-1}\times{\mathbb{R}^n}} 
|\xi|^{2s}\,
\big|{\mathcal{F}}v({\mathcal{R}}_{\tilde\omega}\,\xi)\big|^2
\,d{\mathcal{H}}^{n-1}(\omega)
\,d{\mathcal{H}}^{n-1}(\tilde\omega)\,d\xi\right)^{\frac12}\\
\\ &=&
\frac{1}{\left|\mathbb{S}^{n-1}\right|^2}
\left(
\iiint_{\mathbb{S}^{n-1}\times\mathbb{S}^{n-1}\times{\mathbb{R}^n}} 
|\eta|^{2s}\,\big|{\mathcal{F}}v(\eta)\big|^2
\,d{\mathcal{H}}^{n-1}(\omega)
\,d{\mathcal{H}}^{n-1}(\tilde\omega)\,d\eta\right)^{\frac12}\\&&\qquad\cdot
\left(
\iiint_{\mathbb{S}^{n-1}\times\mathbb{S}^{n-1}\times{\mathbb{R}^n}} 
|\tilde\eta|^{2s}\,\big|
{\mathcal{F}}v(\tilde\eta)\big|^2
\,d{\mathcal{H}}^{n-1}(\omega)
\,d{\mathcal{H}}^{n-1}(\tilde\omega)\,d\tilde\eta\right)^{\frac12}
\\ &=&
\left(
\int_{{\mathbb{R}^n}} 
|\eta|^{2s}\,\big|{\mathcal{F}}v(\eta)\big|^2
\,d\eta\right)^{\frac12}
\left(
\int_{\mathbb{R}^{n}} 
|\tilde\eta|^{2s}\,\big|
{\mathcal{F}}v(\tilde\eta)\big|^2
\,d\tilde\eta\right)^{\frac12}
\\&=&
\mathcal{E}_s\left(v,v\right).
\end{eqnarray*}
This proves~\eqref{ENblaoe1}.

Now, we prove~\eqref{ENblaoe2}. For this, we observe that
$$ \frac{\partial^\ell v_\sharp}{\partial x_{j_1}\dots\partial x_{j_\ell}}(x)=
\frac{1}{\left|\mathbb{S}^{n-1}\right|}\sum_{k_1,\dots,k_\ell=1}^n
\int_{\mathbb{S}^{n-1}} 
\frac{\partial^\ell v}{\partial x_{k_1}\dots\partial x_{k_\ell}}
({\mathcal{R}}_\omega\,x)\;{\mathcal{R}}_\omega^{k_1j_1}\dots{\mathcal{R}}_\omega^{k_\ell j_\ell}
\;d{\mathcal{H}}^{n-1}(\omega),$$
for every $\ell\in{\mathbb{N}}$ and~$j_1,\dots,j_\ell\in\{1,\dots,n\}$,
where~${\mathcal{R}}_\omega^{jk}$ denotes the $(j,k)$ component of the matrix~${\mathcal{R}}_\omega$.
In particular,
$$ \left|\frac{\partial^\ell v_\sharp}{\partial x_{j_1}\dots\partial x_{j_\ell}}(x)\right|\le
C\,\sum_{k_1,\dots,k_\ell=1}^n
\int_{\mathbb{S}^{n-1}} 
\left|\frac{\partial^\ell v}{\partial x_{k_1}\dots\partial x_{k_\ell}}
({\mathcal{R}}_\omega\,x)\right|
\;d{\mathcal{H}}^{n-1}(\omega),$$
for some~$C>0$ only depending on~$n$ and~$\ell$, and hence
\begin{eqnarray*} \left\|\frac{\partial^\ell v_\sharp}{\partial x_{j_1}\dots
\partial x_{j_\ell}}(x)\right\|_{L^2(B_1)}^2&\le&
C\,\sum_{k_1,\dots,k_\ell=1}^n
\iint_{\mathbb{S}^{n-1}\times B_1} 
\left|\frac{\partial^\ell v}{\partial x_{k_1}\dots\partial x_{k_\ell}}
({\mathcal{R}}_\omega\,x)\right|^2
\;d{\mathcal{H}}^{n-1}(\omega)\,dx\\&=&
C\,\sum_{k_1,\dots,k_\ell=1}^n
\iint_{\mathbb{S}^{n-1}\times B_1} 
\left|\frac{\partial^\ell v}{\partial x_{k_1}\dots\partial x_{k_\ell}}
(y)\right|^2
\;d{\mathcal{H}}^{n-1}(\omega)\,dy\\&=&
C\,\sum_{k_1,\dots,k_\ell=1}^n
\left\|\frac{\partial^\ell v}{\partial x_{k_1}\dots\partial x_{k_\ell}}
\right\|_{L^2(B_1)}^2,
\end{eqnarray*}
up to renaming~$C$.

This, together with~\eqref{energynorm} and~\eqref{ENblaoe1},
gives~\eqref{ENblaoe2}, as desired.
\end{proof}
\end{lemma}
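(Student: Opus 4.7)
The plan is to move everything to the Fourier side and exploit that rotations preserve both the Lebesgue measure on $\mathbb{R}^n$ and the weight $|\xi|^{2s}\,d\xi$. The starting observation is the commutation identity
\[ \mathcal{F}(v_\sharp)(\xi) = \frac{1}{|\mathbb{S}^{n-1}|}\int_{\mathbb{S}^{n-1}} \mathcal{F}(v)({\mathcal{R}}_\omega \xi)\,d{\mathcal{H}}^{n-1}(\omega), \]
obtained from Fubini's theorem combined with the substitution $y = {\mathcal{R}}_\omega x$, using $x\cdot\xi = y \cdot {\mathcal{R}}_\omega\xi$. In other words, $\mathcal{F}$ and the spherical-averaging operator commute up to a change of variable.

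To prove \eqref{ENblaoe1}, I would expand
\[ \mathcal{E}_s(v_\sharp,v_\sharp) = \int_{\mathbb{R}^n} |\xi|^{2s}\,\mathcal{F}(v_\sharp)(\xi)\,\overline{\mathcal{F}(v_\sharp)(\xi)}\,d\xi \]
as a triple integral over $\mathbb{S}^{n-1}\times\mathbb{S}^{n-1}\times\mathbb{R}^n$, with one rotation parameter coming from each Fourier factor. Bounding the integrand by the product of the two moduli and applying the Cauchy--Schwarz inequality to the double angular integration decouples the two rotations $\omega$ and $\tilde\omega$. In each of the resulting factors, the substitution $\eta = {\mathcal{R}}_\omega\xi$ (respectively $\tilde\eta = {\mathcal{R}}_{\tilde\omega}\xi$) leaves $|\xi|^{2s}\,d\xi$ invariant and collapses the integral into $\mathcal{E}_s(v,v)^{1/2}$; the normalizing prefactor $|\mathbb{S}^{n-1}|^{-2}$ is exactly balanced by the two remaining angular integrations.

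For \eqref{ENblaoe2}, three items must be verified according to \eqref{energynorm}. The support condition $v_\sharp = 0$ on $\mathbb{R}^n\setminus B_1$ is immediate, since $|{\mathcal{R}}_\omega x| = |x|$ for every rotation, so $v({\mathcal{R}}_\omega x)=0$ whenever $|x|>1$; the finiteness of the seminorm $\mathcal{E}_s(v_\sharp,v_\sharp)$ is supplied directly by \eqref{ENblaoe1}. For the $L^2$ bounds on classical derivatives of order at most $m$, I would differentiate under the integral sign via the chain rule: this expresses $\partial^\alpha v_\sharp(x)$ as a spherical average of terms $\partial^\beta v({\mathcal{R}}_\omega x)$ with $|\beta|=|\alpha|$, weighted by bounded products of entries of ${\mathcal{R}}_\omega$. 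Minkowski's integral inequality combined with the substitution $y={\mathcal{R}}_\omega x$ then bounds $\|\partial^\alpha v_\sharp\|_{L^2(B_1)}$ by a constant times $\sum_{|\beta|=|\alpha|}\|\partial^\beta v\|_{L^2(B_1)}$.

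The only mildly delicate point, and the one I expect to require some care, is the justification of the Fubini exchanges and of the differentiation under the integral sign for general $v\in H^s_0(B_1)$. The standard remedy is to argue first for smooth, compactly supported functions (or Schwartz functions), where every exchange is manifestly legitimate, and then extend to arbitrary $v\in H^s_0(B_1)$ by density, using the quantitative bounds derived above to pass to the limit.
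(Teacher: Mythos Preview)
Your proposal is correct and follows essentially the same route as the paper: both derive the identity $\mathcal{F}(v_\sharp)(\xi)=\frac{1}{|\mathbb{S}^{n-1}|}\int_{\mathbb{S}^{n-1}}\mathcal{F}v({\mathcal{R}}_\omega\xi)\,d{\mathcal{H}}^{n-1}(\omega)$, expand $\mathcal{E}_s(v_\sharp,v_\sharp)$ as a triple integral, apply Cauchy--Schwarz and rotation invariance for \eqref{ENblaoe1}, and then handle \eqref{ENblaoe2} via the chain rule on $\partial^\alpha v_\sharp$ together with a change of variable in the $L^2$ norm. The only cosmetic differences are that the paper does not spell out the support condition or the density argument, and it bounds $\|\partial^\alpha v_\sharp\|_{L^2}^2$ directly by squaring and using Jensen rather than invoking Minkowski's integral inequality.
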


With this preliminary work, we can now find a nontrivial,
nonnegative
and radial solution of~\eqref{dirfun}.

\begin{proposition}\label{2.52.5}
There exists a solution of~\eqref{dirfun} in~$H^s_0(B_1)$
which is radial, nonnegative
and with unit norm in~$L^2(B_1)$.

\begin{proof} Let~$u\ge0$ be a nontrivial solution of~\eqref{dirfun},
whose existence is warranted by Lemma~\ref{ikAHHPKAK}.

Then, we have that~$u_\sharp\ge0$.
Moreover,
\begin{eqnarray*}&& \int_{B_1}u_\sharp(x)\,dx=
\frac{1}{\left|\mathbb{S}^{n-1}\right|}
\iint_{\mathbb{S}^{n-1}\times B_1} u({\mathcal{R}}_\omega\,x)
\,d{\mathcal{H}}^{n-1}(\omega)\,dx\\&&\qquad=
\frac{1}{\left|\mathbb{S}^{n-1}\right|}
\iint_{\mathbb{S}^{n-1}\times B_1} u(y)
\,d{\mathcal{H}}^{n-1}(\omega)\,dy=
\int_{ B_1} u(y)
\,dy>0,
\end{eqnarray*}
and therefore~$u_\sharp$ does not vanish identically.

As a consequence, we can define
$$u_\star:=\frac{u_\sharp}{\|u_\sharp\|_{L^2(B_1)}}.$$
We know that~$u_\star\in H^s_0(B_1)$, due to~\eqref{ENblaoe2}.
Moreover, in view of Lemma~\ref{lapsfercom},
$$ (-\Delta)^s u_\star=
\frac{(-\Delta)^s u_\sharp}{\|u_\sharp\|_{L^2(B_1)}}=
\frac{\big((-\Delta)^s u\big)_\sharp}{\|u_\sharp\|_{L^2(B_1)}}
=\frac{\lambda_1\,u_\sharp}{\|u_\sharp\|_{L^2(B_1)}}
=\lambda_1\,u_\star,$$
which gives the desired result.
\end{proof}
\end{proposition}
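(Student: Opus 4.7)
The plan is to obtain the required radial nonnegative $\lambda_1$-eigenfunction by spherically averaging a sign-definite eigenfunction, using the commutation of $(-\Delta)^s$ with rotational averaging. Specifically, I would start from a nontrivial solution $u$ of~\eqref{dirfun} that does not change sign, whose existence is supplied by Lemma~\ref{ikAHHPKAK}, and, replacing $u$ with $-u$ if necessary, assume $u\ge 0$. The candidate radial eigenfunction will be a suitable normalization of the spherical mean $u_\sharp$.

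Three of the four required properties of $u_\sharp$ come essentially for free from the preparatory work. Radiality is built into the definition of the spherical mean, nonnegativity follows because $u_\sharp$ is a positive-measure average of a nonnegative function, and membership in $H^s_0(B_1)$ is precisely the content of~\eqref{ENblaoe2}. The only remaining point is that $u_\sharp\not\equiv 0$, which I would check by Fubini and the rotation-invariance of Lebesgue measure on $B_1$:
\[
\int_{B_1} u_\sharp(x)\,dx \;=\; \frac{1}{|\mathbb{S}^{n-1}|}\int_{\mathbb{S}^{n-1}}\!\!\int_{B_1} u(\mathcal{R}_\omega x)\,dx\,d\mathcal{H}^{n-1}(\omega) \;=\; \int_{B_1} u(y)\,dy \;>\;0,
\]
where the strict inequality uses that $u$ is a nontrivial nonnegative function. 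Then I would set $u_\star:=u_\sharp/\|u_\sharp\|_{L^2(B_1)}$, which lies in $H^s_0(B_1)$ and has unit $L^2$ norm.

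The final step is to verify that $u_\star$ still solves~\eqref{dirfun} with eigenvalue $\lambda_1$, and this is where Lemma~\ref{lapsfercom} does all the heavy lifting. Since $(-\Delta)^s$ commutes with the spherical mean and the spherical mean is linear, one has $(-\Delta)^s u_\sharp = \big((-\Delta)^s u\big)_\sharp = (\lambda_1 u)_\sharp = \lambda_1 u_\sharp$, and dividing through by $\|u_\sharp\|_{L^2(B_1)}$ yields $(-\Delta)^s u_\star = \lambda_1 u_\star$ as desired.

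I expect no serious obstacle beyond the care needed to thread these ingredients together: all the technically substantial content has already been established in Lemma~\ref{ikAHHPKAK} (existence of a sign-definite minimizer, via a Moreau-type decomposition on the nonnegative cone), Lemma~\ref{lapsfercom} (the Fourier-analytic commutation of $(-\Delta)^s$ with rotational averaging), and the energy/Sobolev bound~\eqref{ENblaoe2}. The conceptual content of the present proposition is simply that these three facts combine to allow \emph{a posteriori} symmetrization of the first eigenfunction without losing its sign, its eigenvalue, or its energy class. A purely variational alternative — restricting the Rayleigh quotient to radial nonnegative competitors from the outset — would require reproving lower semicontinuity and attainment in a symmetric subspace, so the present route via symmetrization is both shorter and more transparent.
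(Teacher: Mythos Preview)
Your proposal is correct and follows essentially the same route as the paper's proof: start from the nonnegative eigenfunction of Lemma~\ref{ikAHHPKAK}, pass to its spherical mean, check nontriviality via the integral identity, normalize, and invoke~\eqref{ENblaoe2} and Lemma~\ref{lapsfercom} to recover membership in $H^s_0(B_1)$ and the eigenvalue equation. The only cosmetic difference is that you remark on replacing $u$ with $-u$ if necessary, whereas the paper's Lemma~\ref{ikAHHPKAK} already arranges $u\ge 0$ in its proof.
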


Now, we are in the position of proving the following result.

\begin{lemma}
\label{onesob}
Let $s\ge1$ and~$r\in(0,1)$. If $u\in H_0^s\left(B_1\right)$ and $u$ is radial, then $u\in C^\alpha\left({\mathbb{R}}^n\setminus B_r\right)$ for any $\alpha\in\left[0,\frac{1}{2}\right]$.
\begin{proof}
We write
\begin{equation}
u\left(x\right)=u_0\left(\left|x\right|\right),\qquad\mbox{for some }
\;u_0:[0,+\infty)\rightarrow\mathbb{R}
\end{equation}
and we observe that~$u\in H_0^s\left(B_1\right)\subset H^1\left({\mathbb{R}}^n\right) $.

Accordingly, for any $0<r<1$, we have
\begin{equation}
\label{funz}
\infty>\int_{{\mathbb{R}}^n\setminus B_r} |u(x)|^2 \,dx
=\int_r^{+\infty} |u_0(\rho)|^2\rho^{n-1} \,d\rho\geq 
r^{n-1}\int_r^{+\infty}|u_0(\rho)|^2 \,d\rho
\end{equation}
and
\begin{equation}
\label{grad}
\infty>\int_{{\mathbb{R}}^n\setminus B_r} {|\nabla u(x)|^2 \,dx}=\int_r^{+\infty}
{\left|u_0' (\rho)\right|^2\rho^{n-1} \,d\rho}
\geq r^{n-1}\int_r^{+\infty}{\left|u_0' (\rho)\right|^2 \,d\rho}.
\end{equation}
Thanks to \eqref{funz} and \eqref{grad} we have that $u_0\in H^1\left(\left(r,+\infty\right)\right)$,
with~$u_0=0$ in~$[1,+\infty)$.

Then, from 
the Morrey Embedding Theorem, it follows that
$u_0\in C^\alpha\left(\left(r,+\infty\right)\right)$ for any
$\alpha\in\left[0,\frac{1}{2}\right]$, which leads to the desired result.
\end{proof}
\end{lemma}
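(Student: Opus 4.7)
The plan is to use the fact that for $s\ge 1$ we have the continuous inclusion $H^s_0(B_1)\subset H^1(\mathbb{R}^n)$, so that the radial profile $u_0$ of $u$ inherits enough one-dimensional Sobolev regularity to apply a Morrey embedding in dimension one. Concretely, first I would write $u(x)=u_0(|x|)$ and express $\|u\|_{L^2(\mathbb{R}^n\setminus B_r)}^2$ and $\|\nabla u\|_{L^2(\mathbb{R}^n\setminus B_r)}^2$ in spherical coordinates. Since the Jacobian $\rho^{n-1}$ is bounded below by $r^{n-1}>0$ on $(r,+\infty)$, I can drop this weight and conclude that both $u_0$ and $u_0'$ lie in $L^2((r,+\infty))$, i.e.\ $u_0\in H^1((r,+\infty))$.

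Next, since $u$ vanishes outside $B_1$, the same holds for $u_0$ on $[1,+\infty)$, so $u_0$ is actually an $H^1$ function on a one-dimensional interval (or on $(r,+\infty)$ with compact support in $[r,1]$). The classical Morrey embedding in one dimension then yields $u_0\in C^{1/2}((r,+\infty))$, and hence $u_0\in C^\alpha((r,+\infty))$ for any $\alpha\in[0,1/2]$.

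Finally, to transfer this Hölder regularity from the profile $u_0$ to the function $u$ on $\mathbb{R}^n\setminus B_r$, I would observe that for any $x,y\in\mathbb{R}^n\setminus B_r$ one has the elementary inequality $\bigl||x|-|y|\bigr|\le |x-y|$, so
\[
|u(x)-u(y)|=\bigl|u_0(|x|)-u_0(|y|)\bigr|\le C\bigl||x|-|y|\bigr|^\alpha\le C|x-y|^\alpha,
\]
which gives the desired Hölder continuity of $u$ on $\mathbb{R}^n\setminus B_r$. The whole argument is essentially straightforward; the only minor point to be careful about is the initial embedding $H^s_0(B_1)\subset H^1(\mathbb{R}^n)$ (which uses $s\ge 1$) and the passage from weighted one-dimensional bounds to unweighted ones, which is immediate thanks to the uniform lower bound $\rho^{n-1}\ge r^{n-1}$ away from the origin.
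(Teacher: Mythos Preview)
Your proposal is correct and follows essentially the same approach as the paper: reduce to the radial profile, use $H^s_0(B_1)\subset H^1(\mathbb{R}^n)$ for $s\ge1$, pass to polar coordinates and exploit the lower bound $\rho^{n-1}\ge r^{n-1}$ to obtain $u_0\in H^1((r,+\infty))$, and then apply the one-dimensional Morrey embedding. The only difference is that you spell out the final step (transferring H\"older regularity from $u_0$ to $u$ via $\bigl||x|-|y|\bigr|\le|x-y|$), which the paper leaves implicit.
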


\begin{corollary}\label{QUESTCP} Let~$s\in(0,+\infty)$.
There exists a radial, nonnegative
and nontrivial solution of \eqref{dirfun}
which belongs to~$H^s_0(B_1)\cap C^{\alpha}({\mathbb{R}}^n\setminus B_{1/2})$,
for some~$\alpha\in(0,1)$.

\begin{proof} If~$s\in(0,1)$, the desired claim follows from 
Corollary~8 in~\cite{DSV1}.

If instead~$s\ge1$, we obtain the desired result as a consequence of
Proposition \ref{2.52.5} and Lemma \ref{onesob}.
\end{proof}
\end{corollary}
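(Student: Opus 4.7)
The strategy is to split according to whether $s$ is below or above $1$, because the two regimes call for different machinery, and in each regime the main ingredients have already been assembled in the section. No new construction is needed, only a clean assembly.

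In the range $s\in(0,1)$ the conclusion can be read off from the existing literature. Indeed, Corollary~8 of~\cite{DSV1} already produces a radial, nonnegative, nontrivial first Dirichlet eigenfunction of~$(-\Delta)^s$ on~$B_1$ that is Hölder continuous away from~$\partial B_1$, so this case reduces to quoting that statement (together with restricting to~$\mathbb{R}^n\setminus B_{1/2}$).

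In the range $s\ge 1$ the plan is to combine Proposition~\ref{2.52.5} with Lemma~\ref{onesob}. First, Proposition~\ref{2.52.5} supplies a radial, nonnegative, nontrivial $u_\star\in H^s_0(B_1)$ that solves~\eqref{dirfun}. Then, since $s\ge 1$ and $u_\star$ is radial, Lemma~\ref{onesob} applies with the choice $r:=1/2$ and upgrades this solution to $u_\star\in C^\alpha(\mathbb{R}^n\setminus B_{1/2})$ for every $\alpha\in\bigl[0,\tfrac12\bigr]$. Picking any $\alpha\in\bigl(0,\tfrac12\bigr]$ yields the corollary.

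The genuine work has already been front-loaded into the preceding lemmas, which is why this last step is short: the existence of a nontrivial, sign-definite eigenfunction in the higher-order case required the dual-cone splitting of Lemma~\ref{ikAHHPKAK}; the radial symmetrization step depended on the commutation property in Lemma~\ref{lapsfercom} and on the energy-space bounds~\eqref{ENblaoe1}--\eqref{ENblaoe2}; and the Hölder regularity outside $B_{1/2}$ used that $s\ge 1$ forces $H^1$ control of the radial profile, so that a one-dimensional Morrey embedding yields the desired modulus of continuity. The main obstacle is thus conceptual rather than computational: recognizing that these three ingredients, together with the known $s\in(0,1)$ result, cover the whole range $s\in(0,+\infty)$ uniformly, so that the corollary follows by case distinction with no further estimates.
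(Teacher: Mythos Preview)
Your proof is correct and follows essentially the same approach as the paper: the case~$s\in(0,1)$ is handled by quoting Corollary~8 of~\cite{DSV1}, and the case~$s\ge1$ is obtained by combining Proposition~\ref{2.52.5} with Lemma~\ref{onesob}. The additional paragraph summarizing the roles of the earlier lemmas is accurate commentary but not part of the argument itself.
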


\section{Boundary asymptotics of the first eigenfunctions of~$(-\Delta)^s$}
\label{sec5}

In Lemma 4 of~\cite{DSV1}, some precise asymptotics
at the boundary for the first Dirichlet eigenfunction
of~$(-\Delta)^s$ have been established in the range~$s\in(0,1)$.

Here, we obtain a related expansion in the range~$s>0$
for the eigenfunction provided in Corollary~\ref{QUESTCP}.
The result that we obtain is the following:

\begin{proposition}
\label{sharbou}
There exists a nontrivial solution $\phi_*$ of \eqref{dirfun}
which belongs to~$H^s_0(B_1)\cap C^{\alpha}
({\mathbb{R}}^n\setminus B_{1/2})$,
for some~$\alpha\in(0,1)$, and such that, for every~$e\in\partial B_1$
and~$\beta=(\beta_1,\dots,\beta_n)\in{\mathbb{N}}^n$,
\[\lim_{\epsilon\searrow 0}\epsilon^{\left|\beta\right|-s}\partial^\beta\phi_*\left(e+\epsilon X\right)=\left(-1\right)^{\left|\beta\right|}k_*\, s\left(s-1\right)\ldots\left(s-\left|\beta\right|+1\right)e_1^{\beta_1}\ldots e_n^{\beta_n}\left(-e\cdot X\right)_+^{s-\left|\beta\right|},\]
in the sense of distribution, with~$|\beta|:=\beta_1+\dots+\beta_n$
and~$k_*>0$.\end{proposition}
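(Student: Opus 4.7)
The plan is to take $\phi_*$ to be the radial, nonnegative, nontrivial solution of~\eqref{dirfun} in $H^s_0(B_1)\cap C^\alpha(\mathbb{R}^n\setminus B_{1/2})$ supplied by Corollary~\ref{QUESTCP}. Since $f:=\lambda_1\phi_*$ lies in $L^2(B_1)$ and in $C^\alpha(B_1\setminus B_{1/2})$, Proposition~\ref{LEJOS} with $r=1/2$ produces a function $\tilde\phi$, given by the Green convolution with $f$, that distributionally solves $(-\Delta)^s\tilde\phi=f$ in $B_1$, vanishes outside $B_1$, lies in $W^{2s,2}_{\rm loc}(B_1)$, and obeys $|\tilde\phi(x)|\le C\,d(x)^s$ near $\partial B_1$. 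These properties suffice to identify $\tilde\phi\equiv\phi_*$: their difference $w$ solves $(-\Delta)^s w=0$ in $B_1$ with $w=0$ outside, and the regularity allows one to test the equation against $w$, yielding $\mathcal{E}_s(w,w)=0$. Hence
\[ \phi_*(x)=\lambda_1\int_{B_1}\mathcal{G}_s(x,y)\,\phi_*(y)\,dy,\qquad x\in B_1. \]

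Next I would extract the boundary asymptotic from this representation. From~\eqref{GREEN} and the elementary expansion $\int_0^\rho\eta^{s-1}(\eta+1)^{-n/2}\,d\eta=s^{-1}\rho^s+O(\rho^{s+1})$ as $\rho\searrow 0$, combined with $1-|e+\epsilon X|^2=2\epsilon(-e\cdot X)+O(\epsilon^2)$ when $-e\cdot X>0$, one obtains for $y$ bounded away from $e$
\[ \epsilon^{-s}\mathcal{G}_s(e+\epsilon X,y)\;\longrightarrow\;\frac{k(n,s)}{s}\frac{(2(-e\cdot X))^s(1-|y|^2)^s}{|e-y|^n}\qquad\mbox{as }\epsilon\searrow 0. \]
I then split the integral of $\mathcal{G}_s(e+\epsilon X,y)\phi_*(y)$ over $\{|y-e|>\delta\}$, where dominated convergence applies with an integrable majorant (provided by $\phi_*(y)\le Cd(y)^s$ from~\eqref{VIC2}), and over $\{|y-e|<\delta\}$, where the bound $|\phi_*(y)|\le C\delta^s$ combined with the basic estimate $\int_{B_1}\mathcal{G}_s(x,y)\,dy\le Cd(x)^s$ (Proposition~\ref{LEJOS} applied to $f\equiv 1$) yields
\[ \epsilon^{-s}\left|\lambda_1\int_{B_1\cap\{|y-e|<\delta\}}\mathcal{G}_s(e+\epsilon X,y)\phi_*(y)\,dy\right|\le C|X|^s\delta^s. \]
Sending first $\epsilon\searrow 0$ and then $\delta\searrow 0$ gives
\[ \lim_{\epsilon\searrow 0}\epsilon^{-s}\phi_*(e+\epsilon X)=k_*(-e\cdot X)_+^s,\qquad k_*:=\frac{2^s\lambda_1 k(n,s)}{s}\int_{B_1}\frac{(1-|y|^2)^s\phi_*(y)}{|e-y|^n}\,dy. \]
Radial symmetry of $\phi_*$ makes $k_*$ independent of $e$, and $k_*>0$ since $\phi_*\ge 0$ and $\phi_*\not\equiv 0$.

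For a general multi-index $\beta$ I would argue distributionally. Given $\varphi\in C^\infty_0(\mathbb{R}^n)$, a change of variable plus integration by parts give
\[ \int_{\mathbb{R}^n}\epsilon^{|\beta|-s}(\partial^\beta\phi_*)(e+\epsilon X)\,\varphi(X)\,dX=(-1)^{|\beta|}\int_{\mathbb{R}^n}\epsilon^{-s}\phi_*(e+\epsilon X)\,\partial^\beta\varphi(X)\,dX. \]
The bound $\epsilon^{-s}|\phi_*(e+\epsilon X)|\le C(1+|X|)^s$ supplied by~\eqref{VIC2} (since $d(e+\epsilon X)\le\epsilon|X|$), paired with the pointwise limit on the half-space $\{-e\cdot X>0\}$ just established (and trivial vanishing when $-e\cdot X<0$ for $\epsilon$ small), permits passage to the limit by dominated convergence, producing $(-1)^{|\beta|}k_*\int(-e\cdot X)_+^s\partial^\beta\varphi(X)\,dX$ on the right-hand side. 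The distributional identity
\[ \partial^\beta(-e\cdot X)_+^s=(-1)^{|\beta|}s(s-1)\cdots(s-|\beta|+1)\,e_1^{\beta_1}\cdots e_n^{\beta_n}(-e\cdot X)_+^{s-|\beta|}, \]
obtained by iterating $\partial_t t_+^\alpha=\alpha\,t_+^{\alpha-1}$ in the direction $-e$, then concludes the proof.

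The main technical obstacle is the near-diagonal behavior of the Green convolution: the leading kernel $(1-|x|^2)^s(1-|y|^2)^s/|x-y|^n$ is not integrable in $y$ uniformly as $x\to\partial B_1$, so no single dominated-convergence argument suffices to identify the pointwise limit of $\epsilon^{-s}\phi_*(e+\epsilon X)$. The remedy is the two-scale splitting, where the boundary estimate~\eqref{VIC2} applied to $\phi_*$ itself controls the near-$e$ block uniformly in $\epsilon$, and the two limits must be taken in the order $\epsilon\searrow 0$ then $\delta\searrow 0$. A minor secondary point is the meaning of $(-e\cdot X)_+^{s-|\beta|}$ when the exponent becomes $\le -1$, but this is absorbed in the distributional formulation of the final identity.
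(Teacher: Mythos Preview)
Your proposal is correct and its overall architecture matches the paper's: represent the eigenfunction via the Green convolution (identifying it with the function in Proposition~\ref{LEJOS}), extract the $\beta=0$ boundary asymptotic from that representation, and then pass to general $\beta$ by integrating by parts against test functions and invoking dominated convergence with the uniform bound $\epsilon^{-s}|\phi_*(e+\epsilon X)|\le C|X|^s$ coming from~\eqref{VIC2}. The last step is carried out in the paper exactly as you describe.

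The genuine difference lies in how the $\beta=0$ limit is obtained. The paper proves a general result (Lemma~\ref{lemsix}) valid for any $f\in C^\alpha(\mathbb{R}^n\setminus B_r)\cap L^2$: it Taylor-expands the kernel $\eta^{s-1}(\eta+1)^{-n/2}$, splits $\mathcal{G}_s=\mathcal{G}_0+\mathcal{G}_1+g$ according to the leading term, a tail series, and a large-$r_0$ remainder, estimates each piece using the H\"older modulus of $f$ near $\partial B_1$, and closes with the Vitali convergence theorem. Your argument is instead a near/far decomposition in the $y$-variable that bootstraps on the eigenfunction's own boundary decay: on $\{|y-e|<\delta\}$ you use $|\phi_*(y)|\le Cd(y)^s\le C\delta^s$ together with $\int_{B_1}\mathcal{G}_s(x,y)\,dy\le Cd(x)^s$, while on $\{|y-e|>\delta\}$ the kernel is uniformly dominated and ordinary dominated convergence applies. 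This is shorter and more elementary, but it is tailored to $f=\lambda_1\phi_*$ (it needs $f\le Cd^s$, not merely $f\in C^\alpha$ near the boundary), whereas the paper's Lemma~\ref{lemsix} is a reusable statement for general right-hand sides. One small imprecision: the integrable majorant on the far block really comes from the kernel bound $\epsilon^{-s}\mathcal{G}_s(e+\epsilon X,y)\le C_{X,\delta}(1-|y|^2)^s$ together with $\phi_*\in L^1(B_1)$; the pointwise bound $\phi_*\le Cd^s$ from~\eqref{VIC2} is only needed on the near block.
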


The proof of Proposition~\ref{sharbou} relies on Proposition~\ref{LEJOS}
and some auxiliary computations on the Green function in~\eqref{GREEN}.
We start with the following result:

\begin{lemma}
\label{lklkl}
Let $0<r<1$, $e\in\partial B_1$, $s>0$,
$f\in C^\alpha(\mathbb{R}^n\setminus B_r)\cap L^2(\mathbb{R}^n)$ 
for some $\alpha\in(0,1)$, and $f=0$ outside $B_1$. Then the integral
\begin{equation}
\label{I1I2}
\int_{B_1} f(z)\frac{(1-|z|^2)^s}{s|z-e|^n} \,dz
\end{equation}
is finite.
\begin{proof}
We denote by~$I$ the integral in~\eqref{I1I2}. We let
$$ I_1:=
\int_{B_1\setminus B_r} f(z)\frac{(1-|z|^2)^s}{s|z-e|^n} \,dz
\qquad{\mbox{and}}\qquad I_2:=
\int_{B_r} f(z)\frac{(1-|z|^2)^s}{s|z-e|^n} \,dz.$$
Then, we have that
\begin{equation}\label{SsalKAM:1} I=I_1+I_2.\end{equation}
Now, if $z\in B_1\setminus B_r$, we have that
\begin{equation*}
f(z)\leq|f(z)-f(e)|\leq C|z-e|^\alpha,
\end{equation*}
therefore
\begin{equation}\label{SsalKAM:2}
I_1\leq\int_{B_1\setminus B_r}\frac{(1-|z|^2)^s}{s|z-e|^{n-\alpha}} \,dz<\infty.
\end{equation}
If instead~$z\in B_r$,
\begin{equation*}
|z-e|\geq 1-r>0,
\end{equation*}
and consequently
\begin{equation}\label{SsalKAM:3}
I_2\leq \frac{1}{s\,(1-r)^n}\int_{B_r}f(z)\, dz<\infty.
\end{equation}
The desired result follows from~\eqref{SsalKAM:1},
\eqref{SsalKAM:2} and~\eqref{SsalKAM:3}.
\end{proof}
\end{lemma}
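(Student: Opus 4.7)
The plan is to decompose the region of integration according to where the two potential sources of singularity in the kernel can cause trouble. The factor $|z-e|^{-n}$ blows up at $z=e\in\partial B_1$, and the factor $(1-|z|^2)^s$ is merely bounded near $\partial B_1$, so the danger zone is a small neighborhood of $e$, which lies inside the region $B_1\setminus B_r$ where we have Hölder regularity. The $L^2$ assumption is harmless away from $e$. Accordingly, fix any $r'\in(r,1)$ (one could simply use $r$) and split the integral as $I=I_1+I_2$, where $I_1$ is the integral over $B_1\setminus B_r$ and $I_2$ the integral over $B_r$.

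First I would handle $I_2$. On $B_r$ the denominator is bounded below by $(1-r)^n>0$, so
\[
|I_2|\le \frac{1}{s(1-r)^n}\int_{B_r}|f(z)|\,dz\le \frac{|B_r|^{1/2}}{s(1-r)^n}\,\|f\|_{L^2(\mathbb{R}^n)},
\]
which is finite.

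Next I would treat $I_1$, which is the genuinely delicate piece. The key observation is that, since $f=0$ outside $B_1$ and $f\in C^\alpha(\mathbb{R}^n\setminus B_r)$ with $e\in\partial B_1\subseteq \mathbb{R}^n\setminus B_r$, continuity forces $f(e)=0$. Consequently, for $z\in B_1\setminus B_r$ the Hölder bound yields $|f(z)|=|f(z)-f(e)|\le C|z-e|^\alpha$. Using in addition that $(1-|z|^2)^s$ is bounded on $B_1$, one gets
\[
|I_1|\le C\int_{B_1\setminus B_r}\frac{dz}{|z-e|^{n-\alpha}},
\]
and this last integral is finite because the exponent $n-\alpha$ is strictly less than $n$, so the singularity at $z=e$ is integrable in $\mathbb{R}^n$.

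The main obstacle, such as it is, is precisely the justification that the Hölder bound near $\partial B_1$ is strong enough to beat the $|z-e|^{-n}$ singularity; this relies on noting that $f(e)=0$, which in turn uses the joint hypothesis that $f$ is Hölder near the boundary and vanishes outside $B_1$. Note that one could alternatively exploit the trivial inequality $1-|z|\le |z-e|$ to bound $(1-|z|^2)^s\le C|z-e|^s$ and trade the Hölder gain for the $s$ gain, but the argument above is the cleanest since it uses the hypotheses as stated.
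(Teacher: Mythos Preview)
Your proof is correct and follows essentially the same approach as the paper: the same decomposition $I=I_1+I_2$ over $B_1\setminus B_r$ and $B_r$, the same H\"older bound $|f(z)|=|f(z)-f(e)|\le C|z-e|^\alpha$ for $I_1$, and the same lower bound $|z-e|\ge 1-r$ for $I_2$. You are slightly more explicit in justifying $f(e)=0$ and in invoking Cauchy--Schwarz for the $L^2$ control on $I_2$, but the argument is the same.
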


Next result gives
a precise boundary behaviour of the Green function
for any $s>0$ (the case in which~$s\in(0,1)$ and~$f\in
C^\alpha(\mathbb{R}^n)$ was considered in Lemma~6 of~\cite{DSV1},
and in fact the proof presented here also simplifies the one in
Lemma~6 of~\cite{DSV1} for the setting considered there).

\begin{lemma}
\label{lemsix}
Let $e$, $\omega\in\partial B_1$, $\epsilon_0>0$ and~$r\in(0,1)$. 
Assume that
\begin{equation}\label{CHIAmahdfn}
e+\epsilon\omega\in B_1,\end{equation}
for any $\epsilon\in(0,\epsilon_0]$. Let $f\in C^\alpha(\mathbb{R}^n\setminus B_r)\cap L^2(\mathbb{R}^n)$ for some $\alpha\in(0,1)$, with $f=0$ outside $B_1$. \\
Then
\begin{equation}
\lim_{\epsilon\searrow 0}\epsilon^{-s}\int_{B_1} f(z)
\mathcal{G}_s(e+\epsilon\omega,z) \,dz=k(n,s)\,
\int_{B_1} f(z)\frac{(-2e\cdot\omega)^s(1-|z|^2)^s}{s|z-e|^n}\, dz,
\end{equation}
for a suitable normalizing constant~$k(n,s)>0$.

\begin{proof} In light of~\eqref{CHIAmahdfn}, we have that
\begin{equation*}1>
|e+\epsilon\omega|^2=1+\epsilon^2+2\epsilon e\cdot\omega,
\end{equation*}
and therefore
\begin{equation}\label{RGHHCicj}
-e\cdot\omega>\frac{\epsilon}{2}>0.
\end{equation}
Moreover, if $r_0$ is as given in \eqref{GREEN}, we have that, for all~$z\in B_1$,
\begin{equation}
\label{kfg}
r_0(e+\epsilon\omega,z)=\frac{\epsilon(-\epsilon-2e\cdot\omega)(1-|z|^2)}{|z-e-\epsilon\omega|^2}\leq\frac{3\epsilon}{|z-e-\epsilon\omega|^2}.
\end{equation}
Also, a Taylor series representation allows us to write, for any~$t\in(-1,1)$,
\begin{equation}\label{7uJAJMMA.aA}
\frac{t^{s-1}}{(t+1)^{\frac{n}{2}}}=\sum_{k=0}^\infty \binom{-n/2}{k}t^{k+s-1}.
\end{equation}
We also notice that
\begin{equation}
\label{bound}\begin{split}&
\left|\binom{-n/2}{k}\right|=
\left|
\frac{-\frac{n}2
\left(-\frac{n}2-1\right)\,...\,\left(-\frac{n}2-k+1\right)
}{k!}
\right|
=
\frac{\frac{n}2
\left(\frac{n}2+1\right)\,...\,\left(\frac{n}2+(k-1)\right)
}{k!}\\
&\qquad\le\frac{n
\left(n+1\right)\,...\,\left(n+(k-1)\right)
}{k!}
\le\frac{\left(n+(k-1)\right)!
}{k!}= (k+1)\,...\,\left(n+(k-1)\right)\\&\qquad
\le (n+k+1)^{n+1}.
\end{split}\end{equation}
This and the Root Test
give that the series in~\eqref{7uJAJMMA.aA}
is uniformly convergent on compact sets in $(-1,1)$.

As a consequence, if we set
\begin{equation}
\label{min}
r_1(x,z):=\min\left\{\frac{1}{2},r_0(x,z)\right\},
\end{equation}
we can switch integration and summation signs and obtain that
\begin{equation}
\label{mmno}
\int_0^{r_1(x,z)} \frac{t^{s-1}}{(t+1)^{\frac{n}{2}}}\, dt=\sum_{k=0}^\infty c_k(r_1(x,z))^{k+s},
\end{equation}
where
\begin{equation*}
c_k:=\frac{1}{k+s}\binom{-n/2}{k}.
\end{equation*}
Once again, the bound in~\eqref{bound}, together with~\eqref{min},
give that the series in~\eqref{mmno} is convergent.

Now, we omit for simplicity the normalizing constant~$k(n,s)$
in the definition of the Green function in~\eqref{GREEN},
and
we define
\begin{equation}\label{GNAKDDEF}
\mathcal{G}(x,z):=|z-x|^{2s-n}\sum_{k=0}^\infty c_k(r_1(x,z))^{k+s}
\end{equation}
and
\begin{equation*}
g(x,z):=|z-x|^{2s-n}\int_{r_1(x,z)}^{r_0(x,z)} \frac{t^{s-1}}{(t+1)^{\frac{n}{2}}} \,dt.
\end{equation*}
Using~\eqref{GREEN}
and \eqref{mmno}, and dropping dimensional constants for the sake of shortness,
we can write
\begin{equation}
\label{splitgreen}
\mathcal{G}_s(x,z)=\mathcal{G}(x,z)+g(x,z).
\end{equation}
Now, we show that
\begin{equation}\label{VKLACL}
g(x,z)\leq
\begin{cases} C\chi(r,z)\,|z-x|^{2s-n}&\quad\text{if}\quad n>2s, \\ 
C\chi(r,z)\,\log r_0(x,z)&\quad\text{if}\quad n=2s, \\
C\chi(r,z)\,|z-x|^{2s-n}(r_0(x,z))^{s-\frac{n}{2}}&\quad\text{if}\quad n<2s, \end{cases}
\end{equation}
where~$\chi(r,z)=1$ if~$r_0(x,z)> \frac{1}{2}$
and~$\chi(r,z)=0$ if~$r_0(x,z)\leq \frac{1}{2}$.
To check this,
we notice that if~$r_0(x,z)\leq \frac{1}{2}$ we have that~$r_1(x,z)=r_0(x,z)$,
due to~\eqref{min}, and therefore~$g(x,z)=0$. 

On the other hand, if $r_0(x,z)>\frac{1}{2}$, 
we deduce from~\eqref{min} that~$r_1(x,z)=\frac12$, and consequently
\begin{equation*}
g(x,z)\leq |z-x|^{2s-n}\int_{1/2}^{r_0(x,z)} t^{s-\frac{n}{2}-1} dt\leq
\begin{cases} C|z-x|^{2s-n}&\quad\text{if}\quad n>2s, \\ 
C\log r_0(x,z)&\quad\text{if}\quad n=2s, \\
C|z-x|^{2s-n}(r_0(x,z))^{s-\frac{n}{2}}&\quad\text{if}\quad n<2s, \end{cases}
\end{equation*}
for some constant $C>0$. This completes the proof of~\eqref{VKLACL}.

Now, we exploit the bound in~\eqref{VKLACL} when $x=e+\epsilon\omega$. 
For this, we notice that 
if~$r_0(e+\epsilon\omega,z)>\frac12$,
recalling \eqref{kfg}, we find that
\begin{equation}\label{75g8676769}
|z-e-\epsilon\omega|^2\leq 6\epsilon<9\epsilon,
\end{equation}
and therefore $z\in B_{3\sqrt{\epsilon}}(e+\epsilon\omega)$.

Hence, using~\eqref{VKLACL},
\begin{equation}
\label{estimates1}
\begin{split}
&\left|\int_{B_1} f(z)g(e+\epsilon\omega,z) dz\right|\leq 
\int_{B_{3\sqrt{\epsilon}}(e+\epsilon\omega)}
|f(z)||g(e+\epsilon\omega,z)| dz \\ &
\leq\begin{cases} C\displaystyle\int_{B_{3\sqrt{\epsilon}}(e+\epsilon\omega)}
|f(z)||z-e-\epsilon\omega|^{2s-n} dz&\quad\text{if}\quad n>2s,\\
C\displaystyle\int_{B_{3\sqrt{\epsilon}}(e+\epsilon\omega)}|f(z)|\log r_0(
e+\epsilon\omega,z) dz&\quad\text{if}\quad n=2s, \\
C\displaystyle\int_{B_{3\sqrt{\epsilon}}(e+\epsilon\omega)}
|f(z)||z-e-\epsilon\omega|^{2s-n}(r_0(e+\epsilon\omega,z))^{
s-\frac{n}{2}} dz &\quad\text{if}\quad n<2s .\end{cases}
\end{split}
\end{equation}
Now, if $z\in B_{3\sqrt{\epsilon}}(e+\epsilon\omega)$,
then \begin{equation}\label{z0okdscxi}
|z-e|\leq |z-e-\epsilon\omega|+|\epsilon\omega|
\leq 3\sqrt{\epsilon}+\epsilon<4\sqrt{\epsilon}.\end{equation}
Furthermore,
for a given~$r\in(0,1)$, we have that~$
B_{3\sqrt{\epsilon}}(e+\epsilon\omega)\subseteq{\mathbb{R}}^n\setminus
B_r$, provided that~$\epsilon$ is sufficiently small.

Hence, if~$z\in B_{3\sqrt{\epsilon}}(e+\epsilon\omega)$,
we can exploit the regularity of~$f$ and deduce that
$$ |f(z)|=|f(z)-f(e)|\leq C|z-e|^\alpha.$$
This and~\eqref{z0okdscxi} lead to
\begin{equation}
\label{su}
|f(z)|\leq C\epsilon^{\frac{\alpha}{2}},
\end{equation}
for every~$z\in B_{3\sqrt{\epsilon}}(e+\epsilon\omega)$.

Thanks to \eqref{kfg}, \eqref{estimates1} and \eqref{su}, we have that
\begin{equation*}
\begin{split}
\left|\int_{B_1} f(z)g(e+\epsilon\omega,z) dz\right|&\leq\begin{cases} C\epsilon^{\frac{\alpha}{2}}
\displaystyle\int_{B_{3\sqrt{\epsilon}}(e+\epsilon\omega)}|z-e-\epsilon\omega|^{2s-n} dz&\quad\text{if}\quad n>2s,\\ C\epsilon^{\frac{\alpha}{2}}
\displaystyle\int_{B_{3\sqrt{\epsilon}}(e+\epsilon\omega)}\log \frac{3\epsilon}{|z-e-\epsilon\omega|^2} dz&\quad\text{if}\quad n=2s ,\\ C\epsilon^{\frac{\alpha}{2}+s-\frac{n}{2}}\displaystyle\int_{B_{3\sqrt{\epsilon}}(e+\epsilon\omega)} dz &\quad\text{if}\quad n<2s \end{cases} \\
&\leq C\epsilon^{\frac{\alpha}{2}+s},
\end{split}
\end{equation*}
up to renaming~$C$.

This and \eqref{splitgreen} give that
\begin{equation}
\label{alalal}
\int_{B_1} f(z)\mathcal{G}_s(e+\epsilon\omega,z) dz=\int_{B_1} f(z)\mathcal{G}(e+\epsilon\omega,z) dz+o(\epsilon^s).
\end{equation}
Now, we consider the series in~\eqref{GNAKDDEF},
and we split the contribution coming from the index $k=0$ from the ones coming from the indices $k>0$, namely
we write
\begin{equation}\label{IN16p}
\begin{split}
&\mathcal{G}(x,z)=\mathcal{G}_0(x,z)+\mathcal{G}_1(x,z), \\
\quad\text{with}\quad &\mathcal{G}_0(x,z):=\frac{|z-x|^{2s-n}}{s}(r_1(x,z))^s \\
\quad\text{and}\quad &\mathcal{G}_1(x,z):=|z-x|^{2s-n}\sum_{k=1}^{+\infty} c_k\,(r_1(x,z))^{k+s}.
\end{split}
\end{equation}
Firstly, we consider the contribution given by the term $\mathcal{G}_1$. Thanks to \eqref{min} and \eqref{su}, we have that
\begin{equation}
\label{estimates5}
\begin{split}
&\left|\int_{B_1\cap B_{3\sqrt{\epsilon}}(e+\epsilon\omega)}f(z)\mathcal{G}_1(e+\epsilon\omega,z) dz\right|\leq \int_{B_{3\sqrt{\epsilon}}(e+\epsilon\omega)} |f(z)|\mathcal{G}_1(e+\epsilon\omega,z) dz \\
&\quad\quad\leq C\epsilon^{\frac{\alpha}{2}}\int_{B_{3\sqrt{\epsilon}}(e+\epsilon\omega)}|z-e-\epsilon\omega|^{2s-n}\sum_{k=1}^{+\infty} |c_k|\,(r_1(e+\epsilon\omega,z))^{k+s} dz \\
&\quad\quad\leq C\epsilon^{\frac{\alpha}{2}}\int_{B_{3\sqrt{\epsilon}}(e+\epsilon\omega)}|z-e-\epsilon\omega|^{2s-n}\sum_{k=1}^{+\infty} |c_k|\,\left(\frac{1}{2}\right)^{k+s} dz \\
&\quad\quad\leq C\epsilon^{\frac{\alpha}{2}}\int_{B_{3\sqrt{\epsilon}}(e+\epsilon\omega)}|z-e-\epsilon\omega|^{2s-n} dz \\
&\quad\quad\leq C\epsilon^{\frac{\alpha}{2}+s},
\end{split}
\end{equation}
up to renaming the constant $C$ step by step. 

On the other hand, for every~$z\in{\mathbb{R}}^n$,
\[|z|=|e+\epsilon\omega+z-e-\epsilon\omega|
\geq|e+\epsilon\omega|-|z-e-\epsilon\omega|
\geq 1-\epsilon-|z-e-\epsilon\omega|.\] 
Therefore,
for every~$z\in B_1\setminus\left(B_r\cup B_{3\sqrt{\epsilon}}
(e+\epsilon\omega)\right)$, we can take~$e_*:=\frac{z}{|z|}$ and obtain that
\begin{equation}
\label{estimate2}
\begin{split}&
|f(z)|=|f(z)-f(e_*)|\leq
C|z-e_*|^\alpha= C(1-|z|)^\alpha
\\&\qquad\leq C(\epsilon+|z-e-\epsilon\omega|)^\alpha
\leq C|z-e-\epsilon\omega|^\alpha,
\end{split}\end{equation}
up to renaming~$C>0$.

Also, using \eqref{kfg}, we see that, for any $k>0$,
\begin{equation}
\label{estimates3}
\begin{split}
%% &(r_1(e+\epsilon\omega,z))^{k+s}=(r_1(e+\epsilon\omega,z))^{s+\frac{\alpha}{4}}(r_1(e+\epsilon\omega,z))^{k-\frac{\alpha}{4}} \\
%% {\mbox{and }}\quad\quad
&(r_0(e+\epsilon\omega,z))^{s+\frac{\alpha}{4}}\left(\frac{1}{2}\right)^{k-\frac{\alpha}{4}}\leq\frac{C\epsilon^{s+\frac{\alpha}{4}}}{2^k|z-e-\epsilon\omega|^{2s+\frac{\alpha}{2}}}.
\end{split}
\end{equation}
This, \eqref{min} and \eqref{estimate2} give that if $z\in B_1\setminus\left(B_r\cup B_{3\sqrt{\epsilon}}(e+\epsilon\omega)\right)$, then
\begin{equation*}
\begin{split}
|f(z)\mathcal{G}_1(e+\epsilon\omega,z)| \,&
\leq C|z-e-\epsilon\omega|^{\alpha+2s-n}
\sum_{k=1}^{+\infty}|c_k|\,(r_1(e+\epsilon\omega,z))^{k+s} \\
&=C|z-e-\epsilon\omega|^{\alpha+2s-n}
\sum_{k=1}^{+\infty}|c_k|\,(r_1(e+\epsilon\omega,z))^{s+\frac\alpha4}
(r_1(e+\epsilon\omega,z))^{k-\frac\alpha4}\\
&\le C|z-e-\epsilon\omega|^{\alpha+2s-n}
\sum_{k=1}^{+\infty}|c_k|\,(r_0(e+\epsilon\omega,z))^{s+\frac\alpha4}
\left(\frac12\right)^{k-\frac\alpha4}\\
&\leq C\epsilon^{s+\frac{\alpha}{4}}|z-e-\epsilon\omega|^{\frac{\alpha}{2}-n}\sum_{k=1}^{+\infty}\frac{|c_k|}{2^k},
\end{split}
\end{equation*}
where the latter series is absolutely convergent thanks to \eqref{bound}. 

This implies that, if we
set $E:=B_1\setminus\left(B_r\cup B_{3\sqrt{\epsilon}}
(e+\epsilon\omega)\right)$, it holds that
\begin{equation}
\label{estimates4}
\begin{split}
&\left|\int_E f(z)\mathcal{G}_1(e+\epsilon\omega,z) dz\right|\leq C\epsilon^{s+\frac{\alpha}{4}}\int_E |z-e-\epsilon\omega|^{\frac{\alpha}{2}-n} dz \\
\quad\quad &\qquad\qquad\leq C\epsilon^{s+\frac{\alpha}{4}}\int_{B_1} |z-e-\epsilon\omega|^{\frac{\alpha}{2}-n} dz\leq C\epsilon^{s+\frac{\alpha}{4}}\int_{B_3} |z|^{\frac{\alpha}{2}-n} dz \leq C\epsilon^{s+\frac{\alpha}{4}}.
\end{split}
\end{equation}
Moreover, if $z\in B_r$, we have that
\begin{equation*}
|e+\epsilon\omega-z|\geq 1-\epsilon-r,
\end{equation*}
and therefore, recalling~\eqref{estimates3},
\begin{equation*}
\begin{split}
\sup_{z\in B_r} |\mathcal{G}_1(e+\epsilon\omega,z)|\,&\leq |z-e-\epsilon\omega|^{2s-n}\sum_{k=1}^{+\infty}
|c_k|\,\big(r_1(e+\epsilon\omega,z)\big)^{s+\frac\alpha4}
\big(r_1(e+\epsilon\omega,z)\big)^{k-\frac\alpha4}\\&\le
|z-e-\epsilon\omega|^{2s-n}\sum_{k=1}^{+\infty}
|c_k|\,\big(r_0(e+\epsilon\omega,z)\big)^{s+\frac\alpha4}
\left(\frac12\right)^{k-\frac\alpha4}
\\ &\le C\,
|z-e-\epsilon\omega|^{-n-\frac\alpha2}\sum_{k=1}^{+\infty}
\frac{|c_k|}{2^k}
\\ &\le
C(1-\epsilon-r)^{-n-\frac{\alpha}{2}}\,
\epsilon^{s+\frac{\alpha}{4}},
\end{split}\end{equation*}
up to renaming~$C$.

As a consequence, we find that
\begin{equation}
\label{estimates6}
\begin{split}
\left|\int_{B_r} f(z)\mathcal{G}_1(e+\epsilon\omega,z) dz\right|
&\leq 
\sup_{z\in B_r} |\mathcal{G}_1
(e+\epsilon\omega,z)|\,
\left\|f\right\|_{L^1(B_r)}
\\ &\leq \left\|f\right\|_{L^1(B_r)}(1-\epsilon-r)^{-n-\frac{\alpha}{2}}\epsilon^{s+\frac{\alpha}{4}}
\\
&\leq \left\|f\right\|_{L^1(B_r)}2^{n+\frac{\alpha}{2}}(1-r)^{-n-\frac{\alpha}{2}}\epsilon^{s+\frac{\alpha}{4}}
\\
&=C\epsilon^{s+\frac{\alpha}{4}},
\end{split}
\end{equation}
as long as $\epsilon$ is suitably
small with respect to $r$, and $C$ is a positive constant
which depends on $\|f\|_{L^1(B_r)}$, $r$, $n$ and~$\alpha$.

Then, by \eqref{estimates5}, \eqref{estimates4} and \eqref{estimates6} we conclude that
\begin{equation}
\int_{B_1} f(z)\mathcal{G}_1(e+\epsilon\omega,z) dz=o(\epsilon^s).
\end{equation}
Inserting this information into \eqref{alalal}, 
and recalling~\eqref{IN16p},
we obtain
\begin{equation}
\label{ololol}
\int_{B_1} f(z)\mathcal{G}_s(e+\epsilon\omega,z) dz=\int_{B_1} f(z)\mathcal{G}_0(e+\epsilon\omega,z) dz+o(\epsilon^s).
\end{equation}
Now, we define \[\mathcal{D}_1:=\left\{z\in B_1\quad\text{s.t.}\quad r_0(e+\epsilon\omega,z)>1/2\right\}\] and \[\mathcal{D}_2:=\left\{z\in B_1\quad\text{s.t.}\quad r_0(e+\epsilon\omega,z)\leq 1/2\right\}.\]
If $z\in\mathcal{D}_1$, then $z\in B_1\setminus B_r$, thanks to~\eqref{75g8676769},
and hence we can use \eqref{estimates1} and \eqref{su} and write 
\[|f(z)\mathcal{G}_0(e+\epsilon\omega,z)|\leq 
C\epsilon^{\frac{\alpha}{2}}|z-e-\epsilon\omega|^{2s-n}.\] 
Then, recalling again \eqref{estimates1},
\begin{equation}
\label{D1}
\left|\int_{\mathcal{D}_1} f(z)\mathcal{G}_1(e+\epsilon\omega,z) dz\right|\leq C\epsilon^{\frac{\alpha}{2}}\int_{B_{3\sqrt{\epsilon}}(e+\epsilon\omega)} |z-e-\epsilon\omega|^{2s-n} dz=C\epsilon^{\frac{\alpha}{2}+s},
\end{equation}
up to renaming the constant $C>0$. This information and \eqref{ololol} give that
\begin{equation*}
\int_{B_1} f(z)\mathcal{G}_s(e+\epsilon\omega,z) dz=
\int_{\mathcal{D}_2} f(z)\mathcal{G}_0(e+\epsilon\omega,z) dz+o(\epsilon^s).
\end{equation*}
Now, by \eqref{kfg} and \eqref{min}, if $z\in\mathcal{D}_2$,
\begin{equation*}
\mathcal{G}_0(e+\epsilon\omega,z)=\frac{|z-e-\epsilon\omega|^{2s-n}}{s}(r_0(e+\epsilon\omega))^s=\frac{\epsilon^s(-\epsilon-2e\cdot\omega)^s(1-|z|^2)^s}{s|z-e-\epsilon\omega|^n}.
\end{equation*}
Hence, we have
\begin{equation}
\label{sesa}
\begin{split}
&\lim_{\epsilon\searrow 0}\epsilon^{-s}\int_{B_1} f(z)\mathcal{G}_s(e+\epsilon\omega,z) dz \\
=\;&\lim_{\epsilon\searrow 0}\epsilon^{-s}\int_{\mathcal{D}_2} f(z)\mathcal{G}_0(e+\epsilon\omega,z) dz
\\
=\;&\lim_{\epsilon\searrow 0} \int_{\left\{2\epsilon(-\epsilon-2e\cdot\omega)(1-|z|^2)\leq|z-e-\epsilon\omega|^2\right\}} f(z)\frac{(-\epsilon-2e\cdot\omega)^s(1-|z|^2)^s}{s|z-e-\epsilon\omega|^n} dz.
\end{split}
\end{equation}
Now we set
\begin{equation}\label{H7uJA78JsadA}
F_\epsilon(z):=\begin{cases}f(z)
\displaystyle\frac{(-\epsilon-2e\cdot\omega)^s(1-|z|^2)^s}{
s|z-e-\epsilon\omega|^n}&\quad\text{if}\quad 2\epsilon
(-\epsilon-2e\cdot\omega)(1-|z|^2)\leq|z-e-\epsilon\omega|^2, \\
0&\quad\text{otherwise}, \end{cases}
\end{equation}
and we prove that for any $\eta>0$ there exists $\delta>0$ independent
of~$\epsilon$ such that, for any $E\subset\mathbb{R}^n$ with $|E|\leq\delta$, we have
\begin{equation}
\label{eta}
\int_{B_1\cap E} |F_\epsilon(z)| dz\leq\eta.
\end{equation}
To this aim, given~$\eta$ and~$E$ as above,
we define
\begin{equation} \label{RGANrho}
\rho:= \min\left\{
\epsilon (-\epsilon-2e\cdot\omega),\,
\sqrt{{ 2\epsilon (-\epsilon-2e\cdot\omega)}(1-r)},\,
\left(
\frac{
2^{s+\alpha} s^2\,\epsilon^{s+\alpha}\,(-\epsilon-2e\cdot\omega)^\alpha
\eta}{3^{2s}\,\|f\|_{C^\alpha(B_1\setminus B_r)}\,|\partial B_1|}
\right)^{\frac1{2\alpha}}
\right\}.\end{equation}
We stress that the above definition is well-posed, thanks to~\eqref{RGHHCicj}.
In addition, using the integrability of~$f$, we take~$\delta>0$
such that if~$A\subseteq B_1$ and~$|A|\le\delta$ then
\begin{equation}\label{PPKAjnaOP} \int_{A} |f(x)|\,dx\le \frac{s\rho^n\eta}{2\cdot 3^s}.
\end{equation}
We set 
\begin{equation}\label{9ikjendE} E_1:=E\cap B_{\rho}(e+\epsilon\omega)\qquad{\mbox{and}}\qquad
E_2:=E\setminus B_{\rho}(e+\epsilon\omega).\end{equation}
{F}rom~\eqref{H7uJA78JsadA}, we see that
$$ |F_\epsilon(z)|\le 
\frac{|f(z)|\,\chi_\star(z)}{2^s s\,\epsilon^s
|z-e-\epsilon\omega|^{n-2s}},$$
where
$$ \chi_\star(z):=\begin{cases}
1 & \quad\text{if}\quad 2\epsilon
(-\epsilon-2e\cdot\omega)(1-|z|^2)\leq|z-e-\epsilon\omega|^2, \\
0&\quad\text{otherwise},
\end{cases}$$
and therefore
\begin{equation}\label{THAnaoa}
\int_{B_1\cap E_1} |F_\epsilon(z)|\,dz
\le \int_{B_1\cap E_1}\frac{|f(z)|\,\chi_\star(z)}{2^s s\,\epsilon^s
|z-e-\epsilon\omega|^{n-2s}}\,dz.
\end{equation}
Now, for every~$z\in B_1\cap E_1\subseteq B_{\rho}(e+\epsilon\omega)$ for which~$\chi_\star(z)\ne0$,
we have that
$$ 2\epsilon
(-\epsilon-2e\cdot\omega)(1-|z|^2)\le|z-e-\epsilon\omega|^2\le\rho^2,$$
and hence
$$ |z|\ge \sqrt{1-\frac{\rho^2}{ 2\epsilon (-\epsilon-2e\cdot\omega)}}
\ge 1-\frac{\rho^2}{ 2\epsilon (-\epsilon-2e\cdot\omega)},$$
which in turn gives that~$|z|\ge r$, recall~\eqref{RGANrho}.

{F}rom this and~\eqref{THAnaoa} we deduce that
\begin{equation}\label{9ikjendE2}
\begin{split}&
\int_{B_1\cap E_1} |F_\epsilon(z)|\,dz
\le \int_{1-\frac{\rho^2}{ 2\epsilon (-\epsilon-2e\cdot\omega)}\le|z|<1
}\frac{\|f\|_{C^\alpha(B_1\setminus B_r)}\,(1-|z|)^\alpha}{2^s s\,\epsilon^s
|z-e-\epsilon\omega|^{n-2s}}\,dz\\&\qquad
\le \frac{\|f\|_{C^\alpha(B_1\setminus B_r)}}{2^s s\,\epsilon^s}\,
\left(\frac{\rho^2}{ 2\epsilon (-\epsilon-2e\cdot\omega)}\right)^\alpha
\int_{1-\frac{\rho^2}{ 2\epsilon (-\epsilon-2e\cdot\omega)}\le|z|<1
}\frac{dz}{|z-e-\epsilon\omega|^{n-2s}}\\&\qquad
\le \frac{\|f\|_{C^\alpha(B_1\setminus B_r)}}{2^s s\,\epsilon^s}\,
\left(\frac{\rho^2}{ 2\epsilon (-\epsilon-2e\cdot\omega)}\right)^\alpha
\int_{B_3}\frac{dx}{|x|^{n-2s}}
\\&\qquad=\frac{3^{2s}\,\|f\|_{C^\alpha(B_1\setminus B_r)}\,
|\partial B_1|}{
2^{s+\alpha+1} s^2\,\epsilon^{s+\alpha}\,(-\epsilon-2e\cdot\omega)^\alpha }\,
\;\rho^{2\alpha}\\&\qquad\le
\frac{\eta}{2},
\end{split}\end{equation}
where~\eqref{RGANrho} has been exploited in the last inequality.

We also point out that, by~\eqref{H7uJA78JsadA},
\eqref{PPKAjnaOP} and~\eqref{9ikjendE},
\begin{eqnarray*}
\int_{B_1\cap E_2}|F_\epsilon(z)|\,dz
&\le&\int_{(B_1\setminus
B_{\rho}(e+\epsilon\omega))\cap E}
|f(z)|\,\frac{(-\epsilon-2e\cdot\omega)^s(1-|z|^2)^s}{
s|z-e-\epsilon\omega|^n}\,dz\\
&\le&\frac{3^s}{s\rho^n}\int_{B_1\cap E}|f(z)|\,dz\\&\le&\frac{\eta}{2}.
\end{eqnarray*}
This, \eqref{9ikjendE} and~\eqref{9ikjendE2} give~\eqref{eta},
as desired.

Notice also that the sequence $F_\epsilon(z)$ converges pointwise
to the function \[F(z):=f(z)\frac{(-2e\cdot\omega)^s(1-|z|^2)^s}{s|z-e|^n}.\]
Hence \eqref{sesa}, \eqref{eta} and the Vitali Convergence Theorem allow us to conclude that
\begin{equation}\label{lkl5678kl}
\begin{split}
\lim_{\epsilon\searrow 0}\int_{B_1} f(z)\mathcal{G}_s(e+\epsilon\omega,z) dz&=\lim_{\epsilon\searrow 0}\int_{B_1} F_\epsilon(z) dz \\
&=\int_{B_1} f(z)\frac{(-2e\cdot\omega)^s(1-|z|^2)^s}{s|z-e|^n} dz,
\end{split}
\end{equation}
which establishes the claim of Lemma \ref{lemsix}
(notice that the finiteness of the latter quantity in~\eqref{lkl5678kl}
follows from~\eqref{lklkl}).
\end{proof}
\end{lemma}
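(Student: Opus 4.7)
My plan is to extract the leading $\epsilon^s$ behaviour of $\mathcal{G}_s(e+\epsilon\omega,z)$ directly from its definition. Writing $x := e+\epsilon\omega$ and using $e\in\partial B_1$, we have $1-|x|^2 = \epsilon(-\epsilon-2e\cdot\omega)$, which the hypothesis $x\in B_1$ forces to be positive of order $\epsilon$. Therefore
\[
r_0(x,z) = \frac{\epsilon(-\epsilon-2e\cdot\omega)(1-|z|^2)}{|z-x|^2},
\]
which, for $z$ at positive distance from $e$, is small of order $\epsilon$. Since $\eta^{s-1}/(1+\eta)^{n/2}\sim \eta^{s-1}$ near zero, the inner integral in $\mathcal{G}_s$ should behave like $r_0^s/s$, producing exactly the integrand on the right-hand side together with the desired factor $\epsilon^s$.

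To make this precise I would expand $(1+\eta)^{-n/2}$ as a binomial series, absolutely and uniformly convergent on compacta of $(-1,1)$, and truncate $r_0$ at $1/2$ by setting $r_1:=\min(r_0,1/2)$. This decomposes $\mathcal{G}_s$ into three pieces: a principal term proportional to $|z-x|^{2s-n}r_1^s/s$, a higher-order series $|z-x|^{2s-n}\sum_{k\ge1}c_k r_1^{k+s}$, and a tail coming from the region $\{r_0>1/2\}$. The principal piece is expected to give the claimed limit after the factor $\epsilon^s$ is pulled out; the other two pieces must be shown to be $o(\epsilon^s)$.

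The main obstacle is the region $\{r_0>1/2\}$, in which $|z-x|\lesssim\sqrt{\epsilon}$ and the Green function has its full strength, so the $\epsilon^s$ scaling is not visible from $r_0$ alone. Here the Hölder hypothesis becomes decisive: for small $\epsilon$ this region is contained in $\mathbb{R}^n\setminus B_r$, and $|z-e|\lesssim \sqrt{\epsilon}$, so Hölder continuity yields $|f(z)| = |f(z)-f(e)|\le C\epsilon^{\alpha/2}$. Combining this with the integrable singularity $|z-x|^{2s-n}$ over a ball of radius $\sqrt{\epsilon}$ produces a contribution of order $\epsilon^{s+\alpha/2}$. The same Hölder bound, together with the absolute convergence of the series $\sum_k|c_k|2^{-k+\alpha/4}$ (which rests on the elementary estimate $|\binom{-n/2}{k}|\le (n+k+1)^{n+1}$), controls the higher-order piece on $B_1\setminus B_r$; on the inner part $B_r$ a lower bound on $|z-x|$ yields a uniform control $\le C\epsilon^{s+\alpha/4}\|f\|_{L^1(B_r)}$.

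Finally, to pass to the limit in the principal term I would invoke Vitali's convergence theorem. The rescaled integrand $F_\epsilon$ converges pointwise to $f(z)(-2e\cdot\omega)^s(1-|z|^2)^s/(s|z-e|^n)$, which is integrable by Lemma~\ref{lklkl}. The delicate ingredient is uniform integrability: given a measurable set $E$ of small measure I would split it into $E\cap B_\rho(x)$ and $E\setminus B_\rho(x)$, for a cutoff $\rho$ chosen as the minimum of a suitable power of $\epsilon$ and a quantity involving $\|f\|_{C^\alpha(B_1\setminus B_r)}$. On the inner piece the constraint $r_0\le 1/2$ forces $|z|$ close to $1$, and the Hölder estimate combined with the integrability of $|z-x|^{2s-n}$ absorbs the singularity; on the outer piece $|z-x|\ge\rho$ allows direct domination by $C\rho^{-n}\|f\|_{L^1(E)}$. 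Balancing the two bounds through the choice of $\rho$ gives equi-integrability, and Vitali's theorem completes the proof.
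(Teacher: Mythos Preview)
Your proposal is correct and follows essentially the same route as the paper: the same binomial-series decomposition with the truncation $r_1=\min(r_0,1/2)$, the same three pieces (principal term, higher-order series, tail from $\{r_0>1/2\}$), the same H\"older-based $\epsilon^{\alpha/2}$ estimate on the $\sqrt{\epsilon}$-neighbourhood, the same binomial-coefficient bound $|\binom{-n/2}{k}|\le(n+k+1)^{n+1}$, the same three-region treatment of the higher-order series, and the same Vitali argument with the $E\cap B_\rho(x)$ versus $E\setminus B_\rho(x)$ split. The paper's proof is more explicit in the bookkeeping, but the architecture is identical.
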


With this preliminary work,
we can now establish the boundary behaviour of solutions which is needed
in our setting. As a matter of fact, from Lemma~\ref{lemsix} we immediately deduce that:

\begin{corollary}
\label{propsev}
Let $e$, $\omega\in\partial B_1$, $\epsilon_0>0$
and~$r\in(0,1)$. 

Assume that $e+\epsilon\omega\in B_1$, for any $\epsilon\in(0,\epsilon_0]$. Let $f\in C^\alpha(\mathbb{R}^n\setminus B_r)\cap L^2(\mathbb{R}^n)$ for some $\alpha\in(0,1)$, with $f=0$ outside $B_1$. 

Let $u$ be as in~\eqref{0olwsKA}.
Then,
\begin{equation*}
\lim_{\epsilon\searrow 0}\epsilon^{-s}u(e+\epsilon\omega)=k(n,s)(-2e\cdot\omega)^s\int_{B_1} f(z)\frac{(1-|z|^2)^s}{s|z-e|^n} dz,
\end{equation*}
where $k(n,s)$ denotes a positive normalizing constant.
\end{corollary}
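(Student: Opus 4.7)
The plan is to observe that Corollary~\ref{propsev} is a direct rewriting of Lemma~\ref{lemsix} once one unwinds the definition of $u$ in~\eqref{0olwsKA}, so essentially no new work is required.

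More precisely, I would proceed as follows. By definition~\eqref{0olwsKA}, evaluating $u$ at $x:=e+\epsilon\omega$ (which lies in $B_1$ by the standing hypothesis that $e+\epsilon\omega\in B_1$ for all $\epsilon\in(0,\epsilon_0]$), one has
\[
u(e+\epsilon\omega)=\int_{B_1}\mathcal{G}_s(e+\epsilon\omega,z)\,f(z)\,dz.
\]
Multiplying by $\epsilon^{-s}$ and letting $\epsilon\searrow 0$, the left-hand side of the claimed identity coincides with the quantity studied in Lemma~\ref{lemsix}. The hypotheses of Lemma~\ref{lemsix} (namely $e,\omega\in\partial B_1$, $e+\epsilon\omega\in B_1$ for $\epsilon\in(0,\epsilon_0]$, $f\in C^\alpha(\mathbb{R}^n\setminus B_r)\cap L^2(\mathbb{R}^n)$ with $f=0$ outside $B_1$) are identical to those of Corollary~\ref{propsev}, so the lemma directly yields
\[
\lim_{\epsilon\searrow 0}\epsilon^{-s}u(e+\epsilon\omega)=k(n,s)\int_{B_1}f(z)\,\frac{(-2e\cdot\omega)^s(1-|z|^2)^s}{s\,|z-e|^n}\,dz.
\]
Finally, since $e$ and $\omega$ are fixed and do not depend on the variable of integration $z$, the factor $(-2e\cdot\omega)^s$ is a constant with respect to $z$ (and is positive, in view of the estimate $-e\cdot\omega>\epsilon/2>0$ obtained in~\eqref{RGHHCicj} of the proof of Lemma~\ref{lemsix}). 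Pulling it outside the integral produces the form stated in Corollary~\ref{propsev}, completing the argument.

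There is really no main obstacle here, since all of the analytic work, namely the splitting of the Green function via~\eqref{splitgreen} and~\eqref{IN16p}, the control of the tail terms $g$ and $\mathcal{G}_1$, and the Vitali-type passage to the limit in~\eqref{lkl5678kl}, has already been carried out in the proof of Lemma~\ref{lemsix}. The only verification required is the trivial bookkeeping of factoring out $(-2e\cdot\omega)^s$, which is independent of~$z$.
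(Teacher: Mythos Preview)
Your proposal is correct and matches the paper's own treatment: the paper does not even write out a proof, but simply states that Corollary~\ref{propsev} is immediately deduced from Lemma~\ref{lemsix}, which is exactly the unwinding of~\eqref{0olwsKA} that you describe.
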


Now we apply the previous results to detect the
boundary growth of a suitable
first eigenfunction. For our purposes, the statement that we need
is the following:

\begin{corollary}\label{8iJJAUMPAAAxc}
There exists a nontrivial solution $\phi_*$ of \eqref{dirfun}
which belongs to~$H^s_0(B_1)\cap C^{\alpha}
({\mathbb{R}}^n\setminus B_{1/2})$,
for some~$\alpha\in(0,1)$, and such that, for every~$e\in\partial B_1$,
\begin{equation}\label{CHaert}
\lim_{\epsilon\searrow 0}\epsilon^{-s}\phi_*(e+\epsilon\omega)=k_*\,
(-e\cdot\omega)^s_+,
\end{equation}
for a suitable constant~$k_*>0$.

Furthermore,
for every $R\in(r,1)$, there exists~$C_R>0$ such that
\begin{equation}\label{COSI}
\sup_{x\in B_1\setminus B_R}
d^{-s}(x)\,|\phi_*(x)|\le C_R.\end{equation}

\begin{proof}
Let~$\alpha\in(0,1)$ and~$\phi\in H^s_0(B_1)\cap C^{\alpha}({\mathbb{R}}^n\setminus B_{1/2})$
be the nonnegative
and nontrivial solution of \eqref{dirfun}, as given in Corollary~\ref{QUESTCP}.

In the spirit of~\eqref{0olwsKA},
we define
$$
\phi_*(x):=
\begin{cases}
\displaystyle\lambda_1\int_{B_1} \mathcal{G}_s\left(x,y\right)\,\phi(y)\,dy & {\mbox{ if }}x\in B_1,\\
0&{\mbox{ if }}x\in{\mathbb{R}}^n\setminus B_1.
\end{cases}$$
We stress that we can use Proposition~\ref{LEJOS}
in this context, with~$f:=\lambda_1\phi$,
since condition~\eqref{CHlaIA} is satisfied in this case.

Then, from~\eqref{VIC2} and~\eqref{VIC4}, we know that~$\phi_*\in H^s_0(B_1)$
and, from~\eqref{VIC3},
$$ (-\Delta)^s \phi_*=\lambda_1\,\phi{\mbox{ in }}B_1.$$
In particular, we have that~$(-\Delta)^s (\phi-\phi_*)=0$ in~$B_1$,
and~$\phi-\phi_*\in H^s_0(B_1)$, which give that~$\phi-\phi_*$ vanishes identically.
Hence, we can write that~$\phi=\phi_*$, and thus~$\phi_*$
is a solution of~\eqref{dirfun}.

Now, we check~\eqref{CHaert}.
For this, we distinguish two cases.
If~$e\cdot\omega\ge 0$, we have that
$$ |e+\epsilon\omega|^2 =1+2\epsilon e\cdot\omega+\epsilon^2>1,$$
for all~$\epsilon>0$. Then, in this case~$e+\epsilon\omega\in
{\mathbb{R}}^n\setminus B_1$, and therefore~$\phi_*(e+\epsilon\omega)=0$.
This gives that, in this case,
\begin{equation}\label{GIANDIACNKS}
\lim_{\epsilon\searrow 0}\epsilon^{-s}\phi_*(e+\epsilon\omega)=0.\end{equation}
If instead~$e\cdot\omega<0$, we see that
$$ |e+\epsilon\omega|^2 =1+2\epsilon e\cdot\omega+\epsilon^2<1,$$
for all~$\epsilon>0$ sufficiently small. Hence, we can exploit
Corollary~\ref{propsev} and find that
\begin{equation}\label{GIANDIACNKS2}
\lim_{\epsilon\searrow 0}\epsilon^{-s}\phi_*(e+\epsilon\omega)=\lambda_1\,
k(n,s)(-2e\cdot\omega)^s\int_{B_1} \phi(z)\frac{(1-|z|^2)^s}{s|z-e|^n} \,dz,\end{equation}
with~$k(n,s)>0$. Then, we define
$$ k_*:=2^s\,k(n,s)\int_{B_1} \phi(z)\frac{(1-|z|^2)^s}{s|z-e|^n} \,dz.$$
We observe that~$k_*$ is positive by construction,
with~$k(n,s)>0$. Also, in light of
Lemma~\ref{lklkl}, we know that $k_*$ is finite. 
Hence, from~\eqref{GIANDIACNKS} and~\eqref{GIANDIACNKS2}
we obtain~\eqref{CHaert}, as desired.

It only remains to check~\eqref{COSI}.
For this, we use~\eqref{VIC3}, and we see that,
for every~$R\in(r,1)$, 
$$ \sup_{x\in B_1\setminus B_R}
d^{-s}(x)\,|\phi_*(x)|\le C_R\,\lambda_1\,\big(\|\phi\|_{L^1(B_1)}+
\|\phi\|_{L^\infty(B_1\setminus B_r)}\big),
$$
and this gives~\eqref{COSI} up to renaming~$C_R$.
\end{proof}
\end{corollary}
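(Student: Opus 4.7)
The plan is to combine Corollary~\ref{QUESTCP}, Proposition~\ref{LEJOS}, and Corollary~\ref{propsev}: starting from the radial, nonnegative, nontrivial eigenfunction provided by Corollary~\ref{QUESTCP}, I would rewrite it through Green representation and then extract its boundary asymptotics directly from Corollary~\ref{propsev}.

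Concretely, let $\phi \in H^s_0(B_1) \cap C^\alpha(\mathbb{R}^n \setminus B_{1/2})$ be the radial, nonnegative eigenfunction from Corollary~\ref{QUESTCP}, so $(-\Delta)^s \phi = \lambda_1 \phi$ in $B_1$ with $\phi = 0$ outside. Setting $f := \lambda_1 \phi$, the H\"older regularity near $\partial B_1$ and the $L^2$ integrability verify the hypotheses of Proposition~\ref{LEJOS} with $r = 1/2$. I then define
\[
\phi_*(x) := \begin{cases} \displaystyle\int_{B_1} \mathcal{G}_s(x,y)\,\lambda_1 \phi(y)\,dy & \text{if } x \in B_1, \\ 0 & \text{if } x \in \mathbb{R}^n \setminus B_1. \end{cases}
\]
By \eqref{VIC3}--\eqref{VIC4}, $\phi_*$ is a distributional solution of $(-\Delta)^s \phi_* = \lambda_1 \phi$ in $B_1$ lying in $W^{2s,2}_{\mathrm{loc}}(B_1)$, vanishing outside $B_1$; combined with the boundary decay $|\phi_*(x)| \le C\,d(x)^s$ from \eqref{VIC2}, this places $\phi_*$ in $H^s_0(B_1)$. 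Then $\phi - \phi_*$ is an $H^s_0(B_1)$ weak solution of $(-\Delta)^s w = 0$ in $B_1$, hence vanishes by the uniqueness built into the variational framework of Lemma~\ref{VARIA}. Thus $\phi_* = \phi$ is an eigenfunction with the claimed regularity, and \eqref{COSI} follows at once from \eqref{VIC2}, since $\lambda_1 \phi \in L^1(B_1) \cap L^\infty(B_1 \setminus B_{1/2})$.

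For the boundary limit I split cases. If $e \cdot \omega \ge 0$, then $|e + \epsilon \omega|^2 = 1 + 2\epsilon\, e\cdot\omega + \epsilon^2 \ge 1$ forces $e + \epsilon\omega \notin B_1$, so $\phi_*(e + \epsilon\omega) = 0$, which matches $(-e\cdot\omega)_+^s = 0$. If $e \cdot \omega < 0$, then $e + \epsilon\omega \in B_1$ for small $\epsilon$, and Corollary~\ref{propsev} yields
\[
\lim_{\epsilon \searrow 0} \epsilon^{-s} \phi_*(e + \epsilon \omega) = \lambda_1\,k(n,s)\,(-2 e \cdot \omega)^s \int_{B_1} \phi(z)\,\frac{(1-|z|^2)^s}{s\,|z-e|^n}\,dz,
\]
the integral being finite by Lemma~\ref{lklkl}. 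Setting $k_* := 2^s \lambda_1 k(n,s) I$, where $I$ denotes the integral above, gives \eqref{CHaert}. The crucial point is that because $\phi$ is radial, a rotation change of variables shows $I$ is independent of $e \in \partial B_1$, so $k_*$ is a single constant; its positivity is then immediate from $\phi \ge 0,\, \not\equiv 0$, $\lambda_1 > 0$ (Lemma~\ref{VARIA}) and $k(n,s) > 0$. The main obstacle I anticipate is the verification that $\phi_* \in H^s_0(B_1)$ so that the uniqueness step closes; this rests on combining the interior $W^{2s,2}_{\mathrm{loc}}$ regularity with the sharp boundary decay of order $d^s$, and the radial invariance of $\phi$ is what ultimately makes $k_*$ well defined as a single constant rather than an $e$-dependent quantity.
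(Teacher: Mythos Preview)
Your proof is correct and follows essentially the same route as the paper: start from the eigenfunction of Corollary~\ref{QUESTCP}, rewrite it via the Green representation of Proposition~\ref{LEJOS}, identify it with $\phi$ by uniqueness in $H^s_0(B_1)$, and then read off the boundary asymptotics from Corollary~\ref{propsev} after the same case split on the sign of $e\cdot\omega$. Your version is in fact slightly cleaner than the paper's, since you correctly include the factor~$\lambda_1$ in the definition of~$k_*$ and you explicitly invoke the radiality of~$\phi$ to justify that the integral $\int_{B_1}\phi(z)\frac{(1-|z|^2)^s}{s|z-e|^n}\,dz$ is independent of~$e\in\partial B_1$, a point the paper leaves implicit.
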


Now, we can complete the proof of Proposition~\ref{sharbou}, by arguing as follows.

\begin{proof}[Proof of Proposition~\ref{sharbou}]
Let $\psi$ be a test function in $C^\infty_0(\mathbb{R}^n)$.
Let also~$R:=\frac{r+1}{2}\in(r,1)$ and
$$ g_\epsilon(X):=
\epsilon^{-s}\phi_*(e+\epsilon X)\partial^{\beta}\psi(X).$$
We claim that
\begin{equation}\label{7UHSNs9oKN}
\sup_{{X\in{\mathbb{R}}^n}}|g_\epsilon(X)|\le C,\end{equation}
for some~$C>0$ independent of~$\epsilon$.
To prove this, we distinguish three cases.
If~$e+\epsilon X\in{\mathbb{R}}^n\setminus B_1$,
we have that~$\phi_*(e+\epsilon X)=0$ and thus~$g_\epsilon(X)=0$.
If instead~$e+\epsilon X\in B_R$,
we observe that
$$ R>|e+\epsilon X|\ge 1-\epsilon|X|,$$
and therefore~$|X|\ge \frac{1-R}{\epsilon}$. In particular,
in this case~$X$ falls outside the support of~$\psi$, as long as~$\epsilon>0$
is sufficiently small, and consequently~$\partial^{\beta}\psi(X)=0$
and~$g_\epsilon(X)=0$.

Hence, to complete the proof of~\eqref{7UHSNs9oKN},
we are only left with the case in which~$
e+\epsilon X\in B_1\setminus B_R$. In this situation,
we make use of~\eqref{COSI} and we find that
\begin{eqnarray*}
&& |\phi_*(e+\epsilon X)|\le C\,d^{s}(e+\epsilon X)=
C\,(1-|e+\epsilon X|)^s\\&&\qquad
\le C\,(1-|e+\epsilon X|)^s(1+|e+\epsilon X|)^s=
C\,(1-|e+\epsilon X|^2)^s\\&&\qquad=
C\,\epsilon^s(-2e\cdot X-\epsilon|X|^2)^s\le C\epsilon^s,
\end{eqnarray*}
for some~$C>0$ possibly varying from line to line,
and this completes the proof of~\eqref{7UHSNs9oKN}.

Now, from~\eqref{7UHSNs9oKN} and the
Dominated Convergence Theorem, we obtain that
\begin{equation}\label{eq567a8s81n} \lim_{\epsilon\searrow0}\int_{\mathbb{R}^n}
\epsilon^{-s}\phi_*(e+\epsilon X)\partial^{\beta}\psi(X) dX
=\int_{\mathbb{R}^n} \lim_{\epsilon\searrow0}
\epsilon^{-s}\phi_*(e+\epsilon X)\partial^{\beta}\psi(X) dX.\end{equation}
On the other hand, by Corollary~\ref{8iJJAUMPAAAxc},
used here with~$\omega:=\frac{X}{|X|}$, we know that
\begin{eqnarray*}&& \lim_{\epsilon\searrow0}
\epsilon^{-s}\phi_*(e+\epsilon X)
=\lim_{\epsilon\searrow0}
\epsilon^{-s}\phi_*(e+\epsilon |X|\omega)=|X|^s
\lim_{\epsilon\searrow 0}\epsilon^{-s}\phi_*(e+\epsilon\omega)\\&&\qquad=k_*\,|X|^s\,
(-e\cdot\omega)^s_+=k_*\,(-e\cdot X)^s_+.
\end{eqnarray*}
Substituting this into~\eqref{eq567a8s81n}, we thus find that
$$ \lim_{\epsilon\searrow0}\int_{\mathbb{R}^n}
\epsilon^{-s}\phi_*(e+\epsilon X)\partial^{\beta}\psi(X) dX
=k_*\,\int_{\mathbb{R}^n} (-e\cdot X)^s_+\partial^{\beta}\psi(X) dX.$$
As a consequence, integrating by parts twice,
\begin{equation*}
\begin{split}
&\lim_{\epsilon\searrow 0}\epsilon^{|\beta|-s}\int_{\mathbb{R}^n}
\partial^\beta\phi_*(e+\epsilon X)\psi(X) dX=
\lim_{\epsilon\searrow 0}\int_{\mathbb{R}^n}\partial^\beta
\Big(\epsilon^{-s}\phi_*(e+\epsilon X)\Big)\psi(X) dX \\
&\qquad=(-1)^{|\beta|}\lim_{\epsilon\searrow 0}\int_{\mathbb{R}^n}
\epsilon^{-s}\phi_*(e+\epsilon X)\partial^{\beta}\psi(X) dX \\
&\qquad=(-1)^{|\beta|}\,k_*\,\int_{\mathbb{R}^n} (-e\cdot X)^s_+\partial^{\beta}\psi(X) dX\\
&\qquad=k_*\,\int_{\mathbb{R}^n} \partial^{\beta}(-e\cdot X)^s_+\psi(X) dX
\\
&\qquad=(-1)^{|\beta|}\,k_*\, s(s-1)\ldots(s-|\beta|+1)e_1^{\beta_1}\ldots e_n^{\beta_n}\int_{\mathbb{R}^n}(-e\cdot X)^{s-|\beta|}_+\psi(X) dX.
\end{split}
\end{equation*}
Since the test function $\psi$ is arbitrary, the claim in
Proposition~\ref{sharbou} is proved.
\end{proof}

\section{Boundary behaviour of~$s$-harmonic functions}
\label{s:hwb}

In this section we analyze the asymptotic behaviour of $s$-harmonic
functions, with a ``spherical bump function'' as exterior Dirichlet datum.

The result needed for our purpose is the following:

\begin{lemma}
\label{hbump} 
Let $s>0$. Let~$m\in\mathbb{N}_0$
and~$\sigma\in(0,1)$ such that~$s=m+\sigma$.

Then, there exists
\begin{equation}\label{0oHKNSSH013oe2urjhfe}
{\mbox{$\psi\in H^s(\mathbb{R}^n)\cap C^s_0(\mathbb{R}^n)$
such that $
(-\Delta)^s \psi=0$ in~$B_1$,}}\end{equation} and, for
every $x\in\partial B_{1-\epsilon}$,
\begin{equation}\label{0oHKNSSH013oe2urjhfe:2}
\psi(x)=k\,\epsilon^s+o(\epsilon^s),\end{equation}
as $\epsilon\searrow 0$, for some $k>0$.
\begin{proof}

Let~$\overline{\psi}\in C^\infty(\mathbb{R},\,[0,1])$ such that $\overline{\psi}=0$ in $\mathbb{R}\setminus(2,3)$ and $\overline{\psi}>0$ in $(2,3)$. Let $\psi_0(x):=(-1)^m\overline{\psi}(|x|)$.
We recall the Poisson kernel 
$$ 
\Gamma_s(x,y):=(-1)^m\frac{\gamma_{n,\sigma}}{
|x-y|^n}\frac{(1-|x|^2)^s_+}{(|y|^2-1)^s},$$
for $x\in\mathbb{R}^n$, $y\in\mathbb{R}^n\setminus\overline{B}_1$, and
a suitable normalization constant~$\gamma_{n,\sigma}>0$ (see formulas~(1.10) and~(1.30)
in~\cite{ABX}).
We define
$$ \psi(x):=
\displaystyle\int_{{\mathbb{R}}^n\setminus B_1} \Gamma_s(x,y)\,\psi_0(y)\,dy+\psi_0(x).$$
Notice that~$\psi_0=0$ in~$B_{3/2}$ and therefore we can
exploit Theorem
in~\cite{ABX} and obtain that~\eqref{0oHKNSSH013oe2urjhfe}
is satisfied
(notice also that~$\psi=\psi_0$ outside~$B_1$, hence~$\psi$ is compactly supported).

Furthermore, to prove~\eqref{0oHKNSSH013oe2urjhfe:2}
we borrow some ideas from Lemma 2.2 in~\cite{MR3626547}
and we see that, for any $x\in \partial B_{1-\epsilon}$,
\begin{equation*}
\begin{split}
\psi(x)
&=c(-1)^m\int_{\mathbb{R}^n\setminus B_1} \frac{\psi_0(y)(1-|x|^2)^s}{(|y|^2-1)^s|x-y|^n} dy+\psi_0(x) \\
&=c(-1)^m\int_{\mathbb{R}^n\setminus B_1} \frac{\psi_0(y)(1-|x|^2)^s}{(|y|^2-1)^s|x-y|^n} dy \\
&=c\,(1-|x|^2)^s\int_2^3\left[\int_{\mathbb{S}^{n-1}} \frac{\rho^{n-1}\overline{\psi}(\rho)}{(\rho^2-1)^s|x-\rho\omega|^n} d\omega\right] d\rho
\\&=c\,(2\epsilon-\epsilon^2)^s\int_2^3\left[\int_{\mathbb{S}^{n-1}} \frac{\rho^{n-1}\overline{\psi}(\rho)}{(\rho^2-1)^s|(1-\epsilon)e_1-\rho\omega|^n} d\omega\right] d\rho \\
&=2^sc\,\epsilon^s\int_2^3\left[\int_{\mathbb{S}^{n-1}} \frac{\rho^{n-1}\overline{\psi}(\rho)}{(\rho^2-1)^s|e_1-\rho\omega|^n} d\omega\right] d\rho+o(\epsilon^s) \\
&=c\epsilon^s+o(\epsilon^s),
\end{split}
\end{equation*}
where~$c>0$ is a constant possibly varying from line to line, and this establishes~\eqref{0oHKNSSH013oe2urjhfe:2}.
\end{proof}
\end{lemma}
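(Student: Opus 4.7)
The plan is to construct $\psi$ explicitly as the Poisson extension of a smooth, radial, compactly supported exterior datum. First I would pick a smooth, nonnegative, radial bump $\overline{\psi}\in C^\infty(\mathbb{R},[0,1])$ with $\mathrm{supp}\,\overline{\psi}\subset(2,3)$ and $\overline{\psi}>0$ on $(2,3)$, and set $\psi_0(x):=(-1)^m\overline{\psi}(|x|)$, where the sign $(-1)^m$ is inserted to compensate for the analogous sign in the Poisson kernel $\Gamma_s$ for the polyharmonic fractional Laplacian of order $2s$ with $s=m+\sigma$, as recalled in the statement. Then I would define
\[
\psi(x):=\int_{\mathbb{R}^n\setminus B_1}\Gamma_s(x,y)\,\psi_0(y)\,dy+\psi_0(x),
\]
so that $\psi=\psi_0$ outside $B_1$ (hence $\psi$ is compactly supported, bounded, and has the required regularity coming from the Poisson representation).

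The next step is to verify that $\psi$ satisfies~\eqref{0oHKNSSH013oe2urjhfe}. Since $\psi_0$ is smooth and vanishes in a neighborhood of $\partial B_1$ (being supported in $\{2<|y|<3\}$), the integral defining $\psi$ on $B_1$ is absolutely convergent with no singular boundary contribution, and one can invoke the Poisson representation results of \cite{ABX} to conclude that $(-\Delta)^s\psi=0$ in $B_1$ and that $\psi$ belongs to $H^s(\mathbb{R}^n)\cap C^s_0(\mathbb{R}^n)$. The identification of the appropriate function spaces follows from the smoothness of $\psi_0$ together with the explicit $(1-|x|^2)^s_+$ factor in $\Gamma_s$, which provides the expected $C^s$ boundary behaviour on the interior side.

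The final and main step is the asymptotic expansion~\eqref{0oHKNSSH013oe2urjhfe:2}. For $x\in\partial B_{1-\epsilon}$ one has $1-|x|^2=2\epsilon-\epsilon^2=2\epsilon(1+O(\epsilon))$. Since $\psi_0(x)=0$ for small $\epsilon$, only the integral contributes, and after passing to polar coordinates $y=\rho\omega$ and factoring out $(1-|x|^2)^s$ one obtains
\[
\psi(x)=c\,(2\epsilon-\epsilon^2)^s\int_2^3\left[\int_{\mathbb{S}^{n-1}}\frac{\rho^{n-1}\overline{\psi}(\rho)}{(\rho^2-1)^s|x-\rho\omega|^n}\,d\omega\right]d\rho.
\]
The inner double integral is continuous in $x$ as $x\to e_1\in\partial B_1$, since $|x-\rho\omega|\ge \rho-|x|\ge 1$ on the support of $\overline{\psi}$ prevents any singularity. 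Passing to the limit $\epsilon\searrow 0$ and replacing $x$ by $e_1$ in the non-singular factor yields $\psi(x)=k\epsilon^s+o(\epsilon^s)$ with
\[
k:=2^s c\int_2^3\int_{\mathbb{S}^{n-1}}\frac{\rho^{n-1}\overline{\psi}(\rho)}{(\rho^2-1)^s|e_1-\rho\omega|^n}\,d\omega\,d\rho.
\]
The main subtlety is ensuring $k>0$, but this is immediate: the integrand is strictly positive on the open set where $\overline{\psi}>0$, and the normalizing constant $\gamma_{n,\sigma}>0$ from \cite{ABX} guarantees positivity after the $(-1)^m$ factors cancel. The only other point requiring care is justifying the interchange of limit and integration, which follows from dominated convergence since the integrand is uniformly bounded in $\epsilon$ for $x$ near $\partial B_1$ and $y$ in the compact set $\{2\le|y|\le 3\}$.
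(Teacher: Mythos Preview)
Your proposal is correct and follows essentially the same construction and argument as the paper: the same radial bump $\overline{\psi}$ supported in $(2,3)$ with the sign $(-1)^m$, the same Poisson representation $\psi=\int\Gamma_s\psi_0+\psi_0$ from \cite{ABX}, and the same factorization $(1-|x|^2)^s=(2\epsilon-\epsilon^2)^s$ together with continuity of the remaining integral as $x\to e_1$. Your write-up is in fact slightly more explicit than the paper's in justifying dominated convergence and the positivity of $k$, but the underlying method is identical.
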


\begin{remark}\label{RUCAPSJD} {\rm
As in Proposition~\ref{sharbou}, one can extend~\eqref{0oHKNSSH013oe2urjhfe:2}
to higher derivatives (in the distributional sense), obtaining, for any~$e\in\partial B_1$
and~$\beta\in\mathbb{N}^n$
$$ \lim_{\epsilon\searrow0} \epsilon^{|\beta|-s}\partial^\beta\psi(e+\epsilon X)=k_\beta\,
e_1^{\beta_1}\dots e_n^{\beta_n}(-e\cdot X)_+^{s-|\beta|}
,$$
for some~$\kappa_\beta\ne0$.}\end{remark}

Using Lemma \ref{hbump}, in the spirit of \cite{MR3626547}, we
can construct a sequence of $s$-harmonic functions
approaching~$(x\cdot e)^s_+$ for a fixed unit vector $e$,
by using a blow-up argument. Namely, we prove the following:

\begin{corollary}
\label{lapiog}
Let $e\in\partial B_1$. There exists a sequence $v_{e,j}\in H^s(\mathbb{R}^n)\cap C^s(\mathbb{R}^n)$ such that $(-\Delta)^s v_{e,j}=0$ in $B_1(e)$, $v_{e,j}=0$ in $\mathbb{R}^n\setminus B_{4j}(e)$, and \[v_{e,j}\to\kappa(x\cdot e)^s_+\quad\mbox{in}\quad L^1(B_1(e)),\] as $j\to+\infty$, for some $\kappa>0$.
\begin{proof}
Let $\psi$ be as in Lemma \ref{hbump} and
define \[v_{e,j}(x):=j^s\psi\left(\frac{x}{j}-e\right).\]
The $s$-harmonicity and the property of being compactly supported follow
by the ones of $\psi$. We now prove the convergence.
To this aim, given $x\in B_1(e)$, we write $p_j:=\frac{x}{j}-e$ and $\epsilon_j:=1-|p_j|$. Recall that since $x\in B_1(e)$, then $|x-e|^2<1$, which implies that $|x|^2<2x\cdot e$ and $x\cdot e>0$ for any $x\in B_1(e)$. \\
As a consequence \[|p_j|^2=\left|\frac{x}{j}-e\right|^2=
\frac{|x|^2}{j^2}+1-2\frac{x}{j}\cdot e=1-\frac{2}{j}(x\cdot e)_+
+o\left(\frac{1}{j}\right)(x\cdot e)^2_+,\]
and so \[\epsilon_j=\frac{(1+o(1))}{j}(x\cdot e)_+.\]
Therefore, using \eqref{0oHKNSSH013oe2urjhfe:2},
\begin{equation*}
\begin{split}
v_{e,j}(x)&=j^s\psi(p_j) \\ &=j^s\kappa(\epsilon_j^s+o(\epsilon^s_j)) \\
&=j^s\left(\frac{\kappa}{j^s}(x\cdot e)_+^s+o\left(\frac{1}{j^s}\right)\right) \\
&=\kappa(x\cdot e)^s_+ +o(1).
\end{split}
\end{equation*}
Integrating over $B_1(e)$, we obtain the desired $L^1$-convergence.
\end{proof}
\end{corollary}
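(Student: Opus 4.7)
The strategy is a blow-up built directly on the function $\psi$ from Lemma \ref{hbump}, which is $s$-harmonic in $B_1$, compactly supported (in the construction, inside $\overline{B_3}$), and satisfies $\psi(x)=k\,\epsilon^s+o(\epsilon^s)$ for $x\in\partial B_{1-\epsilon}$. I would take
\[
v_{e,j}(x):=j^s\,\psi\!\left(\tfrac{x}{j}-e\right),
\]
where the prefactor $j^s$ is chosen to exactly cancel the $j^{-s}$ scaling that the boundary asymptotic of $\psi$ produces after the rescaling $x\mapsto x/j$.

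I would then verify the three conclusions in turn. Writing $p_j:=x/j-e$, the scaling of $(-\Delta)^s$ together with its translation invariance gives $(-\Delta)^s v_{e,j}(x)=j^{-s}\,(-\Delta)^s\psi(p_j)$. For $x\in B_1(e)$ the inequality $|x-e|^2<1$ becomes $|x|^2<2\,x\cdot e$, hence
\[
|p_j|^2=1-\tfrac{2(x\cdot e)}{j}+\tfrac{|x|^2}{j^2}<1\qquad\text{for }j\geq 2,
\]
so $p_j\in B_1$ and $(-\Delta)^s v_{e,j}\equiv 0$ on $B_1(e)$ by Lemma \ref{hbump}. The support of $v_{e,j}$ is contained in $\{|x/j-e|\leq 3\}=\overline{B_{3j}(je)}$, and the triangle inequality combined with $|je-e|=j-1$ yields $\overline{B_{3j}(je)}\subset\overline{B_{4j}(e)}$. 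Expanding $|p_j|^2$ gives $\epsilon_j:=1-|p_j|=(x\cdot e)_+/j+o(1/j)$ on $B_1(e)$, and Lemma \ref{hbump} then produces the pointwise limit
\[
v_{e,j}(x)=j^s\big(k\,\epsilon_j^s+o(\epsilon_j^s)\big)=k\,(x\cdot e)^s_++o(1),
\]
so the required constant is $\kappa:=k>0$.

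To upgrade pointwise convergence on $B_1(e)$ to convergence in $L^1(B_1(e))$, I would apply dominated convergence. This requires a uniform bound of the form $|\psi(y)|\leq C\,(1-|y|)^s_+$ on $B_1$, which is not asserted in the statement of Lemma \ref{hbump} but can be read off directly from the Poisson representation used in its proof, since the exterior datum $\psi_0$ vanishes in a neighbourhood of $\partial B_1$ and the factor $(1-|x|^2)^s_+$ in $\Gamma_s$ provides the correct rate. Once this is in hand, $|v_{e,j}(x)|\leq C\,j^s\epsilon_j^s\leq C'\,(x\cdot e)^s_+$ uniformly in $j$, so dominated convergence on the bounded set $B_1(e)$ closes the argument. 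I expect this uniform boundary estimate to be the only real technical point; everything else is a direct calculation once the correct scaling has been guessed.
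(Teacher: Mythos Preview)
Your proof is correct and follows exactly the same blow-up construction as the paper, with the same definition $v_{e,j}(x)=j^s\psi(x/j-e)$ and the same asymptotic expansion of $\epsilon_j$. You are in fact more careful than the paper on the final step: the paper simply writes ``Integrating over $B_1(e)$, we obtain the desired $L^1$-convergence,'' whereas you correctly identify that a dominated convergence argument with the bound $|\psi(y)|\le C(1-|y|)^s_+$ (read off from the Poisson representation) is what actually justifies passing from pointwise to $L^1$ convergence.
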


Now, we show that, as in the case $s\in (0,1)$ proved in
Theorem~3.1 of \cite{MR3626547}, we can find an $s$-harmonic function
with an arbitrarily large number of derivatives prescribed at some point.

\begin{proposition}
\label{maxhlapspan}
For any $\beta\in\mathbb{N}^n$, there exist~$p\in\mathbb{R}^n$, $R>r>0$, and~$v\in H^s(\mathbb{R}^n)\cap C^s(\mathbb{R}^n)$ such that
\begin{equation}
\label{csi}
\begin{cases}
(-\Delta)^s v=0&\quad\text{in}\quad B_r(p), \\
v=0&\quad\text{in}\quad\mathbb{R}^n\setminus B_R(p),
\end{cases}
\end{equation}
\begin{equation*}
D^\alpha v(p)=0\quad{\mbox{ for any }}\; \alpha\in\mathbb{N}^n\quad{\mbox{ with }}\;|\alpha|\leq|\beta|-1,\end{equation*}
\begin{equation*}
D^\alpha v(p)=0\quad{\mbox{ for any }}\; \alpha\in\mathbb{N}^n\quad{\mbox{ with }}\;|\alpha|=|\beta|\quad{\mbox{ and }}\;\alpha\neq\beta\end{equation*}  
and 
\begin{equation*}
D^\beta v(p)=1.\end{equation*}

\begin{proof}
Let $\mathcal{Z}$ be the set of all pairs~$(v,x)\in\left(H^s(\mathbb{R}^n)\cap C^s(\mathbb{R}^n)\right)\times B_r(p)$ that satisfy \eqref{csi} for some $R>r>0$ and $p\in\mathbb{R}^n$. 

To each pair $(v,x)\in\mathcal{Z}$ we associate the vector
$\left(D^\alpha v(x)\right)_{|\alpha|\leq|\beta|}\in\mathbb{R}^{K'}$, for some $K'=K'_{|\beta|}$ and 
consider~$\mathcal{V}$ to be the vector space spanned by this construction, namely
we set
$$\mathcal{V}:=\Big\{\left(D^\alpha v(x)\right)_{|\alpha|\leq|\beta|},
\quad{\mbox{ with }}\; (v,x)\in\mathcal{Z}
\Big\}.$$
We claim that
\begin{equation}\label{CLSPAZ}
\mathcal{V}=\mathbb{R}^{K'}.\end{equation}
To check this, we suppose by contradiction that~$\mathcal{V}$ lies in a 
proper subspace of~$\mathbb{R}^{K'}$. Then, $\mathcal{V}$ must lie in a
hyperplane, hence there exists 
\begin{equation}
\label{cnonull0}
c=(c_\alpha)_{|\alpha|\leq|\beta|}\in\mathbb{R}^{K'}\setminus\left\{0\right\} 
\end{equation}
which is orthogonal to any vector $\left(D^\alpha v(x)\right)_{|\alpha|\leq|\beta|}$
with~$(v,x)\in\mathcal{Z}$, that is
\begin{equation}
\label{prp18}
\sum_{|\alpha|\leq|\beta|} c_\alpha D^\alpha v(x)=0.
\end{equation}
We notice that the pair $(v_{e,j},x)$, with $v_j$ as in Corollary \ref{lapiog},
$e\in\partial B_1$
and $x\in B_1(e)$, belongs to~$\mathcal{Z}$. Consequently,
fixed~$\xi\in\mathbb{R}^n\setminus B_{1/2}$
and set~$e:=\frac{\xi}{|\xi|}$, we have that~\eqref{prp18} holds true when~$v:=v_{e,j}$ and $x\in B_1(e)$, namely
$$
\sum_{|\alpha|\leq|\beta|} c_\alpha D^\alpha v(x)=0.
$$
Let now~$\varphi\in C_0^\infty(B_1(e))$. 
Integrating by parts,
by Corollary \ref{lapiog} and the Dominated Convergence Theorem, 
we have that
\begin{eqnarray*}
&&0=\lim_{j\to+\infty}\int_{\mathbb{R}^n}\sum_{|\alpha|\leq|\beta|}
c_\alpha D^\alpha v_{e,j}(x)\varphi(x)\,dx
=\lim_{j\to+\infty}\int_{\mathbb{R}^n}\sum_{|\alpha|\leq|\beta|}(-1)^{|\alpha|}
c_\alpha v_{e,j}(x)D^\alpha\varphi(x)\,dx\\
&&\qquad=\kappa\int_{\mathbb{R}^n}\sum_{|\alpha|\leq|\beta|}(-1)^{|\alpha|}
c_\alpha(x\cdot e)^s_+D^\alpha\varphi(x)\,dx
=\kappa\int_{\mathbb{R}^n}\sum_{|\alpha|\leq|\beta|}c_\alpha
D^\alpha(x\cdot e)^s_+\varphi(x)\,dx.\end{eqnarray*}
This gives that, for every $x\in B_1(e)$,
\[\sum_{|\alpha|\leq|\beta|}c_\alpha D^\alpha(x\cdot e)^s_+=0.\] 
Moreover, for every $x\in B_1(e)$,
\[D^\alpha(x\cdot e)^s_+=s(s-1)\ldots(s-|\alpha|+1)
(x\cdot e)^{s-|\alpha|}_+e_1^{\alpha_1}\ldots e_n^{\alpha_n}.\]
In particular, for $x=\frac{e}{|\xi|}\in B_1(e)$,
\[D^\alpha(x\cdot e)^s_+\big|_{|_{x=e/{|\xi|}}}=s(s-1)
\ldots(s-|\alpha|+1)|\xi|^{-s}\xi_1^{\alpha_1}\ldots \xi_n^{\alpha_n}.\] And, using the usual multi-index notation, we write
\begin{equation}
\label{sampspal}
\sum_{|\alpha|\leq|\beta|}c_\alpha s(s-1)\ldots(s-|\alpha|+1)
\xi^\alpha=0,
\end{equation}
for any $\xi\in\mathbb{R}^n\setminus B_{1/2}$.
The identity \eqref{sampspal} describes
a polynomial in $\xi$ which vanishes for any~$\xi$ in an open subset of $\mathbb{R}^n$. As a result, the Identity Principle
for polynomials leads to
$$ c_\alpha s(s-1)\ldots(s-|\alpha|+1)=0,$$
for all~$|\alpha|\leq|\beta|$.

Consequently, since $s\in\mathbb{R}\setminus\mathbb{N}$, 
the product $s(s-1)\ldots(s-|\alpha|+1)$ never vanishes, 
and so the coefficients $c_\alpha$ are forced to be null for any $|\alpha|\leq|\beta|$. 
This is in contradiction with~\eqref{cnonull0}, and therefore the proof
of~\eqref{CLSPAZ} is complete.

{F}rom this, the desired claim in
Proposition~\ref{maxhlapspan} plainly follows.
\end{proof}
\end{proposition}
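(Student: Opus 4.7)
The approach is to work in the vector space $\mathcal{V}\subseteq\mathbb{R}^{K'}$ (where $K'$ is the number of multi-indices with $|\alpha|\le|\beta|$) spanned by jet vectors $(D^\alpha v(x))_{|\alpha|\le|\beta|}$ coming from all pairs $(v,x)$ in which $v$ satisfies \eqref{csi} on some balls $B_r(p)\subset B_R(p)$ and $x\in B_r(p)$. Once $\mathcal{V}=\mathbb{R}^{K'}$ is established, the conclusion follows by a standard linear-algebra/translation argument: select admissible pairs $(v_1,x_1),\dots,(v_{K'},x_{K'})$ whose jets form a basis of $\mathbb{R}^{K'}$, translate each $v_i$ so that $x_i$ becomes a common reference point $p$ (enlarging the radii as needed), and then take a linear combination whose jet at $p$ is the unit vector in the $\beta$-slot.

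To establish $\mathcal{V}=\mathbb{R}^{K'}$, I argue by contradiction: if $\mathcal{V}\subsetneq\mathbb{R}^{K'}$, then some nonzero $c=(c_\alpha)_{|\alpha|\le|\beta|}$ annihilates $\mathcal{V}$, so that $\sum_\alpha c_\alpha D^\alpha v(x)=0$ for every admissible pair $(v,x)$. I apply this to the sequence $v_{e,j}$ from Corollary~\ref{lapiog} (for a fixed $e\in\partial B_1$) and to points $x\in B_1(e)$. Testing against $\varphi\in C_0^\infty(B_1(e))$, moving all derivatives onto $\varphi$ by integration by parts, and using the $L^1(B_1(e))$-convergence $v_{e,j}\to\kappa(x\cdot e)_+^s$, I pass to the limit and integrate by parts back to obtain the distributional identity
\[
\sum_{|\alpha|\le|\beta|} c_\alpha\, D^\alpha (x\cdot e)_+^s=0 \qquad\text{in } B_1(e).
\]
Since $D^\alpha(x\cdot e)_+^s = s(s-1)\cdots(s-|\alpha|+1)\,(x\cdot e)_+^{s-|\alpha|}\,e^\alpha$ in multi-index notation, and since $s\notin\mathbb{N}$ ensures that no factor $s(s-1)\cdots(s-|\alpha|+1)$ vanishes, what remains is a weighted polynomial-type identity in the variables $e$.

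To extract coefficient-vanishing, I parametrize by $\xi\in\mathbb{R}^n\setminus B_{1/2}$, set $e:=\xi/|\xi|$, and test the identity at the specific point $x:=e/|\xi|\in B_1(e)$; plugging in converts the identity into a genuine polynomial identity in $\xi$ on an open subset of $\mathbb{R}^n$. By the identity principle every coefficient vanishes, and since the numerical factors $s(s-1)\cdots(s-|\alpha|+1)$ are all nonzero, we conclude $c_\alpha=0$ for every $|\alpha|\le|\beta|$, contradicting $c\ne 0$. The main delicate step I expect is the justification of the distributional passage to the limit: since we only have $L^1$ (not pointwise) convergence of $v_{e,j}$, every derivative must be carefully routed onto the test function before the limit is taken. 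A secondary bookkeeping issue lies in the final linear-combination step, where one must intersect the domains $B_{r_i}(p)$ of $s$-harmonicity and union the supports $B_{R_i}(p)$ after translating, so as to deliver a single pair of radii $r<R$ valid for the combined function $v$.
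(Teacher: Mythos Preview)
Your proposal is correct and follows essentially the same approach as the paper: both argue by contradiction that the jet space $\mathcal{V}$ must be all of $\mathbb{R}^{K'}$, both plug in the sequence $v_{e,j}$ from Corollary~\ref{lapiog}, move derivatives onto the test function to exploit the $L^1$-convergence, evaluate the resulting distributional identity at $x=e/|\xi|$ with $e=\xi/|\xi|$, and invoke the identity principle together with $s\notin\mathbb{N}$. Your write-up even spells out the final translation/linear-combination step that the paper dismisses with ``plainly follows.''
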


\section{A result which implies Theorem \ref{theone}}\label{s:fourthE}

We will use the notation
\begin{equation}\label{NEOAAJKin1a}
\Lambda_{-\infty}:=\Lambda_{(-\infty,\dots,-\infty)},\end{equation}
that is we exploit~\eqref{1.6BIS} with~$a_1:=\dots:=a_l:=-\infty$.
This section presents the following statement:

\begin{theorem}\label{theone2}
Suppose that 
\begin{equation*}\begin{split}&
{\mbox{either there exists~$i\in\{1,\dots,M\}$ such that~$\XB_i\ne0$
and~$s_i\not\in{\mathbb{N}}$,}}\\
&{\mbox{or there exists~$i\in\{1,\dots,l\}$ such that~$\XC_i\ne0$ and $\alpha_i\not\in{\mathbb{N}}$.}}\end{split}
\end{equation*}
Let $\ell\in\mathbb{N}$, $f:\mathbb{R}^N\rightarrow\mathbb{R}$,
with $f\in C^{\ell}\big(\overline{B_1^N}\big)$. Fixed $\epsilon>0$,
there exist
\begin{equation*}\begin{split}&
u=u_\epsilon\in C^\infty\left(B_1^N\right)\cap C\left(\mathbb{R}^N\right),\\
&a=(a_1,\dots,a_l)=(a_{1,\epsilon},\dots,a_{l,\epsilon})
\in(-\infty,0)^l,\\ {\mbox{and }}\quad&
R=R_\epsilon>1\end{split}\end{equation*} such that:
\begin{itemize}
\item for every~$h\in\{1,\dots,l\}$ and~$(x,y,t_1,\dots,t_{h-1},t_{h+1},\dots,t_l)$
\begin{equation}\label{SPAZIO}
{\mbox{the map ${\mathbb{R}}\ni t_h\mapsto u(x,y,t)$
belongs to~$C^{k_h,\alpha_h}_{-\infty}$,}}
\end{equation}
in the notation of formula~(1.4) of~\cite{CDV18},
\item it holds that
\begin{equation}\label{MAIN EQ:2}\left\{\begin{matrix}
\Lambda_{-\infty} u=0 &\mbox{ in }\;B_1^{N-l}\times(-1,+\infty)^l, \\
u(x,y,t)=0&\mbox{ if }\;|(x,y)|\ge R,
\end{matrix}\right.\end{equation}
\begin{equation}\label{ESTENSIONE}
\partial^{k_h}_{t_h} u(x,y,t)=0\qquad{\mbox{if }}t_h\in(-\infty,a_h),\qquad{\mbox{for all }}h\in\{1,\dots,l\},
\end{equation}
and
\begin{equation}\label{IAzofm:2}
\left\|u-f\right\|_{C^{\ell}(B_1^N)}<\epsilon.
\end{equation}\end{itemize}
\end{theorem}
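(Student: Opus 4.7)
The plan is to prove Theorem~\ref{theone2} in two stages, matching the announced organisation of Sections~\ref{s:fourth0} and~\ref{s:fourth}. The first stage is a span property: for every $K\in\mathbb{N}$ and every open ball $B_r\subset\mathbb{R}^N$ around the origin, the jet map $v\mapsto(\partial^\beta v(0))_{|\beta|\le K}$, restricted to compactly supported $v\in C^\infty(B_r)$ satisfying $\Lambda_{-\infty}v=0$ in $B_r$ (and with the polynomial extension in time built in so that \eqref{ESTENSIONE} holds), is surjective onto the Euclidean space of jets of order $\le K$. The second stage converts this span property into the quantitative $C^\ell$ approximation \eqref{IAzofm:2} via Taylor expansion and a suitable rescaling.

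For the span property I would argue by contradiction, in the spirit of Proposition~\ref{maxhlapspan}. Suppose the image of the jet map sits in a proper subspace, so that $\sum_{|\beta|\le K} c_\beta\,\partial^\beta v(0)=0$ for every admissible $v$ and some $(c_\beta)\ne 0$. I would then plug in test functions tailored to \eqref{NOTVAN}. If \eqref{NOTVAN} is realised by some $s_i\notin\mathbb{N}$ with $\XB_i\ne 0$, take $v$ to be a tensor product of the eigenfunction $\phi_*$ from Proposition~\ref{sharbou} in the variable $y_i$ (translated and rescaled so that its boundary singularity is placed at the origin) with smooth cutoffs in all other variables arranged to be locally constant near $0$; this makes $\mathfrak{l}$, $\mathcal{D}_{-\infty}$ and $(-\Delta)^{s_j}_{y_j}$ for $j\ne i$ act trivially near the origin, and a Poisson-type correction kills the lower order eigenvalue contribution from $\phi_*$, producing an honest element of $\ker\Lambda_{-\infty}$ in $B_r$. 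The boundary asymptotics of Proposition~\ref{sharbou} then yield, after blow-up, the polynomial identity
\[
\sum_{|\beta|\le K} c_\beta\,s_i(s_i-1)\cdots(s_i-|\beta|+1)\,e^\beta\,(-e\cdot X)_+^{s_i-|\beta|}=0
\]
for every $e\in\partial B_1^{m_i}$ and every test direction $X$; the Identity Principle, together with $s_i\notin\mathbb{N}$ keeping every factor $s_i(s_i-1)\cdots(s_i-|\beta|+1)$ nonzero, then forces every $c_\beta$ to vanish, contradicting the assumption. The alternative case of some $\alpha_h\notin\mathbb{N}$ with $\XC_h\ne 0$ runs along the same lines with the time-fractional solution $\psi$ of Lemma~\ref{LF} in place of $\phi_*$ and \eqref{LAp2} in place of the boundary asymptotics; the polynomial extension \eqref{ESTENSIONE} from~\cite{CDV18} is precisely what makes $\psi$ extend to a function compatible with $D^{\alpha_h}_{t_h,-\infty}$.

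For the second stage, given $f$ and $\epsilon>0$ I would first approximate $f$ in $C^\ell(B_1^N)$ to within $\epsilon/2$ by a polynomial $P$ via Stone--Weierstrass, reducing matters to approximating $P$. Applying the span with $K=\ell$ I would select compactly supported $v_\alpha\in\ker\Lambda_{-\infty}$, each satisfying \eqref{ESTENSIONE}, with $\partial^\beta v_\alpha(0)=\alpha!\,\delta_{\alpha\beta}$ for $|\beta|\le\ell$, and form $u$ out of suitably rescaled copies of the $v_\alpha$ matching the Taylor jet of $P$ at the origin; this exploits that each building block from Sections~\ref{s:second}--\ref{s:hwb} carries a natural homogeneity with respect to its own variable group, so that the resulting combination stays in $\ker\Lambda_{-\infty}$ on $B_1^{N-l}\times(-1,+\infty)^l$. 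Taylor's theorem gives $\|u-P\|_{C^\ell(B_1^N)}<\epsilon/2$ once the scaling parameter is chosen small, and the support condition \eqref{MAIN EQ:2} and the extension \eqref{ESTENSIONE} are inherited from the $v_\alpha$. The main obstacle is the span step, and within it the simultaneous-kernel requirement on the test functions: because $\mathfrak{l}$, $\mathcal{L}$ and $\mathcal{D}_{-\infty}$ act on disjoint variable blocks the tensor construction works, but the two branches of \eqref{NOTVAN} must be handled separately, and reconciling the inhomogeneous scaling orders $|r_i|$, $2s_j$, $\alpha_h$ of the various components prevents naive uniform rescaling from preserving the equation, so the scaled family has to be built directly at each target scale rather than obtained by rescaling a single solution.
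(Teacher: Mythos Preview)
Your overall two–stage strategy (span of jets, then anisotropic rescaling) matches the paper's architecture, but the span step as you describe it has a genuine gap. Your proposed test function is $\phi_*$ in the distinguished variable $y_i$ tensored with cutoffs that are locally constant in all remaining variables. That choice forces every derivative at the origin involving any direction other than $y_i$ to vanish, so the relation $\sum_{|\beta|\le K} c_\beta\,\partial^\beta v(0)=0$ only constrains the coefficients $c_\beta$ with $\beta$ supported purely in the $y_i$-block. Your displayed identity (with $e\in\partial B_1^{m_i}$ and $X\in\mathbb{R}^{m_i}$) therefore says nothing about the mixed $c_\beta$, and the contradiction does not close.

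The paper handles this by building tensor test functions that carry free parameters in \emph{every} variable block simultaneously: ODE solutions $v_j(x_j)$ depending on parameters $\XX_j$, translated eigenfunctions $\phi_j(y_j+e_j+\epsilon Y_j)$, and Mittag--Leffler profiles $\psi_h(t_h)$ depending on $\XT_h$. None of these factors is locally constant; each satisfies an eigenvalue-type equation, and the key device is that the eigenvalues are tuned to satisfy a single scalar balance (e.g.\ \eqref{alp}, \eqref{alpc}, \eqref{alp3}) so that $\Lambda_{-\infty}w=0$. Your ``Poisson-type correction'' cannot replace this: a correction localized in $y_i$ alone either fails to cancel the eigenvalue term or destroys the explicit dependence on the other free parameters. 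After the blow-up $\epsilon\searrow 0$ the paper extracts a polynomial identity \emph{jointly} in $(\XX,\XY,\XT)$ over an open set (their \eqref{ipop}), and it is this joint identity---established separately in four cases according to which of the $\XA_i,\XB_j,\XC_h$ are nonzero---that forces all coefficients to vanish.

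A secondary issue: in the second stage you invoke the span with $K=\ell$, but this is not enough. The anisotropic rescaling produces error terms of size $\eta^{\kappa}$ with $\kappa\ge\delta|\tau|-\gamma$ (in the notation of \eqref{am}--\eqref{blo}), so one needs $K$ large relative to the exponents $r_i,2s_j,\alpha_h$---the paper takes $K\ge(\gamma+1)/\delta+|\iota|+\ell$---not merely $K=\ell$.
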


The proof of Theorem~\ref{theone2} will basically occupy the
rest of this paper, and this will lead us to the completion of the
proof of Theorem~\ref{theone}. Indeed, we have that:

\begin{lemma}\label{GRAT}
If the statement of Theorem~\ref{theone2} holds true,
then the statement in Theorem~\ref{theone} holds true.
\end{lemma}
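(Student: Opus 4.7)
The plan is to deduce Theorem~\ref{theone} from Theorem~\ref{theone2} via two elementary adjustments: converting the operator $\Lambda_{-\infty}$ into $\Lambda_a$ using the vanishing of the top-order time derivatives in~\eqref{ESTENSIONE}, and multiplying by a time cutoff to upgrade the partial support property $u_0(x,y,t)=0$ for $|(x,y)|\ge R$ into full compact support in $\mathbb{R}^N$.

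To carry this out, I would apply Theorem~\ref{theone2} to the given $f$ and $\epsilon$ to obtain $u_0$, $a$, $R$ satisfying \eqref{SPAZIO}--\eqref{IAzofm:2}. Since replacing any $a_h$ by a more negative value $a'_h\le a_h$ only weakens~\eqref{ESTENSIONE} (as $(-\infty,a'_h)\subset(-\infty,a_h)$), I may assume $a_h\le -1$ for every $h$, so that $\mathcal{D}_a$ is defined throughout $B_1^N$. Then for every $t_h>a_h$, the vanishing of $\partial^{k_h}_\tau u_0$ on $(-\infty,a_h)$ lets one truncate the integral in~\eqref{defcap}:
\begin{equation*}
D^{\alpha_h}_{t_h,-\infty}u_0 \;=\; \frac{1}{\Gamma(k_h-\alpha_h)}\int_{a_h}^{t_h}\frac{\partial^{k_h}_\tau u_0}{(t_h-\tau)^{\alpha_h-k_h+1}}\,d\tau \;=\; D^{\alpha_h}_{t_h,a_h}u_0.
\end{equation*}
Since $\mathfrak{l}$ and $\mathcal{L}$ do not depend on $a$, summing gives $\Lambda_{-\infty}u_0=\Lambda_a u_0$ on $\{t_h>a_h\text{ for all }h\}\supset B_1^N$, which combined with~\eqref{MAIN EQ:2} yields $\Lambda_a u_0=0$ on $B_1^N$.

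Next, I would fix product cutoffs $\eta_h\in C^\infty_c(\mathbb{R})$ with $\eta_h\equiv 1$ on $[a_h,1]$ and supported in $[a_h-1,2]$, set $\eta(t):=\prod_{h=1}^{l}\eta_h(t_h)$, and define $u:=u_0\,\eta$. Then $u$ vanishes outside some $B_{R'}^N$ (since $u_0$ is compactly supported in $(x,y)$ and $\eta$ in $t$), $u\in C^\infty(B_1^N)\cap C(\mathbb{R}^N)$, and $u=u_0$ on $B_1^N$, so~\eqref{IAzofm:2} transfers to~\eqref{IAzofm}. Since $\eta$ depends only on $t$, the local and space-nonlocal operators satisfy $\mathfrak{l}u=\eta\,\mathfrak{l}u_0=\mathfrak{l}u_0$ and $\mathcal{L}u=\eta\,\mathcal{L}u_0=\mathcal{L}u_0$ on $B_1^N$; and for the time-fractional part, when $t\in B_1^l$ one has $\eta_j(t_j)=1$ for $j\ne h$ and $\eta_h\equiv 1$ on the whole integration interval $[a_h,t_h]\subset[a_h,1]$, so the Leibniz expansion of $\partial^{k_h}_\tau(u_0\eta)$ collapses to $\partial^{k_h}_\tau u_0$ there, giving $\mathcal{D}_a u=\mathcal{D}_a u_0$ on $B_1^N$. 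Adding the three pieces, $\Lambda_a u=\Lambda_a u_0=0$ on $B_1^N$, which verifies~\eqref{MAIN EQ} and completes the reduction.

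There is no genuine obstacle; the argument is essentially bookkeeping once Theorem~\ref{theone2} is in place, and it captures precisely the point stressed in the introduction that motivated formulating the $a=-\infty$ version first, namely that the initial point $a$ obstructs additive linearity and is therefore best removed before the main construction and reinstated at the end via~\eqref{ESTENSIONE}.
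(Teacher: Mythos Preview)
Your proposal is correct and follows essentially the same approach as the paper: first convert $\Lambda_{-\infty}$ to $\Lambda_a$ using the vanishing in~\eqref{ESTENSIONE} (the paper cites Lemma~A.1 in~\cite{CDV18} for this, you compute it directly from~\eqref{defcap}), then multiply by smooth time cutoffs that equal~$1$ on each $[a_h,1]$ to obtain full compact support without altering the equation in $B_1^N$. Your explicit step of replacing $a_h$ by $\min(a_h,-1)$ is a clean way to ensure $B_1^N\subset\{t_h>a_h\text{ for all }h\}$ so that $\mathcal D_a$ is defined throughout; the paper handles this implicitly.
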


\begin{proof} Assume that the claims
in Theorem~\ref{theone2} are satisfied. Then, by~\eqref{SPAZIO} and~\eqref{ESTENSIONE},
we are in the position of exploting Lemma~A.1 in~\cite{CDV18}
and conclude that, in~$B_1^{N-l}\times(-1,+\infty)^l$,
$$ D^{\alpha_h}_{t_h ,a_h} u=D^{\alpha_h}_{t_h,-\infty} u,$$
for every~$h\in\{1,\dots,l\}$. This and~\eqref{MAIN EQ:2}
give that
\begin{equation} \label{33ujNAKS}
\Lambda_{a}u=\Lambda_{-\infty} u=0 \qquad\mbox{ in }\;B_1^{N-l}
\times(-1,+\infty)^l.\end{equation}
We also define
$$\underline{a}:=\min_{h\in\{1,\dots,l\}} a_h$$
and take~$\tau\in C^\infty_0 ([-\underline{a}-2,3])$ with~$\tau=1$
in~$[-\underline{a}-1,1]$. Let
\begin{equation}\label{UJNsdA} U(x,y,t):=u(x,y,t)\,\tau(t_1)\dots\tau(t_l).\end{equation}
Our goal is to prove that~$U$ satisfies the theses of Theorem~\ref{theone}.
To this end, we observe that~$u=U$ in~$B^N_1$, therefore~\eqref{IAzofm}
for~$U$
plainly follows from~\eqref{IAzofm:2}.

In addition, from~\eqref{defcap}, we see that~$
D^{\alpha_h}_{t_h,a_h}$ at a point~$t_h\in(-1,1)$
only depends on the values of the function
between~$a_h$ and~$1$. Since the cutoffs in~\eqref{UJNsdA} do not
alter these values, we see that~$D^{\alpha_h}_{t_h,a_h}U=D^{\alpha_h}_{t_h,a_h}u$
in~$B_1^N$, and accordingly~$\Lambda_a U=\Lambda_a u$ in~$B_1^N$.
This and~\eqref{33ujNAKS} say that
\begin{equation}\label{9OAJA}
\Lambda_a U=0\qquad{\mbox{in }}B_1^N.\end{equation}
Also, since~$u$ in Theorem~\ref{theone2} is compactly supported
in the variable~$(x,y)$, we see from~\eqref{UJNsdA} that~$U$
is compactly supported in the variables~$(x,y,t)$.
This and~\eqref{9OAJA} give that~\eqref{MAIN EQ} is satisfied by~$U$
(up to renaming~$R$).
\end{proof} 

\section{A pivotal span result towards the proof of Theorem \ref{theone2}}\label{s:fourth0}

In what follows, we let~$\Lambda_{-\infty}$
be as in~\eqref{NEOAAJKin1a}, we recall the setting in~\eqref{1.0},
and we
use the following multi-indices notations:
\begin{equation}\label{mulPM}
\begin{split}
& \iota=\left(i,I,\mathfrak{I}\right)=\left(i_1,\ldots,i_n,I_1,\ldots,I_M,\mathfrak{I}_1,
\ldots,\mathfrak{I}_l\right)\in\mathbb{N}^N\\
{\mbox{and }} &
\partial^\iota w=\partial^{i_1}_{x_1}\ldots\partial^{i_n}_{x_n}
\partial^{I_1}_{y_1}\ldots\partial^{I_M}_{y_M}\partial^{\mathfrak{I}_1}_{t_1}
\ldots\partial^{\mathfrak{I}_l}_{t_l}w.
\end{split}\end{equation}
Inspired by Lemma 5 of~\cite{DSV1},
we consider the span of the derivatives of functions in~$
\ker\Lambda_{-\infty}$, with derivatives up to a fixed order $K\in{\mathbb{N}}$.
We want to prove that the derivatives of such functions span
a maximal vectorial space. 

For this, we denote by $\partial^K w(0)$
the vector with entries given,
in some prescribed order,
by~$
\partial^\iota w(0)$ with $\left|\iota\right|\leq K$.

We notice that
\begin{equation}\label{8iokjKK}
{\mbox{$\partial^K w(0)\in\mathbb{R}^{K'}$ for some $K'\in{\mathbb{N}}$,}}
\end{equation}
with~$K'$ depending on~$K$.

Now, we adopt the notation in formula~(1.4) of~\cite{CDV18},
and
we denote by~$
\mathcal{A}$ \label{CALSASS}
the set of all functions~$w=w(x,y,t)$
such that for all~$h\in\{1,\ldots, l\}$ and all~$
(x,y,t_1,\ldots,t_{h-1},t_{h+1},\ldots,t_l)\in\mathbb{R}^{N-1}$,
the map~${\mathbb{R}}\ni t_h\mapsto w(x,y,t)$ belongs to~$
C^{\infty}((a_h,+\infty))\cap C^{k_h,\alpha_h}_{-\infty}$,
and~\eqref{ESTENSIONE} holds true for some~$a_h\in (-2,0)$.

We also set
\begin{equation*}
\begin{split}
\mathcal{H}:=\Big\{w\in C(\mathbb{R}^N)
\cap C_0(\mathbb{R}^{N-l})\cap C^\infty(\mathcal{N})\cap\mathcal{A},
\text{ for some neighborhood 
$\mathcal{N} $
of the origin, } \\
 \text{ such that }
\Lambda_{-\infty} w=0 \text{ in }\mathcal{N}\Big\}
\end{split}
\end{equation*}
and, for any $w\in\mathcal{H}$, let $\mathcal{V}_K$ be the vector space spanned by the vector $\partial^K w(0)$. 

By \eqref{8iokjKK}, we know that~$\mathcal{V}_K\subseteq\mathbb{R}^{K'}$.
In fact, we show that equality holds in this inclusion, as
stated in the following\footnote{Notice that results
analogous to Lemma~\ref{lemcin}
cannot hold for solutions of local operators: for instance,
pure second derivatives of harmonic functions have to satisfy
a linear equation, so they are forced to lie in a proper subspace.
In this sense, results such as Lemma~\ref{lemcin} here reveal a truly nonlocal
phenomenon.}
result:
\begin{lemma}
\label{lemcin}
It holds that $\mathcal{V}_K=\mathbb{R}^{K'}$.
\end{lemma}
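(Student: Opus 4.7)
The proof will go by contradiction. Suppose $\mathcal{V}_K$ were a proper subspace of~$\mathbb{R}^{K'}$; then there would exist a nonzero vector~$c=(c_\iota)_{|\iota|\le K}$, indexed by the multi-indices~$\iota=(i,I,\mathfrak{I})$ from~\eqref{mulPM}, such that
\begin{equation}\label{TOCONTRADICT}
\sum_{|\iota|\le K} c_\iota\,\partial^\iota w(0)\;=\;0\qquad\text{for every }w\in\mathcal{H}.
\end{equation}
The plan is to exhibit a family of $w\in\mathcal{H}$ violating~\eqref{TOCONTRADICT}. The structural observation driving the construction is that $\Lambda_{-\infty}$ splits across the three variable blocks $x$, $y$, $t$, so that a product~$w(x,y,t)=V_1(x)\,V_2(y)\,V_3(t)$ lies in $\ker\Lambda_{-\infty}$ wherever $\mathfrak{l}V_1=\mathcal{L}V_2=\mathcal{D}_{-\infty}V_3=0$, where $\mathcal{D}_{-\infty}$ denotes the Caputo block with all $a_h=-\infty$.

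Under the first alternative of~\eqref{NOTVAN}---some $\XB_i\ne0$ with $s_i\notin\mathbb{N}$---I would take $V_2$ to depend only on the $y_i$-variable, so that $\mathcal{L}V_2=\XB_i(-\Delta)^{s_i}_{y_i}V_2$. For any multi-index $\beta$ in the $y_i$-coordinates, Proposition~\ref{maxhlapspan} supplies a compactly supported $V_2$ with $(-\Delta)^{s_i}_{y_i}V_2=0$ on a small ball around a point $p$, and with exactly the $\beta$-derivative ``switched on'' at $p$; after translating so that $p$ is the origin, one multiplies by polynomials $V_1=V_1(x)$ and $V_3=V_3(t)$ chosen in $\ker\mathfrak{l}$ and $\ker\mathcal{D}_{-\infty}$ respectively. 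Under the second alternative---some $\XC_h\ne0$ with $\alpha_h\notin\mathbb{N}$---the roles of $V_2$ and $V_3$ are interchanged: Lemma~\ref{LF} furnishes $V_3=\psi$ with $D^{\alpha_h}_{t_h,0}\psi=0$ on $(1,+\infty)$ together with the sharp asymptotics $\epsilon^{\ell-\alpha_h}\partial^\ell\psi(1+\epsilon\,\cdot)\to\kappa_{\alpha_h,\ell}\,t^{\alpha_h-\ell}$, while $V_2$ is filled with a polynomial annihilated by $\mathcal{L}$.

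I would then plug the scaling family $w_\epsilon$---obtained by shifting the fractional singularity toward the origin at rate $\epsilon$---into~\eqref{TOCONTRADICT} and take the distributional limit $\epsilon\searrow0$. Each derivative $\partial^\iota w_\epsilon(0)$ has leading asymptotics of the form $\kappa_\iota\,\epsilon^{|I_i|-s_i}$ (in the space-fractional case) or $\kappa_\iota\,\epsilon^{\mathfrak{I}_h-\alpha_h}$ (in the time-fractional case), multiplied by a monomial factor coming from $V_1$ and the remaining variables. Since $s_i\notin\mathbb{N}$ (resp.\ $\alpha_h\notin\mathbb{N}$), these exponents are mutually distinct modulo~$\mathbb{Z}$ as $|I_i|$ (resp.\ $\mathfrak{I}_h$) varies, so identifying coefficients of distinct powers of~$\epsilon$ in~\eqref{TOCONTRADICT} pins down each coefficient group; varying the polynomial factors independently, and invoking the Identity Principle for polynomials as at the end of the proof of Proposition~\ref{maxhlapspan}, then forces $c_\iota=0$ for every~$\iota$, contradicting~$c\ne0$.

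The main obstacle I anticipate is ensuring that the product $V_1V_2V_3$ actually belongs to $\mathcal{H}$: one must simultaneously realize $\Lambda_{-\infty}$-annihilation in a neighborhood of the origin, compact support in the $(x,y)$-variables, and the extension condition~\eqref{ESTENSIONE} at each~$t_h$. The support condition is handled by a cut-off placed far from the origin, which is of lower order there and does not pollute~\eqref{TOCONTRADICT}; the extension condition is already built into the construction of $V_3$ supplied by Lemma~\ref{LF}; the kernel condition is the most delicate, since the polynomial kernel of~$\mathfrak{l}$ is limited, but the residual $\mathfrak{l}V_1$, being a polynomial of strictly lower order in $x$, can be absorbed into~$V_2$ by invoking Proposition~\ref{maxhlapspan} at sufficiently high $|\beta|$ and iterating on $K$.
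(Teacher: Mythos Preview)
Your product ansatz $w=V_1(x)V_2(y)V_3(t)$ with each factor in the kernel of its respective block is \emph{not} what the paper does, and it has a real gap. The issue is that for the \emph{local} components the kernel is far too small to span all derivatives up to order~$K$: if $V_1$ is a polynomial with $\mathfrak{l}V_1=0$, then (already when $n=1$, $p_1=1$) $\partial_{x_1}^{i_1}V_1(0)=0$ for every $i_1\ge r_1$, so no choice of $V_1$ can probe the coefficients $c_\iota$ with $|i|$ large; exactly the same obstruction hits $V_3$ whenever some $\alpha_h\in\mathbb N$ (allowed by~\eqref{NOTVAN}) and hits the $y_j$-factors whenever some $s_j\in\mathbb N$, since polyharmonic functions of integer order satisfy rigid algebraic relations among their derivatives. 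Your proposed fix---``absorb the residual $\mathfrak{l}V_1$ into $V_2$''---does not type-check: $\mathfrak{l}V_1$ is a function of $x$, while $V_2$ lives in $y$, so no choice of $V_2$ can cancel it in the product. A second, related problem is compact support in the variables~$(x,y)$: if $V_2$ depends only on the single $y_i$ with $s_i\notin\mathbb N$, it is constant in the remaining $y_j$, hence not in $C_0(\mathbb R^{N-l})$; a cutoff in those directions destroys $(-\Delta)^{s_j}$-harmonicity at the origin because these operators are nonlocal.

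The paper circumvents all of this by \emph{not} putting each factor in the kernel, but by using \emph{eigenfunctions} and balancing eigenvalues: one takes $\phi_j$ a Dirichlet eigenfunction of $(-\Delta)^{s_j}_{y_j}$ (Proposition~\ref{sharbou}), $\psi_h$ a Mittag--Leffler eigenfunction of $D^{\alpha_h}_{t_h}$ (Lemma~\ref{MittagLEMMA}), and $v_i$ an exponential-type solution of $\partial^{r_i}_{x_i}v_i=\pm v_i$, each carrying free scaling parameters; the single scalar equation $\sum\lambda=0$ is then enforced by tuning one eigenvalue (formulas~\eqref{alp}, \eqref{alpc}, \eqref{alp3}). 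This is exactly what allows arbitrary high-order $x$- and $t$-derivatives to be nonzero (exponentials have all derivatives nonvanishing), while the compactly supported Dirichlet eigenfunctions take care of all $y_j$ simultaneously. The polynomial identity is then extracted not from Proposition~\ref{maxhlapspan} at a point, but from the \emph{boundary} asymptotics of $\phi_j(e_j+\epsilon Y_j)$ and $\psi_h$ near the initial time, letting $\epsilon\searrow0$ and isolating the top stratum $|I|=|\overline I|$ (or $|\mathfrak I|=|\overline{\mathfrak I}|$). Your sketch gestures toward an $\epsilon$-scaling, but combines it with the pointwise ``single derivative switched on'' output of Proposition~\ref{maxhlapspan}; these are two different mechanisms, and neither one alone closes the argument when local derivatives are present.
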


The proof of Lemma~\ref{lemcin} is 
by contradiction. Namely, if $\mathcal{V}_K$ does not exhaust the whole of $\mathbb{R}^{K'}$ there exists 
\begin{equation}
\label{tetaort}
\theta\in\partial B_1^{K'}
\end{equation}
such that
\begin{equation}
\label{aza}
\mathcal{V}_K\subseteq\left\{\zeta\in\mathbb{R}^{K'}
\text{ s.t. } \theta\cdot\zeta=0\right\}.
\end{equation}
In coordinates, recalling~\eqref{mulPM},
we write~$\theta$
as~$\theta_\iota=\theta_{i,I,\mathfrak{I}}$,
with~$i\in\mathbb{N}^{p_1+\dots+p_n}$,
$I\in\mathbb{N}^{m_1+\dots+m_M}$
and~$\mathfrak{I}\in\mathbb{N}^l$.
We consider
\begin{equation}\label{IBARRA}
\begin{split}&
{\mbox{a multi-index $\overline{I}\in\mathbb{N}^{m_1+\dots+m_M}$ 
such that it maximizes~$|I|$}}\\
&{\mbox{among all the multi-indexes~$(i,I,\mathfrak{I})$
for which~$\left|i\right|+\left|I\right|+|\mathfrak{I}|\leq K$}}\\
&{\mbox{and~$\theta_{i,I,\mathfrak{I}}\ne0$
for some~$(i,\mathfrak{I})$.}}
\end{split}\end{equation}
Some comments on the setting\label{FFOAK}
in~\eqref{IBARRA}. We stress that, by~\eqref{tetaort},
the set~$\mathcal{S}$
of indexes~$I$ for which there exist indexes~$
(i,\mathfrak{I})$ such that~$|i|+|I|+|\mathfrak{I}|\le K$
and~$\theta_{i,I,\mathfrak{I}}\ne0$ is not empty.
Therefore, since~${\mathcal{S}}$ is a finite set,
we can take
$$ S:=\sup_{I\in {\mathcal{S}}} |I|=\max_{I\in {\mathcal{S}}}
|I|\in{\mathbb{N}}\cap [0,K].$$
Hence, we consider a multi-index $\overline{I}$ for
which~$|\overline I|=S$ to obtain the
setting in~\eqref{IBARRA}. By construction, we have that
\begin{itemize}
\item $|i|+|\overline I|+|\mathfrak{I}|\le K$,
\item  if~$|I|>|\overline I|$, then $\theta_{i,I,\mathfrak{I}}=0$,
\item
and there exist multi-indexes~$i$ and~$\mathfrak{I}$
such that~$\theta_{i,\overline I,\mathfrak{I}}\ne0$.\end{itemize}

As a variation of the setting in~\eqref{IBARRA},
we can also consider
\begin{equation}\label{IBARRA2}
\begin{split}&
{\mbox{a multi-index $\overline{\mathfrak{I}}\in\mathbb{N}^{l}$ 
such that it maximizes~$|\mathfrak{I}|$}}\\
&{\mbox{among all the multi-indexes~$(i,I,\mathfrak{I})$
for which~$\left|i\right|+\left|I\right|+|\mathfrak{I}|\leq K$}}\\
&{\mbox{and~$\theta_{i,I,\mathfrak{I}}\ne0$
for some~$(i,I)$.}}
\end{split}\end{equation}
In the setting of~\eqref{IBARRA} and~\eqref{IBARRA2},
we claim that there exists an open set
of~$\mathbb{R}^{p_1+\ldots+p_n}\times\mathbb{R}^{m_1+\ldots+m_M}\times\mathbb{R}^{l}$
such that for every~$(\XX,\XY,\XT)$ in such open set we have that
\begin{equation}
\label{ipop}\begin{split}
{\mbox{either }}\qquad&
0=\sum_{{|i|+|I|+|\mathfrak{I}|\le K}\atop{|I| = |\overline{I}|}}
C_{i,I,\mathfrak{I}}\;\theta_{i,I,\mathfrak{I}}\;
\XX^i \XY^{{I}}\XT^{\mathfrak{I}},\qquad{\mbox{ with }}\qquad
C_{i,I,\mathfrak{I}}\ne0,\\
{\mbox{or }}\qquad&
0=\sum_{{|i|+|I|+|\mathfrak{I}|\le K}\atop{|\mathfrak{I}| = |\overline{\mathfrak{I}}|}}
C_{i,I,\mathfrak{I}}\;\theta_{i,I,\mathfrak{I}}\;
\XX^i \XY^{{I}}\XT^{\mathfrak{I}},\qquad{\mbox{ with }}\qquad
C_{i,I,\mathfrak{I}}\ne0.\end{split}
\end{equation}
In our framework, the claim in~\eqref{ipop} will be pivotal
towards the completion of the proof of Lemma~\ref{lemcin}.
Indeed, let us suppose for the moment that~\eqref{ipop}
is established and let us complete the proof of Lemma~\ref{lemcin}
by arguing as follows.

Formula \eqref{ipop} says that $\theta\cdot\partial^K w(0)$
is a polynomial which vanishes for any triple $(\XX,\XY,\XT)$
in an open subset of $\mathbb{R}^{p_1+\ldots+p_n}\times\mathbb{R}^{m_1+\ldots+m_M}\times\mathbb{R}^{l}$.
Hence, using the identity principle of polynomials, we have that each $C_{i,I,\mathfrak{I}}\;\theta_{i,I,\mathfrak{I}}$ is equal to zero
whenever~$|i|+|I|+|\mathfrak{I}|\le K$
and either~$|I|=|\overline I|$ (if the first identity in~\eqref{ipop}
holds true) or~$|\mathfrak{I}|=|\overline{\mathfrak{I}}|$
(if the second identity in~\eqref{ipop}
holds true). Then, since~$C_{i,I,\mathfrak{I}}\neq 0$,
we conclude that each $\theta_{i,I,\mathfrak{I}}$ is zero
as long as either~$|I|=|\overline{I}|$ (in the first case)
or~$|\mathfrak{I}|=|\overline{\mathfrak{I}}|$
(in the second case), but this contradicts either
the definition of $\overline{I}$
in~\eqref{IBARRA} (in the first case)
or the definition of~$\overline{\mathfrak{I}}$
in~\eqref{IBARRA2} (in the second case). This would therefore complete the proof
of Lemma~\ref{lemcin}.
\medskip

In view of the discussion above,
it remains to prove~\eqref{ipop}.
To this end, we distinguish the following four
cases:
\begin{enumerate}

\item\label{itm:case1} there exist $i\in\{1,\dots,n\}$ and
$j\in\{1,\dots,M\}$ such that~$\XA_i\ne0$ and~$\XB_j\ne0$,
\item\label{itm:case2} there exist $i\in\{1,\dots,n\}$ and
$h\in\{1,\dots,l\}$ such that~$\XA_i\ne0$ and~$\XC_h\ne0$,
\item\label{itm:case3} we have that~$\XA_1=\dots=\XA_n=0$,
and there exists~$j\in\{1,\dots,M\}$ such that~$\XB_j\ne0$,
\item\label{itm:case4} we have that~$\XA_1=\dots=\XA_n=0$,
and there exists~$h\in\{1,\dots,l\}$ such that~$\XC_h\ne0$.

\end{enumerate}

Notice that cases~\ref{itm:case1} and~\ref{itm:case3}
deal with the case in which space fractional diffusion is present
(and in case~\ref{itm:case1} one also has classical
derivatives, while in case~\ref{itm:case3}
the classical derivatives are absent).

Similarly, cases~\ref{itm:case2} and~\ref{itm:case4}
deal with the case in which time fractional diffusion is present
(and in case~\ref{itm:case2} one also has classical
derivatives, while in case~\ref{itm:case4}
the classical derivatives are absent).

Of course, the case in which both space and time
fractional diffusion occur is already comprised by the
previous cases (namely, it is comprised in
both cases~\ref{itm:case1} and~\ref{itm:case2}
if classical derivatives are also present,
and in both cases~\ref{itm:case3}
and~\ref{itm:case4} if classical derivatives are absent).

\begin{proof}[Proof of \eqref{ipop}, case \ref{itm:case1}]
For any $j\in\left\{1,\ldots,M\right\}$ we denote by $\tilde{\phi}_{\star,j}$
the first eigenfunction for $(-\Delta)^{s_j}_{y_j}$
vanishing outside $B_1^{m_j}$ given in Corollary \ref{QUESTCP}. 
We normalize it such that $ \|\tilde{\phi}_{\star,j}\|_{L^2(\mathbb{R}^{m_j})}=1$,
and we write $\lambda_{\star,j}\in(0,+\infty)$ to indicate the corresponding first eigenvalue
(which now depends on~$s_j$), namely we write
\begin{equation}
\label{lambdastarj}
\begin{cases}
(-\Delta)^{s_j}_{y_j}\tilde{\phi}_{\star,j}=\lambda_{\star,j}\tilde{\phi}_{\star,j}&\quad\text{in}\,B_1^{m_j} ,\\
\tilde{\phi}_{\star,j}=0&\quad\text{in}\,\mathbb{R}^{m_j}\setminus\overline{B}_1^{m_j}.
\end{cases}
\end{equation}
Up to reordering the variables and/or
taking the operators to the other side of the equation, 
given the assumptions of case~\ref{itm:case1},
we can suppose that 
\begin{equation}\label{AGZ}
{\mbox{$\XA_1\ne0$}}\end{equation}
and
\begin{equation}\label{MAGGZ}
{\mbox{$\XB_M>0$}}.\end{equation} 
In view of~\eqref{AGZ}, we can define
\begin{equation}\label{MAfghjkGGZ} R:=\left( 
\frac{ 1 }{|\XA_1|}\displaystyle\left(\sum_{j=1}^{M-1}{|\XB_j|\lambda_{\star,j}}
+\sum_{h=1}^l|\XC_h|\right)\right)^{1/|r_{1}|}.\end{equation}
Now, we fix two sets of free parameters 
\begin{equation}\label{FREExi}
\XX_1\in(R+1,R+2)^{p_1},\ldots,\XX_n\in(R+1,R+2)^{p_n},\end{equation}
and
\begin{equation}\label{FREEmustar}
\XT_{\star,1}\in\left(\frac12,1\right),\dots,\XT_{\star,l}\in\left(\frac12,1\right).\end{equation}
We also set 
\begin{equation}\label{1.6md}
{\mbox{$\lambda_j:=\lambda_{\star,j}$ for $j\in\left\{1,\ldots,M-1\right\}$, }}\end{equation}
where $\lambda_{\star,j}$ is defined as in \eqref{lambdastarj}, and
\begin{equation}
\label{alp}
\lambda_M\,:=\,\frac{1}{\XB_M}\left(
\sum_{j=1}^n {\left|\XA_j\right|\XX_j^{r_j}}-\sum_{j=1}^{M-1}
{\XB_j\lambda_j}-\sum_{h=1}^l\XC_h\XT_{\star,h}\right).\end{equation}
Notice that this definition is well-posed, thanks to~\eqref{MAGGZ}.
In addition, from~\eqref{FREExi}, we can write~$\XX_{j}=(
\XX_{j1},\dots,\XX_{jp_j})$, and
we know that~$\XX_{j\ell}>R+1$
for any~$j\in\{1,\dots,n\}$ and any~$\ell\in\{1,\dots,p_j\}$.
Therefore,
\begin{equation}\label{1.15bis} \XX_j^{r_j}= \XX_{j1}^{r_{j1}}\dots\XX_{jp_j}^{r_{jp_j}}\ge0.\end{equation}
{F}rom this, \eqref{MAfghjkGGZ} and~\eqref{FREEmustar}, we deduce that
\begin{eqnarray*}&& \sum_{j=1}^n {\left|\XA_j\right|\XX_j^{r_j}}
\ge \left|\XA_1\right|\XX_1^{r_1}\ge
\left|\XA_1\right| (R+1)^{|r_1|}>
\left|\XA_1\right| R^{|r_1|}\\&&\qquad
=\sum_{j=1}^{M-1}
{|\XB_j|\lambda_j}+\sum_{h=1}^l |\XC_h|\geq\sum_{j=1}^{M-1}
{\XB_j\lambda_j}+\sum_{h=1}^l \XC_h\XT_{\star,h},\end{eqnarray*}
and consequently, by~\eqref{alp},
\begin{equation}
\label{alp-0}
\lambda_M>0.\end{equation}
We also set
\begin{equation}\label{OMEj}
\omega_j:=\begin{cases}1&\quad\text{if }\,j=1,\dots,M-1 ,\\
\displaystyle\frac{\lambda_{\star,M}^{1/2s_M}}{
\lambda_M^{1/2s_M}}&\quad\text{if }\,j=M.\end{cases}
\end{equation}
Notice that this definition is well-posed, thanks to~\eqref{alp-0}.
In addition, 
by~\eqref{lambdastarj}, we have that,
for any $j\in\{1,\dots,M\}$, the functions
\begin{equation}
\label{autofun1}
\phi_j\left(y_j\right):=\tilde{\phi}_{\star,j}\left(\frac{y_j}{\omega_j}\right)
%\label{autofun2}
%{\mbox{and }}\quad \psi_h\left(t_h\right)&:=&\tilde{\psi}_{\star,h}\left(\frac{t_h}{\varrho_h}\right)
\end{equation}
are eigenfunctions of $(-\Delta)^{s_j}_{y_j}$ in $B_{\omega_j}^{m_j}$ 
with external homogenous Dirichlet boundary condition,
and eigenvalues $\lambda_j$:
namely, we can rewrite~\eqref{lambdastarj} as
\begin{equation}\label{REGSWYS-A}
\begin{cases}
(-\Delta)^{s_j}_{y_j} {\phi}_{j}=\lambda_{j} {\phi}_{j}&\quad\text{in}\,B_{\omega_j}^{m_j} ,\\
{\phi}_{j}=0&\quad\text{in}\,\mathbb{R}^{m_j}\setminus\overline{B}_{\omega_j}^{m_j}.
\end{cases}
\end{equation}
Now, we define
\begin{equation}
\label{chosofpsistar}
\psi_{\star,h}(t_h):=E_{\alpha_h,1}(t_h^{\alpha_h}),
\end{equation}
where~$E_{\alpha_h,1}$ denotes the Mittag-Leffler function
with parameters $\alpha:=\alpha_h$ and $\beta:=1$ as defined 
in \eqref{Mittag}.

Moreover, 
we consider~$a_h\in(-2,0)$, for every~$h=1,\dots,l$,
to be chosen appropriately in what follows
(the precise choice will be performed in~\eqref{pata7UJ:AKK}),
and, recalling~\eqref{FREEmustar},
we let 
\begin{equation}\label{TGAdef}
\XT_h:=\XT_{\star,h}^{1/{\alpha_h}},\end{equation} and we define
\begin{equation}
\label{autofun2}
\psi_h(t_h):=\psi_{\star,h}\big(\XT_h (t_h-a_h)\big)=
E_{\alpha_h,1}\big(\XT_{\star,h} (t_h-a_h)^{\alpha_h}\big).
\end{equation}
We point out that, thanks to Lemma~\ref{MittagLEMMA}, the function in~\eqref{autofun2}, solves
\begin{equation}
\label{jhjadwlgh}
\begin{cases}
D^{\alpha_h}_{t_h,a_h}\psi_h(t_h)=\XT_{\star,h}\psi_h(t_h)&\quad\text{in }\,(a_h,+\infty), \\
\psi_h(a_h)=1, \\
\partial^m_{t_h}\psi_h(a_h)=0&\quad\text{for every }\,m\in\{1,\dots,[\alpha_h] \}.
\end{cases}
\end{equation}
Moreover, for any $h\in\{1,\ldots, l\}$, we define
\begin{equation}
\label{starest}
\psi^{\star}_h(t_h):=\begin{cases}
\psi_h(t_h)\qquad\text{ if }\,t_h\in[a_h,+\infty) \\
1\qquad\qquad\text{ if }\,t_h\in(-\infty,a_h).\end{cases}
\end{equation}
Thanks to \eqref{jhjadwlgh} and Lemma A.3 in \cite{CDV18} applied here with $b:=a_h$, $a:=-\infty$, $u:=\psi_h$, $u_\star:=\psi^{\star}_h$, we have that $\psi^{\star}_h\in C^{k_h,\alpha_h}_{-\infty}$, and
\begin{equation}\label{DOBACHA}
D^{\alpha_h}_{t_h,-\infty}\psi_h^\star(t_h)=D^{\alpha_h}_{t_h,a_h}\psi_h(t_h)
=\XT_{\star,h}\psi_h(t_h)
=\XT_{\star,h}\psi_h^\star(t_h)\,\text{ in every interval }\,I\Subset(a_h,+\infty).
\end{equation}
We observe that the setting in \eqref{starest} is compatible with the ones in \eqref{SPAZIO} and \eqref{ESTENSIONE} .

{F}rom~\eqref{Mittag} and~\eqref{autofun2}, we see that
\begin{equation*}
\psi_h(t_h)=
\sum_{j=0}^{+\infty} {\frac{
\XT_{\star,h}^j\, (t_h-a_h)^{\alpha_h j}}{\Gamma\left(\alpha_h j+1\right)}}.
\end{equation*}
Consequently, for every~${\mathfrak{I}_h}\in\mathbb{N}$,
we have that
\begin{equation}\label{STAvca}
\partial^{\mathfrak{I}_h}_{t_h}\psi_h(t_h)=
\sum_{j=0}^{+\infty} {\frac{
\XT_{\star,h}^j\, \alpha_h j(\alpha_h j-1)\dots(\alpha_h j-
\mathfrak{I}_h+1)
(t_h-a_h)^{\alpha_h j-\mathfrak{I}_h}}{\Gamma\left(\alpha_h j+1\right)}}
.\end{equation}
Now, we define, for any $i\in\left\{1,\ldots,n\right\}$,
\begin{equation*}
\overline{\XA}_i:=
\begin{cases}
\displaystyle\frac{\XA_i}{\left|\XA_i\right|}\quad \text{ if }\XA_i\neq 0 ,\\
1 \quad \text{ if }\XA_i=0.
\end{cases}
\end{equation*}
We notice that
\begin{equation}\label{NOZABA}
{\mbox{$\overline{\XA}_i\neq 0$ for all~$i\in\left\{1,\ldots,n\right\}$,}}\end{equation}
and
\begin{equation}\label{NOZABAnd}
{\XA}_i\overline{\XA}_i=|{\XA}_i|.\end{equation}
Now, for each~$i\in\{1,\dots,n\}$, we consider
the multi-index~$r_i=(r_{i1},\dots,r_{i p_i})\in\mathbb{N}^{p_i}$.
This multi-index acts on~$\mathbb{R}^{p_i}$,
whose variables are denoted by~$x_i=(x_{i1},\dots,x_{ip_i})\in\mathbb{R}^{p_i}$.
We let~$\overline{v}_{i1}$ be the solution of
the Cauchy problem
\begin{equation}\label{CAH1}
\begin{cases}
\partial^{r_{i1}}_{x_{i1}}\overline{v}_{i1}=-\overline{\XA}_i\overline{v}_{i1} \\
\partial^{\beta_1}_{x_{i1}}\overline{v}_{i1}\left(0\right)=1\quad
\text{ for every } \beta_1\leq r_{i1}-1.
\end{cases}
\end{equation}
We notice that
the solution of the Cauchy problem in~\eqref{CAH1}
exists at least in a neighborhood of the origin of the form
$[-\rho_{i1},\rho_{i1}]$ for a suitable $\rho_{i1}>0$.

Moreover, if~$p_i\ge2$,
for any $\ell\in\{2,\dots, p_i\}$, we consider
the solution of the following Cauchy problem:
\begin{equation}\label{CAH2}
\begin{cases}
\partial^{r_{i\ell}}_{x_{i\ell}}\overline{v}_{i\ell}=\overline{v}_{i\ell} \\
\partial^{\beta_\ell}_{x_{i\ell}}\overline{v}_{i\ell}\left(0\right)=1\quad
\text{ for every } \beta_\ell\leq r_{i\ell}-1.
\end{cases}
\end{equation}
As above, these solutions
are well-defined at least in a neighborhood of the origin of the form $[-\rho_{i\ell},\rho_{i\ell}]$,
for a suitable $\rho_{i\ell}>0$.

Then, we define
\[\overline{\rho}_i:=\min\{ \rho_{i1},\dots,\rho_{i p_i}\}=\min_{\ell\in\{1,\dots,p_i\}}\rho_{i\ell}.\] 
In this way, for every~$x_i=(x_{i1},\dots,x_{ip_i})\in B^{p_i}_{\overline{\rho}_i}$,
we set
\begin{equation}\label{vba}
\overline{v}_i(x_i):=\overline{v}_{i1}(x_{i1})\ldots
\overline{v}_{i{p_i}}(x_{ip_i}).\end{equation}
By~\eqref{CAH1} and~\eqref{CAH2}, we have that
\begin{equation}
\label{cau}
\begin{cases}
\partial^{r_i}_{x_i}\overline{v}_i=-\overline{\XA}_i\overline{v}_i \\
\\
\partial^{\beta}_{x_i}\overline{v}_i\left(0\right)=1\quad
\begin{matrix}
&\text{ for every $\beta=(\beta_1,\dots\beta_{p_i})\in{\mathbb{N}}^{p_i}$}\\
&\text{ such that~$\beta_{\ell}\leq r_{i\ell}-1$ for each~$\ell\in\{1,\dots,p_i\}$.}\end{matrix}
\end{cases}
\end{equation}
Now, we define \[\rho:=\min\{ \overline{\rho}_1,\dots\overline{\rho}_n\}
=\min_{i\in\{1,\dots,n\}}\overline{\rho}_i.\]
We take 
\begin{equation*}
\overline{\tau}\in C_0^\infty\left(B^{p_1+\ldots+p_n}_{\rho/(R+2)}\right),
\end{equation*}
with $\overline{\tau}=1$ in $B^{p_1+\ldots+p_n}_{\rho/(2(R+2))}$, and,
for every~$x=(x_1,\dots,x_n)\in{\mathbb{R}}^{p_1}\times\dots\times{\mathbb{R}}^{p_n}$, we set 
\begin{equation}
\label{otau1}\tau_1\left(x_1,\ldots,x_n\right):=\overline{\tau}\left(\XX_1 \otimes x_1,\ldots,\XX_n \otimes x_n\right).\end{equation}
We recall that the free parameters~$\XX_1,\dots,\XX_n$
have been introduced in~\eqref{FREExi}, and we have used here the notation
$$ \XX_i\otimes x_i = (\XX_{i1},\dots,\XX_{ip_i})\otimes(x_{i1},\dots,x_{ip_i}):= (\XX_{i1}x_{i1},\dots,\XX_{ip_i}x_{ip_i})\in{\mathbb{R}}^{p_i},$$
for every~$i\in\{1,\dots,n\}$.

We also set, for any~$i\in\{1,\dots,n\}$,
\begin{equation}
\label{vuggei}
v_i\left(x_i\right):=\overline{v}_i\left(\XX_i\otimes x_i\right).
\end{equation}
We point out that if~$x_i\in B^{p_i}_{\overline\rho_i/(R+2)}$ we have that
$$ |\XX_i\otimes x_i|^2=\sum_{\ell=1}^{p_i}(\XX_{i\ell}x_{i\ell})^2\le
(R+2)^2\sum_{\ell=1}^{p_i}x_{i\ell}^2<\overline\rho_i^2,$$
thanks to~\eqref{FREExi}, and therefore the setting in~\eqref{vuggei}
is well-defined for every~$x_i\in B^{p_i}_{\overline\rho_i/(R+2)}$.

Recalling~\eqref{cau} and~\eqref{vuggei}, we see that, for any~$i\in\{1,\dots,n\}$,
\begin{equation}\label{QYHA0ow1dk} \partial_{x_i}^{r_i}v_i(x_i)=
\XX_i^{r_i}\partial_{x_i}^{r_i}\overline{v}_i\left(\XX_i\otimes x_i\right)
=
-\overline\XA_i \XX_i^{r_i}\overline{v}_i\left(\XX_i\otimes x_i\right)
=
-\overline\XA_i \XX_i^{r_i} {v}_i( x_i).\end{equation}
%Let also~\begin{equation}\label{otau2}\tau_2\in C^\infty_0([-30,30]^{l})\end{equation}
%with~$\tau_2=1$ in~$[-20,20]^{l}$.
We take $e_1,\ldots,e_M$, with
\begin{equation}
\label{econ}
e_j\in\partial B_{\omega_j}^{m_j},
\end{equation}
and we introduce an additional set of free parameters
$Y_1,\ldots,Y_M$ with 
\begin{equation}
\label{eq:FREEy}
Y_j\in\mathbb{R}^{m_j}\qquad{\mbox{
and }}\qquad e_j\cdot Y_j<0. \end{equation}
We let~$\epsilon>0$, to be taken small
possibly depending on the  free parameters $e_j$, $Y_j$ and $\XT_h$, 
and we define
\begin{equation}
\label{svs}
\begin{split}
w\left(x,y,t\right): 
=&\tau_1\left(x\right)v_1\left(x_1\right)\cdot
\ldots \cdot
v_n\left(x_n\right)\phi_1\left(y_1+e_1+\epsilon Y_1\right)\cdot
\ldots\cdot
\phi_M\left(y_M+e_M+\epsilon Y_M\right) \\
&\times\psi^\star_1(t_1)\cdot\ldots\cdot\psi^\star_l(t_l),
\end{split}
\end{equation}
where the setting in~\eqref{autofun1}, %% \eqref{autofun2},
\eqref{starest}, \eqref{otau1} and~\eqref{vuggei} has been exploited.

%We remark that, for every~$h\in\{1,\dots,l\}$,
%we have that~$\tau_2 (t_1,\dots,t_{h-1},\eta,t_{h+1},\dots,t_l)=1$
%when~$\eta\in(a_h,t_h)$ and~$|t_1|,\dots,|t_l|\le18$, and consequently
%\begin{equation}\label{TH92KA}
%\begin{split}&
%\Gamma([\alpha_h]+1-\alpha_h)\,D^{\alpha_h}_{t_h,a_h}
%\Big(\psi_1(t_1)\dots\psi_l(t_l)\tau_2(t)\Big)\\
%=\,&
%\int_{a_h}^{t_h} \frac{\partial^{[\alpha_h]+1}_{\eta}
%\Big(
%\psi_1(t_1)\dots\psi_{h-1}(t_{h-1})
%\psi_h(\eta)\psi_{h+1}(t_{h+1})\dots\psi_l(t_l)\;\tau_2 (t_1,\dots,t_{h-1},\eta,t_{h+1},\dots,t_l)
%\Big)}{(t_h-\eta)^{\alpha_h-[\alpha_h]}}\,d\eta\\
%=\,&
%\int_{a_h}^{t_h} \frac{\partial^{[\alpha_h]+1}_{\eta}
%\big(\psi_1(t_1)\dots\psi_{h-1}(t_{h-1})
%\psi_h(\eta)\psi_{h+1}(t_{h+1})\dots\psi_l(t_l)
%\big)}{(t_h-\eta)^{\alpha_h-[\alpha_h]}}\,d\eta\\
%=\,&\Gamma([\alpha_h]+1-\alpha_h)\,D^{\alpha_h}_{t_h,a_h}
%\Big(\psi_1(t_1)\dots\psi_l(t_l)\Big)\\
%=\,&\Gamma([\alpha_h]+1-\alpha_h)\,
%\psi_1(t_1)\dots\psi_{h-1}(t_{h-1})\,
%D^{\alpha_h}_{t_h,a_h}\psi_h(t_h)\,\psi_{h+1}(t_{h+1})\dots
%\psi_l(t_l),
%\end{split}\end{equation}
%as long as~$|t|\le 18$.

We also notice that $w\in C\left(\mathbb{R}^N\right)\cap C_0\left(\mathbb{R}^{N-l}\right)\cap\mathcal{A}$. Moreover,
if
\begin{equation}\label{pata7UJ:AKK}
a=(a_1,\dots,a_l):=\left(-\frac{\epsilon}{\XT_1},\dots,-\frac{\epsilon}{\XT_l}\right)\in\mathbb{R}^l\end{equation}
and~$(x,y)$ is sufficiently close to the origin
and~$t\in(a_1,+\infty)\times\dots\times(a_l,+\infty)$, we have that
\begin{eqnarray*}&&
\Lambda_{-\infty} w\left(x,y,t\right)\\&=&
\left( \sum_{i=1}^n \XA_i \partial^{r_i}_{x_i}
+\sum_{j=1}^{M} \XB_j (-\Delta)^{s_j}_{y_j}+
\sum_{h=1}^{l} \XC_h D^{\alpha_h}_{t_h,-\infty}\right) w\left(x,y,t\right)\\
&=&
\sum_{i=1}^n \XA_i 
v_1\left(x_1\right)
\ldots  
v_{i-1}\left(x_{i-1}\right)
\partial^{r_i}_{x_i}v_{i}\left(x_{i}\right)v_{i+1}\left(x_{i+1}\right)
\ldots  v_n\left(x_n\right)\\
&&\qquad\times\phi_1\left(y_1+e_1+\epsilon Y_1\right)
\ldots\phi_M\left(y_M+e_M+\epsilon Y_M\right)\psi^\star_1\left(t_1\right)
\ldots\psi^\star_l\left(t_l\right)\\
&&+\sum_{j=1}^{M} \XB_j 
v_1\left(x_1\right)
\ldots v_n\left(x_n\right)\phi_1\left(y_1+e_1+\epsilon Y_1\right)
\ldots\phi_{j-1}\left(y_{j-1}+e_{j-1}+\epsilon Y_{j-1}\right)\\&&\qquad\times
(-\Delta)^{s_j}_{y_j}\phi_j\left(y_j+e_j+\epsilon Y_j\right)
\phi_{j+1}\left(y_{j+1}+e_{j+1}+\epsilon Y_{j+1}\right)
\ldots\phi_M\left(y_M+e_M+\epsilon Y_M\right)\\
&&\qquad\times\psi^\star_1\left(t_1\right)
\ldots\psi^\star_l\left(t_l\right)\\
&&+\sum_{h=1}^{l} \XC_h 
v_1\left(x_1\right)
\ldots v_n\left(x_n\right)\phi_1\left(y_1+e_1+\epsilon Y_1\right)\ldots\phi_M\left(y_M+e_M+\epsilon Y_M\right)\psi^\star_1\left(t_1\right)
\ldots\psi^\star_{h-1}\left(t_{h-1}\right)\\
&&\qquad\times D^{\alpha_h}_{t_h,-\infty}\psi^\star_h\left(t_h\right)
\psi^\star_{h+1}(t_{h+1})\ldots\psi^\star_l\left(t_l\right)\\
&=&
-\sum_{i=1}^n \XA_i \overline\XA_i\XX_i^{r_i}
v_1\left(x_1\right)
\ldots v_n\left(x_n\right)\phi_1\left(y_1+e_1+\epsilon Y_1\right)
\ldots\phi_M\left(y_M+e_M+\epsilon Y_M\right)\psi^\star_1(t_1)\ldots\psi^\star_l(t_l)\\
&&+\sum_{j=1}^{M} \XB_j \lambda_j
v_1\left(x_1\right)
\ldots v_n\left(x_n\right)\phi_1\left(y_1+e_1+\epsilon Y_1\right)
\ldots\phi_M\left(y_M+e_M+\epsilon Y_M\right)\psi^\star_1(t_1)\ldots\psi^\star_l(t_l)
\\ 
&&+\sum_{h=1}^{l} \XC_h \XT_{\star,h}
v_1\left(x_1\right)
\ldots v_n\left(x_n\right)\phi_1\left(y_1+e_1+\epsilon Y_1\right)
\ldots\phi_M\left(y_M+e_M+\epsilon Y_M\right)\psi^\star_1(t_1)\ldots\psi^\star_l(t_l)
\\
&=&\left( -\sum_{i=1}^n \XA_i \overline\XA_i\XX_i^{r_i}
+\sum_{j=1}^{M} \XB_j \lambda_j+\sum_{h=1}^l \XC_h\XT_{\star,h}\right) w(x,y,t),
\end{eqnarray*}
thanks to \eqref{REGSWYS-A}, \eqref{DOBACHA} and~\eqref{QYHA0ow1dk}
.

Consequently, making use of~\eqref{1.6md}, \eqref{alp} and~\eqref{NOZABAnd},
if~$(x,y)$ lies near the origin and~$t\in(a_1,+\infty)\times\dots\times(a_l,+\infty)$,
we have that
\begin{eqnarray*}&&
\Lambda_{-\infty} w\left(x,y,t\right)=
\left( -\sum_{i=1}^n |\XA_i |\XX_i^{r_i}
+\sum_{j=1}^{M-1} \XB_j \lambda_{j}+\XB_M\lambda_M+\sum_{h=1}^l \XC_h\XT_{\star,h}\right) w(x,y,t)
\\&&\qquad=
\left( -\sum_{i=1}^n |\XA_i |\XX_i^{r_i}
+\sum_{j=1}^{M-1} \XB_j \lambda_{\star,j}+\XB_M\lambda_M+\sum_{h=1}^l \XC_h\XT_{\star,h}\right) w(x,y,t)=0.
\end{eqnarray*}
This says that~$w\in\mathcal{H}$. Thus, in light of~\eqref{aza} we have that
\begin{equation}
\label{eq:ort}
0=\theta\cdot\partial^K w\left(0\right)=
\sum_{\left|\iota\right|\leq K}
{\theta_{\iota}\partial^\iota w\left(0\right)}=\sum_{|i|+|I|+|\mathfrak{I}|\le K}
\theta_{i,I,\mathfrak{I}}\,\partial_{x}^{i}\partial_{y}^{I}\partial_{t}^{\mathfrak{I}}w\left(0\right) 
.\end{equation}
%Since~$w$ is constant in~$t$ near the origin,
%recalling the notation in~\eqref{mulPM}
%we have that~$\partial^\iota w(0)=
%\partial^{(i,I,\mathfrak{I})} w(0)=0$ if~$|\mathfrak{I}|>0$.
Now, we recall~\eqref{vba} and we claim that,
for any $j\in\{1,\dots,n\}$,
any~$\ell\in\{1,\dots, p_j\}$ and any~$
i_{j\ell}\in\mathbb{N}$, we have that
\begin{equation}
\label{notnull}
\partial^{i_{j\ell}}_{x_{j\ell}}\overline{v}_{j\ell}(0)\neq 0.
\end{equation}
We prove it by induction over~$i_{j\ell}$. Indeed, if 
$i_{j\ell}\in\left\{0,\ldots,r_{j\ell}-1\right\}$, then
the initial condition in \eqref{CAH1} (if~$\ell=1$)
or~\eqref{CAH2} (if~$\ell\ge2$) gives that~$
\partial^{i_{j\ell}}_{x_{i\ell}}\overline{v}_{i\ell}\left(0\right)=1$, and
so~\eqref{notnull}
is true in this case.

To perform the inductive step,
let us now
suppose that the claim in~\eqref{notnull}
still holds for all $i_{j\ell}\in\left\{0,\ldots,i_0\right\}$
for some~$i_0$ such that $i_0\geq r_{j\ell}-1$.
Then, using the equation in~\eqref{CAH1} (if~$\ell=1$)
or in~\eqref{CAH2} (if~$\ell\ge2$), we have that
\begin{equation}\label{8JAMANaoaksd}
\partial^{i_0+1}_{x_{j\ell}}\overline{v}_j=
\partial^{i_0+1-r_{j\ell}}_{x_{j\ell}}\partial^{r_{j\ell}}_{x_{j\ell}}
\overline{v}_j=-\tilde{a}_j\partial^{i_0+1-r_{j\ell}}_{x_{j\ell}}\overline{v}_j,
\end{equation}
with
$$\tilde{a}_j:=\begin{cases}
\overline{a}_j & {\mbox{ if }}\ell=1,\\
-1 & {\mbox{ if }}\ell\ge2.
\end{cases}$$
Notice that~$\tilde{a}_j\ne0$, in view of~\eqref{NOZABA},
and~$\partial^{i_0+1-r_{j\ell}}_{x_{j \ell}}\overline{v}_j\left(0\right)\neq 0$,
by the inductive assumption. These considerations and~\eqref{8JAMANaoaksd}
give that~$\partial^{i_0+1}_{x_{j\ell}}\overline{v}_j\left(0\right)\neq 0$,
and this proves~\eqref{notnull}. 

Now, using \eqref{vba} and~\eqref{notnull} we have that,
for any $j\in\{1,\dots,n\}$ and any~$
i_{j}\in\mathbb{N}^{p_j}$,
\begin{equation*}
\partial^{i_j}_{x_{j}}\overline{v}_{j}(0)\neq 0.
\end{equation*}
This, \eqref{FREExi} and the computation in~\eqref{QYHA0ow1dk} give that,
for any $j\in\{1,\dots,n\}$ and any~$
i_{j}\in\mathbb{N}^{p_j}$,
\begin{equation}
\label{eq:adoajfiap}
\partial^{i_j}_{x_{j}} {v}_{j}(0)=\XX_j^{i_j}\partial^{i_j}_{x_{j}}\overline{v}_{j}(0)\neq 0.
\end{equation}
We also notice that, in light of~\eqref{starest}, \eqref{svs} and~\eqref{eq:ort},
\begin{equation}\label{7UJHASndn2weirit}\begin{split}&
0=\sum_{|i|+|I|+|\mathfrak{I}|\le K}
\theta_{i,I,\mathfrak{I}}\,
\partial^{i_1}_{x_{1}} {v}_{1}(0)\ldots
\partial^{i_n}_{x_{n}} {v}_{n}(0)
\,\partial_{y_1}^{I_1}\phi_1\left(e_1+\epsilon Y_1\right)
\ldots\partial_{y_M}^{I_M}\phi_M\left(e_M+\epsilon Y_M\right)\\
&\qquad\qquad\qquad\times\partial^{\mathfrak{I}_1}_{t_1}
\psi_1(0)\ldots\partial^{\mathfrak{I}_l}\psi_l(0).\end{split}\end{equation}
Now, by~\eqref{autofun1}
and Proposition \ref{sharbou}
(applied to $s:=s_j$, $\beta:=I_j$, $e:=\frac{e_j}{\omega_j}\in\partial B_1^{m_j}$, due to~\eqref{econ}, and $X:=\frac{Y_j}{\omega_j}$),
we see that, for any $j=1,\ldots, M$,
\begin{equation}
\label{alppoo}
\begin{split}\omega_j^{|I_j|}
\lim_{\epsilon\searrow 0}\epsilon^{|I_j|-s_j}\partial_{y_j}^{I_j}
\phi_j\left( e_j+\epsilon Y_j \right)
\; =\;&
\lim_{\epsilon\searrow 0}\epsilon^{|I_j|-s_j}\partial_{y_j}^{I_j}
\tilde\phi_{\star,j}\left(\frac{e_j+\epsilon Y_j}{\omega_j}\right)
\\=\;& \kappa_j \frac{e_j^{I_j}}{\omega_j^{|I_j|}}\left(-\frac{e_j}{\omega_j}\cdot\frac{Y_j}{\omega_j}\right)_+^{s_j-|I_j|}
,\end{split}\end{equation}
with~$\kappa_j\ne0$, in the sense of distributions (in the coordinates~$Y_j$).

Moreover, using~\eqref{STAvca} and \eqref{pata7UJ:AKK},
it follows that
\begin{eqnarray*}
\partial^{\mathfrak{I}_h}_{t_h}\psi_h(0)&=&
\sum_{j=0}^{+\infty} {\frac{
\XT_{\star,h}^j\, \alpha_h j(\alpha_h j-1)\dots(\alpha_h j-
\mathfrak{I}_h+1)
(0-a_h)^{\alpha_h j-\mathfrak{I}_h}}{\Gamma\left(\alpha_h j+1\right)}}\\
&=& \sum_{j=0}^{+\infty} {\frac{
\XT_{\star,h}^j\, \alpha_h j(\alpha_h j-1)\dots(\alpha_h j-
\mathfrak{I}_h+1)
\,{\epsilon}^{\alpha_h j-\mathfrak{I}_h}}{\Gamma\left(\alpha_h j+1\right)\;
{\XT_h}^{\alpha_h j-\mathfrak{I}_h}}}\\
&=& \sum_{j=1}^{+\infty} {\frac{
\XT_{\star,h}^j\, \alpha_h j(\alpha_h j-1)\dots(\alpha_h j-
\mathfrak{I}_h+1)
\,{\epsilon}^{\alpha_h j-\mathfrak{I}_h}}{\Gamma\left(\alpha_h j+1\right)\;
{\XT_h}^{\alpha_h j-\mathfrak{I}_h}}}.
\end{eqnarray*}
Accordingly, recalling~\eqref{TGAdef}, we find that
\begin{equation}
\label{limmittl}\begin{split}&
\lim_{\epsilon\searrow 0}\epsilon^{\mathfrak{I}_h-\alpha_h}
\partial^{\mathfrak{I}_h}_{t_h}\psi_h(0)
=\lim_{\epsilon\searrow 0}
\sum_{j=1}^{+\infty} {\frac{
\XT_{\star,h}^j\, \alpha_h j(\alpha_h j-1)\dots(\alpha_h j-
\mathfrak{I}_h+1)
\,{\epsilon}^{\alpha_h (j-1)}}{\Gamma\left(\alpha_h j+1\right)\;
{\XT_h}^{\alpha_h j-\mathfrak{I}_h}}}
\\&\qquad=
{\frac{ \XT_{\star,h}\,\alpha_h (\alpha_h -1)\dots(\alpha_h -\mathfrak{I}_h+1)
}{\Gamma\left(\alpha_h +1\right)\;
\XT_h^{\alpha_h -\mathfrak{I}_h}}} =
{\frac{ \XT_h^{\mathfrak{I}_h}\,\alpha_h (\alpha_h -1)\dots(\alpha_h -\mathfrak{I}_h+1)
}{\Gamma\left(\alpha_h +1\right)}}.
\end{split}\end{equation}
Also, recalling~\eqref{IBARRA}, we can write~\eqref{7UJHASndn2weirit} as
\begin{equation}
\label{eq:ort:X}\begin{split}&
0=\sum_{{|i|+|I|+|\mathfrak{I}|\le K}\atop{|I|\le|\overline{I}|}}
\theta_{i,I,\mathfrak{I}}\,
\partial^{i_1}_{x_{1}} {v}_{1}(0)\ldots
\partial^{i_n}_{x_{n}} {v}_{n}(0)
\,\partial_{y_1}^{I_1}\phi_1\left(e_1+\epsilon Y_1\right)
\ldots\partial_{y_M}^{I_M}\phi_M\left(e_M+\epsilon Y_M\right)\\&\qquad\qquad\times
\partial^{\mathfrak{I}_1}_{t_1}\psi_{1}(0)\ldots
\partial^{\mathfrak{I}_l}_{t_l}\psi_{l}(0).\end{split}
\end{equation}
Moreover, we define
\begin{equation*}
\Xi:=\left|\overline{I}\right|-\sum_{j=1}^M {s_j}+|\mathfrak{I}|-\sum_{h=1}^l {\alpha_h}.
\end{equation*}
Then, we
multiply~\eqref{eq:ort:X} by $\epsilon^{\Xi}\in(0,+\infty)$, and we
send~$\epsilon$ to zero. In this way, we obtain from~\eqref{alppoo},
\eqref{limmittl}
and~\eqref{eq:ort:X} that
\begin{eqnarray*}
0&=&\lim_{\epsilon\searrow0}
\epsilon^{\Xi}
\sum_{{|i|+|I|+|\mathfrak{I}|\le K}\atop{|I|\le|\overline{I}|}}
\theta_{i,I,\mathfrak{I}}\,
\partial^{i_1}_{x_{1}} {v}_{1}(0)\ldots
\partial^{i_n}_{x_{n}} {v}_{n}(0)
\,\partial_{y_1}^{I_1}\phi_1\left(e_1+\epsilon Y_1\right)
\ldots\partial_{y_M}^{I_M}\phi_M\left(e_M+\epsilon Y_M\right)
\\ &&\qquad\times\partial^{\mathfrak{I}_1}_{t_1}\psi_1(0)\ldots\partial^{\mathfrak{I}_l}_{t_l}\psi_l(0)
\\ &=&\lim_{\epsilon\searrow0}
\sum_{{|i|+|I|+|\mathfrak{I}|\le K}\atop{|I|\le|\overline{I}|}}
\epsilon^{|\overline{I}|-|I|}
\theta_{i,I,\mathfrak{I}}\,
\partial^{i_1}_{x_{1}} {v}_{1}(0)\ldots
\partial^{i_n}_{x_{n}} {v}_{n}(0)\\
&&\qquad\times\epsilon^{|I_1|-s_1}\partial_{y_1}^{I_1}\phi_1\left(e_1+\epsilon Y_1\right)
\ldots\epsilon^{|I_M|-s_M}\partial_{y_M}^{I_M}\phi_M\left(e_M+\epsilon Y_M\right)
\\ &&\qquad\times\epsilon^{\mathfrak{I}_1-\alpha_1}\partial^{\mathfrak{I}_1}_{t_1}\psi_1(0)\ldots\epsilon^{\mathfrak{I}_l-\alpha_l}\partial^{\mathfrak{I}_l}_{t_l}\psi_l(0)
\\&=&
\sum_{{|i|+|I|+|\mathfrak{I}|\le K}\atop{|I| = |\overline{I}|}}
\tilde C_{i,I,\mathfrak{I}}\,\theta_{i,I,\mathfrak{I}}\,
\partial^{i_1}_{x_{1}} {v}_{1}(0)\ldots
\partial^{i_n}_{x_{n}} {v}_{n}(0)\\\
&&\qquad\times e_1^{I_1}\ldots e_M^{I_M}\,
\left(- e_1 \cdot Y_1 \right)_+^{s_1-|I_1|}
\ldots
\left(- e_M \cdot Y_M\right)_+^{s_M-|I_M|}\XT_1^{\mathfrak{I}_1}\ldots\XT_l^{\mathfrak{I}_l}
,\end{eqnarray*}
for a suitable~$\tilde C_{i,I,\mathfrak{I}}\ne0$
(strictly speaking, the above identity holds
in the sense of distribution with respect to the coordinates~$Y$
and~$\XT$, but since the left hand side vanishes,
we can consider it also a pointwise identity).

Hence, recalling~\eqref{eq:adoajfiap},
\begin{equation}\label{GANAfai}\begin{split}
0&\,=\,\sum_{{|i|+|I|+|\mathfrak{I}|\le K}\atop{|I| = |\overline{I}|}}
C_{i,I,\mathfrak{I}}\,\theta_{i_1,\dots,i_n,I_1,\dots,I_M,\mathfrak{I}_1,\dots,\mathfrak{I}_l}\;
\XX_1^{i_1}\ldots\XX_n^{i_n}\\&\qquad\qquad\times
e_1^{I_1}\ldots e_M^{I_M}\,
\left(- e_1 \cdot Y_1 \right)_+^{s_1-|I_1|}
\ldots
\left(- e_M \cdot Y_M\right)_+^{s_M-|I_M|}
\XT_1^{\mathfrak{I}_1}\ldots\XT_l^{\mathfrak{I}_l}\\
&\,=\,
\left(- e_1 \cdot Y_1 \right)_+^{s_1}
\ldots
\left(- e_M \cdot Y_M\right)_+^{s_M}\\&\qquad\qquad\times
\sum_{{|i|+|I|+|\mathfrak{I}|\le K}\atop{|I| = |\overline{I}|}}
C_{i,I,\mathfrak{I}}\,\theta_{i,I,\mathfrak{I}}\;
\XX^i\,e^{I}\,
\left(- e_1 \cdot Y_1 \right)_+^{-|I_1|}
\ldots
\left(- e_M \cdot Y_M\right)_+^{-|I_M|}
\XT^{\mathfrak{I}}
,\end{split}\end{equation}
for a suitable~$C_{i,I,\mathfrak{I}}\ne0$.

We observe that the equality in~\eqref{GANAfai}
is valid for any choice of the free parameters~$(\XX,Y,\XT)$
in an open subset of~$\mathbb{R}^{p_1+\dots+p_n}\times
\mathbb{R}^{m_1+\dots+m_M}\times\mathbb{R}^l$,
as prescribed in~\eqref{FREExi},~\eqref{FREEmustar}
and~\eqref{eq:FREEy}. 

Now, we take new free parameters, $\XY_1,\ldots,\XY_M$ with $\XY_j\in\mathbb{R}^{m_j}\setminus\{0\}$, and we
define
\begin{equation}\label{COMPATI}
e_j:=\frac{\omega_j\XY_j}{|\XY_j|}\quad {\mbox{ and }}\quad 
Y_j:=-\frac{\XY_j}{|\XY_j|^2}.\end{equation}
We stress that the setting in~\eqref{COMPATI} is compatible with that in~\eqref{eq:FREEy}, since
$$ e_j\cdot Y_j=-\frac{\omega_j\XY_j}{|\XY_j|}\cdot
\frac{\XY_j}{|\XY_j|^2}=-\frac{\omega_j}{|\XY_j|}<0,$$
thanks to~\eqref{OMEj}. We also notice that, for all~$j\in\{1,\dots,M\}$,
$$ e_j^{I_j}\left(- e_j \cdot Y_j \right)_+^{-|I_j|}=
\frac{\omega_j^{|I_j|}\XY_j^{I_j}}{|\XY_j|^{|I_j|}}\,
\frac{|\XY_j|^{|I_j|}}{\omega_j^{|I_j|}}=\XY_j^{I_j},
$$
and hence
$$ e^{I}\,
\left(- e_1 \cdot Y_1 \right)_+^{-|I_1|}
\ldots
\left(- e_M \cdot Y_M\right)_+^{-|I_M|}=
\XY^I.$$
Plugging this into formula \eqref{GANAfai},
we obtain the first identity in~\eqref{ipop},
as desired.
Hence, 
the proof of~\eqref{ipop} in case~\ref{itm:case1}
is complete.
\end{proof}

\begin{proof}[Proof of \eqref{ipop}, case \ref{itm:case2}]
Thanks to the assumptions given in case~\ref{itm:case2}, we can suppose
that formula~\eqref{AGZ} still holds, and also that
\begin{equation}\label{MAGGZC}
{\mbox{$\XC_l>0$}}.\end{equation} In addition,
for any $j\in\{1,\ldots,M\}$, we consider $\lambda_j$
and~$\phi_j$ as in \eqref{REGSWYS-A}.

Then, we define
\begin{equation}\label{MAfghjkGGZC} R:=\left( 
\frac{ 1 }{|\XA_1|}\displaystyle\left(\sum_{h=1}^{l-1}|\XC_h|+\sum_{j=1}^M|\XB_j|\lambda_j\right)\right)^{1/|r_{1}|}.\end{equation}
We notice that, in light of \eqref{AGZ}, the setting in~\eqref{MAfghjkGGZC} is well-defined. 

Now, we fix two sets of free parameters $\XX_1,\ldots,\XX_n$
as in~\eqref{FREExi}
and~$\,\XT_{\star,1},\dots,\XT_{\star,l}$ as in~\eqref{FREEmustar}, here taken with~$R$
as in~\eqref{MAfghjkGGZC}.
Moreover, we define
\begin{equation}
\label{alpc}
\lambda\,:=\,\frac{1}{\XC_l\,\XT_{\star,l}}\left(
\sum_{j=1}^n {\left|\XA_j\right|\XX_j^{r_j}}-\sum_{j=1}^M\XB_j\lambda_j-\sum_{h=1}^{l-1}\XC_h\XT_{\star,h}\right).\end{equation}
We notice that \eqref{alpc} is well-defined,
thanks to~\eqref{FREEmustar}
and~\eqref{MAGGZC}.
Furthermore, recalling~\eqref{FREExi}, \eqref{1.15bis} and~\eqref{MAfghjkGGZC},
we find that
\begin{equation*}
\begin{split}
\sum_{i=1}^n&|\XA_i|\XX_i^{r_i}\ge|\XA_1|\XX_1^{r_1}>
|\XA_1|(R+1)^{|r_1|}>|\XA_1|R^{|r_1|} \\
&=\sum_{h=1}^{l-1}|\XC_h|+\sum_{j=1}^M|\XB_j|\lambda_j
\ge\sum_{h=1}^{l-1}\XC_h\XT_{\star,h}+\sum_{j=1}^M\XB_j\lambda_j.
\end{split}
\end{equation*}
Consequently, by~\eqref{alpc},
\begin{equation}
\label{alp-0c}
\lambda>0.\end{equation}
Hence, we can define
\begin{equation}\label{OVlam}
\overline{\lambda}:=\lambda^{1/{\alpha_l}}.\end{equation}
Moreover, 
we consider~$a_h\in(-2,0)$, for every~$h\in\{ 1,\dots,l\}$,
to be chosen appropriately in what follows
(the exact choice will be performed in~\eqref{pata7UJ:AKKcc}),
and, using the notation in~\eqref{chosofpsistar}
and~\eqref{TGAdef}, we define
\begin{equation}
\label{autofun2c}
\psi_h(t_h):=\psi_{\star,h}\big(\XT_h (t_h-a_h)\big)=
E_{\alpha_h,1}\big(\XT_{\star,h} (t_h-a_h)^{\alpha_h}\big)\quad\text{if }\,h\in\{1,\dots,l-1\}
\end{equation}
and
\begin{equation}
\label{psielle}
\psi_{l}(t_l):=\psi_{\star,l}\big(\overline{\lambda}\,\XT_l (t_l-a_l)\big)=
E_{\alpha_l,1}\big(\lambda\,\XT_{\star,l} (t_l-a_l)^{\alpha_l}\big).
\end{equation}
We recall that, thanks to Lemma~\ref{MittagLEMMA}, the function in~\eqref{autofun2c}
solves~\eqref{jhjadwlgh} and satisfies~\eqref{STAvca} for any $h\in\{1,\dots,l-1\}$, while the function
in~\eqref{psielle} solves
\begin{equation}
\label{jhjadwlghplop}
\begin{cases}
D^{\alpha_l}_{t_l,a_l}\psi_l(t_l)=\lambda\,\XT_{\star,l}\psi_l(t_l)&\quad\text{in }\,(a_l,+\infty), \\
\psi_l(a_l)=1, \\
\partial^m_{t_l}\psi_l(a_l)=0&\quad\text{for every }\,m\in\{1,\dots,[\alpha_l] \}.
\end{cases}
\end{equation}
As in~\eqref{starest}, we extend the functions~$\psi_{h}$
constantly in~$(-\infty,a_h)$, calling~$\psi^\star_h$ this
extended function. In this way,
Lemma~A.3 in~\cite{CDV18} translates~\eqref{jhjadwlghplop} into
\begin{equation}
\label{jhjadwlghplop2}
D^{\alpha_h}_{t_h,-\infty}\psi_h^\star(t_h)=\XT_{\star,h}\psi_h(t_h)
=\XT_{\star,h}\psi_h^\star(t_h)\,\text{ in every interval }\,I\Subset(a_h,+\infty).
\end{equation}
Now, we let~$\epsilon>0$, to be taken small
possibly depending on the  free parameters,
and we exploit the functions defined in \eqref{otau1} and \eqref{vuggei},
provided that
one replaces the positive constant $R$ defined in \eqref{MAfghjkGGZ}
with the one in \eqref{MAfghjkGGZC}, when necessary. 

With this idea in mind, for any $j\in\{1,\ldots,M\}$, we let\footnote{Comparing~\eqref{freee2}
with \eqref{econ}, we observe that~\eqref{econ} reduces to~\eqref{freee2} with the choice~$\omega_j:=1$.}
\begin{equation}
\label{freee2}
e_j\in\partial B^{m_j}_1,
\end{equation}
and we define
\begin{equation}
\label{svsc}
\begin{split}
w\left(x,y,t\right): 
=&\tau_1\left(x\right)v_1\left(x_1\right)\cdot
\ldots \cdot
v_n\left(x_n\right)\phi_1\left(y_1+e_1+\epsilon Y_1\right)\cdot
\ldots\cdot
\phi_M\left(y_M+e_M+\epsilon Y_M\right) \\
&\times\psi^\star_1(t_1)\cdot\ldots\cdot\psi^\star_l(t_l),
\end{split}
\end{equation}
where the setting in~\eqref{REGSWYS-A}, %% \eqref{starest}, 
\eqref{otau1},
\eqref{vuggei}, \eqref{eq:FREEy}, \eqref{autofun2c}
and~\eqref{psielle} has been exploited. 

We also notice that $w\in C\left(\mathbb{R}^N\right)\cap C_0(\mathbb{R}^{N-l})\cap\mathcal{A}$. Moreover,
if
\begin{equation}\label{pata7UJ:AKKcc}
a=(a_1,\dots,a_l):=\left(-\frac{\epsilon}{\XT_1},\dots,-\frac{\epsilon}{\,\XT_l}\right)\in\mathbb{R}^l\end{equation}
and~$(x,y)$ is sufficiently close to the origin
and~$t\in(a_1,+\infty)\times\dots\times(a_l,+\infty)$, we have that
\begin{eqnarray*}&&
\Lambda_{-\infty} w\left(x,y,t\right)\\&=&
\left( \sum_{i=1}^n \XA_i \partial^{r_i}_{x_i}
+\sum_{j=1}^{M} \XB_j (-\Delta)^{s_j}_{y_j}+
\sum_{h=1}^{l} \XC_h D^{\alpha_h}_{t_h,-\infty}\right) w\left(x,y,t\right)\\
&=&
\sum_{i=1}^n \XA_i 
v_1\left(x_1\right)
\ldots  
v_{i-1}\left(x_{i-1}\right)
\partial^{r_i}_{x_i}v_{i}\left(x_{i}\right)v_{i+1}\left(x_{i+1}\right)
\ldots  v_n\left(x_n\right)\\
&&\qquad\times\phi_1\left(y_1+e_1+\epsilon Y_1\right)
\ldots\phi_M\left(y_M+e_M+\epsilon Y_M\right)\psi^\star_1\left(t_1\right)
\ldots\psi^\star_{l-1}\left(t_{l-1}\right)\psi^\star_l\left(t_l\right)\\
&&+\sum_{j=1}^{M} \XB_j 
v_1\left(x_1\right)
\ldots v_n\left(x_n\right)\phi_1\left(y_1+e_1+\epsilon Y_1\right)
\ldots\phi_{j-1}\left(y_{j-1}+e_{j-1}+\epsilon Y_{j-1}\right)\\&&\qquad\times
(-\Delta)^{s_j}_{y_j}\phi_j\left(y_j+e_j+\epsilon Y_j\right)
\phi_{j+1}\left(y_{j+1}+e_{j+1}+\epsilon Y_{j+1}\right)
\ldots\phi_M\left(y_M+e_M+\epsilon Y_M\right)\\
&&\qquad\times\psi^\star_1\left(t_1\right)
\ldots\psi^\star_{l-1}\left(t_{l-1}\right)\psi^\star_l\left(t_l\right)\\
&&+\sum_{h=1}^{l} \XC_h 
v_1\left(x_1\right)
\ldots v_n\left(x_n\right)\phi_1\left(y_1+e_1+\epsilon Y_1\right)\ldots\phi_M\left(y_M+e_M+\epsilon Y_M\right)\psi^\star_1\left(t_1\right)
\ldots\psi^\star_{h-1}\left(t_{h-1}\right)\\
&&\qquad\times D^{\alpha_h}_{t_h,-\infty}\psi^\star_h\left(t_h\right)
\psi^\star_{h+1}(t_{h+1})\ldots\psi^\star_{l-1}\left(t_{l-1}\right)\psi^\star_l\left(t_l\right)\\
&=&
-\sum_{i=1}^n \XA_i \overline\XA_i\XX_i^{r_i}
v_1\left(x_1\right)
\ldots v_n\left(x_n\right)\phi_1\left(y_1+e_1+\epsilon Y_1\right)
\ldots\phi_M\left(y_M+e_M+\epsilon Y_M\right)\\
&&\qquad\times\psi^\star_1(t_1)\ldots\psi^\star_{l-1}(t_{l-1})\psi^\star_l(t_l)\\
&&+\sum_{j=1}^{M} \XB_j \lambda_j
v_1\left(x_1\right)
\ldots v_n\left(x_n\right)\phi_1\left(y_1+e_1+\epsilon Y_1\right)
\ldots\phi_M\left(y_M+e_M+\epsilon Y_M\right)\\&&\qquad\times\psi^\star_1(t_1)\ldots\psi^\star_{l-1}\left(t_{l-1}\right)\psi^\star_l(t_l)
\\ 
&&+\sum_{h=1}^{l-1} \XC_h \XT_{\star,h}
v_1\left(x_1\right)
\ldots v_n\left(x_n\right)\phi_1\left(y_1+e_1+\epsilon Y_1\right)
\ldots\phi_M\left(y_M+e_M+\epsilon Y_M\right)\\&&\qquad\times\psi^\star_1(t_1)\ldots\psi^\star_{l-1}(t_{l-1})\psi^\star_l(t_l)
\\
&&+\XC_l\lambda\XT_{\star,l}v_1\left(x_1\right)
\ldots v_n\left(x_n\right)\phi_1\left(y_1+e_1+\epsilon Y_1\right)
\ldots\phi_M\left(y_M+e_M+\epsilon Y_M\right)\\
&&\qquad\times\psi^\star_1(t_1)\ldots\psi^\star_{l-1}(t_{l-1})\psi^\star_l(t_l) \\
&=&\left( -\sum_{i=1}^n \XA_i \overline\XA_i\XX_i^{r_i}
+\sum_{j=1}^M\XB_j\lambda_j+\sum_{h=1}^{l-1} \XC_h\XT_{\star,h}+\XC_l\lambda\XT_{\star,l}\right) w(x,y,t),
\end{eqnarray*}
thanks to \eqref{REGSWYS-A}, \eqref{jhjadwlgh}, \eqref{QYHA0ow1dk}
and~\eqref{jhjadwlghplop2}.

Consequently, making use of \eqref{NOZABAnd} and~\eqref{alpc}, when~$(x,y)$ is
near the origin and~$t\in(a_1,+\infty)\times\dots\times(a_l,+\infty)$,
we have that
$$
\Lambda_{-\infty} w\left(x,y,t\right)=
\left( -\sum_{i=1}^n |\XA_i |\XX_i^{r_i}
+\sum_{j=1}^M\XB_j\lambda_j+\sum_{h=1}^{l-1} \XC_h\XT_{\star,h}+\lambda\XC_l\XT_{\star,l}\right) w(x,y,t)=0
.$$
This says that~$w\in\mathcal{H}$. Thus, in light of~\eqref{aza} we have that
\begin{equation*}
0=\theta\cdot\partial^K w\left(0\right)=
\sum_{\left|\iota\right|\leq K}
{\theta_{\iota}\partial^\iota w\left(0\right)}=\sum_{|i|+|I|+|\mathfrak{I}|\le K}
\theta_{i,I,\mathfrak{I}}\,\partial_{x}^{i}\partial_{y}^I\partial_{t}^{\mathfrak{I}}w\left(0\right) 
.\end{equation*}
Hence, in view of~\eqref{eq:adoajfiap} and~\eqref{svsc},
\begin{equation}\label{1.64bis}
\begin{split}
0\,&=\,\sum_{|i|+|I|+|\mathfrak{I}|\le K}
\theta_{i,I,\mathfrak{I}}\,
\partial^{i_1}_{x_{1}} {v}_{1}(0)\ldots
\partial^{i_n}_{x_{n}} {v}_{n}(0)\\&\qquad\times
\partial^{I_1}_{y_{1}} {\phi}_{1}(e_1+\epsilon Y_1)\ldots
\partial^{I_M}_{y_{M}} {\phi}_{M}(e_M+\epsilon Y_M)
\,\partial_{t_1}^{\mathfrak{I}_1}\psi_1(0)
\ldots\partial_{t_l}^{\mathfrak{I}_l}\psi_l(0)
\\&=
\,\sum_{|i|+|I|+|\mathfrak{I}|\le K}
\theta_{i,I,\mathfrak{I}}\,\XX_1^{r_1}\ldots\XX_n^{r_n}\,
\partial^{i_1}_{x_{1}} \overline{v}_{1}(0)\ldots
\partial^{i_n}_{x_{n}} \overline{v}_{n}(0)\\&\qquad\times
\partial^{I_1}_{y_{1}} {\phi}_{1}(e_1+\epsilon Y_1)\ldots
\partial^{I_M}_{y_{M}} {\phi}_{M}(e_M+\epsilon Y_M)
\,\partial_{t_1}^{\mathfrak{I}_1}\psi_1(0)
\ldots\partial_{t_l}^{\mathfrak{I}_l}\psi_l(0)
.\end{split}\end{equation}
Moreover, using~\eqref{Mittag}, \eqref{psielle}
and \eqref{pata7UJ:AKKcc},
it follows that
\begin{eqnarray*}
\partial^{\mathfrak{I}_l}_{t_l}\psi_l(0)&=&
\sum_{j=0}^{+\infty} {\frac{
\lambda^j\,\XT_{\star,l}^j\, \alpha_l j(\alpha_l j-1)\dots(\alpha_l j-
\mathfrak{I}_l+1)
(0-a_l)^{\alpha_l j-\mathfrak{I}_l}}{\Gamma\left(\alpha_l j+1\right)}}\\
&=& \sum_{j=0}^{+\infty} {\frac{
\lambda^j\,\XT_{\star,l}^j\, \alpha_l j(\alpha_l j-1)\dots(\alpha_l j-
\mathfrak{I}_l+1)
\,{\epsilon}^{\alpha_l j-\mathfrak{I}_l}}{\Gamma\left(\alpha_l j+1\right)\;
{\XT_l}^{\alpha_l j-\mathfrak{I}_l}}}\\
&=& \sum_{j=1}^{+\infty} {\frac{
\lambda^j\,\XT_{\star,l}^j\, \alpha_l j(\alpha_l j-1)\dots(\alpha_l j-
\mathfrak{I}_l+1)
\,{\epsilon}^{\alpha_l j-\mathfrak{I}_l}}{\Gamma\left(\alpha_l j+1\right)\;
{\XT_l}^{\alpha_l j-\mathfrak{I}_l}}}.
\end{eqnarray*}
Accordingly, by~\eqref{TGAdef}, we find that
\begin{equation}
\label{limmittlc}\begin{split}&
\lim_{\epsilon\searrow 0}\epsilon^{\mathfrak{I}_l-\alpha_l}
\partial^{\mathfrak{I}_l}_{t_l}\psi_l(0)
=\lim_{\epsilon\searrow 0}
\sum_{j=1}^{+\infty} {\frac{
\lambda^j\,\XT_{\star,l}^j\, \alpha_l j(\alpha_l j-1)\dots(\alpha_l j-
\mathfrak{I}_l+1)
\,{\epsilon}^{\alpha_l (j-1)}}{\Gamma\left(\alpha_l j+1\right)\;
{\XT_l}^{\alpha_l j-\mathfrak{I}_l}}}
\\&\qquad=
{\frac{ \lambda\,\XT_{\star,l}\,\alpha_l (\alpha_l -1)\dots(\alpha_l -\mathfrak{I}_l+1)
}{\Gamma\left(\alpha_l +1\right)\;
\XT_l^{\alpha_l -\mathfrak{I}_l}}} =
{\frac{ \lambda\,\XT_l^{\mathfrak{I}_l}\,\alpha_l (\alpha_l -1)\dots(\alpha_l -\mathfrak{I}_l+1)
}{\Gamma\left(\alpha_l +1\right)}}.
\end{split}\end{equation}
Hence, recalling~\eqref{IBARRA2}, we can write~\eqref{1.64bis} as
\begin{equation}
\label{eq:ort:Xc}\begin{split}&
0=\sum_{{|i|+|I|+|\mathfrak{I}|\le K}\atop{|\mathfrak{I}|\le|\overline{\mathfrak{I}}|}}
\theta_{i,I,\mathfrak{I}}\,\XX_1^{r_1}\ldots\XX_n^{r_n}\,
\partial^{i_1}_{x_{1}} \overline{v}_{1}(0)\ldots
\partial^{i_n}_{x_{n}} \overline{v}_{n}(0)\\
&\qquad\qquad\times
\partial^{I_1}_{y_{1}} {\phi}_{1}(e_1+\epsilon Y_1)\ldots
\partial^{I_M}_{y_{M}} {\phi}_{M}(e_M+\epsilon Y_M) \partial^{\mathfrak{I}_1}_{t_1}\psi_{1}(0)\ldots
\partial^{\mathfrak{I}_l}_{t_l}\psi_{l}(0).\end{split}
\end{equation}
Moreover, we define
\begin{equation*}
\Xi:=|\overline{\mathfrak{I}}|-\sum_{h=1}^l {\alpha_h}+\left|I\right|-
\sum_{j=1}^M {s_j}.
\end{equation*}
Then, we
multiply~\eqref{eq:ort:Xc} by $\epsilon^{\Xi}\in(0,+\infty)$, and we
send~$\epsilon$ to zero. In this way, we obtain from~\eqref{limmittl}, used here for~$h\in\{1,\ldots, l-1\}$,
\eqref{limmittlc} and~\eqref{eq:ort:Xc} that
\begin{eqnarray*}
0&=&\lim_{\epsilon\searrow0}
\epsilon^{\Xi}
\sum_{{|i|+|I|+|\mathfrak{I}|\le K}\atop{|\mathfrak{I}|\le|\overline{\mathfrak{I}}|}}
\theta_{i,I,\mathfrak{I}}\,
\,\XX_1^{r_1}\ldots\XX_n^{r_n}\,
\partial^{i_1}_{x_{1}} \overline{v}_{1}(0)\ldots
\partial^{i_n}_{x_{n}} \overline{v}_{n}(0)\\
&&\qquad\times\partial_{y_1}^{I_1}\phi_1\left(e_1+\epsilon Y_1\right)
\ldots\partial_{y_M}^{I_M}\phi_M\left(e_M+\epsilon Y_M\right)
\\ &&\qquad\times\partial^{\mathfrak{I}_1}_{t_1}\psi_1(0)\ldots\partial^{\mathfrak{I}_l}_{t_l}\psi_l(0)
\\ &=&\lim_{\epsilon\searrow0}
\sum_{{|i|+|I|+|\mathfrak{I}|\le K}\atop{|\mathfrak{I}|\le|\overline{\mathfrak{I}}|}}
\epsilon^{|\overline{\mathfrak{I}}|-|\mathfrak{I}|}
\theta_{i,I,\mathfrak{I}}
\,\XX_1^{r_1}\ldots\XX_n^{r_n}\,
\partial^{i_1}_{x_{1}} \overline{v}_{1}(0)\ldots
\partial^{i_n}_{x_{n}} \overline{v}_{n}(0)\\
&&\qquad\times\epsilon^{|I_1|-s_1}\partial_{y_1}^{I_1}\phi_1\left(e_1+\epsilon Y_1\right)
\ldots\epsilon^{|I_M|-s_M}\partial_{y_M}^{I_M}\phi_M\left(e_M+\epsilon Y_M\right)
\\ &&\qquad\times\epsilon^{\mathfrak{I}_1-\alpha_1}\partial^{\mathfrak{I}_1}_{t_1}\psi_1(0)\ldots\epsilon^{\mathfrak{I}_l-\alpha_l}\partial^{\mathfrak{I}_l}_{t_l}\psi_l(0)
\\&=&
\sum_{{|i|+|I|+|\mathfrak{I}|\le K}\atop{|\mathfrak{I}| = |\overline{\mathfrak{I}}|}}
\lambda\,\tilde C_{i,I,\mathfrak{I}}\,\theta_{i,I,\mathfrak{I}}\,\XX_1^{r_1}\ldots\XX_n^{r_n}\,
\partial^{i_1}_{x_{1}} \overline{v}_{1}(0)\ldots
\partial^{i_n}_{x_{n}} \overline{v}_{n}(0)\\\
&&\qquad\times e_1^{I_1}\ldots e_M^{I_M}\,
\left(- e_1 \cdot Y_1 \right)_+^{s_1-|I_1|}
\ldots
\left(- e_M \cdot Y_M\right)_+^{s_M-|I_M|}\XT_1^{\mathfrak{I}_1}\ldots\XT_l^{\mathfrak{I}_l}
,\end{eqnarray*}
for a suitable~$\tilde C_{i,I,\mathfrak{I}}$. We stress that~$\tilde C_{i,I,\mathfrak{I}}\ne0$,
thanks also to~\eqref{alppoo},
applied here with~$\omega_j:=1$, $\tilde{\phi}_{\star,j}:=\phi_j$
and $e_j$ as in \eqref{freee2} for any $j\in\{1,\ldots,M\}$. 

Hence, recalling~\eqref{alp-0c},
\begin{equation}\label{GANAfaic}
\begin{split}
0&\,=\,\sum_{{|i|+|I|+|\mathfrak{I}|\le K}\atop{|\mathfrak{I}| = |\overline{\mathfrak{I}}|}}
C_{i,I,\mathfrak{I}}\,\theta_{i_1,\dots,i_n,I_1,\dots,I_M,\mathfrak{I}_1,\dots,\mathfrak{I}_l}\;
\XX_1^{i_1}\ldots\XX_n^{i_n}\\&\qquad\qquad\times
e_1^{I_1}\ldots e_M^{I_M}\,
\left(- e_1 \cdot Y_1 \right)_+^{s_1-|I_1|}
\ldots
\left(- e_M \cdot Y_M\right)_+^{s_M-|I_M|}
\XT_1^{\mathfrak{I}_1}\ldots\XT_l^{\mathfrak{I}_l}\\
&\,=\,
\left(- e_1 \cdot Y_1 \right)_+^{s_1}
\ldots
\left(- e_M \cdot Y_M\right)_+^{s_M}\\&\qquad\qquad\times
\sum_{{|i|+|I|+|\mathfrak{I}|\le K}\atop{|\mathfrak{I}| = |\overline{\mathfrak{I}}|}}
C_{i,I,\mathfrak{I}}\,\theta_{i,I,\mathfrak{I}}\;
\XX^i\,e^{I}\,
\left(- e_1 \cdot Y_1 \right)_+^{-|I_1|}
\ldots
\left(- e_M \cdot Y_M\right)_+^{-|I_M|}
\XT^{\mathfrak{I}}
,\end{split}\end{equation}
for a suitable~$C_{i,I,\mathfrak{I}}\ne0$.

We observe that the equality in~\eqref{GANAfaic}
is valid for any choice of the free parameters~$(\XX,Y,\XT)$
in an open subset of~$\mathbb{R}^{p_1+\dots+p_n}\times
\mathbb{R}^{m_1+\ldots+m_M}\times\mathbb{R}^l$,
as prescribed in~\eqref{FREExi}, \eqref{FREEmustar}
and~\eqref{eq:FREEy}.

Now, we take new free parameters $\XY_j$ with $\XY_j\in\mathbb{R}^{m_j}\setminus\{0\}$ for any $j=1,\ldots,M$, and perform in \eqref{GANAfaic} the same change of variables done in \eqref{COMPATI}, obtaining that
$$
0=\sum_{{|i|+|I|+|\mathfrak{I}|\le K}\atop{|\mathfrak{I}| = |\overline{\mathfrak{I}}|}}
C_{i,I,\mathfrak{I}}\,\theta_{i,I,\mathfrak{I}}\;
\XX^i\XY^I\XT^{\mathfrak{I}},
$$
for some $C_{i,I,\mathfrak{I}}\ne 0$. 

Hence, 
the second identity in~\eqref{ipop} is obtained as desired, and the proof of Lemma \ref{lemcin} in case~\ref{itm:case2}
is completed.
\end{proof}

\begin{proof}[Proof of \eqref{ipop}, case \ref{itm:case3}]
We divide the proof of case \ref{itm:case3} into two subcases, namely
either 
\begin{equation}\label{SC-va1}
{\mbox{there exists $h\in\{1,\ldots,l\}$ such that $\XC_h\ne 0$,}}\end{equation}
or
\begin{equation}\label{SC-va2}
{\mbox{$\XC_h=0\,$ for every $h\in\{1,\ldots,l\}$.}}\end{equation}
We start by dealing with the case in~\eqref{SC-va1}.
Up to relabeling and reordering the coefficients $\XC_h$, we can assume that
\begin{equation}\label{CNONU}
{\mbox{$\XC_1\ne 0$}}.\end{equation}
Also, thanks to the assumptions given in case~\ref{itm:case3}, we can suppose that
\begin{equation}\label{NEGB}
{\mbox{$\XB_M<0$}},\end{equation}
and, for any $j\in\{1,\ldots,M\}$, we consider $\lambda_{\star,j}$ and $\tilde{\phi}_{\star,j}$ as in \eqref{lambdastarj}. 
Then, we take~$\omega_j:=1$ and~$\phi_j$ as in~\eqref{autofun1},
so that~\eqref{REGSWYS-A} is satisfied.
In particular, here we have that
\begin{equation}\label{7yHSSIKnNSJS}
\lambda_{j}=
\lambda_{\star,j}\qquad{\mbox{ and }}\qquad\phi_j=\tilde\phi_{\star,j}.\end{equation}
We define
\begin{equation}\label{MAfghjkGGZ3} R:= 
\frac{ 1 }{|\XC_1|}\displaystyle\sum_{j=1}^{M-1}|\XB_j|\lambda_{\star,j}.\end{equation}
We notice that, in light of \eqref{CNONU},
the setting in~\eqref{MAfghjkGGZ3} is well-defined. 

Now, we fix a set of free parameters 
\begin{equation}\label{FREEmustar3}
\XT_{\star,1}\in(R+1,R+2),\dots\,\XT_{\star,l}\in(R+1,R+2).\end{equation}
Moreover, we define
\begin{equation}
\label{alp3}
\lambda_M\,:=\,\frac{1}{\XB_M}\left(
-\sum_{j=1}^{M-1}\XB_j\lambda_{\star,j}-\sum_{h=1}^{l}|\XC_h|\XT_{\star,h}\right).\end{equation}
We notice that \eqref{alp3} is well-defined thanks to \eqref{NEGB}.
{F}rom~\eqref{MAfghjkGGZ3} we deduce that
\begin{equation*}
\begin{split}
\sum_{h=1}^l&|\XC_h|\XT_{\star,h}+\sum_{j=1}^{M-1}\XB_j\lambda_{\star,j}\ge
|\XC_1|\XT_{\star,1}-\sum_{j=1}^{M-1}|\XB_j|\lambda_{\star,j} \\
&>|\XC_1|R-\sum_{j=1}^{M-1}|\XB_j|\lambda_{\star,j}=0.
\end{split}
\end{equation*}
Consequently, by~\eqref{NEGB} and~\eqref{alp3},
\begin{equation}
\label{alp-03}
\lambda_M>0.\end{equation}
Now, we define, for any $h\in\left\{1,\ldots,l\right\}$,
\begin{equation*}
\overline{\XC}_h:=
\begin{cases}
\displaystyle\frac{\XC_h}{\left|\XC_h\right|}\quad& \text{ if }\XC_h\neq 0 ,\\
1 \quad &\text{ if }\XC_h=0.
\end{cases}
\end{equation*}
We notice that
\begin{equation}\label{ahflaanhuf}
{\mbox{$\overline{\XC}_h\neq 0$ for all~$h\in\left\{1,\ldots,l\right\}$,}}\end{equation}
and
\begin{equation}\label{ahflaanhuf3}
{\XC}_h\overline{\XC}_h=|{\XC}_h|.\end{equation}
Moreover, 
we consider~$a_h\in(-2,0)$, for every~$h=1,\dots,l$,
to be chosen appropriately in what follows (see~\eqref{pata7UJ:AKK3}
for a precise choice).

Now, for every $h\in\{1,\ldots,l\}$, we define
\begin{equation}
\label{autofun3}
\psi_h(t_h):=E_{\alpha_h,1}(\overline{\XC}_h\XT_{\star,h}(t_h-a_h)^{\alpha_h}),
\end{equation}
where~$E_{\alpha_h,1}$ denotes the Mittag-Leffler function
with parameters $\alpha:=\alpha_h$ and $\beta:=1$ as defined 
in \eqref{Mittag}. By Lemma~\ref{MittagLEMMA},
we know that
\begin{equation}
\label{HFloj}
\begin{cases}
D^{\alpha_h}_{t_h,a_h}\psi_h(t_h)=\overline{\XC}_h\XT_{\star,h}\psi_h(t_h)\quad\text{in }\,(a_h,+\infty) ,\\
\psi_h(a_h)=1 ,\\
\partial^m_{t_h}\psi_h(a_h)=0\quad\text{for any}\,m=1,\dots,[\alpha_h],
\end{cases}
\end{equation}
and we consider again the extension $\psi^\star_h$ given in \eqref{starest}. 
By Lemma~A.3 in~\cite{CDV18}, we know that~\eqref{HFloj}
translates into
\begin{equation}\label{HFloj2}
D^{\alpha_h}_{t_h,-\infty}\psi_h^\star(t_h)
=\overline{\XC}_h\XT_{\star,h}\psi_h^\star(t_h)\,\text{ in every interval }\,I\Subset(a_h,+\infty).
\end{equation}
Now, we 
consider auxiliary parameters~$\XT_h$, $e_j$ and $Y_j$
as in \eqref{TGAdef}, \eqref{econ} and~\eqref{eq:FREEy}. 
Moreover, we introduce an additional set of free parameters 
\begin{equation}
\label{FREEXXXXX}
\XX=(\XX_1,\ldots,\XX_n)\in
\mathbb{R}^{p_1}\times\ldots\times\mathbb{R}^{p_n}.
\end{equation}
We
let~$\epsilon>0$, to be taken small
possibly depending on the 
free parameters.
We take~$\tau\in C^\infty(\mathbb{R}^{p_1+\ldots+p_n},[0+\infty))$ such that
\begin{equation}
\label{otau3}
\tau(x):=\begin{cases}
\exp\left({\,\XX\cdot x}\right)&\quad\text{if }\,x\in B_1^{p_1+\ldots+p_n} ,\\
0&\quad\text{if }\,x\in \mathbb{R}^{p_1+\ldots+p_n}\setminus B_2^{p_1+\ldots+p_n},
\end{cases}
\end{equation}
where 
$$ \XX\cdot x:=\sum_{j=1}^{n} \XX_i\cdot x_i$$
denotes the standard scalar product.

We notice that, for any $i\in\mathbb{N}^{p_1}\times\ldots\times\mathbb{N}^{p_n}$,
\begin{equation}
\label{resczzz}
\partial^{i}_{x}\tau(0)=\partial^{i_1}_{x_1}\ldots\partial^{i_n}_{x_n}\tau(0)=
\XX^{i_{11}}_{11}\ldots\XX^{i_{1p_1}}_{1p_1}\ldots\XX^{i_{n1}}_{n1}\ldots\XX^{i_{np_n}}_{np_n}
=\XX^i.
\end{equation}
We define
\begin{equation}
\label{svs3}
w\left(x,y,t\right):=\tau(x)\phi_1\left(y_1+e_1+\epsilon Y_1\right)\cdot
\ldots\cdot
\phi_M\left(y_M+e_M+\epsilon Y_M\right) 
\psi^\star_1(t_1)\cdot\ldots\cdot\psi^\star_l(t_l),
\end{equation}
where the setting in~\eqref{REGSWYS-A}
has also been exploited. 

We also notice that $w\in C\left(\mathbb{R}^N\right)\cap C_0\left(\mathbb{R}^{N-l}\right)\cap\mathcal{A}$. Moreover,
if
\begin{equation}\label{pata7UJ:AKK3}
a=(a_1,\dots,a_l):=\left(-\frac{\epsilon}{\XT_1},\dots,-\frac{\epsilon}{\,\XT_l}\right)\in\mathbb{R}^l\end{equation}
and~$\left(x,y\right)$ is sufficiently close to the origin
and~$t\in(a_1,+\infty)\times\dots\times(a_l,+\infty)$, we have that
\begin{eqnarray*}&&
\Lambda_{-\infty} w\left(x,y,t\right)\\&=&
\left( \sum_{j=1}^{M} \XB_j (-\Delta)^{s_j}_{y_j}+
\sum_{h=1}^{l} \XC_h D^{\alpha_h}_{t_h,-\infty}\right) w\left(x,y,t\right)\\
&=&\sum_{j=1}^{M} \XB_j 
\tau(x)\phi_1\left(y_1+e_1+\epsilon Y_1\right)
\ldots\phi_{j-1}\left(y_{j-1}+e_{j-1}+\epsilon Y_{j-1}\right)(-\Delta)^{s_j}_{y_j}\phi_j\left(y_j+e_j+\epsilon Y_j\right)\\&&\qquad\times
\phi_{j+1}\left(y_{j+1}+e_{j+1}+\epsilon Y_{j+1}\right)
\ldots\phi_M\left(y_M+e_M+\epsilon Y_M\right)\psi^\star_1\left(t_1\right)
\ldots\psi^\star_l\left(t_l\right)\\
&&+\sum_{h=1}^{l} \XC_h\tau(x)\phi_1\left(y_1+e_1+\epsilon Y_1\right)\ldots\phi_M\left(y_M+e_M+\epsilon Y_M\right)\psi^\star_1\left(t_1\right)
\ldots\psi^\star_{h-1}\left(t_{h-1}\right)\\
&&\qquad\times D^{\alpha_h}_{t_h,-\infty}\psi^\star_h\left(t_h\right)
\psi^\star_{h+1}(t_{h+1})\ldots\psi^\star_l\left(t_l\right)\\
&=&\sum_{j=1}^{M} \XB_j \lambda_j
\tau(x)\phi_1\left(y_1+e_1+\epsilon Y_1\right)
\ldots\phi_M\left(y_M+e_M+\epsilon Y_M\right)\psi^\star_1(t_1)\ldots\psi^\star_l(t_l)
\\ 
&&+\sum_{h=1}^{l} \XC_h\overline{\XC}_h\XT_{\star,h}
\tau(x)\phi_1\left(y_1+e_1+\epsilon Y_1\right)
\ldots\phi_M\left(y_M+e_M+\epsilon Y_M\right)\psi^\star_1(t_1)\ldots\psi^\star_l(t_l) \\
&=&\left( \sum_{j=1}^M\XB_j\lambda_j+\sum_{h=1}^{l} \XC_h\overline{\XC}_h\XT_{\star,h}\right) w(x,y,t),
\end{eqnarray*}
thanks to \eqref{REGSWYS-A} and~\eqref{HFloj2}.

Consequently, making use of \eqref{7yHSSIKnNSJS}, \eqref{alp3} and~\eqref{ahflaanhuf3},
if~$(x,y)$ is near the origin and~$t\in(a_1,+\infty)\times\dots\times(a_l,+\infty)$,
we have that
$$
\Lambda_{-\infty} w\left(x,y,t\right)=
\left( \sum_{j=1}^M\XB_j\lambda_{\star,j}+\XB_M\lambda_M+\sum_{h=1}^{l} |\XC_h|\XT_{\star,h}\right) w(x,y,t)=0
.$$
This says that~$w\in\mathcal{H}$. Thus, in light of~\eqref{aza} we have that
\begin{equation*}
0=\theta\cdot\partial^K w\left(0\right)=
\sum_{\left|\iota\right|\leq K}
{\theta_{\iota}\partial^\iota w\left(0\right)}=\sum_{|i|+|I|+|\mathfrak{I}|\le K}
\theta_{i,I,\mathfrak{I}}\,\partial_{x}^i\partial_{y}^I\partial_{t}^{\mathfrak{I}}w\left(0\right) 
.\end{equation*}
{F}rom this and~\eqref{svs3}, we obtain that
\begin{equation}\label{eq:orthsi3}
0=\sum_{|i|+|I|+|\mathfrak{I}|\le K}
\theta_{i,I,\mathfrak{I}}\,
\partial^{i}_x\tau(0)
\partial^{I_1}_{y_{1}} {\phi}_{1}(e_1+\epsilon Y_1)\ldots
\partial^{I_M}_{y_{M}} {\phi}_{M}(e_M+\epsilon Y_M)
\,\partial_{t_1}^{\mathfrak{I}_1}\psi_1(0)\ldots\partial_{t_l}^{\mathfrak{I}_l}\psi_l(0).\end{equation}
Moreover, using~\eqref{autofun3} and \eqref{pata7UJ:AKK3},
it follows that, for every $\mathfrak{I}_h\in\mathbb{N}$
\begin{eqnarray*}
\partial^{\mathfrak{I}_h}_{t_h}\psi_h(0)&=&
\sum_{j=0}^{+\infty} {\frac{
\overline{\XC}_h^j\,\XT_{\star,h}^j\, \alpha_h j(\alpha_h j-1)\dots(\alpha_h j-
\mathfrak{I}_l+1)
(0-a_h)^{\alpha_h j-\mathfrak{I}_h}}{\Gamma\left(\alpha_h j+1\right)}}\\
&=& \sum_{j=0}^{+\infty} {\frac{
\overline{\XC}_h^j\,\XT_{\star,h}^j\, \alpha_h j(\alpha_h j-1)\dots(\alpha_h j-
\mathfrak{I}_h+1)
\,{\epsilon}^{\alpha_h j-\mathfrak{I}_h}}{\Gamma\left(\alpha_h j+1\right)\;
{\XT_h}^{\alpha_h j-\mathfrak{I}_h}}}\\
&=& \sum_{j=1}^{+\infty} {\frac{
\overline{\XC}_h^j\,\XT_{\star,h}^j\, \alpha_h j(\alpha_h j-1)\dots(\alpha_h j-
\mathfrak{I}_h+1)
\,{\epsilon}^{\alpha_h j-\mathfrak{I}_h}}{\Gamma\left(\alpha_h j+1\right)\;
{\XT_h}^{\alpha_h j-\mathfrak{I}_h}}}.
\end{eqnarray*}
Accordingly, recalling~\eqref{TGAdef}, we find that
\begin{equation}
\label{limmittl3}\begin{split}&
\lim_{\epsilon\searrow 0}\epsilon^{\mathfrak{I}_h-\alpha_h}
\partial^{\mathfrak{I}_h}_{t_h}\psi_h(0)
=\lim_{\epsilon\searrow 0}
\sum_{j=1}^{+\infty} {\frac{
\overline{\XC}_h^j\,\XT_{\star,h}^j\, \alpha_h j(\alpha_h j-1)\dots(\alpha_h j-
\mathfrak{I}_h+1)
\,{\epsilon}^{\alpha_h (j-1)}}{\Gamma\left(\alpha_h j+1\right)\;
{\XT_h}^{\alpha_h j-\mathfrak{I}_h}}}
\\&\qquad=
{\frac{ \overline{\XC}_h\,\XT_{\star,h}\,\alpha_h (\alpha_h -1)\dots(\alpha_h -\mathfrak{I}_h+1)
}{\Gamma\left(\alpha_h +1\right)\;
\XT_h^{\alpha_h -\mathfrak{I}_h}}} =
{\frac{ \overline{\XC}_h\,\XT_h^{\mathfrak{I}_h}\,\alpha_h (\alpha_h -1)\dots(\alpha_h -\mathfrak{I}_h+1)
}{\Gamma\left(\alpha_h +1\right)}}.
\end{split}\end{equation}
Also, recalling~\eqref{IBARRA}, we can write~\eqref{eq:orthsi3} as
\begin{equation}
\label{eq:ort:X3}
0=\sum_{{|i|+|I|+|\mathfrak{I}|\le K}\atop{|I|\le|\overline{I}|}}
\theta_{i,I,\mathfrak{I}}\,\partial^{i}_x\tau(0)
\partial^{I_1}_{y_{1}} {\phi}_{1}(e_1+\epsilon Y_1)\ldots
\partial^{I_M}_{y_{M}} {\phi}_{M}(e_M+\epsilon Y_M) 
\partial^{\mathfrak{I}_1}_{t_1}\psi_{1}(0)\ldots
\partial^{\mathfrak{I}_l}_{t_l}\psi_{l}(0).
\end{equation}
Moreover, we define
\begin{equation*}
\Xi:=\left|\overline{I}\right|-\sum_{j=1}^M {s_j}+|\mathfrak{I}|-\sum_{h=1}^l {\alpha_h}
.\end{equation*}
Then, we
multiply~\eqref{eq:ort:X3} by $\epsilon^{\Xi}\in(0,+\infty)$, and we
send~$\epsilon$ to zero. In this way, we obtain from~\eqref{alppoo},
\eqref{resczzz}, \eqref{limmittl3} and~\eqref{eq:ort:X3} that
\begin{eqnarray*}
0&=&\lim_{\epsilon\searrow0}
\epsilon^{\Xi}
\sum_{{|i|+|I|+|\mathfrak{I}|\le K}\atop{|I|\le|\overline{I}|}}
\theta_{i,I,\mathfrak{I}}\,\partial_{x}^i\tau(0)
\partial_{y_1}^{I_1}\phi_1\left(e_1+\epsilon Y_1\right)
\ldots\partial_{y_M}^{I_M}\phi_M\left(e_M+\epsilon Y_M\right)
\partial^{\mathfrak{I}_1}_{t_1}\psi_1(0)\ldots\partial^{\mathfrak{I}_l}_{t_l}\psi_l(0)
\\ &=&\lim_{\epsilon\searrow0}
\sum_{{|i|+|I|+|\mathfrak{I}|\le K}\atop{|I|\le|\overline{I}|}}
\epsilon^{|\overline{I}|-|I|}
\theta_{i,I,\mathfrak{I}}\,\partial_x^i\tau(0)
\epsilon^{|I_1|-s_1}\partial_{y_1}^{I_1}\phi_1\left(e_1+\epsilon Y_1\right)
\ldots\epsilon^{|I_M|-s_M}\partial_{y_M}^{I_M}\phi_M\left(e_M+\epsilon Y_M\right)
\\ &&\qquad\times\epsilon^{\mathfrak{I}_1-\alpha_1}\partial^{\mathfrak{I}_1}_{t_1}\psi_1(0)\ldots\epsilon^{\mathfrak{I}_l-\alpha_l}\partial^{\mathfrak{I}_l}_{t_l}\psi_l(0)
\\&=&
\sum_{{|i|+|I|+|\mathfrak{I}|\le K}\atop{|I| = |\overline{I}|}}
C_{i,I,\mathfrak{I}}\,\theta_{i,I,\mathfrak{I}}\,
\XX_1^{i_1}\ldots\XX_n^{i_n}\,
e_1^{I_1}\ldots e_M^{I_M}\,
\left(- e_1 \cdot Y_1 \right)_+^{s_1-|I_1|}
\ldots
\left(- e_M \cdot Y_M\right)_+^{s_M-|I_M|}\XT_1^{\mathfrak{I}_1}\ldots\XT_l^{\mathfrak{I}_l}
\\&=&
\left(- e_1 \cdot Y_1 \right)_+^{s_1}
\ldots
\left(- e_M \cdot Y_M\right)_+^{s_M}\\&&\qquad\qquad\times
\sum_{{|i|+|I|+|\mathfrak{I}|\le K}\atop{|I| = |\overline{I}|}}
C_{i,I,\mathfrak{I}}\,\theta_{i,I,\mathfrak{I}}\;
\XX^i\,e^{I}\,
\left(- e_1 \cdot Y_1 \right)_+^{-|I_1|}
\ldots
\left(- e_M \cdot Y_M\right)_+^{-|I_M|}
\XT^{\mathfrak{I}}
,\end{eqnarray*}
for a suitable~$C_{i,I,\mathfrak{I}}\ne0$.

We observe that the latter equality
is valid for any choice of the free parameters~$(\XX,Y,\XT)$
in an open subset of~$\mathbb{R}^{p_1+\ldots+p_n}\times\mathbb{R}^{m_1+\ldots+m_M}\times\mathbb{R}^l$,
as prescribed in~\eqref{eq:FREEy},~\eqref{FREEmustar3} and~\eqref{FREEXXXXX}. 

Now, we take new free parameters $\XY_j$ with $\XY_j\in\mathbb{R}^{m_j}\setminus\{0\}$ for any $j=1,\ldots,M$, and perform in the latter identity the same change of variables done in \eqref{COMPATI}, obtaining that
$$
0=\sum_{{|i|+|I|+|\mathfrak{I}|\le K}\atop{|I| = |\overline{I}|}}
C_{i,I,\mathfrak{I}}\,\theta_{i,I,\mathfrak{I}}\;
\XX^i\XY^I\XT^{\mathfrak{I}},
$$
for some $C_{i,I,\mathfrak{I}}\ne 0$.
This completes the proof of~\eqref{ipop} in case~\eqref{SC-va1} is satisfied.\medskip

Hence, we now focus on the case in which~\eqref{SC-va2} holds true.
For any $j\in\{1,\ldots,M\}$,
we consider the function~$\psi\in H^{s_j}(\mathbb{R}^{m_j})\cap C^{s_j}_0(\mathbb{R}^{m_j})$
constructed in Lemma \ref{hbump} and we call such function~$\phi_j$,
to make it explicit its dependence on~$j$ in this case.
We recall that
\begin{equation}
\label{harmony}
(-\Delta)^{s_j}_{y_j}\phi_j(y_j)=0\quad\text{ in }\,B_1^{m_j}.
\end{equation}
Also, for every $j\in\{1,\ldots,M\}$, we let $e_j$
and~$Y_j$ be as in \eqref{econ} and \eqref{eq:FREEy}.
Thanks to Lemma~\ref{hbump} and Remark~\ref{RUCAPSJD},
for any $I_j\in\mathbb{N}^{m_j}$, we know that
\begin{equation}
\label{psveind}
\lim_{\epsilon\searrow 0}\epsilon^{|I_j|-s_j}\partial_{y_j}^{I_j}\phi_j(e_j+\epsilon Y_j)=\kappa_{s_j}e_j^{I_j}(-e_j\cdot Y_j)_+^{s_j-|I_j|},
\end{equation}
for some $\kappa_{s_j}\ne 0$.

Moreover, for any $h=1,\ldots, l$, we define $\overline{\tau}_h(t_h)$ as
\begin{equation}
\label{tita}
\overline{\tau}_h(t_h):=\begin{cases}
e^{\XT_h t_h}\quad&\text{if}\quad t_h\in[-1,+\infty), \\
\displaystyle e^{-\XT_h}\sum_{i=0}^{k_h-1}\frac{\XT_h^i}{i!}(t_h+1)^i\quad&\text{if}\quad t_h\in(-\infty,-1),\end{cases}
\end{equation}
where
$\XT=(\XT_1,\ldots,\XT_l)\in(1,2)^l$ are free parameters.

We notice that, for any $h\in\{1,\ldots, l\}$ and $\mathfrak{I}_h\in\mathbb{N}$,
\begin{equation}
\label{rescttt}
\partial^{\mathfrak{I}_h}_{t_h}\overline{\tau}_h(0)=\XT_h^{\mathfrak{I}_h}.
\end{equation}
%%	
%%	Moreover, 
%%	% setting $f_h(t_h):=e^{\XT_h t_h}$, 
%%	%	a simple computation shows that
%%	%	%\begin{equation}
%%	%	%\label{tito}
%%	%	%D^{\alpha_h}_{t_h,-1}f_h(t_h)\leq \frac{2^{k_h}}{\Gamma(k_h-\alpha_h+1)}\left(t_h^{k_h-\alpha_h}(e^{2t_h}-1)+(t_h+1)^{k_h-\alpha_h}\right),
%%	%	%\end{equation}
%%	%	%$|D^{\alpha_h}_{t_h,-1}f_h(t_h)|$ is finite for every $t_h>-1$.
%%	%	%Hence, from \eqref{tita}, it follows that 
%%	%	$f_h\in C^{k_h,\alpha_h}_{-1}$,
%%	using Lemma A.3 in \cite{CDV18} with $u(t_h):=e^{\XT_h t_h}$, $a:=-\infty$, $b:=-1$ and $u_\star:=\overline{\tau}_h$, we have that 
%%	\begin{equation}
%%	\label{railey}
%%	\overline{\tau}_h\in C^{k_h,\alpha_h}_{-\infty}\cap C^{\infty}((-1,+\infty)),\,\text{ and}\quad\partial^{k_h}_{t_h}\overline{\tau}_h(t_h)=0\quad\text{in}\quad(-\infty,-1).
%%	\end{equation}
%%	
Now, we
define
\begin{equation}
\label{quiquoqua}
w(x,y,t):=\tau(x)\phi_1(y_1+e_1+\epsilon Y_1)\ldots\phi_M(y_M+e_M+\epsilon Y_M)\overline{\tau}_1(t_1)\ldots\overline{\tau}_l(t_l),
\end{equation}
where the setting of~\eqref{autofun1}, \eqref{otau3} and \eqref{tita} has been exploited.
We have that~$w\in\mathcal{A}$. Moreover, we point out that, since $\tau$, $\phi_1,\ldots,\phi_M$ are 
compactly supported, we have that~$w\in C(\mathbb{R}^N)\cap C_0(\mathbb{R}^{N-l})$, and, using Proposition \ref{maxhlapspan}, for any $j\in\{1,\ldots,M\}$, it holds that~$\phi_j\in C^{\infty}(\mathcal{N}_j)$ for some neighborhood~$\mathcal{N}_j$
of the origin in $\mathbb{R}^{m_j}$.
Hence $w\in C^{\infty}(\mathcal{N})$. 

Furthermore,
using \eqref{harmony}, when~$y$
is in a neighborhood of the origin we have that
\begin{equation*}
\begin{split}
\Lambda_{-\infty} w(x,y,t)&=\tau(x)\left(\XB_1(-\Delta)^{s_1}_{y_1}\phi_1(y_1+e_1+\epsilon Y_1)\right)\ldots\phi_M(y_M+e_M+\epsilon Y_M)\overline{\tau}_1(t_1)\ldots\overline{\tau}_l(t_l) \\
&+\ldots+\tau(x)\phi_1(y_1)\ldots\left(\XB_M(-\Delta)^{s_M}_{Y_M}\phi_M(y_M+
e_M+\epsilon Y_M)\right)\overline{\tau}_1(t_1)\ldots\overline{\tau}_l(t_l)=0,
\end{split}
\end{equation*}
which gives that~$w\in\mathcal{H}$.

In addition, using~\eqref{IBARRA}, \eqref{resczzz} and \eqref{rescttt}, we have that
\begin{eqnarray*}&&
0=\theta\cdot\partial^K w(0)=\sum_{|\iota|\leq K}\theta_{i,I,\mathfrak{I}}
\partial_x^i \partial_y^I \partial_t^{\mathfrak{I}} w(0)
=\sum_{{|\iota|\leq K}\atop{|I| \le |\overline{I}|}}\theta_{i,I,\mathfrak{I}}
\partial_x^i \partial_y^I \partial_t^{\mathfrak{I}} w(0)\\
&&\qquad\qquad=\sum_{{|\iota|\leq K}\atop{|I| \le |\overline{I}|}}\theta_{i,I,\mathfrak{I}}
\,\XX^i\partial_{y_1}^{I_1} \phi_1(e_1+\epsilon Y_1)\ldots
\partial_{y_M}^{I_M}\phi_M(e_M+\epsilon Y_M)\,\XT^\mathfrak{I}.
\end{eqnarray*}
Hence, we set
$$ \Xi:=|\overline{I}|-\sum_{j=1}^M s_j,$$
we multiply the latter identity by~$\epsilon^\Xi$
and we exploit~\eqref{psveind}. In this way, we find that
\begin{eqnarray*}
0 &=& \lim_{\epsilon\searrow0}
\sum_{{|\iota|\leq K}\atop{|I| \le |\overline{I}|}}\epsilon^{|\overline I|-|I|}\theta_{i,I,\mathfrak{I}}
\,\XX^i\,\epsilon^{|I_1|-s_1}\partial_{y_1}^{I_1} \phi_1(e_1+\epsilon Y_1)\ldots\epsilon^{|I_M|-s_M}
\partial_{y_M}^{I_M}\phi_M(e_M+\epsilon Y_M)\,\XT^\mathfrak{I}\\&=&
\sum_{{|\iota|\leq K}\atop{|I| = |\overline{I}|}}\theta_{i,I,\mathfrak{I}}\,\kappa_{s_j}\,
\,\XX^i\,e^{I}\,(-e_1\cdot Y_1)_+^{s_1-|I_1|}\ldots(-e_M\cdot Y_M)_+^{s_M-|I_M|}
\,\XT^\mathfrak{I}\\
&=&
(-e_1\cdot Y_1)_+^{s_1}\ldots(-e_M\cdot Y_M)_+^{s_M}
\sum_{{|\iota|\leq K}\atop{|I| = |\overline{I}|}}\theta_{i,I,\mathfrak{I}}\,\kappa_{s_j}\,
\,\XX^i\,e^{I}\,(-e_1\cdot Y_1)_+^{-|I_1|}\ldots(-e_M\cdot Y_M)_+^{-|I_M|}
\,\XT^\mathfrak{I},
\end{eqnarray*}
and consequently
\begin{equation}\label{7UJHAnAXbansdo}
0=\sum_{{|\iota|\leq K}\atop{|I| = |\overline{I}|}}\theta_{i,I,\mathfrak{I}}\,\kappa_{s_j}\,
\,\XX^i\,e^{I}\,(-e_1\cdot Y_1)_+^{-|I_1|}\ldots(-e_M\cdot Y_M)_+^{-|I_M|}
\,\XT^\mathfrak{I}.
\end{equation}
Now we take
free parameters $\XY\in\mathbb{R}^{m_1+\ldots+m_M}\setminus\{0\}$
and we perform the same change of variables in \eqref{COMPATI}.
In this way, we deduce from~\eqref{7UJHAnAXbansdo} that
\begin{eqnarray*}
0=\sum_{{|i|+|I|+|\mathfrak{I}|\le K}\atop{|I| = |\overline{I}|}}C_{i,I,\mathfrak{I}}\theta_{i,I,\mathfrak{I}}\XX^i\XY^I\XT^\mathfrak{I},
\end{eqnarray*}
for some $C_{i,I,\mathfrak{I}}\ne 0$, and the first claim in \eqref{ipop} is proved
in this case as well.
\end{proof}

\begin{proof}[Proof of \eqref{ipop}, case \ref{itm:case4}]
Notice that if there exists $j\in\{1,\ldots,M\}$ such that $\XB_j\ne 0$, we are in the setting of case \ref{itm:case3}.
Therefore, we assume that $\XB_j=0$ for every $j\in\{1,\ldots,M\}$.

We let $\psi$  
be the function constructed in Lemma~\ref{LF}.
For each $h\in\{1,\dots,l\}$,
we let~$\overline\psi_h(t_h):=\psi(t_h)$,
to make the dependence on $h$ clear and explicit.
Then, by formulas~\eqref{LAp1} and~\eqref{LAp2},
we know that
\begin{equation}
\label{cidivu}
D^{\alpha_h}_{t_h,0}\overline{\psi}_h(t_h)=0\quad\text{ in }\,(1,+\infty)
\end{equation} 
and, for every~$\ell\in\mathbb{N}$,
\begin{equation}\label{FOR2.50}
\lim_{\epsilon\searrow0} \epsilon^{\ell-\alpha_h}\partial^\ell_{t_h}
\overline{\psi}_h(1+\epsilon t_h)=\kappa_{h,\ell}\; t_h^{\alpha_h-\ell}
,\end{equation}
in the sense of distribution,
for some~$\kappa_{h,\ell}\ne0$.

Now,
we introduce a set of auxiliary parameters $\XT=(\XT_1,\ldots,\XT_l)
\in(1,2)^l$, and fix $\epsilon$ sufficiently small
possibly depending on the parameters. Then, we define
\begin{equation}
\label{keanzzz}
a=(a_1,\dots,a_l):=\left(-\frac{\epsilon}{\XT_1}-1,\ldots,-\frac{\epsilon}{\XT_l}-1\right)
\in(-2,0)^l,\end{equation}
and
\begin{equation}
\label{translation}
\psi_h(t_h):=\overline{\psi}_h(t_h-a_h).
\end{equation}
With a simple computation we have that the function
in~\eqref{translation} satisfies
\begin{equation}\label{cgraz}
D^{\alpha_h}_{t_h,a_h}\psi_h(t_h)=D^{\alpha_h}_{t_h,0}
\overline{\psi}_h(t_h-a_h)=0 \quad \text{in }\,(1+a_h,+\infty)=\left(-\frac{\epsilon}{\XT_h},+\infty\right),
\end{equation}
thanks to~\eqref{cidivu}.
In addition, for every~$\ell\in\mathbb{N}$, we have that~$\partial^\ell_{t_h}\psi_h(t_h)=
\partial^\ell_{t_h}\overline{\psi}_h(t_h-a_h)$, and therefore,
in light of~\eqref{FOR2.50} and~\eqref{keanzzz},
\begin{equation}\label{FOR2.5}
\epsilon^{\ell-\alpha_h}\partial^\ell_{t_h}\psi_h(0)=\epsilon^{\ell-\alpha_h}
\partial^\ell_{t_h}\overline{\psi}_h(-a_h)=
\epsilon^{\ell-\alpha_h}\partial^\ell_{t_h}\overline{\psi}_h\left(
1+\frac{\epsilon}{\XT_h}
\right)\to\kappa_{h,\ell}\;\XT_h^{\ell-\alpha_h},
\end{equation}
in the sense of distributions, as~$\epsilon\searrow0$.

Moreover, since for any $h=1,\ldots, l$, $\psi_h\in C^{k_h,\alpha_h}_{a_h}$, we can consider the extension
\begin{equation}\label{estar}
\psi^\star_h(t_h):=\begin{cases}
\psi_h(t_h)&\quad\text{ if }\,t_h\in[a_h,+\infty), \\
\displaystyle\sum_{i=0}^{k_h-1}\frac{\psi_h^{(i)}(a_h)}{i!}(t_h-a_h)^i&\quad\text{ if }\,t_h\in(-\infty,a_h),
\end{cases}
\end{equation}
and, using Lemma A.3 in \cite{CDV18} with $u:=\psi_h$, $a:=-\infty$, $b:=a_h$ and $u_\star:=\psi^\star_h$, we have that 
\begin{equation}
\label{tuck}
\psi^\star_h\in C^{k_h,\alpha_h}_{-\infty}\quad\text{and}\quad D^{\alpha_h}_{t_h,-\infty}\psi_h^\star=D^{\alpha_h}_{t_h,a_h}\psi_h=0\quad\text{in every interval }\,I\Subset\left(-\frac{\epsilon}{\XT_h},+\infty\right).
\end{equation}
Now, we fix a set of free parameters $\XY=\left(\XY_1,\ldots,\XY_M\right)\in\mathbb{R}^{m_1+\ldots+m_M}$, and consider~$\overline\tau\in C^\infty
(\mathbb{R}^{m_1+\ldots+m_M})$, such that
\begin{equation}
\label{otau5}
\overline{\tau}(y):=\begin{cases}
\exp\left({\XY\cdot y}\right)\quad&\text{if }\quad y\in B_1^{m_1+\ldots+m_M} ,\\
0\quad&\text{if }\quad y\in \mathbb{R}^{m_1+\ldots+m_M}\setminus B_2^{m_1+\ldots+m_M},\end{cases}
\end{equation}
where
$$
\XY\cdot y=\sum_{j=1}^M\XY_j\cdot y_j,
$$
denotes the standard scalar product.

We notice that, for any multi-index $I\in\mathbb{N}^{m_1+\ldots m_M}$,
\begin{equation}
\label{rescyyy}
\partial^{I}_{y}\overline{\tau}(0)=\XY^I,
\end{equation}
where the multi-index notation has been used.

Now, we define
\begin{equation}
\label{sinaloa}
w(x,y,t):=\tau(x)\overline{\tau}(y)\psi^\star_1(t_1)\ldots\psi^\star_l(t_l),
\end{equation}
where the setting in \eqref{otau3},
\eqref{estar} and \eqref{otau5} has been exploited.

Using~\eqref{tuck}, we have that, for any $(x,y)$
in a neighborhood of the origin and~$t\in\left(-\frac{\epsilon}{2},+\infty\right)^l$,
\begin{equation*}
\begin{split}
\Lambda_{-\infty} w(x,y,t)&=\tau(x)\overline{\tau}(y)\left(\XC_1 D^{\alpha_1}_{t_1,-\infty}\psi^\star_1(t_1)\right)
\ldots\psi^\star_l(t_l) \\
&+\ldots+\tau(x)\overline{\tau}(y)\psi^\star_1(t_1)\ldots\left(\XC_l D^{\alpha_l}_{t_l,-\infty}\psi^\star_l(t_l)\right)=0.
\end{split}
\end{equation*}
We have that~$w\in\mathcal{A}$, and, since $\tau$ and $\overline{\tau}$ are compactly supported, we
also have that $w\in C(\mathbb{R}^N)\cap C_0(\mathbb{R}^{N-l})$.
Also, from Lemma~\ref{LF}, for any $h\in\{1,\ldots,l\}$, we know that~$\overline{\psi}_h\in C^{\infty}((1,+\infty))$, hence $\psi_h \in C^{\infty}\left(\left(
-\frac{\epsilon}{\XT_h},+\infty\right)\right)$.
Thus, $w\in C^{\infty}(\mathcal{N})$, and consequently~$w\in\mathcal{H}$.

Recalling~\eqref{IBARRA2}, \eqref{resczzz}, and \eqref{rescyyy}, we have that
\begin{equation}\label{LATTER}\begin{split}&
0=\theta\cdot\partial^K w(0)=\sum_{|\iota|\leq K}\theta_{i,I,\mathfrak{I}}
\partial_x^i \partial_y^I \partial_t^{\mathfrak{I}} w(0)
=\sum_{{|\iota|\leq K}\atop{|\mathfrak{I}| \le |\overline{\mathfrak{I}}|}}\theta_{i,I,\mathfrak{I}}
\partial_x^i \partial_y^I \partial_t^{\mathfrak{I}} w(0)\\
&\qquad\qquad=\sum_{{|\iota|\leq K}\atop{|\mathfrak{I}| \le |\overline{\mathfrak{I}}|}}\theta_{i,I,\mathfrak{I}}
\,\XX^i\XY^I\partial_{t_1}^{\mathfrak{I}_1}\psi_1(0)\ldots\partial_{t_l}^{\mathfrak{I}_l}\psi_l(0).
\end{split}\end{equation}
Hence, we set
$$ \Xi:=|\overline{\mathfrak{I}}|-\sum_{h=1}^l \alpha_h
%%%%=\left(|\overline{\mathfrak{I}}|-|\mathfrak{I}|\right)+\left(|\mathfrak{I}|-\sum_{h=1}^l \alpha_h\right)
,$$
we multiply the identity in~\eqref{LATTER} by~$\epsilon^\Xi$
and we exploit~\eqref{FOR2.5}. In this way, we find that
\begin{eqnarray*}
0 &=& \lim_{\epsilon\searrow0}
\sum_{{|\iota|\leq K}\atop{|\mathfrak{I}| \le |\overline{\mathfrak{I}}|}}\epsilon^{|\overline{\mathfrak{I}}|-|\mathfrak{I}|}\theta_{i,I,\mathfrak{I}}
\,\XX^i\,\XY^I\,\epsilon^{\mathfrak{I}_1-\alpha_1}\partial_{t_1}^{\mathfrak{I}_1} {\psi}_1(0)\ldots\epsilon^{\mathfrak{I}_l-\alpha_l}\partial_{t_l}^{\mathfrak{I}_l} {\psi}_l(0)\\&=&
\sum_{{|\iota|\leq K}\atop{|\mathfrak{I}| = |\overline{\mathfrak{I}}|}}\theta_{i,I,\mathfrak{I}}\;
\kappa_{1,\mathfrak{I}_1}\dots
\kappa_{l,\mathfrak{I}_l}\;
\XX^i\,\XY^I\,\XT_1^{\mathfrak{I}_1-\alpha_1}\ldots\XT_l^{\mathfrak{I}_l-\alpha_l}\\
&=&
\XT_1^{-\alpha_1}\ldots\XT_l^{-\alpha_l}
\sum_{{|\iota|\leq K}\atop{|\mathfrak{I}| = |\overline{\mathfrak{I}}|}}\theta_{i,I,\mathfrak{I}}
\;\kappa_{1,\mathfrak{I}_1}\dots
\kappa_{l,\mathfrak{I}_l}\;
\XX^i\,\XY^I\,\XT_1^{\mathfrak{I}_1}\ldots\XT_l^{\mathfrak{I}_l},
\end{eqnarray*}
and consequently
\begin{equation*}
0=\sum_{{|\iota|\leq K}\atop{|\mathfrak{I}| = |\overline{\mathfrak{I}}|}}
\theta_{i,I,\mathfrak{I}}\;
\kappa_{1,\mathfrak{I}_1}\dots
\kappa_{l,\mathfrak{I}_l}\;
\,\XX^i\,\XY^I
\,\XT^\mathfrak{I},
\end{equation*}
and the second claim in \eqref{ipop} is proved
in this case as well.
\end{proof}

\section{Every function is locally $\Lambda_{-\infty}$-harmonic up to a small error,
and completion of the proof of Theorem \ref{theone2}}
\label{s:fourth}

In this section we complete the proof of Theorem \ref{theone2}
(which in turn implies Theorem \ref{theone} via Lemma~\ref{GRAT}).
By standard approximation arguments we can reduce to
the case in which $f$ is a polynomial, and hence, by the linearity
of the operator~$\Lambda_{-\infty}$, to the case in which is a monomial.
The details of the proof are therefore the following:

%	we recall a version of the Stone-Weierstrass Theorem in the $C^\ell$ setting
%	(in our context, this will be useful to reduce approximation
%	problems to the case of polynomials, and hence, by linearity, of monomials).
%	For a short proof of this result, see \cite{MR3626547}.
%	
%	\begin{lemma}
%	\label{stowei}
%	Fix $\ell\in\mathbb{N}$. For any $f\in C^\ell(\overline{B}_1)$
%	and any $\epsilon>0$ there exists a polynomial $P$
%	such that $\|f-P\|_{C^\ell(\overline{B}_1)}\leq\epsilon$.
%	\end{lemma}

\subsection{Proof of Theorem \ref{theone2} when $f$ is a monomial}\label{7UHASGBSBSBBSB}

We prove Theorem~\ref{theone2} under the initial assumption
that $f$ is a monomial, that is
\begin{equation}\label{iorade}
f\left(x,y,t\right)=\frac{x_1^{i_1}\ldots x_n^{i_n}y_1^{I_1}\ldots y_M^{I_M}t_1^{\mathfrak{I}_1}\ldots t_l^{\mathfrak{I}_l}}{\iota!}=\frac{x^iy^It^{\mathfrak{I}}}{\iota!}=
\frac{(x ,y,t)^{\iota}}{\iota!},
\end{equation}
where $\iota!:=i_1!\ldots i_n!I_1!\ldots I_M!\mathfrak{I}_1!\ldots\mathfrak{I}_l!$ and $I_\beta!:=I_{\beta,1}!\ldots I_{\beta,m_\beta}!$, $i_\chi!:=i_{\chi,1}!\ldots i_{\chi,p_\chi}!$ for all $\beta=1,\ldots M$. and $\chi=1,\ldots,n$. 
To this end, we argue as follows. 
We 
consider $\eta\in\left(0,1\right)$, to be taken
sufficiently small with respect to the
parameter~$\epsilon>0$ which has been fixed
in the statement of Theorem~\ref{theone2}, and we define
%\begin{equation}\label{le-a}
%a_1:=-2-\frac3{\eta^{\frac1{\alpha_1}}},\dots,
%a_l:=-2-\frac3{\eta^{\frac1{\alpha_l}}}\qquad a:=(a_1,\dots,a_l)
%\end{equation}
%and
$$ {\mathcal{T}}_\eta(x,y,t):=\left(
\eta^{\frac{1}{r_1}}x_1,\ldots,\eta^{\frac{1}{r_n}}x_n,\eta^{\frac{1}{2s_1}}y_1,\ldots,\eta^{\frac{1}{2s_M}}y_M,\eta^{\frac{1}{\alpha_1}}t_1,\ldots,\eta^{\frac{1}{\alpha_l}}t_l\right).$$
We also define
\begin{equation}\label{iorade2}
\gamma:=\sum_{j=1}^n {\frac{|i_j|}{r_j}}+\sum_{j=1}^M {\frac{\left|I_j\right|}{2s_j}}+\sum_{j=1}^l {\frac{\mathfrak{I}_j}{\alpha_j}}
,\end{equation}
and
\begin{equation}
\label{am}
\delta:=\min\left\{\frac{1}{r_1},\ldots,\frac{1}{r_n},\frac{1}{2s_1},\ldots,\frac{1}{2s_M},\frac{1}{\alpha_1},\ldots,\frac{1}{\alpha_l}\right\}.
\end{equation}
We also take $K_0\in\mathbb{N}$ such that
\begin{equation}
\label{blim}
K_0\geq\frac{\gamma+1}{\delta}
\end{equation}
and we let
\begin{equation}
\label{blo}
K:=K_0+\left|i\right|+\left|I\right|+\left|\mathfrak{I}\right|+\ell=
K_0+\left|\iota\right|+\ell,
\end{equation}
where $\ell$ is the fixed integer given in the statement of Theorem~\ref{theone2}. 

By Lemma \ref{lemcin}, there exist a neighborhood~$\mathcal{N}$
of the origin and a function~$w\in C\left(\mathbb{R}^N\right)\cap C_0\left(\mathbb{R}^{N-l}\right)\cap C^\infty\left(\mathcal{N}\right)\cap\mathcal{A}$ such that
\begin{equation}\label{7UJHAanna}
{\mbox{$\Lambda_{-\infty} w=0$ in $\mathcal{N}$, }}\end{equation}
and such that 
\begin{equation}\label{9IKHAHSBBNSBAA}
\begin{split}&
{\mbox{all the derivatives of $w$ in 0 up to order $K$ vanish,}}\\
&{\mbox{with the exception of $\partial^\iota w \left(0\right)$ which equals~$1$,}}\end{split}\end{equation}
being~$\iota$ as in~\eqref{iorade}. 
Recalling the definition of~$\mathcal{A}$ on page~\pageref{CALSASS},
we also know that
\begin{equation}\label{SPE12129dd}
{\mbox{$\partial^{k_h}_{t_h}w=0$
in~$(-\infty,a_h)$, }}\end{equation}
for suitable~$a_h\in(-2,0)$,
for all~$h\in\{1,\dots,l\}$.

In this way, setting
\begin{equation}
\label{quaqui}
g:=w-f,
\end{equation}
we deduce from~\eqref{9IKHAHSBBNSBAA} that
$$
\partial^\sigma g\left(0\right)=0\quad \text{ for any }
\sigma\in\mathbb{N}^N \text{ with }\left|\sigma\right|\leq K.
$$
Accordingly, in $\mathcal{N}$ we can write
\begin{equation}
\label{quaqua}
g\left(x,y,t\right)=\sum_{\left|\tau\right|\geq K+1} {x^{\tau_1}y^{\tau_2}t^{\tau_3}h_\tau\left(x,y,t\right)},
\end{equation}
for some $h_\tau$ smooth in $\mathcal{N}$, where the multi-index
notation $\tau=(\tau_1,\tau_2,\tau_3)$ has been used. 

Now, we define
\begin{equation}\label{JAncasxciNasd}
u\left(x,y,t\right):=\frac{1}{\eta^\gamma}
w\left({\mathcal{T}}_\eta(x,y,t)\right).
\end{equation}
In light of~\eqref{SPE12129dd}, we notice that
$\partial^{k_h}_{t_h}u=0$
in~$(-\infty,a_h/\eta^{\frac1{\alpha_h}})$, for all~$h\in\{1,\dots,l\}$,
and therefore~$u\in C\left(\mathbb{R}^N\right)\cap C_0(\mathbb{R}^{N-l})\cap C^\infty
\left({\mathcal{T}}_\eta(\mathcal{N})\right)\cap\mathcal{A}$. We also claim that
\begin{equation}\label{DEN}
{\mathcal{T}}_\eta([-1,1]^{N-l}\times(a_1,+\infty)\times\ldots\times(a_l,+\infty))\subseteq {\mathcal{N}}.
\end{equation}
To check this, let~$(x,y,t)\in[-1,1]^{N-l}\times(a_1+\infty)\times\ldots\times(a_l,+\infty)$
and~$(X,Y,T):={\mathcal{T}}_\eta(x,y,t)$.
Then, we have that~$|X_1|=\eta^{\frac{1}{r_1}}|x_1|\le \eta^{\frac{1}{r_1}}$,
~$|Y_1|=\eta^{\frac{1}{2s_1}}|y_1|\le \eta^{\frac{1}{2s_1}}$,
~$T_1=\eta^{\frac{1}{\alpha_1}}t_1> a_1\eta^{\frac{1}{\alpha_1}}>-1$,
provided~$\eta$ is small enough.
Repeating this argument, we obtain that, for small~$\eta$,
\begin{equation}\label{CLOS}
{\mbox{$(X,Y,T)$ is as
close to the origin as we wish.}}\end{equation} 
%On the other hand, recalling~\eqref{le-a},
%we see that
%$$ t_1-a_1=t_1+2+\frac3{\eta^{\frac1{\alpha_1}}}\in\left[ 1+\frac3{\eta^{\frac1{\alpha_1}}},3+\frac3{\eta^{\frac1{\alpha_1}}}\right].$$
%As a consequence,
%$$ \eta^{\frac{1}{\alpha_1}}(t_1-a_1)\in\left[{\eta^{\frac1{\alpha_1}}}+3,\,
%3{\eta^{\frac1{\alpha_1}}}+3
%\right].
%$$
%This and~\eqref{9iwkfhvksc283AN} give that
%$$ T_1=\eta^{\frac{1}{\alpha_1}}(t_1-a_1)+a^\star_1\in
%\left[{\eta^{\frac1{\alpha_1}}}+1,\,
%3{\eta^{\frac1{\alpha_1}}}+3
%\right]\subset[0,10],
%$$
%provided that~$\eta$ is small enough. Repeating this argument
%for~$T_2,\dots,T_l$, we find that
%$$ T\in [0,10]^l.$$
%Accordingly, recalling the notation in~\eqref{CALAK},
%we have that~$(O,T)\in{\mathcal{K}}\subseteq{\mathcal{N}}^\star$.
{F}rom \eqref{CLOS} and the fact that~${\mathcal{N}}$
is an open set, we infer that~$(X,Y,T)\in{\mathcal{N}}$,
and this proves~\eqref{DEN}.

Thanks to~\eqref{7UJHAanna} and~\eqref{DEN}, we have that, in~$B_1^{N-l}\times(-1,+\infty)^l$,
\begin{align*}
&\eta^{\gamma-1}\,\Lambda_{-\infty} u\left(x,y,t\right) \\
=\;&\sum_{j=1}^n {\XA_j\partial_{x_j}^{r_j}w
\left({\mathcal{T}}_\eta(x,y,t)\right)}
+\sum_{j=1}^M {\XB_j(-\Delta)^{s_j}_{y_j} w
\left({\mathcal{T}}_\eta(x,y,t)\right)}
+\sum_{j=1}^l {\XC_jD_{t_h,-\infty}^{\alpha_h}w
\left({\mathcal{T}}_\eta(x,y,t)\right) }\\=\;&\Lambda_{-\infty}w
\left({\mathcal{T}}_\eta(x,y,t)\right)\\
=\;&0.
\end{align*}
These observations establish that $u$ solves the equation in $B_1^{N-l}\times(-1+\infty)^l$ and $u$ vanishes when $|(x,y)|\ge R$,
for some~$R>1$, and thus the claims in~\eqref{MAIN EQ:2}
and~\eqref{ESTENSIONE}
are proved.

Now we prove that $u$ approximates $f$, as claimed in~\eqref{IAzofm:2}. For this, using the monomial structure of $f$ in~\eqref{iorade}
and the definition of $\gamma$ in~\eqref{iorade2}, we have,
in a multi-index notation,
\begin{equation}
\label{monsca}
\begin{split}
&\frac{1}{\eta^\gamma}f\left({\mathcal{T}}_\eta(x,y,t)\right)=\frac{1}{\eta^\gamma\,\iota!} \;
(\eta^{\frac{1}{r}}x)^i (\eta^{\frac{1}{2s}}y)^I \big(\eta^{\frac{1}{\alpha}}t\big)^\mathfrak{I}
=\frac{1}{\iota!} x^i y^I t^\mathfrak{I}=f(x,y,t).
\end{split}
\end{equation}
%$$
%\frac{1}{\eta^\gamma}f\left(\eta^{\frac{1}{r_1}}x_1,\ldots,\eta^{\frac{1}{r_n}}x_n,\eta^{\frac{1}{2s_1}}y_1,\ldots,\eta^{\frac{1}{2s_M}}y_M,\eta^{\frac{1}{\alpha_1}}t_1,\ldots,\eta^{\frac{1}{\alpha_l}}t_l\right)=f\left(x,y,t\right).
%$$
Consequently, by~\eqref{quaqui}, \eqref{quaqua}, \eqref{JAncasxciNasd} and~\eqref{monsca},
\begin{align*}
u\left(x,y,t\right)-f\left(x,y,t\right) 
&=\frac{1}{\eta^\gamma}g\left(\eta^{\frac{1}{r_1}}x_1,\ldots,\eta^{\frac{1}{r_n}}x_n,\eta^{\frac{1}{2s_1}}y_1,\ldots,\eta^{\frac{1}{2s_M}}y_M,\eta^{\frac{1}{\alpha_1}}t_1,\ldots,\eta^{\frac{1}{\alpha_l}}t_l\right) \\
&=\sum_{\left|\tau\right|\geq K+1} {\eta^{\left|\frac{\tau_1}{r}\right|+\left|\frac{\tau_2}{2s}\right|+\left|\frac{\tau_3}{\alpha}\right|-\gamma}x^{\tau_1}y^{\tau_2}t^{\tau_3}h_\tau\left(\eta^{\frac{1}{r}}x,\eta^{\frac{1}{2s}}y,\eta^{\frac{1}{\alpha}}t\right)}
,\end{align*}
where a multi-index notation has been used, e.g. we have written
$$ \frac{\tau_1}{r}:=\left( \frac{\tau_{1,1}}{r_1},\dots,\frac{\tau_{1,n}}{r_n}\right)
\in\mathbb{R}^n.$$
Therefore, for any multi-index $\beta=\left(\beta_1,\beta_2,\beta_3\right)$ with $\left|\beta\right|\leq \ell$,
\begin{equation}
\label{eq:quaquo}\begin{split}
&\partial^\beta\left(u\left(x,y,t\right)-f\left(x,y,t\right)\right)
\\=\,&\partial^{\beta_1}_{x}\partial^{\beta_2}_{y}\partial^{\beta_3}_{t}\left(u\left(x,y,t\right)-f\left(x,y,t\right)\right)
\\=\,&\sum_{\substack{\left|\beta'_1\right|+\left|\beta''_1\right|=\left|\beta_1\right| \\ \left|\beta'_2\right|+\left|\beta''_2\right|=\left|\beta_2\right| \\ \left|\beta'_3\right|+\left|\beta''_3\right|=\left|\beta_3\right| \\ \left|\tau\right|\geq K+1}} {c_{\tau,\beta}\;\eta^{\kappa_{\tau,\beta}}\; x^{\tau_1-\beta'_1}y^{\tau_2-\beta'_2}t^{\tau_3-\beta'_3}\partial_{x}^{\beta''_1}\partial_{y}^{\beta''_2}\partial_{t}^{\beta''_3}h_\tau\left(\eta^{\frac{1}{r}}x,\eta^{\frac{1}{2s}}y,\eta^{\frac{1}{\alpha}}t\right)},
\end{split}\end{equation}
where
$$
\kappa_{\tau,\beta}:=\left|\frac{\tau_1}{r}\right|+\left|\frac{\tau_2}{2s}\right|+\left|\frac{\tau_3}{\alpha}\right|-\gamma+\left|\frac{\beta''_1}{r}\right|+\left|\frac{\beta''_2}{2s}\right|+\left|\frac{\beta''_3}{\alpha}\right|,
$$
for suitable coefficients $c_{\tau,\beta}$. Thus, to complete the proof of~\eqref{IAzofm:2}, we need to show that this quantity is small if so is $\eta$.
To this aim, we use~\eqref{am}, \eqref{blim}
and~\eqref{blo} to see that
\begin{eqnarray*}\kappa_{\tau,\beta}
%	&=&
%	\left|\frac{\tau_1}{r}\right|
%	+\left|\frac{\tau_2}{2s}\right|
%	+\left|\frac{\tau_3}{\alpha}\right|
%	-\gamma+\left|\frac{\beta''_1}{r}\right|
%	+\left|\frac{\beta''_2}{2s}
%	\right|+\left|\frac{\beta''_3}{\alpha}\right|
&\geq&\left|\frac{\tau_1}{r}\right|+\left|\frac{\tau_2}{2s}\right|
+\left|\frac{\tau_3}{\alpha}\right|-\gamma \\
& \geq&\delta\left(\left|\tau_1\right|+\left|\tau_2\right|
+\left|\tau_3\right|\right)-\gamma\\&\geq& K\delta
-\gamma\\&\geq& K_0\delta-\gamma\\&\geq& 1.
\end{eqnarray*}
Consequently, we deduce from \eqref{eq:quaquo} that $\left\|u-f\right\|_{C^\ell\left(B_1^N\right)}\leq C\eta$ for some $C>0$. By choosing $\eta$ sufficiently small with respect to $\epsilon$, this implies the claim in~\eqref{IAzofm:2}. This completes the proof of
Theorem \ref{theone2} when $f$ is a monomial.

\subsection{Proof of Theorem \ref{theone2} when $f$ is a polynomial}\label{AUJNSLsxcrsd}

Now, we consider the case in which~$f$ is a polynomial. In this case, we can write $f$ as
$$
f\left(x,y,t\right)=\sum_{j=1}^J c_jf_j\left(x,y,t\right),
$$
where each $f_j$ is a monomial, $J\in\mathbb{N}$ and $c_j\in\mathbb{R}$ for all $j=1,\ldots, J$.

Let $$c:=\max_{j\in\{1,\dots,J\}} c_j.$$ Then, 
by the work done in Subsection~\ref{7UHASGBSBSBBSB},
we know that the claim 
in Theorem \ref{theone2} holds true for each $f_j$, and so we can find $a_j\in(-2,0)^l$, $u_j\in C^\infty\left(B_1^N\right)\cap C\left(\mathbb{R}^N\right)\cap\mathcal{A}$
and $R_j>1$ such that $\Lambda_{-\infty} u_j=0$ in $B_1^{N-l}\times(-1,+\infty)^l$, $\left\|u_j-f_j\right\|_{C^\ell\left(B_1^N\right)}\leq\epsilon$ and $u_j=0$ if $|(x,y)|\ge R_j$. 

Hence, we set
$$
u\left(x,y,t\right):=\sum_{j=1}^J c_ju_j\left(x,y,t\right),
$$
and we see that
\begin{equation}
\label{pazxc}
\left\|u-f\right\|_{C^\ell\left(B_1^N\right)}\leq\sum_{j=1}^J {\left|c_j\right|\left\|u_j-f_j\right\|_{C^\ell\left(B_1^N\right)}}\leq cJ\epsilon.
\end{equation}
Also, $\Lambda_{-\infty} u=0$ thanks to the linearity of $\Lambda_{-\infty}$ in $B_1^{N-l}\times(-1,+\infty)^l$. Finally, $u$ is supported in $B_R^{N-l}$ in the variables $(x,y)$, being $$R:=\max_{j\in\{1,\dots,J\}} R_j.$$ This proves 
Theorem \ref{theone2} when $f$ is a polynomial (up to replacing $\epsilon$ with $cJ\epsilon$).

\subsection{Proof of Theorem \ref{theone2} for a general $f$}

Now we deal with the case of a general~$f$. To this end, we exploit
Lemma~2 in~\cite{MR3626547} and we see that
there exists a polynomial $\tilde{f}$ such that
\begin{equation}\label{6ungfbnreog}
\|f-\tilde{f}\|_{C^\ell(B_1^N)}\leq\epsilon.\end{equation}
Then, applying the result already proven in Subsection~\ref{AUJNSLsxcrsd}
to the polynomial $\tilde{f}$,
we can find $a\in(-\infty,0)^l$, $u\in C^\infty\left(B_1^N\right)
\cap C\left(\mathbb{R}^N\right)\cap\mathcal{A}$ and $R>1$ such that 
\begin{eqnarray*}
&& \Lambda_{-\infty} u=0 \quad{\mbox{ in }}B_1^{N-l}\times(-1,+\infty)^l, \\&&
u=0 \qquad\quad{\mbox{ if }}|(x,y)|\ge R,\\&&
\partial^{k_h}_{t_h} u=0 \quad\quad{\mbox{if }}t_h\in(-\infty,a_h),\qquad{\mbox{ for all }}h\in\{1,\dots,l\},
\\ {\mbox{and }}&&\|u-\tilde{f}\|_{C^\ell(B_1^N)}
\leq\epsilon.\end{eqnarray*}
Then, recalling~\eqref{6ungfbnreog}, we see that
$$ \|u-f\|_{C^\ell(B_1^N)}\leq\|u-\tilde{f}
\|_{C^\ell(B_1^N)}+\|f-\tilde{f}
\|_{C^\ell(B_1^N)}\leq2\epsilon.$$ Hence, the proof
of Theorem~\ref{theone2} is complete.
\qed

\begin{bibdiv}
\begin{biblist}

\bib{AJS2}{article}{
   author={Abatangelo, Nicola},
   author={Jarohs, Sven},
   author={Salda\~na, Alberto},
   title={Positive powers of the Laplacian: From hypersingular integrals to
   boundary value problems},
   journal={Commun. Pure Appl. Anal.},
   volume={17},
   date={2018},
   number={3},
   pages={899--922},
   issn={1534-0392},
   review={\MR{3809107}},
   doi={10.3934/cpaa.2018045},
}

\bib{AJS3}{article}{
   author={Abatangelo, Nicola},
   author={Jarohs, Sven},
   author={Salda\~na, Alberto},
   title={Green function and Martin kernel for higher-order fractional
   Laplacians in balls},
   journal={Nonlinear Anal.},
   volume={175},
   date={2018},
   pages={173--190},
   issn={0362-546X},
   review={\MR{3830727}},
   doi={10.1016/j.na.2018.05.019},
}

\bib{AJS1}{article}{
   author={Abatangelo, Nicola},
   author={Jarohs, Sven},
   author={Salda\~na, Alberto},
   title={On the loss of maximum principles for higher-order fractional Laplacians},
   journal={to appear on Proc. Amer. Math. Soc.},
   doi={10.1090/proc/14165},
}

\bib{ABX}{article}{
   author={Abatangelo, Nicola},
   author={Jarohs, Sven},
   author={Salda\~na, Alberto},
title={Integral representation of solutions to higher-order fractional
Dirichlet problems on balls},
journal={to appear on Commun. Contemp. Math.},
doi={10.1142/S0219199718500025},
}

\bib{AV}{article}{
   author={Abatangelo, Nicola},
   author={Valdinoci, Enrico},
   title={Getting acquainted with the fractional Laplacian},
   journal={Springer INdAM Series},
   date={2018},
}

\bib{MR3488533}{article}{
   author={Allen, Mark},
   author={Caffarelli, Luis},
   author={Vasseur, Alexis},
   title={A parabolic problem with a fractional time derivative},
   journal={Arch. Ration. Mech. Anal.},
   volume={221},
   date={2016},
   number={2},
   pages={603--630},
   issn={0003-9527},
   review={\MR{3488533}},
   doi={10.1007/s00205-016-0969-z},
}

\bib{MR3641649}{article}{
   author={Biccari, Umberto},
   author={Warma, Mahamadi},
   author={Zuazua, Enrique},
   title={Local elliptic regularity for the Dirichlet fractional Laplacian},
   journal={Adv. Nonlinear Stud.},
   volume={17},
   date={2017},
   number={2},
   pages={387--409},
   issn={1536-1365},
   review={\MR{3641649}},
   doi={10.1515/ans-2017-0014},
}
  
\bib{MR3461641}{article}{
   author={Bucur, Claudia},
   title={Some observations on the Green function for the ball in the
   fractional Laplace framework},
   journal={Commun. Pure Appl. Anal.},
   volume={15},
   date={2016},
   number={2},
   pages={657--699},
   issn={1534-0392},
   review={\MR{3461641}},
   doi={10.3934/cpaa.2016.15.657},
} 

\bib{MR3716924}{article}{
   author={Bucur, Claudia},
   title={Local density of Caputo-stationary functions in the space of
   smooth functions},
   journal={ESAIM Control Optim. Calc. Var.},
   volume={23},
   date={2017},
   number={4},
   pages={1361--1380},
   issn={1292-8119},
   review={\MR{3716924}},
   doi={10.1051/cocv/2016056},
}

\bib{claudia}{book}{
   author={Bucur, Claudia},
   author={Valdinoci, Enrico},
   title={Nonlocal diffusion and applications},
   series={Lecture Notes of the Unione Matematica Italiana},
   volume={20},
   publisher={Springer, [Cham]; Unione Matematica Italiana, Bologna},
   date={2016},
   pages={xii+155},
   isbn={978-3-319-28738-6},
   isbn={978-3-319-28739-3},
   review={\MR{3469920}},
   doi={10.1007/978-3-319-28739-3},
}

\bib{MR3579567}{article}{
   author={Caffarelli, Luis},
   author={Dipierro, Serena},
   author={Valdinoci, Enrico},
   title={A logistic equation with nonlocal interactions},
   journal={Kinet. Relat. Models},
   volume={10},
   date={2017},
   number={1},
   pages={141--170},
   issn={1937-5093},
   review={\MR{3579567}},
   doi={10.3934/krm.2017006},
}

\bib{MR2379269}{article}{
   author={Caputo, Michele},
   title={Linear models of dissipation whose $Q$ is almost frequency
   independent. II},
   note={Reprinted from Geophys. J. R. Astr. Soc. {\bf 13} (1967), no. 5,
   529--539},
   journal={Fract. Calc. Appl. Anal.},
   volume={11},
   date={2008},
   number={1},
   pages={4--14},
   issn={1311-0454},
   review={\MR{2379269}},
}

\bib{CDV18}{article}{
   author={Carbotti, Alessandro},
   author={Dipierro, Serena},
   author={Valdinoci, Enrico},
   title={Local density of Caputo-stationary functions of any order},
    journal = {ArXiv e-prints},
   eprint = {1809.04005},
     date = {2018},
   adsurl = {https://arxiv.org/abs/1809.04005},
}

\bib{MR2944369}{article}{
   author={Di Nezza, Eleonora},
   author={Palatucci, Giampiero},
   author={Valdinoci, Enrico},
   title={Hitchhiker's guide to the fractional Sobolev spaces},
   journal={Bull. Sci. Math.},
   volume={136},
   date={2012},
   number={5},
   pages={521--573},
   issn={0007-4497},
   review={\MR{2944369}},
   doi={10.1016/j.bulsci.2011.12.004},
}

\bib{MR3673669}{article}{
   author={Dipierro, Serena},
   author={Grunau, Hans-Christoph},
   title={Boggio's formula for fractional polyharmonic Dirichlet problems},
   journal={Ann. Mat. Pura Appl. (4)},
   volume={196},
   date={2017},
   number={4},
   pages={1327--1344},
   issn={0373-3114},
   review={\MR{3673669}},
   doi={10.1007/s10231-016-0618-z},
}

\bib{MR3626547}{article}{
   author={Dipierro, Serena},
   author={Savin, Ovidiu},
   author={Valdinoci, Enrico},
   title={All functions are locally $s$-harmonic up to a small error},
   journal={J. Eur. Math. Soc. (JEMS)},
   volume={19},
   date={2017},
   number={4},
   pages={957--966},
   issn={1435-9855},
   review={\MR{3626547}},
   doi={10.4171/JEMS/684},
}

\bib{DSV1}{article}{
   author={Dipierro, Serena},
   author={Savin, Ovidiu},
   author={Valdinoci, Enrico},
   title={Local approximation of arbitrary functions by solutions of nonlocal equations},
   journal={J. Geom. Anal.},
   date={2018},
   doi={10.1007/s12220-018-0045-z},
}

\bib{DV1}{article}{
   author={Dipierro, Serena},
   author={Valdinoci, Enrico},
   title={A Simple Mathematical Model Inspired by the Purkinje Cells: From
   Delayed Travelling Waves to Fractional Diffusion},
   journal={Bull. Math. Biol.},
   volume={80},
   date={2018},
   number={7},
   pages={1849--1870},
   issn={0092-8240},
   review={\MR{3814763}},
   doi={10.1007/s11538-018-0437-z},
}

\bib{MR2863859}{article}{
   author={Dong, Hongjie},
   author={Kim, Doyoon},
   title={On $L_p$-estimates for a class of non-local elliptic equations},
   journal={J. Funct. Anal.},
   volume={262},
   date={2012},
   number={3},
   pages={1166--1199},
   issn={0022-1236},
   review={\MR{2863859}},
   doi={10.1016/j.jfa.2011.11.002},
}
	
\bib{ferrari}{article}{
   author={Ferrari, Fausto},
TITLE = {Weyl and Marchaud Derivatives: A Forgotten History},
JOURNAL = {Mathematics},
VOLUME = {6},
YEAR = {2018},
NUMBER = {1},
URL = {http://www.mdpi.com/2227-7390/6/1/6},
ISSN = {2227-7390},
DOI = {10.3390/math6010006},
}

\bib{MR2667016}{book}{
   author={Gazzola, Filippo},
   author={Grunau, Hans-Christoph},
   author={Sweers, Guido},
   title={Polyharmonic boundary value problems},
   series={Lecture Notes in Mathematics},
   volume={1991},
   note={Positivity preserving and nonlinear higher order elliptic equations
   in bounded domains},
   publisher={Springer-Verlag, Berlin},
   date={2010},
   pages={xviii+423},
   isbn={978-3-642-12244-6},
   review={\MR{2667016}},
   doi={10.1007/978-3-642-12245-3},
}

\bib{2016arXiv160909248G}{article}{
   author = {Ghosh, Tuhin},
author = {Salo, Mikko},
author = {Uhlmann, Gunther},
    title = {The Calder\'on problem for the fractional Schr\"odinger equation},
  journal = {ArXiv e-prints},
   eprint = {1609.09248},
     date = {2016},
   adsurl = {http://adsabs.harvard.edu/abs/2016arXiv160909248G},
}

\bib{MR1814364}{book}{
   author={Gilbarg, David},
   author={Trudinger, Neil S.},
   title={Elliptic partial differential equations of second order},
   series={Classics in Mathematics},
   note={Reprint of the 1998 edition},
   publisher={Springer-Verlag, Berlin},
   date={2001},
   pages={xiv+517},
   isbn={3-540-41160-7},
   review={\MR{1814364}},
}

\bib{MR3244285}{book}{
   author={Gorenflo, Rudolf},
   author={Kilbas, Anatoly A.},
   author={Mainardi, Francesco},
   author={Rogosin, Sergei V.},
   title={Mittag-Leffler functions, related topics and applications},
   series={Springer Monographs in Mathematics},
   publisher={Springer, Heidelberg},
   date={2014},
   pages={xiv+443},
   isbn={978-3-662-43929-6},
   isbn={978-3-662-43930-2},
   review={\MR{3244285}},
   doi={10.1007/978-3-662-43930-2},
}

\bib{2018arXiv181007648K}{article}{
   author = {Krylov, Nicolai V.},
    title = {On the paper ``All functions are locally $s$-harmonic up to a small error'' by Dipierro, Savin, and Valdinoci},
  journal = {ArXiv e-prints},
archivePrefix = {arXiv},
   eprint = {1810.07648},
     date = {2018},
   adsurl = {http://adsabs.harvard.edu/abs/2018arXiv181007648K},
}

\bib{MR3590678}{article}{
   author={Massaccesi, Annalisa},
   author={Valdinoci, Enrico},
   title={Is a nonlocal diffusion strategy convenient for biological
   populations in competition?},
   journal={J. Math. Biol.},
   volume={74},
   date={2017},
   number={1-2},
   pages={113--147},
   issn={0303-6812},
   review={\MR{3590678}},
   doi={10.1007/s00285-016-1019-z},
}

\bib{MR0242239}{article}{
   author={Mandelbrot, Benoit B.},
   author={Van Ness, John W.},
   title={Fractional Brownian motions, fractional noises and applications},
   journal={SIAM Rev.},
   volume={10},
   date={1968},
   pages={422--437},
   issn={0036-1445},
   review={\MR{0242239}},
   doi={10.1137/1010093},
}

\bib{MR3235230}{article}{
   author={Mandelbrot, Benoit},
   title={The variation of certain speculative prices [reprint of J. Bus.
   {\bf 36} (1963), no. 4, 394--419]},
   conference={
      title={Financial risk measurement and management},
   },
   book={
      series={Internat. Lib. Crit. Writ. Econ.},
      volume={267},
      publisher={Edward Elgar, Cheltenham},
   },
   date={2012},
   pages={230--255},
   review={\MR{3235230}},
}

\bib{MR1219954}{book}{
   author={Miller, Kenneth S.},
   author={Ross, Bertram},
   title={An introduction to the fractional calculus and fractional
   differential equations},
   series={A Wiley-Interscience Publication},
   publisher={John Wiley \& Sons, Inc., New York},
   date={1993},
   pages={xvi+366},
   isbn={0-471-58884-9},
   review={\MR{1219954}},
}

\bib{GEN1}{article}{
author = {Regner, Benjamin M.} 
author = {Vu\v{c}ini\'{c}, Dejan}, 
author = {Domnisoru, Cristina}, 
author = {Bartol, Thomas M.} 
author = {Hetzer, Martin W.} 
author = {Tartakovsky, Daniel M.} 
author = {Sejnowski, Terrence J.},
title = {Anomalous diffusion of single particles in cytoplasm},
journal = {Biophys. J.},
volume = {104},
number = {8},
pages = {1652--1660},
date = {2013},
issn = {0006-3495},
doi = {10.1016/j.bpj.2013.01.049},
url = {http://www.sciencedirect.com/science/article/pii/S0006349513001823},
}

\bib{MR3168912}{article}{
   author={Ros-Oton, Xavier},
   author={Serra, Joaquim},
   title={The Dirichlet problem for the fractional Laplacian: regularity up
   to the boundary},
   language={English, with English and French summaries},
   journal={J. Math. Pures Appl. (9)},
   volume={101},
   date={2014},
   number={3},
   pages={275--302},
   issn={0021-7824},
   review={\MR{3168912}},
   doi={10.1016/j.matpur.2013.06.003},
}

\bib{MR3694738}{article}{
   author={Ros-Oton, Xavier},
   author={Serra, Joaquim},
   title={Boundary regularity estimates for nonlocal elliptic equations in
   $C^1$ and $C^{1,\alpha}$ domains},
   journal={Ann. Mat. Pura Appl. (4)},
   volume={196},
   date={2017},
   number={5},
   pages={1637--1668},
   issn={0373-3114},
   review={\MR{3694738}},
   doi={10.1007/s10231-016-0632-1},
}

\bib{2017arXiv170804285R}{article}{
author={R\"uland, Angkana},
    title = {Quantitative invertibility and approximation for the truncated Hilbert
and Riesz Transforms},
  journal = {ArXiv e-prints},
   eprint = {1708.04285},
 date= {2017},
   adsurl = {http://adsabs.harvard.edu/abs/2017arXiv170804285R},
}

\bib{2017arXiv170806294R}{article}{   
author={R\"uland, Angkana},
   author={Salo, Mikko},
    title = {The fractional Calder\'on problem: low regularity and stability},
  journal = {ArXiv e-prints},
   eprint = {1708.06294},
date = {2017},
   adsurl = {http://adsabs.harvard.edu/abs/2017arXiv170806294R},
}

\bib{2017arXiv170806300R}{article}{
   author={R\"uland, Angkana},
   author={Salo, Mikko},
    title = {Quantitative approximation properties for the fractional heat equation},
  journal = {ArXiv e-prints},
   eprint = {1708.06300},
 date = {2017},
   adsurl = {http://adsabs.harvard.edu/abs/2017arXiv170806300R},
}

\bib{MR3774704}{article}{
   author={R\"uland, Angkana},
   author={Salo, Mikko},
   title={Exponential instability in the fractional Calder\'on problem},
   journal={Inverse Problems},
   volume={34},
   date={2018},
   number={4},
   pages={045003, 21},
   issn={0266-5611},
   review={\MR{3774704}},
   doi={10.1088/1361-6420/aaac5a},
}

\bib{MR1347689}{book}{
   author={Samko, Stefan G.},
   author={Kilbas, Anatoly A.},
   author={Marichev, Oleg I.},
   title={Fractional integrals and derivatives},
   note={Theory and applications;
   Edited and with a foreword by S. M. Nikol\cprime ski\u\i ;
   Translated from the 1987 Russian original;
   Revised by the authors},
   publisher={Gordon and Breach Science Publishers, Yverdon},
   date={1993},
   pages={xxxvi+976},
   isbn={2-88124-864-0},
   review={\MR{1347689}},
}

\bib{SANTA}{article}{
   author={Santamaria, F.},
  author={Wils, S.},
author={De Schutter, E.},
author={Augustine, G. J.},
title={Anomalous diffusion in Purkinje cell dendrites caused by spines},
   journal={Neuron.}
volume={52},
date={2006},
number={4},
pages={635--648},
doi={10.1016/j.neuron.2006.10.025}
}

\bib{MR3563609}{article}{
   author={Sin, Chung-Sik},
   author={Zheng, Liancun},
   title={Existence and uniqueness of global solutions of Caputo-type
   fractional differential equations},
   journal={Fract. Calc. Appl. Anal.},
   volume={19},
   date={2016},
   number={3},
   pages={765--774},
   issn={1311-0454},
   review={\MR{3563609}},
   doi={10.1515/fca-2016-0040},
}

\bib{GEN2}{incollection}{
author = {Seffens, William},
title = {Models of RNA interaction from experimental datasets: framework of resilience},
booktitle = {Applications of RNA-Seq and Omics Strategies},
publisher = {IntechOpen},
address = {Rijeka},
date = {2017},
editor = {Marchi, Fabio A.},
editor = {Cirillo, Priscila D.R.},
editor = {Mateo, Elvis C.},
chapter = {4},
doi = {10.5772/intechopen.69452},
url = {https://doi.org/10.5772/intechopen.69452}
}

\bib{MR2584076}{article}{
   author={Valdinoci, Enrico},
   title={From the long jump random walk to the fractional Laplacian},
   journal={Bol. Soc. Esp. Mat. Apl. SeMA},
   number={49},
   date={2009},
   pages={33--44},
   issn={1575-9822},
   review={\MR{2584076}},
}

\bib{ALBA}{article}{
   author={Viswanathan, G. M.},
   author={Afanasyev, V.},
   author={Buldyrev, S. V.},
   author={Murphy, E. J.},
   author={Prince, P. A.},
   author={Stanley, H. E.},
title={L\'evy flight search patterns of wandering albatrosses},
   journal={Nature},
   volume={381},
date={1996},
   pages={413--415},
doi={10.1038/381413a0},
}

\end{biblist} 
\end{bibdiv}
\end{document}